\documentclass[12 pt,a4paper,leqno]{article}

\setlength{\hoffset}{-0.8in}
\setlength{\voffset}{-0.8in}
\setlength{\textwidth}{150mm}
\setlength{\textheight}{220mm}

\usepackage[french,english]{babel}
\usepackage{latexsym,amsmath,amscd,amsfonts,amssymb,mathrsfs,amsthm}
\usepackage{graphicx}
\usepackage[all]{xy}
\usepackage[T1]{fontenc}

\RequirePackage{amsmath}
\RequirePackage{amssymb}
\RequirePackage{ifthen}
\RequirePackage{pslatex}

\bibliographystyle{plain}


\newtheorem{thm}{Théorème}
\newtheorem{defn}{Définition}
\newtheorem{lem}{Lemme}
\newtheorem{prop}{Proposition}
\newtheorem{cor}{Corollaire}
\newtheorem{rem}{Remarque}

\newtheorem{ex}{Exemple}


\author{Jacques Sauloy}

\title{Théorie analytique locale des équations aux $q$-différences de pentes
  arbitraires}


\def\C{{\mathbf C}}
\def\Q{{\mathbf Q}}
\def\Z{{\mathbf Z}}
\def\R{{\mathbf R}}
\def\N{{\mathbf N}}

\def\Cs{{\mathbf C^*}}
\def\Sr{{\mathbf{S}}}

\def\div{\text{div}}
\def\dim{\text{dim}}

\def\Diag{\text{Diag}}

\def\Ker{\text{Ker}}
\def\Id{\text{Id}}
\def\Sp{\text{Sp}}

\def\lmod{\left |}            
\def\rmod{\right |}           
\def\tq{~ | ~}              
\def\ie{\emph{i.e.}}

\def\cf{\emph{cf.}}

\def\Lin{\mathcal{L}}         
\def\Hom{\text{Hom}}          
\def\Aut{\text{Aut}}          
\def\1{\underline{1}}         
\def\Ker{\text{Ker~}}         
\def\Id{\text{Id}}            
\def\Mat{{\text{Mat}}}        
\def\GL{{\text{GL}}}          
\def\GLn{{\GL_n}}          
\def\GLnc{{\GLn(\C)}}          
\def\gl{{\text{gl}}}          
\def\glnc{{\text{gl}_n(\C)}}          

\def\ii{{\text{i}}}

\def\sq{\sigma_q}

\def\F{{\mathcal{F}}}

\def\B{{\mathcal{B}}}

\def\O{{\mathcal{O}}}
\def\M{{\mathcal{M}}}

\def\D{{\mathscr{D}_{q,K}}}
\def\gr{{\text{gr}}}

\def\Eq{{\mathbf{E}_{q}}}
\def\EE{{\mathscr{E}}}
\def\G{{\mathfrak{G}_{A_{0}}}}
\def\g{{\mathfrak{g}_{A_{0}}}}
\def\Gd{{\mathfrak{G}^{\geq \delta}_{A_{0}}}}
\def\gd{{\mathfrak{g}^{\geq \delta}_{A_{0}}}}
\def\Gds{{\mathfrak{G}^{> \delta}_{A_{0}}}}
\def\gds{{\mathfrak{g}^{> \delta}_{A_{0}}}}
\def\gde{{\mathfrak{g}^{(\delta)}_{A_{0}}}}

\def\Ra{{\C\{z\}}}
\def\Ka{{\C(\{z\})}}

\def\Raq(d){{\C\{\xi\}_{q,(\delta)}}}
\def\Kaq(d){{\C(\{\xi\})_{q,(\delta)}}}

\def\Rf{{\C[[z]]}}
\def\Kf{{\C((z))}}

\def\Kg{{\C(z)}}
\def\Rw{{\mathcal{O}(\C^{*})}}

\def\Oc{{\hat{\mathcal{O}}}}
\def\Kc{{\hat{K}}}

\def\Lie{{\text{Lie}}}
\def\St{\mathfrak{St}}
\def\Stq{\mathfrak{St}_q}
\def\Stqr{\mathfrak{St}_q^r}
\def\Stqu{\mathfrak{St}_q^1}
\def\Stqp{\mathfrak{St}_q'}
\def\st{\mathfrak{st}}
\def\stq{\mathfrak{st}_q}

\def\stqu{\mathfrak{st}_q^1}

\def\stt{{\tilde{\st}}}
\def\sttq{{\tilde{\st}_q}}

\def\sttqu{{\tilde{\st}_q^1}}

\def\Sttq{{\widetilde{\St}_q}}

\def\Sttqu{{\widetilde{\St}_q^1}}

\def\Rep{{\text{Rep}}}

\def\Repc{{\text{Rep}_{\C}}}
\def\U{{\mathfrak U}}
\def\Ua{{\mathfrak U}_{A_{0}}}
\def\Res{{\text{Res}}}
\def\res{{\text{res}}}

\def\limproj{{\underset{\longleftarrow}{\lim}}}
\def\limind{{\underset{\longrightarrow}{\lim}}}

\def\FTA{{famille trivialisante adaptée}}
\def\FTAs{{familles trivialisantes adaptées}}

\def\equivd{{~\equiv_\delta~}}
\def\equivdp{{~\equiv_{\delta'}~}}
\def\isom{{\overset{\sim}{\longrightarrow}}}
\def\thq{{\theta_q}}
\def\thqc{{\theta_{q,c}}}
\def\thqd{{\theta_q^\delta}}

\def\e{{\mathbf{e}}}

\def\Gq{{\mathbf{G}_{q}}}
\def\Gqp{{\mathbf{G}_{q,p}}}
\def\Gqps{{\mathbf{G}_{q,p,s}}}
\def\Gqpu{{\mathbf{G}_{q,p,u}}}
\def\Gqr{{\mathbf{G}_{q}^r}}
\def\Gqpr{{\mathbf{G}_{q,p}^r}}
\def\Gqpsr{{\mathbf{G}_{q,p,s}^r}}

\def\Gqprime{{\mathbf{G}'_{q}}}
\def\Gqpprime{{\mathbf{G}'_{q,p}}}

\def\Eqvee{{\mathbf{E}_{q}^\vee}}
\def\Eqr{{\mathbf{E}_{q_r}}}


\begin{document}

\maketitle

\tableofcontents

\begin{abstract}
  La classification analytique locale et la description du groupe de Galois
  pour les équations aux $q$-différences linéaires analytiques complexes ont
  été obtenues par Ramis, Sauloy et Zhang \cite{RSZ,RS3} sous l'hypothèse que
  les pentes du polygone de Newton sont entières. Nous relâchons ici cette
  hypothèse.
\end{abstract}

\selectlanguage{english}

\begin{abstract}
  Title: ``Local analytic theory of $q$-difference equations with arbitrary slopes''. \\
  The local analytic classification and the description of the Galois group
  for complex linear analytic $q$-difference equations have been obtained by
  Ramis, Sauloy and Zhang \cite{RSZ,RS3} under the assumption that the slopes
  of the Newton polygon are integral. In this work, we relax that assumption.
\end{abstract}

\selectlanguage{french}



\section{Introduction}
\label{section:Introduction}


\subsection{La théorie analytique locale des équations aux $q$-différences}
\label{subsection:TALEQD}

Nous considérons ici des équations aux $q$-différences \emph{linéaires
complexes analytiques} écrites sous forme matricielle-vectorielle:
\begin{equation}
\label{eqn:EQD}
X(q z) = A(z) X(z).
\end{equation}
Le complexe $q \in \Cs$ est fixé une fois pour toutes et tel que\footnote{La
théorie pour le cas où $0 < \lmod q \rmod < 1$ est essentiellement la même,
mais le cas où $\lmod q \rmod = 1$ présente de tout autres difficultés (\og
petits diviseurs \fg), voir \cite{LDV2}.} $\lmod q \rmod > 1$.
Notant $\Ka$ le corps des germes méromorphes en $0$, on suppose de plus que
$A \in \GLn(\Ka)$. (Voir \ref{subsection:Notations} pour les conventions et
notations générales dans cet article.)

\begin{rem}
Les équations aux $q$-différences du $q$-calcul \og classique \fg\ s'écrivent
le plus souvent sous forme \emph{scalaire}
$f(q^n z) + a_1(z) f(q^{n-1} z) + \cdots + a_n(z) f(z) = 0$,
où $a_1,\ldots,a_n \in \Kg$ et où l'on peut supposer $a_n \neq 0$. L'équivalence
avec la forme matricielle ci-dessus est due à Birkhoff \cite{Birkhoff1}; pour
une preuve moderne du \emph{lemme du vecteur cyclique} correspondant, voir
\cite{JSAIF} (dans l'esprit de Birkhoff) ou \cite{LDV} (dans l'esprit de
Deligne et Katz).
\end{rem}

Comme pour les équations différentielles depuis Riemann, la motivation de départ
concerne des systèmes \emph{rationnels} $X(q z) = A(z) X(z)$, $A \in \GLn(\Kg)$;
mais l'étude globale de ces derniers sur la sphère de Riemann 
$\Sr := \C \cup \{\infty\}$ commence
par l'étude locale, analytique ou formelle. Or, les seuls points fixes de
$\Sr$ par la $q$-dilatation $z \mapsto qz$ sont $0$ et $\infty$, et l'étude
en $\infty$ se ramène facilement à celle en $0$ par le changement de variable
$w = 1/z$. En ce qui concerne les \og singularités intermédiaires \fg,
c'est-à-dire dans $\Cs = \Sr \setminus \{0,\infty\}$, nous ne disposons pas
encore d'une approche cohérente pour les étudier localement; voir par exemple
des tentatives en ce sens dans \cite{Roques2,RoquesSauloy,ORS}. \\

La théorie analytique locale\footnote{La théorie analytique \emph{globale},
initiée par Birkhoff dans \cite{Birkhoff1} (mais pour le cas fuchsien seulement)
tourne autour de deux thèmes: groupe de Galois et correspondance de Riemann-Hilbert;
voir par exemple \cite{Etingof,vdPS1,JSGAL,Roques2,ORS}.} (en $0$) telle qu'elle
s'est développée au troisième millénaire, largement sous l'influence de Jean-Pierre
Ramis, comporte les volets suivants:
\begin{enumerate}
\item Classification: voir \cite{RSZ}, ainsi que la thèse de Anton Eloy à
l'Université Paul Sabatier;
\item Théorie de Galois: voir \cite{JSGAL,RS3,VB,vdPS1} ainsi que divers articles
de Yves Andr\'e, Anne Duval, Lucia Di Vizio et Julien Roques;
\item Théorie asymptotique: voir \cite{RZ,RSZ};
\item Sommation de solutions divergentes: voir \cite{RZ,LDVZ}, ainsi que
l'article \cite{DREL} de Thomas Dreyfus et Anton Eloy.
\end{enumerate}

On n'abordera dans cet article que les deux premiers volets. Ceux-ci reposent
sur la découverte d'un $q$-analogue du phénomène de Stokes. Parmi les divers
avatars de ce $q$-analogue, nous nous appuierons sur celui décrit dans
\cite{JSStokes} et utilisé dans \cite{RSZ,RS3}.


\subsection{L'hypothèse antérieure des pentes entières}
\label{subsection:Hypothèse}

À toute équation \eqref{eqn:EQD} est attaché un invariant formel, son
\emph{polygone de Newton}, consistant essentiellement en la donnée de
pentes $\mu_1 < \cdots < \mu_k$, qui sont des rationnels; affectées de
multiplicités $r_1,\ldots,r_k$, qui sont des entiers naturels non nuls
tels que $r_1 \mu_1,\ldots,r_k \mu_k \in \Z$ et $r_1 + \cdots + r_k = n$.
Les résultats les plus forts de la théorie analytique locale, en
particulier ceux de \cite{RSZ,RS3}, ont été obtenus sous l'hypothèse
\og technique \fg\ que les pentes sont entières: $\mu_1,\ldots,\mu_k \in \Z$.
Cette hypothèse intervient principalement à travers le fait que l'on
dispose alors
\begin{itemize}
\item de formes normales explicites (dites de Birkhoff-Guenther),
\item et d'une description explicite du phénomène de Stokes.
\end{itemize}
Hors l'hypothèse d'intégrité des pentes, les seuls progrès notables ont
été, à ma connaissance, les suivants:
\begin{itemize}
\item Marius van der Put et Reversat ont obtenu dans \cite{vdPR} une
classification \emph{formelle} complète et la description implicite du groupe
de Galois \emph{formel} universel; ceci sans aucune condition.
\item Virginie Bugeaud a obtenu (article déjà mentionné), dans certains
cas, une généralisation de la forme normale explicite de Birkhoff-Guenther
et une description des opérateurs de Stokes et de leur rôle galoisien.
De plus, elle a rendu plus explicite la description du groupe de Galois formel
de \cite{vdPR}.
\end{itemize}
C'est d'ailleurs sur cette description explicite que nous nous appuierons,
car notre approche est elle-même assez calculatoire. \\

Dans ce travail, nous relâchons totalement l'hypothèse d'intégrité des
pentes et généralisons certains des principaux résultats de \cite{RSZ,RS3},
plus précisément:
\begin{itemize}
\item le $q$-analogue du théorème de classification de
Birkhoff-Malgrange-Sibuya\footnote{Ce théorème est traditionnellement appelé
\og de Malgrange-Sibuya \fg, mais selon Bernard Malgrange il se trouve en
substance dans l'\oe uvre de Birkhoff.} \cite{RSZ};
\item la description complète du groupe de Galois analytique universel,
du $q$-analogue du groupe de monodromie sauvage et par conséquent la
correspondance de Riemann-Hilbert qui s'en déduit, au sens de \cite{RS3}.
\end{itemize}
Nous n'obtenons \emph{pas}, en particulier, une généralisation de la
description explicite des opérateurs de Stokes; ni de la solution du
problème inverse qui figure dans \cite{RS3}. Ce dernier point est sans
doute accessible à partir de nos résultats, j'espère que d'autres s'en
chargeront.


\subsection{Contenu de cet article}
\label{subsection:Contenu}

Décrivons maintenant plus en détail le contenu de cet article. Les notations
et conventions générales seront résumées en \ref{subsection:Notations}. \\

La section \ref{section:structuregénérale} est consacrée au rappel
des principaux résultats de structure de la catégorie des équations
aux $q$-différences (ou catégorie des modules aux $q$-différences)
sur $\Ka$: propriétés abéliennes et tensorielles, polygone de Newton,
filtration et graduation par les pentes; références pour ces prérequis:
\cite{JSFIL,JSStokes,RSZ,RS3}. On explique aussi brièvement comment
cette structure fournit un cadre pour la classification (sous-section
\ref{subsubsection:prérequisclassificationlocale}) et la théorie de Galois
(sous-section \ref{subsubsection:prérequisGaloislocal}). Enfin, on introduit
en \ref{subsection:prérequisRamification} le formalisme de la ramification
qui permettra l'extension des résultats antérieurs au cas de pentes
arbitraires. \\

La section \ref{section:classification} concerne la classification analytique
locale. En \ref{subsection:classifpentesentières}, nous rappelons les résultats
obtenus dans \cite{JSStokes,RSZ} dans le cas des pentes entières (nous y précisons
également la description de la structure affine sur l'espace des classes). En
\ref{subsection:classifpentesarbitraires}, nous prouvons en toute généralité
le principal résultat de la première partie, le $q$-analogue du théorème de
Birkhoff-Malgrange-Sibuya (théorème \ref{thm:qBMSPA}). \\

Dans la section \ref{section:groupedeGaloislocal}, nous abordons la
description du groupe de Galois local. Ce groupe proalgébrique a, dans
tous les cas, la forme d'un produit semi-direct $\St \ltimes \Gqp$, où $\Gqp$
est le groupe formel\footnote{L'indice p signifie \og pur \fg, nous réservons
l'indice f pour \og fuchsien \fg. Notons d'ailleurs que le \emph{groupe formel}
$\Gqp$ est en réalité un groupe (pro)algébrique \dots} 
et où le \emph{groupe de Stokes} $\St$ est pro-unipotent.
Suivant Ramis (dans sa définition du \og groupe de monodromie sauvage \fg\
des équations différentielles irrégulières), nous formulons la correspondance
de Riemann-Hilbert en termes de représentations d'un objet hybride 
$L \ltimes \Gqp$,
où $L$ est une sous-algèbre de Lie graduée libre de $\st := \Lie(\St)$ telle que
$\exp(L) \ltimes \Gqp$ soit Zariski-dense dans $\St \ltimes \Gqp$. Dans le cas
des pentes entières, rappelé en \ref{subsection:groupedeGaloispentesentières},
la description de $\Gqp$ était simplette et tout le travail portait sur
la construction de $L$. Dans le cas général, nous devons recourir à la
description donnée par van der Put et Reversat, que nous rappelons en
\ref{subsection:groupedeGaloisformel} sous la forme complétée et précisée
par Virginie Bugeaud. Nous décrivons de manière détaillée le groupe $\Gqpr$
associé à la catégorie des objets dont la pente a pour dénominateur $r$; puis
l'action de ce groupe sur sur l'algèbre de Lie du groupe de Stokes dans
\ref{subsection:groupedeGaloispentesarbitraires}. Nous pouvons alors énoncer
et prouver le résultat complet en \ref{subsection:structgroufonsau}. Les
résultats principaux sont les théorèmes \ref{thm:conjuguésmeilleurebasecanonique}
et \ref{thm:groupefondamentalsauvage}.


\subsection{Notations et conventions générales}
\label{subsection:Notations}

Le complexe $q$ ayant été fixé tel que $\lmod q \rmod > 1$, on notera $\sq$
l'opérateur aux $q$-différences défini par $(\sq f)(z) := f(q z)$. Ainsi
l'équation \eqref{eqn:EQD} se réécrit:
\begin{equation}
\label{eqn:eqd}
\sq X = A X,
\end{equation}
forme sous laquelle nous l'envisagerons désormais. \\

On notera de manière abrégée $\O$ et $K$ l'anneau et le corps d'intérêt: 
$\O := \Ra$ (anneau des germes holomorphes en $0$, ou, de façon équivalente,
des séries entières) et $K := \Ka = \O[1/z]$ l'anneau et le corps d'intérêt.
Leurs complétés $\Oc := \Rf$ et $\Kc := \Kf$ interviendront également. \\

On considérera $\Eq := \Cs/q^\Z$ parfois comme un groupe, parfois comme une
surface de Riemann (tore complexe), parfois comme une courbe elliptique, et
cela, sans changer de notation; par souci de l\'egèret\'e et parce que cela
ne pose aucun problème. Soit $\pi: \Cs \rightarrow \Eq$ la projection canonique:
c'est un morphisme de groupes et un revêtement non ramifié de degré infini.
On notera $\overline{a}$ l'image $\pi(a) \in \Eq$ de $a \in \Cs$; et
$[a;q] := \pi^{-1}(\overline{a}) \subset \Cs$ la spirale logarithmique discrète
$a q^\Z$.

\paragraph{Ramification.}

Pour tout $r \in \N^*$, on aura besoin de choisir\footnote{Les conséquences de ces
choix ne sont pour le moment absolument pas claires pour moi.} une racine $r$\ieme\
de $q$, notée $q_r$, ces choix étant supposés cohérents: $q_{rs}^s = q_r$. Pour cela,
nous décidons de choisir une fois pour toutes $\tau \in \C$ tel que $q = e^{2 \ii \pi \tau}$,
puis de poser, pour tout $x \in \C$, $q^x := e^{2 \ii \pi \tau x}$. En particulier, nous
noterons, pour tout $r \in \N^*$, $q_r := q^{1/r} = e^{2 \ii \pi \tau/r}$. Par ailleurs,
nous noterons $\zeta_r := e^{2 \ii \pi/r}$.

On choisit de même des extensions cycliques $K_r := \C(\{z_r\})$, $z_r^r = z$, 
avec la même convention $z_{rs}^s = z_r$; en vertu du théorème de Puiseux,
cela revient à fixer une cl\^oture algébrique 
$K_\infty := \bigcup\limits_{r \in \N^*} \C(\{z^{1/r}\})$ de $K = \Ka$.

Pour tout $r \geq 2$, l'extension $K_r= K[z_r]$ de $K$ est cyclique et son groupe
de Galois s'identifie au groupe $\mu_r := \{1,\zeta_r,\ldots,\zeta_r^{r-1}\}$ des
racines $r$\iemes\ de
l'unité dans $\C$: si $j \in \mu_r$, posant $\sigma_j(f(z_r)) := f(j z_r)$,
on voit que $j \mapsto \sigma_j$ est un isomorphisme de $\mu_r$ sur ce groupe
de Galois.

L'action de $\sq$ sur $K$ s'étend à naturellement à $K_r$ en l'automorphisme
$f(z_r) \mapsto f(q_r z_r)$, que l'on notera encore $\sq$ (on a donc une
\og extension de corps aux différences \fg). Un fait important est que
\emph{l'action de $\sq$ sur $K_r$ commute à celle de $\mu_r$}. \\

\emph{Pour alléger l'écriture, les catégories et groupes notés $\EE_q^{(0)}$,
$G^{(0)}$, etc, dans \cite{RSZ,RS3} seront ici notés $\EE_q$, $\Gq$, etc. De plus,
à partir de \ref{subsection:prérequisRamification}, les conditions de restriction
sur les pentes seront dénotées par des indices supérieurs: ainsi $\EE_q^1$
désigne la sous-catégorie pleine de $\EE_q$ formée des objets à pentes entières
(au lieu de $\EE_{q,1}^{(0)}$ dans \emph{loc. cit}).}

\paragraph{Fonctions.}

Nous noterons\footnote{Cette formule, ainsi que l'équation fonctionnelle
$\thq(qz) = z \thq(z)$, sont directement liées au choix de l'hypothèse
$\lmod q \rmod > 1$. Pour le cas de l'hypothèse symétrique $\lmod q \rmod < 1$,
voir par exemple \cite{ORS}.}:
$$
\thq(z) := \sum_{m \in \Z} q^{-m(m+1)/2} z^m.
$$
C'est une fonction holomorphe sur $\C^*$, qui vérifie les équations fonctionnelles:
$$
\thq(qz) = z \thq(z) = \thq(1/z)
$$
et la \emph{formule du triple produit de Jacobi}:
$$
\thq(z) = (-q^{-1};q^{-1})_\infty \; (-q^{-1}z;q^{-1})_\infty \; (-z^{-1};q^{-1})_\infty,
$$
où l'on définit les \emph{symboles de Pochhammer} par les produits infinis:
$$
(a;q^{-1})_\infty := \prod_{m \geq 0} (1 - q^{-m} a).
$$
On pose enfin, pour tout $c \in \C^*$:
$$
\thqc(z) := \thq(z/c),
$$
fonction dont les propriétés se déduisent immédiatement de celles de $\thq$.

\subsection*{Remerciements}

Une partie de ce travail a été effectuée alors que l'étais membre de l'Institut
Mathématique de Toulouse; l'unique support en a été mon salaire de fonctionnaire.
Cependant, je remercie Julien Roques de m'avoir invité il y a trois ans à en
exposer une version préliminaire aux rencontres qu'il a organisées à Grenoble. \\

D'un bout à l'autre j'ai été encouragé par l'intérêt de mon ami, mentor et coauteur
Jean-Pierre Ramis. À l'occasion d'un incident de parcours (rencontre de \og mauvaises
valeurs \fg\ de $q$, voir au \ref{subsubsection:basecanonique} la définition
\ref{defn:bonnevaleur}), j'ai eu la chance d'effleurer des thèmes chers à Ramanujan;
et j'ai alors pu bénéficier de l'aide du Professeur Bruce Berndt et de celle de mon
ami et coauteur Changgui Zhang.



\section{Structure générale de la catégorie des équations aux $q$-différences}
\label{section:structuregénérale}


\subsection{Propriétés abéliennes et tensorielles des modules aux $q$-différences}
\label{subsection:propabeltens}


\subsubsection{La catégorie $\EE_q$}

Un modèle intrinsèque de l'équation \eqref{eqn:eqd} est le \emph{module
aux $q$-différences}, ici défini comme un couple $(V,\Phi)$ formé d'un
$K$-espace vectoriel de dimension finie $V$ et d'un automorphisme $\sq$-
linéaire $\Phi$ de $V$, autrement dit, un automorphisme du groupe $V$
satisfaisant la forme multiplicative de la règle de Leibniz:
$$
\forall a \in K \;,\; \forall x \in V \;,\; \Phi(ax) = \sq(a) \Phi(x).
$$
($\Phi$ est donc en particulier $\C$-linéaire.) Un morphisme de $(V,\Phi)$
dans $(V',\phi')$ est une application $K$-linéaire $F: V \rightarrow V'$
qui entrelace $\Phi$ et $\Phi'$: $\Phi' \circ F = F \circ \Phi$. On
définit ainsi une catégorie $\EE_q$. \\

Notant $\D := K\left<\sigma,\sigma^{-1}\right>$ l'anneau des polynômes
de Öre non commutatifs caractérisés par la relation de commutation tordue:
$$
\forall a \in K \;,\; \forall k \in \Z \;,\; \sigma^k.a = \sq^k(a) \sigma^k,
$$
on voit immédiatement que tout module aux $q$-différences $(V,\Phi)$ définit
une structure de $\D$-module à gauche sur $V$ par la règle:
$$
\forall P := \sum a_k \sigma^k \in \D \;,\; \forall x \in V \;,\;
P.x := \sum a_k \Phi^k(x).
$$
On identifie ainsi $\EE_q$ à la catégorie des $\D$-modules à gauche de longueur
finie. \\

Soit $\B$ une base de $V$. Alors $\Phi(\B)$ est également une base, et il
existe un unique $A \in \GLn(K)$ telle que $\B = A \Phi(\B)$. On obtient ainsi
un isomorphisme de modules aux $q$-différences $X \mapsto \B X$ de $(K^n,\Phi_A)$
dans $(V,\Phi)$, où l'on a posé\footnote{Le choix d'utiliser la matrice
inverse $A^{-1}$ vient de ce que, dans ce modèle, les solutions $X$ de
l'équation \eqref{eqn:eqd} sont les points fixes de $\Phi_A$.}
$\Phi_A(X) := A^{-1} \sq X$. La sous-catégorie pleine de $\EE_q$ dont
les objets sont les $(K^n,\Phi_A)$ est donc essentielle. On en complète la
description en notant qu'un morphisme $F: (K^n,\Phi_A) \rightarrow (K^p,\Phi_B)$
est une matrice $F \in \Mat_{p,n}(K)$ telle que $(\sq F) A = B F$. \\

On retrouve ainsi le vocabulaire de la classification: si $F$ est un
isomorphisme, $n = p$ et $B = (\sq F) A F^{-1} =: F[A]$ (\emph{transformation
de jauge}); on dira également que $A$ et $B$ (ou les équations associées)
sont analytiquement équivalentes. On étend cette notion au cas où $B = F[A]$
avec $A,B \in \GLn(K)$ et $F \in \GLn(\Kc)$: on dit alors que $A$ et $B$ sont
formellement équivalentes\footnote{Il n'est pas utile d'élargir la catégorie
$\EE_q$ en autorisant des objets formels $A \in \GLn(\Kc)$: nous verrons en
effet que toute telle matrice est formellement équivalente à une matrice à
coefficients dans $K$.}.


\subsubsection{Propriétés abéliennes et tensorielles de $\EE_q$}
\label{subsubsection:propabeltens}

Le centre de $\D$ est $\C$ et la catégorie des $\D$-modules à gauche est
donc abélienne $\C$-linéaire; il en est donc de même de sa sous-catégorie
$\EE_q$. \\

Si $A$ est triangulaire par blocs:
$A = \begin{pmatrix} A' & \star \\ 0 & A'' \end{pmatrix}$, $A' \in \GL_{n'}(K)$,
$A'' \in \GL_{n''}(K)$, $n' + n'' = n$, la suite exacte évidente
$0 \rightarrow K^{n'} \rightarrow K^n \rightarrow K^{n''} \rightarrow 0$
définit en fait une suite exacte
$0 \rightarrow (K^{n'},\Phi_{A'}) \rightarrow (K^n,\Phi_A) \rightarrow (K^{n''},\Phi_{A''})
\rightarrow 0$.
On démontre que toute suite exacte dans $\EE_q$ est isomorphe à une
suite exacte de cette forme. \\

Soit $(V,\Phi)$ un objet quelconque de $\EE_q$. Son dual $(V^*,\Phi^*)$
est défini en prenant pour $V^*$ le dual du $K$-espace vectoriel $V$ et pour
$\Phi^*$ la contragrédiente ${}^t \Phi^{-1}$. Plus généralement, le \og Hom
interne \fg\ $\underline{\Hom}((V,\Phi),(W,\Psi))$ s'obtient en munissant
le $K$-espace vectoriel $\Lin_K(V,W)$ de l'automorphisme $\sq$-linéaire
$f \mapsto \Psi \circ f \circ \Phi^{-1}$. Si l'on prend pour $(W,\Psi)$
l'objet unité $\underline{1} := (K,\sq)$, on retrouve (à identifications
canoniques près) le dual. \\

Soient $(V_1,\Phi_1)$ et $(V_2,\Phi_2)$ deux objets de $\EE_q$. Il existe
un unique automorphisme $\sq$-linéaire $\Phi$ de $V := V_1 \otimes_K V_2$ tel
que $\Phi(x_1 \otimes x_2) = \Phi_1(x_1) \otimes \Phi_2(x_2)$. On pose alors:
$(V_1,\Phi_1) \otimes (V_2,\Phi_2) := (V,\Phi)$. \\

Ces structures font de $\EE_q$, une catégorie tensorielle rigide. Pour en
faire une catégorie tannakienne neutre, il faut la munir \cite{DM} d'un foncteur
fibre sur $\C$. Cette construction est indirecte. On verra plus loin que toute
matrice $A \in \GLn(K)$ est analytiquement équivalente à une matrice régulière
sur $\C^*$ (et même à coefficients dans $\C[z,z^{-1}]$). On se restreint à la
sous-catégorie pleine correspondante des $(K^n,\Phi_A)$ (qui est donc équivalente
à $\EE_q$), on choisit un point-base arbitraire $z_0 \in \Cs$ et l'on considère
le foncteur $\tilde{\omega}_{z_0}: (K^n,\Phi_A) \leadsto \C^n$ dont l'effet sur les
morphismes est $F \leadsto F(z_0)$. En effet, sous les hypothèses indiquées, $F(z_0)$
est nécessairement bien défini.

\begin{rem}
Voici une version plus géométrique de cette construction. À toute matrice
$A \in \GLn(K)$, on associe le faisceau $\F_A$ sur $\Eq$ défini, pour tout
ouvert $V$ de $\Eq$, par:
$$
\F_A(V) := \{X \text{~holomorphe sur~} \pi^{-1}(V) \text{~près de~} 0
\tq \sq X = A X \text{~près de~} 0\}.
$$
C'est un module localement libre de rang $n$, qui définit donc un fibré vectoriel
holomorphe sur le tore complexe $\Eq$, et, par théorème GAGA, un fibré algébrique
sur la courbe elliptique $\Eq$; nous noterons encore $\F_A$ ces fibrés. Alors
l'opération $(K^n,\Phi_A) \leadsto \F_A$ est un foncteur fibre sur la courbe $\Eq$.
Chaque point de $\Eq$ donne donc lieu \cite{DF} à un foncteur fibre sur $\C$.
La première construction est plus riche que la deuxième en ce qu'elle porte
sur le fibré équivariant $\Cs \times \C^n$ qui relève $\F_A$ sur $\Cs$.
\end{rem}

Comme prévisible dans ce genre d'affaire, le choix du point base est sans importance
pour le résultat final mais peut faire une différence pour les calculs et raisonnements
intermédiaires. Les remarques suivantes sont donc ici appropriées:
\begin{enumerate}
\item Pour tout disque ouvert $D$ de $\Cs$ contenu dans une couronne de la forme
$\{z \in \Cs \tq r \leq \lmod z \rmod < r \lmod q \rmod\}$ (dilatation par un
$r > 0$ de la couronne fondamentale $1 \leq \lmod z \rmod < \lmod q \rmod$)
l'ouvert $\pi(D)$ de $\Eq$ est trivialisant pour le faisceau et le fibré ci-dessus.
Il y a donc des isomorphismes canoniques (des identifications) entre tous les
foncteurs fibres $\tilde{\omega}_{z_0}$, $z_0 \in D$.
\item Il y a également un isomorphisme canonique entre $\tilde{\omega}_{z_0}$ et
$\tilde{\omega}_{q z_0}$, celui qui, à un objet $A$, associe l'isomorphisme:
$$
A(z_0): \C^n = \tilde{\omega}_{z_0}(A) \rightarrow \C^n = \tilde{\omega}_{qz_0}(A).
$$
En effet, le fait que c'est une transformation naturelle vient de ce que, si $F$
est un morphisme de $A$ dans $B$, l'équation fonctionnelle $F(qz) A(z) = B(z) F(z)$
instanciée en $z_0$ traduit exactement la fonctorialité.
\end{enumerate}

En pratique, c'est pour un choix légèrement différent de foncteur fibre que nous
opterons (\cf\ \ref{subsubsection:prérequisGaloislocal}), mais les mêmes remarques
s'y appliqueront de manière évidente.


\subsection{Polygone de Newton, filtration par les pentes et gradué associé}
\label{subsection:filtrationparlespentes}

Dorénavant, nous identifierons $\EE_q$ à la sous-catégorie pleine des
$(K^n,\Phi_A)$; et chaque objet $M := (K^n,\Phi_A)$ à la matrice $A \in \GLn(K)$
lorsque cela simplifiera les formulations. Si nécessaire, nous imposerons de
plus des conditions supplémentaires à $A$, comme être holomorphe sur $\Cs$,
voire être à coefficients dans $\C[z,z^{-1}]$: ceci, du moment que la sous-catégorie
pleine correspondante est essentielle (\ie\ équivalente à $\EE_q$). Les résultats
de toute cette sous-section sont justifiés en détail dans \cite{JSFIL,RSZ}.


\subsubsection{Polygone de Newton}
\label{subsubsection:polygonedeNewton}

Le principal invariant formel attaché à l'équation \eqref{eqn:eqd} et
au module aux $q$-différences $M = (K^n,\Phi_A)$ qui la code est son
\emph{polygone de Newton}; nous ne décrirons pas sa construction (voir
\cite{JSFIL,RSZ}) mais indiquons sa nature et ses principales propriétés.
On associe à $A \in \GLn(K)$ des \emph{pentes} $\mu_1 < \cdots < \mu_k$,
qui sont des rationnels, et des multiplicités $r_1,\ldots,r_k$, qui sont des
naturels non nuls tels que $d_1 := r_1 \mu_1,\ldots, d_k :=r_k \mu_k \in \Z$
et $r_1 + \cdots + r_k = n$. Ces données codent une partie convexe $P$ de
$\R^2$ telle que $P + (\{0\} \times \R_+) = P$ dont la frontière est formée
de deux demi-droites verticale infinies vers le haut et des $k$ vecteurs
$(r_i,d_i)$ (de gauche à droite); $P$ est totalement déterminé à translation
près par ces données. Pour exprimer les propriétés du polygone de Newton,
notons $M$ un module aux $q$-différences et $r_M: \Q \rightarrow \N$ la
\emph{fonction de Newton}: elle a pour support $S(M) := \{\mu_1,\ldots,\mu_k\}$
et vérifie $\forall i = 1,\ldots,k \;,\; r_M(\mu_i) = r_i$. Alors:
\begin{enumerate}
\item Deux modules formellement isomorphes ont même fonction de Newton.
\item Si $0 \rightarrow M' \rightarrow M \rightarrow M'' \rightarrow 0$
est une suite exacte, $r_M = r_{M'} + r_{M''}$.
\item Si $M = M' \otimes M''$, on a pour tout $\mu \in \Q$:
$$
r_M(\mu) = \sum_{\mu' + \mu'' = \mu} r_{M'}(\mu') r_{M''}(\mu'').
$$
\end{enumerate}

Nous dirons que $M$ est \emph{pur isocline} si son polygone de Newton ne comporte
qu'une seule pente; et qu'il est \emph{pur} s'il est somme directe de modules purs
isoclines.

\paragraph{Ramification.}

On reprend les notations introduites en \ref{subsection:Notations}. Soit
$r \in \N^*$ et soit $z_r := z^{1/r}$. En posant $A'(z_r) := A(z_r^r)$, on définit
une équation aux $q_r$ différences $X'(q_r z_r) = A'(z_r) X'(z_r)$. On vérifie
alors que, avec les notations vues ci-dessus pour $A$, le polygone de Newton
de $A'$ consiste en les pentes $r \mu_i$ avec les multiplicités (inchangées) $r_i$.


\subsubsection{Filtration et graduation par les pentes}
\label{subsub:filgrad}

\begin{prop}
Tout module $M$ de $\EE_q$ admet une unique tour de sous-modules:
$$
0 = M_0 \subset M_1 \subset \cdots \subset M_k = M
$$
telle que chaque quotient $M_i/M_{i-1}$, $1 \leq i \leq k$, soit pur isocline
de pente $\mu_i$ et de rang $\dim_\C M_i = r_i$. (Les $\mu_i$, $r_i$ désignent
les pentes de $M$ et leurs multiplicités.)
\end{prop}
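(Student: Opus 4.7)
Le plan est de traiter s�par�ment unicit� et existence.

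\emph{Unicit�.} Elle repose sur le fait standard d'annulation suivant: pour deux modules purs isoclines $P, P'$ de pentes distinctes $\mu \neq \mu'$, le $\Hom$ interne dans $\EE_q$ s'annule dans la direction appropri�e (v�rification matricielle �l�mentaire � l'aide du polygone de Newton, cf. \cite{JSFIL}). Jointe � l'additivit� de la fonction de Newton dans les suites exactes (propri�t� 2 ci-dessus), cette annulation permet, par chasse au diagramme, de caract�riser intrins�quement chaque $M_i$ comme le plus grand sous-module de $M$ dont toutes les pentes sont $\leq \mu_i$, d'o� l'unicit�.

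\emph{Existence.} On raisonne par r�currence sur $k$. Le cas $k=1$ est trivial. Pour le pas de r�currence, il suffit de produire un sous-module $M_1 \subset M$ pur isocline de pente $\mu_1$ et de rang $r_1$: l'additivit� de la fonction de Newton entra�nera alors que $M/M_1$ a pour pentes $\mu_2, \ldots, \mu_k$ et l'hypoth�se de r�currence s'appliquera. Ma strat�gie consiste � se ramener aux pentes enti�res par ramification. Soit $r \in \N^*$ un d�nominateur commun des $\mu_i$, soit $A \in \GLn(K)$ une matrice repr�sentant $M$, et soit $A'(z_r) := A(z_r^r) \in \GLn(K_r)$, de polygone de Newton � pentes enti�res $r\mu_i$ avec multiplicit�s $r_i$. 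La proposition �tant connue dans ce cas (cf. \cite{JSFIL,RSZ}), on dispose d'un sous-module $M'_1 \subset M'$ pur isocline de pente $r\mu_1$ et de rang $r_1$ dans la cat�gorie des $q_r$-modules sur $K_r$. Comme $\sq$ commute � l'action du groupe de Galois $\mu_r$ sur $K_r$ (voir \ref{subsection:Notations}), ce groupe agit sur $M'$ par $q_r$-automorphismes; par unicit� d�j� �tablie au niveau ramifi�, $M'_1$ est n�cessairement $\mu_r$-stable. La descente galoisienne le long de l'extension cyclique $K_r/K$ fournit alors un sous-$K$-module $M_1 \subset M$, stable sous $\sq$, dont les pentes se d�duisent de celles de $M'_1$ par division par $r$: on obtient ainsi $M_1$ pur isocline de pente $\mu_1$ et de rang $r_1$, comme voulu.

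\emph{Principal obstacle.} Le point le plus d�licat est l'�tape de descente galoisienne. Il faut v�rifier que tout sous-$K_r$-module $\mu_r$-stable de $M \otimes_K K_r$ s'�crit $M_1 \otimes_K K_r$ pour un sous-$K$-module $M_1$ bien d�fini (descente fid�lement plate le long d'une extension galoisienne), et que la $\sq$-structure passe � la descente, ce qui d�coule de la commutation $[\sq, \mu_r] = 0$ d�j� invoqu�e. Un calcul attentif confirmant que le polygone de Newton se transforme sous ramification inverse par division des pentes par $r$ (multiplicit�s inchang�es) ferme alors l'argument.
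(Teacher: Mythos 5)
Your argument is correct, but note that the paper itself states this proposition without proof, as a prerequisite recalled from \cite{JSFIL,RSZ}; the proof given there takes a different route from yours. In \cite{JSFIL} the filtration is constructed directly for arbitrary rational slopes, essentially by factoring a $q$-difference operator (obtained from a cyclic vector) along the edges of its Newton polygon, with no ramification. Your route --- quote the integral-slope case, ramify by a common denominator $r$, observe that the first step $M'_1$ of the filtration of the ramified module is $\mu_r$-stable because each $\sigma_j$ is an auto-equivalence of $\EE_{q_r}$ commuting with $\sq$ and preserving Newton polygons, then descend along the cyclic extension $K_r/K$ --- is sound, and is in fact exactly in the spirit of the present paper, which uses the same projector $\frac{1}{r}\sum_{j\in\mu_r}\sigma_j$ and fixed-point formalism in \ref{subsubsection:ramification} and \ref{subsubsection:classification}. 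Two points deserve care. First, the base case must be established independently of the general statement (it is, in \cite{RSZ}), otherwise the reduction is circular. Second, the $\Hom$-vanishing used for uniqueness must hold for arbitrary pure isoclinic modules, not only those of the form $z^\mu C$ with $C$ constant; for fractional slopes the cleanest way to get it is again by ramification, since $\text{Ram}_r$ is faithful and after ramification the modules do become of that normal form. Granting these, the descended subspace $W:=(M'_1)^{\mu_r}$ is $\Phi$-stable, satisfies $W\otimes_K K_r=M'_1$ by Galois descent, and is pure of slope $\mu_1$ and rank $r_1$ because ramification multiplies slopes by $r$ and preserves multiplicities, so your induction closes.
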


\begin{cor}
Toute matrice de $\GLn(K)$ est analytiquement équivalente à une matrice $A$
triangulaire supérieure par blocs, dont les blocs diagonaux $B_i \in \GL_{r_i}(K)$,
$i = 1,\ldots,k$, sont purs isoclines de pentes les $\mu_i$.
\end{cor}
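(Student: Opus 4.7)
Le plan consiste \`a traduire matriciellement la filtration fournie par la proposition. Soit $A \in \GLn(K)$ et soit $M := (K^n,\Phi_A)$ le module associ\'e. La proposition pr\'ec\'edente fournit une tour canonique
$$
0 = M_0 \subset M_1 \subset \cdots \subset M_k = M
$$
dont chaque quotient $M_i/M_{i-1}$ est pur isocline de pente $\mu_i$ et de rang $r_i$, et dont chaque terme $M_i$ est stable sous l'automorphisme $\sq$-lin\'eaire $\Phi_A$.

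Comme $K$ est un corps et chaque $M_i$ un sous-$K$-espace vectoriel de dimension finie de $K^n$, on construit pas \`a pas une base $\B = (e_{1,1},\ldots,e_{1,r_1},\ldots,e_{k,1},\ldots,e_{k,r_k})$ de $K^n$ adapt\'ee \`a la filtration: pour chaque $i = 1,\ldots,k$, on rel\`eve une base du quotient $M_i/M_{i-1}$ en une famille $(e_{i,1},\ldots,e_{i,r_i})$ qui compl\`ete la base d\'ej\`a construite de $M_{i-1}$ en une base de $M_i$.

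La stabilit\'e des $M_i$ sous $\Phi_A$ entra\^ine que $\Phi_A(\B)$ est encore adapt\'ee \`a la m\^eme filtration; l'unique matrice $A' \in \GLn(K)$ d\'efinie par $\B = A' \Phi_A(\B)$ est donc triangulaire sup\'erieure par blocs de tailles $r_1,\ldots,r_k$. Si $F \in \GLn(K)$ d\'esigne la matrice de passage de la base canonique de $K^n$ \`a $\B$, la transformation de jauge correspondante $A' = F[A]$ fournit l'\'equivalence \emph{analytique} cherch\'ee entre $A$ et $A'$. Par construction, chaque bloc diagonal $B_i \in \GL_{r_i}(K)$ est la matrice du module quotient $M_i/M_{i-1}$ dans la base induite par le rel\`evement, ce qui le rend pur isocline de pente $\mu_i$.

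La difficult\'e essentielle est enti\`erement port\'ee par la proposition pr\'ec\'edente (existence de la filtration par les pentes, \'etablie dans \cite{JSFIL,RSZ}); une fois cet \'enonc\'e acquis, le corollaire se r\'eduit \`a l'alg\`ebre lin\'eaire standard de la triangularisation par blocs d'un endomorphisme semi-lin\'eaire pr\'eservant une filtration \`a quotients prescrits.
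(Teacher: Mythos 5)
Votre démonstration est correcte et suit exactement la voie attendue: le corollaire est la simple traduction matricielle de la proposition précédente, obtenue en choisissant une base de $K^n$ adaptée à la tour de sous-modules (le papier ne détaille d'ailleurs pas cet argument, renvoyant à \cite{JSFIL,RSZ}). Le passage clé — la stabilité des $M_i$ sous $\Phi_A$ entraîne que $\Phi_A(\B)$ reste adaptée, d'où la forme triangulaire supérieure par blocs de $A'$ avec blocs diagonaux réalisant les quotients $M_i/M_{i-1}$ — est bien identifié et correctement justifié.
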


Autrement dit:
\begin{equation}
\label{eqn:formestandardtriangulaire}
A = A_{U} := \begin{pmatrix}
B_{1}  & \ldots & \ldots & \ldots & \ldots \\
\ldots & \ldots & \ldots  & U_{i,j} & \ldots \\
0      & \ldots & \ldots   & \ldots & \ldots \\
\ldots & 0 & \ldots  & \ldots & \ldots \\
0      & \ldots & 0       & \ldots & B_{k}     
\end{pmatrix} \in \GLn(K),
\end{equation}

\begin{prop}
On définit ainsi une filtration $(M^{\geq \mu})_{\mu \in \Q}$ dont les sauts ont lieu
aux valeurs $\mu = \mu_i$. Le gradué associé est $\gr M := \bigoplus M_i/M_{i+1}$. 
\end{prop}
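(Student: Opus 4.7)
The plan is to view this proposition essentially as a bookkeeping reformulation of the preceding one: we repackage the unique tower $0 = M_0 \subset \cdots \subset M_k = M$ as a decreasing family indexed by $\Q$. Explicitly, for any $\mu \in \Q$ I would set $M^{\geq \mu}$ to be the unique submodule of $M$ (extracted from the canonical tower) whose slopes are exactly the $\mu_i \geq \mu$; equivalently, $M^{\geq \mu}$ is the sum of all submodules of $M$ all of whose slopes are $\geq \mu$. The fact that this maximal submodule with prescribed slopes exists and is unique is precisely the content of the preceding proposition applied to the appropriate subquotients, combined with the additivity of the Newton function under short exact sequences (property 2 of \ref{subsubsection:polygonedeNewton}).

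Next I would verify the two claims of the statement. First, that the filtration is locally constant on $\Q$ with jumps precisely at the slopes: if $\mu, \mu' \in \Q$ lie in a common interval $(\mu_{i-1}, \mu_i]$ (using the conventions $\mu_0 = -\infty$, $\mu_{k+1} = +\infty$), then $M^{\geq \mu} = M^{\geq \mu'}$ because the set $\{\mu_j : \mu_j \geq \mu\}$ does not change across such an interval; and at $\mu = \mu_i$ there is a genuine jump because $M^{\geq \mu_i}/M^{> \mu_i}$ is pure isocline of slope $\mu_i$ and rank $r_i > 0$. Second, identifying the graded: by construction $M^{\geq \mu_i}/M^{> \mu_i}$ is exactly the $i$-th subquotient $M_i/M_{i-1}$ of the tower (up to a change of indexing reflecting whether slopes accumulate at the bottom or the top of the tower), so $\gr M := \bigoplus_{\mu \in \Q} M^{\geq \mu}/M^{> \mu}$ recovers $\bigoplus_i M_i/M_{i-1}$.

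The only step that is not purely formal is checking the functoriality of the filtration, i.e.\ that any morphism $f: M \to N$ in $\EE_q$ sends $M^{\geq \mu}$ into $N^{\geq \mu}$. The key input here is the classical vanishing $\Hom_{\EE_q}(P,P') = 0$ whenever $P$, $P'$ are pure isoclines of slopes $\mu_P, \mu_{P'}$ with $\mu_P > \mu_{P'}$ (a standard consequence of the fact that the slope function is additive on $\otimes$ and strictly positive on nonzero submodules of positive-slope objects, so a nonzero morphism forces a slope inequality). Combined with d\'evissage along the towers of $M$ and $N$, this forces $f(M^{\geq \mu}) \subset N^{\geq \mu}$.

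The main (indeed the only) obstacle is thus the slope-vanishing lemma for morphisms between pure isoclines of distinct slopes. Everything else is a formal consequence of the uniqueness and additivity properties of the Newton polygon already recorded. Since this vanishing is part of the standard structure theory recalled in \cite{JSFIL,RSZ}, the whole argument reduces to a short d\'evissage and an indexing convention.
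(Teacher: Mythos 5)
The paper does not actually prove this proposition: it belongs to the structure theory recalled at the head of \ref{subsection:filtrationparlespentes} and is justified only by the references \cite{JSFIL,RSZ}. So there is no argument in the text to compare yours with; your general plan (reindex the canonical tower by $\Q$, locate the jumps, deduce functoriality from a $\Hom$-vanishing between pure isoclines of distinct slopes plus d\'evissage) is the standard and correct one. The difficulty is that your central construction, taken literally, produces the wrong filtration.

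In the paper's conventions the canonical tower $0=M_0\subset M_1\subset\cdots\subset M_k=M$ has $M_i/M_{i-1}$ pure isocline of slope $\mu_i$ with $\mu_1<\cdots<\mu_k$: the canonical \emph{submodule} $M_1$ is pure of the \emph{smallest} slope (it is the top-left block of the upper-triangular form \eqref{eqn:formestandardtriangulaire}), and $M_i$ is the largest submodule all of whose slopes are $\leq\mu_i$. You instead define $M^{\geq\mu}$ as the largest submodule all of whose slopes are $\geq\mu$. No term of the tower has that property, and in general no nonzero such submodule exists: for a non-split extension $0\to P_1\to M\to P_2\to 0$ with $P_i$ pure of slope $\mu_i$ and $\mu_1<\mu_2$, a submodule of rank $r_2$ meeting $P_1$ trivially (which it must, by the very vanishing you invoke) would split the sequence. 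Hence for non-pure, non-split $M$ your $M^{\geq\mu}$ is $0$ for every $\mu>\mu_1$, has a single jump, and cannot have graded $\bigoplus M_{i+1}/M_i$. (The superscript $\geq\mu$ in the statement is admittedly misleading, but the asserted jumps at every $\mu_i$ and the asserted graded force the filtration to be the one read off the tower, i.e.\ by maximal submodules with slopes $\leq\mu$.) Correspondingly, the half of the vanishing lemma that does the work is $\Hom(P,P')=0$ for $\mu_P<\mu_{P'}$ (no nonzero map from a small-slope pure isocline into a larger-slope one): it gives both the maximality of $M_i$ among submodules with slopes $\leq\mu_i$ and the functoriality $f(M_i)\subset N_i$, whereas you state only the opposite inequality. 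Both halves do in fact hold over $K$ --- the relation $(\sq F)\,z^{\mu_P}C=z^{\mu_{P'}}C'F$ kills all Laurent coefficients of $F$ by an immediate induction --- but your justification via positivity of slopes on submodules is not a proof of either. With the filtration defined as the maximal submodule of slopes $\leq\mu$ and the vanishing stated in the correct direction, the rest of your outline goes through.
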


Les propriétés fonctorielles, abéliennes, tensorielles de la filtration sont
résumées par celle-ci \cite{JSFIL}:

\begin{prop}
\label{prop:foncteurgr}
Le foncteur $M \leadsto \gr M$ de $\EE_q$ dans la sous-catégorie pleine
$\EE_{q,p}$ des modules purs est exact, fidèle et $\otimes$-compatible.
\end{prop}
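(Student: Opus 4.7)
My plan is to reduce the three properties to two fundamental inputs: the uniqueness of the slope filtration from the previous proposition, which forces every morphism to respect it, and the following vanishing lemma: $\Hom(P, R) = 0$ whenever $P$ is pure isoclinic of slope $\mu$ and $\mu$ is not a slope of $R$. The lemma follows from the abelian properties (1)--(2) of the Newton polygon: the image $f(P) \subset R$ is pure isoclinic of slope $\mu$ as a quotient of $P$, but its slopes are also a subset of those of $R$ (by functoriality of the slope filtration applied to the injection $f(P) \hookrightarrow R$), forcing $f(P) = 0$.

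For \emph{exactness}, the crucial intermediate step is strictness: every morphism $f : M \to N$ sends each term $M^{\geq \mu}$ into $N^{\geq \mu}$, and moreover $f^{-1}(N^{\geq \mu}) = M^{\geq \mu}$. The inclusion $f(M^{\geq \mu}) \subset N^{\geq \mu}$ follows from the maximality characterization of $N^{\geq \mu}$ as the largest subobject with slopes $\geq \mu$; the reverse uses the same maximality applied to $M$ together with property (2) on the relevant subquotient, whose slopes must be simultaneously $< \mu$ and $\geq \mu$, hence which vanishes. Once strictness is established, the exactness of $\gr$ on $0 \to M' \to M \to M'' \to 0$ reduces slope by slope to the classical exactness of the graded of a strictly filtered short exact sequence, consistent with $r_M = r_{M'} + r_{M''}$.

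For \emph{faithfulness}, suppose $\gr(f) = 0$. I would argue by induction on the number of distinct slopes of $M$. For the base case ($M$ pure isoclinic of slope $\mu$), strictness gives $f(M) \subset N^{\geq \mu}$, and the hypothesis $\gr_\mu(f) = 0$ further pushes this image into $N^{> \mu}$; but $f(M)$ is pure isoclinic of slope $\mu$ as a quotient of $M$, while $N^{> \mu}$ has no slope-$\mu$ component, so $f(M) = 0$ by the vanishing lemma. For the inductive step, let $P$ be a pure isoclinic subobject of $M$ provided by the slope filtration; by the base case applied to $f|_P$, this restriction vanishes, so $f$ factors through $M/P$ (which has one fewer slope and still satisfies $\gr = 0$ on the induced map), and the induction concludes.

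For \emph{$\otimes$-compatibility}, the natural candidate on $M \otimes N$ is the filtration $\sum_{\mu' + \mu'' \geq \mu} M^{\geq \mu'} \otimes_K N^{\geq \mu''}$. Property (3) (multiplicativity of the Newton function) guarantees that each summand has all slopes $\geq \mu' + \mu'' \geq \mu$, so the sum is contained in the intrinsic $(M \otimes N)^{\geq \mu}$. Equality at every step is then confirmed by a dimension count via $r_{M \otimes N}(\mu) = \sum_{\mu' + \mu'' = \mu} r_M(\mu') r_N(\mu'')$; passing to the associated graded produces the canonical isomorphism $\gr(M \otimes N) \cong \gr M \otimes \gr N$. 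I expect this tensor-compatibility step to be the main technical obstacle: the multiplicativity of ranks does not immediately force equality of the two filtrations, and one has to combine it carefully with the maximality characterization of the intrinsic slope filtration of $M \otimes N$ to close the argument.
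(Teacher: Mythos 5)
The paper itself gives no proof of this proposition: it is recalled from \cite{JSFIL,RSZ} (``Les r\'esultats de toute cette sous-section sont justifi\'es en d\'etail dans...''), so there is no internal argument to compare you against. Your reconstruction is the standard one and is essentially sound. The vanishing lemma $\Hom(P,R)=0$ for $P$ pure isoclinic of slope $\mu$ with $\mu\notin S(R)$ does follow from additivity of the Newton function applied to $0\to\ker f\to P\to f(P)\to 0$ and $0\to f(P)\to R\to R/f(P)\to 0$; functoriality of the filtration then follows from its maximality characterization, exactness from strictness, faithfulness by your induction. And the tensor step closes exactly where you fear it might not: once you have the containment $\sum_{\mu'+\mu''\geq\mu}M^{\geq\mu'}\otimes_K N^{\geq\mu''}\subseteq(M\otimes N)^{\geq\mu}$, a vector-space splitting of the two filtrations exhibits the left-hand side as the span of the blocks $V_i\otimes W_j$ with $\mu_i+\nu_j\geq\mu$, whose dimension equals $\sum_{\nu\geq\mu}r_{M\otimes N}(\nu)$ by property (3) of the Newton polygon; containment plus equality of dimensions forces equality of the filtrations, hence the canonical isomorphism on the graded pieces.

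Two points need repair. First, ``$f^{-1}(N^{\geq\mu})=M^{\geq\mu}$'' is false as stated (take $f=0$); the strictness you actually need is $f(M^{\geq\mu})=f(M)\cap N^{\geq\mu}$, which your mechanism (a subquotient whose slopes lie simultaneously in two disjoint ranges must vanish) does prove, via the intermediate identity $f(M^{\geq\mu})=\bigl(f(M)\bigr)^{\geq\mu}$. Second, mind the direction of the inequality: with the paper's conventions (upper block-triangular forms, $\mu_1<\cdots<\mu_k$, top-left block a subobject), the canonical subobjects $M_i$ carry the \emph{smallest} slopes, so the correct maximality characterization is ``largest subobject all of whose slopes are $\leq\mu$''; the object you invoke (``largest subobject with slopes $\geq\mu$'') is in general \emph{not} a step of the canonical tower --- for a non-split two-slope extension it is zero above the bottom slope, precisely because of the Stokes phenomenon. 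With the signs corrected consistently, every step of your argument goes through unchanged.
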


Soient $A$ de la forme \eqref{eqn:formestandardtriangulaire} décrite ci-dessus
et $M$ le module correspondant. La matrice diagonale par blocs $A_0$ obtenue
en remplaçant les $U_{i,j}$ par $0$ correspond au gradué $\gr M$:
\begin{equation}
\label{eqn:formestandarddiagonale}
A_{0} := \begin{pmatrix}
B_{1}  & \ldots & \ldots & \ldots & \ldots \\
\ldots & \ldots & \ldots  & 0 & \ldots \\
0      & \ldots & \ldots   & \ldots & \ldots \\
\ldots & 0 & \ldots  & \ldots & \ldots \\
0      & \ldots & 0       & \ldots & B_{k}    
\end{pmatrix} \in \GLn(K).
\end{equation}
Par convention, dans ces écritures, on a $\mu_1 < \cdots < \mu_k$ et chaque bloc
diagonal $B_i \in \GL_{r_i}(K)$ est pur isocline de pente $\mu_i$; on démontre de
plus que les blocs rectangulaires $U_{i,j} \in \Mat_{r_i,r_j}(K)$,
$1 \leq i < j \leq k$, peuvent être pris à coefficients dans $\C[z,z^{-1}]$. \\

À l'aide des résultats de \cite{vdPR} (qui seront rappelés plus loin), on
vérifie que les $B_i$ peuvent eux-mêmes être pris à coefficients dans
$\C[z,z^{-1}]$. Les objets répondant à cette description forment une
sous-catégorie tannakienne essentielle de $\EE_q$, et nous nous
restreindrons à cette sous-catégorie pour tout le reste de l'article.


\subsubsection{Catégorie formelle}
\label{subsubsection:catégorieformelle}

Dans la catégorie $\hat{\EE}_q$ obtenue en remplaçant $K$ par $\Kc$,
la filtration ci-dessus a encore lieu.

\begin{prop}
(i) Dans la catégorie $\hat{\EE}_q$, la filtration par les pentes est
canoniquement scindée, \ie\ le foncteur $\gr$ correspondant est isomorphe
au foncteur identité. \\
(ii) Deux modules (analytiques) sont formellement isomorphes si, et seulement
si, leurs gradués sont isomorphes (formellement ou analytiquement, cela revient
au même).
\end{prop}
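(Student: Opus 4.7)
Pour (i), la strat�gie consiste, par r�currence sur la longueur de la filtration par les pentes, � se ramener au scindage canonique d'une extension � deux pas. Soit donc $M$ un objet de $\hat{\EE}_q$ avec sous-module $M_1$ pur isocline de pente $\mu_1$ et quotient $M/M_1$ pur isocline de pente $\mu_2 \neq \mu_1$. Apr�s choix de bases convenables, ceci revient � consid�rer une matrice formelle
$$
A = \begin{pmatrix} B_1 & U \\ 0 & B_2 \end{pmatrix},
$$
avec $B_i$ pur isocline de pente $\mu_i$, et � chercher une unique jauge formelle $F = \begin{pmatrix} I & T \\ 0 & I \end{pmatrix}$, $T \in \Mat(\Kc)$, telle que $F[\diag(B_1,B_2)] = A$. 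Un calcul direct traduit cette condition en l'\emph{�quation cohomologique}
$$
(\sq T) B_2 - B_1 T = U.
$$

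Le point central est alors de r�soudre de mani�re unique cette �quation dans $\Mat(\Kc)$. Pour traiter le cas de pentes arbitraires, j'aborderais le probl�me par ramification: choisissant $r \in \N^*$ tel que $r\mu_1, r\mu_2 \in \Z$, on tire l'�quation sur $K_r$ et l'on invoque le r�sultat d�j� �tabli pour pentes enti�res dans \cite{RSZ}, qui fournit l'unique solution $T_r \in \Mat(\Kc_r)$. L'unicit� force alors $T_r$ � �tre invariant sous le groupe de Galois $\mu_r$ de $K_r/K$ (puisque la conjugu�e Galoisienne est encore solution), et l'on descend $T_r$ en $T \in \Mat(\Kc)$. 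L'unicit� ainsi obtenue rend le scindage fonctoriel en $M$ et livre l'isomorphisme de foncteurs $\gr \isom \Id_{\hat{\EE}_q}$, �tablissant (i).

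Pour (ii), le sens direct est imm�diat puisque $\gr$ est un foncteur. R�ciproquement, si $\gr M \simeq \gr M'$ dans $\hat{\EE}_q$, alors (i) fournit la suite d'isomorphismes formels
$$
M \simeq \gr M \simeq \gr M' \simeq M'.
$$
L'assertion parenth�tique selon laquelle, pour des modules \emph{purs}, les notions d'isomorphie formelle et analytique co�ncident se ram�ne, par d�composition en parties isoclines (et nullit� des Hom entre pentes distinctes), au cas de modules purs isoclines d'une m�me pente. Pour ceux-ci, apr�s twist et ramification r�duisant � la pente $0$, les objets se repr�sentent par des matrices constantes et la comparaison formel/analytique se fait par analyse standard sur les invariants (spectres modulo $q^\Z$).

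\textbf{Principal obstacle.} La difficult� technique majeure est la r�solvabilit� \emph{canonique} de l'�quation cohomologique pour pentes rationnelles non enti�res. L'approche par ramification est propre mais s'appuie sur le fait (d�j� non trivial dans \cite{RSZ}) que, pour pentes enti�res, la solution formelle est \emph{unique}, ce qui autorise la descente Galoisienne automatique. L'alternative directe (d�veloppement en s�rie de Laurent de $T$) exige des manipulations pr�cises sur les coefficients exploitant la non-r�sonance $\mu_1 \neq \mu_2$.
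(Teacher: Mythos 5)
Votre sch\'ema de preuve est correct dans ses grandes lignes. Notez d'abord que le pr\'esent article ne d\'emontre pas cette proposition: il renvoie pour toute la sous-section \`a \cite{JSFIL,RSZ}. Votre d\'evissage en extensions \`a deux pentes et la r\'esolution \emph{unique} de l'\'equation $(\sq T)\,B_2 - B_1\,T = U$ dans $\Mat(\Kc)$ --- l'unicit\'e \'etant pr\'ecis\'ement ce qui rend le scindage canonique puis fonctoriel, via la nullit\'e des $\Hom$ formels entre purs isoclines de pentes distinctes --- sont la voie suivie dans \cite{JSFIL}, o\`u l'\'equation est toutefois r\'esolue directement pour des pentes rationnelles arbitraires, sans ramification. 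Votre variante (ramifier pour se ramener aux pentes enti\`eres de \cite{RSZ}, puis descendre par $\mu_r$-invariance gr\^ace \`a l'unicit\'e) est l\'egitime, puisque $\sq$ commute \`a l'action de $\mu_r$ et que $\hat{K}_r^{\mu_r} = \Kc$; elle a le m\'erite d'\^etre exactement la technique que l'article d\'eploie ensuite (\cf\ \ref{subsubsection:ramification}) pour la classification analytique. Deux points m\'eriteraient cependant d'\^etre explicit\'es. D'une part, dans votre r\'ecurrence, $M/M_1$ n'est pas pur isocline d\`es que $k>2$: il faut d'abord le scinder par hypoth\`ese de r\'ecurrence, puis traiter bloc par bloc les extensions isoclines deux \`a deux, et la fonctorialit\'e de $\gr \isom \Id$ exige de v\'erifier qu'un morphisme entre objets purs est diagonal par pentes (encore la nullit\'e des $\Hom$ crois\'es). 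D'autre part, pour l'assertion parenth\'etique de (ii), la r\'eduction \`a la pente $0$ par torsion n'est possible qu'apr\`es ramification lorsque la pente n'est pas enti\`ere, et il faut ensuite redescendre l'isomorphisme analytique de $K_r$ \`a $K$ par le projecteur $\frac{1}{r}\sum_{j \in \mu_r} \sigma_j$, comme dans la premi\`ere proposition de \ref{subsubsection:ramification}. Ce sont des compl\'ements de r\'edaction, non des lacunes de fond.
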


On peut donc identifier $\hat{\EE}_q$ à la catégorie $\EE_{q,p}$ des modules purs (qui
a été introduite par la proposition \ref{prop:foncteurgr} de \ref{subsub:filgrad}) et
la classification formelle se ramène donc à la classification des modules purs (ce point
sera détaillé en \ref{subsubsection:prérequisclassificationlocale}). \\

Pour formuler l'énoncé matriciel correspondant, nous introduisons le sous-groupe
algébrique $\G$ de $\GLn$ formé des matrices triangulaires par blocs et ayant
pour blocs diagonaux les matrices identités $I_{r_1},\ldots,I_{r_k}$, \ie\ défini
par le format ci-dessous:
\begin{equation}
\label{eqn:formestandardautomorphisme}
\begin{pmatrix}
I_{r_1}  & \ldots & \ldots & \ldots & \ldots \\
\ldots & \ldots & \ldots  & \star & \ldots \\
0      & \ldots & \ldots   & \ldots & \ldots \\
\ldots & 0 & \ldots  & \ldots & \ldots \\
0      & \ldots & 0       & \ldots & I_{r_k}    
\end{pmatrix}.
\end{equation}

\begin{cor}
\label{cor:auto=id}
(i) On conserve les notations $A,A_0$ ci-dessus. Il existe un unique
$\hat{F} \in \G(\Kc)$ tel que $\hat{F}[A_0] = A$. \\
(ii) Si $\hat{F} \in \G(\Kc)$ est tel que $\hat{F}[A] = A$, alors 
$\hat{F} = I_n$.
\end{cor}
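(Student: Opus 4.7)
Mon plan pour prouver le corollaire \ref{cor:auto=id} est de traduire les relations $\hat{F}[A_0] = A$ et $\hat{F}[A] = A$ en des syst�mes matriciels bloc par bloc, puis de les r�soudre par r�currence sur l'�cart d'indices $d := j - i$. L'ingr�dient-cl� sera la bijectivit�, pour $i < j$, d'un certain op�rateur de d�placement associ� aux blocs diagonaux $B_i, B_j$, purs isoclines de pentes distinctes $\mu_i \neq \mu_j$.

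Je param�trerais $\hat{F} = (F_{i,j})$ selon le format \eqref{eqn:formestandardautomorphisme}, soit $F_{i,i} = I_{r_i}$ et $F_{i,j} = 0$ pour $i > j$; �crivant $A = A_0 + U$ avec $U = (U_{i,j})_{i < j}$ strictement triangulaire sup�rieur par blocs, un calcul direct bloc par bloc montre que la relation $(\sq \hat{F}) A_0 = A \hat{F}$ associ�e � (i) �quivaut, pour $i < j$, �
\begin{equation*}
(\sq F_{i,j}) B_j - B_i F_{i,j} = U_{i,j} + \sum_{i < \ell < j} U_{i,\ell} F_{\ell,j},
\end{equation*}
tandis que la relation $(\sq \hat{F}) A = A \hat{F}$ associ�e � (ii) se traduit par
\begin{equation*}
(\sq F_{i,j}) B_j - B_i F_{i,j} = \sum_{i < \ell < j} \bigl( U_{i,\ell} F_{\ell,j} - (\sq F_{i,\ell}) U_{\ell,j} \bigr).
\end{equation*}
Dans les deux cas, le second membre ne d�pend que de blocs $F_{\ell,m}$ avec $m - \ell < j - i$, ce qui autorise une r�currence sur $d = j - i \geq 1$.

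L'�tape d�cisive, qui est selon moi le v�ritable obstacle, consiste � �tablir que l'op�rateur $\C$-lin�aire
\begin{equation*}
T_{i,j}: X \longmapsto (\sq X) B_j - B_i X
\end{equation*}
de $\Mat_{r_i,r_j}(\Kc)$ dans lui-m�me est bijectif pour $i < j$. Son noyau s'identifie � $\Hom_{\hat{\EE}_q}(M_{B_j}, M_{B_i})$, lequel est nul puisque $M_{B_i}$ et $M_{B_j}$ sont purs isoclines de pentes distinctes (cons�quence directe de la filtration par les pentes et de la proposition \ref{prop:foncteurgr}). Son conoyau s'identifie au groupe d'extensions $\Ext^1_{\hat{\EE}_q}(M_{B_j}, M_{B_i})$, nul lui aussi par la proposition pr�c�dente qui affirme le scindage canonique de la filtration par les pentes dans $\hat{\EE}_q$.

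Une fois cette bijectivit� acquise, (i) d�coule de la r�currence: le cas $d = 1$ se r�sout par $F_{i,i+1} = T_{i,i+1}^{-1}(U_{i,i+1})$, et les cas $d > 1$ se r�solvent de mani�re analogue � partir des blocs d�j� construits, fournissant l'unique solution. Pour (ii), la m�me r�currence s'initie avec $F_{i,i+1} \in \Ker T_{i,i+1} = 0$ (second membre vide pour $d = 1$), puis propage trivialement l'annulation du second membre, imposant $F_{i,j} = 0$ pour tout $i < j$, soit $\hat{F} = I_n$.
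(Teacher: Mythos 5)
Votre preuve est correcte. Notez d'abord que l'article ne d�montre pas ce corollaire: il est pr�sent� comme la traduction matricielle imm�diate de la proposition qui le pr�c�de (scindage canonique de la filtration par les pentes dans $\hat{\EE}_q$), les justifications d�taill�es �tant renvoy�es � \cite{JSFIL,RSZ}. Votre argument fournit pr�cis�ment les d�tails attendus: la traduction bloc par bloc des deux relations est exacte, la r�currence sur $d = j - i$ est bien fond�e (les seconds membres ne font intervenir que des blocs d'�cart strictement inf�rieur), et le point cl� --- la bijectivit� de $T_{i,j}\colon X \mapsto (\sq X)\,B_j - B_i\,X$ sur $\Mat_{r_i,r_j}(\Kc)$ pour $\mu_i < \mu_j$ --- est correctement ramen� � l'annulation de $\Hom_{\hat{\EE}_q}(M_{B_j},M_{B_i})$ (pentes distinctes, via la fonctorialit� du polygone de Newton ou la fid�lit� de $\gr$) et de $\Ext^1_{\hat{\EE}_q}(M_{B_j},M_{B_i})$ (scindage de la filtration, l'extension ayant la petite pente comme sous-objet). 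La seule r�serve � formuler est d'ordre logique et non math�matique: votre d�monstration fait du corollaire une cons�quence de la proposition de scindage plut�t qu'une v�rification ind�pendante, tout le contenu analytique (r�solubilit� formelle de $(\sq X)B_j - B_i X = V$ lorsque les pentes diff�rent, qui se v�rifie coefficient par coefficient dans $\Kf$) �tant port� par cette proposition; c'est exactement l'ordre logique adopt� par l'article, donc c'est l�gitime ici.
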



\subsection{Conséquences pour la théorie analytique locale}


\subsubsection{Conséquences pour la classification analytique locale}
\label{subsubsection:prérequisclassificationlocale}

La classification formelle étant admise, il est naturel de chercher à
déterminer le quotient d'une classe formelle par la relation d'équivalence
analytique. Cependant, on constate que ce passage au quotient ne donne pas
lieu à un espace \og raisonnable \fg\ (voir la remarque à la fin de ce
numéro). Inspirés par \cite{BV}, nous avons donc dans \cite{RSZ} rigidifié
les objets à classifier en considérant les couples $(M,u)$ formés d'un
module $M$ et d'un isomorphisme $u$ de son formalisé avec un objet formel
fixé (\emph{classification analytique isoformelle}). \\

Vue l'identification de $\hat{\EE}_q$ à $\EE_{q,p}$ décrite plus haut, cela
revient à fixer un module pur $M_0$ et à définir l'ensemble $\F(M_0)$ quotient
de l'ensemble des couples $(M,u: \gr M \rightarrow M_0)$ (où $u$ est un
isomorphisme) par la relation: $(M,u) \simeq (M',u')$ s'il existe
$f: M \rightarrow M'$ tel que $u = u' \circ \gr f$ (un tel $f$ est d'ailleurs
automatiquement un isomorphisme). En termes des matrices $A_0, A$ associées,
on a:
$$
\F(M_0) = \F(A_0) := \dfrac
   {\{A \in \GLn(K) \tq A \text{~est triangulaire supérieure par blocs et~}
     \gr A = A_0\}}
   {A \simeq B \text{~si, et seulement si~} B = F[A], A \in \G(K)} \cdot
   $$

\begin{thm}[\cite{RSZ}]
\label{thm:structaffineF(M_0)}
$\F(M_0)$ est naturellement muni d'une structure d'espace affine de
dimension $\sum\limits_{1 \leq i < j \leq k} r_i r_j (\mu_j - \mu_i)$.  
\end{thm}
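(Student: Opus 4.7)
The plan is to follow the strategy of \cite{RSZ}: realize $\F(A_0)$ explicitly as a quotient, linearize the equivalence relation, and identify the affine directions with $\Ext^1$ groups between the pure isoclinic factors. Let $\VV := \bigoplus_{1 \le i < j \le k} \Mat_{r_i, r_j}(K)$ be the $\C$-vector space of strictly upper triangular block matrices $U = (U_{i,j})$ in the block format of \eqref{eqn:formestandardtriangulaire}. The assignment $U \mapsto A_U := A_0 + U$ (placing $U_{i,j}$ in block position $(i,j)$) identifies $\VV$ with the set of matrices in that triangular form whose block-diagonal is exactly $A_0$; by construction $\F(A_0) = \VV / \G(K)$, where $F \in \G(K)$ acts via $U \mapsto F[A_U] - A_0$.

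To understand this action, I would linearize. Writing $F = I + N$ with $N \in \g(K)$ and expanding the gauge transform, one computes
\[
F[A_U] - A_U \;=\; \sq(N) A_0 - A_0 N + R(N,U),
\]
where $R(N,U)$ has higher degree with respect to the filtration of $\g$ by the \og distance to the diagonal \fg\ $p := j - i$. On the $p$-th graded piece $\{N : N_{i,j} = 0 \text{ for } j-i \neq p\}$, the linear operator $\delta_{A_0}(N) := \sq(N) A_0 - A_0 N$ splits as a direct sum of coboundary operators
\[
\delta_{i,j}(N_{i,j}) \;:=\; \sq(N_{i,j}) B_j - B_i N_{i,j}, \qquad j-i = p,
\]
whose cokernels compute the Baer--Yoneda groups $\Ext^1_{\EE_q}(P_j, P_i)$, with $P_\ell := (K^{r_\ell}, \Phi_{B_\ell})$ the $\ell$-th pure isoclinic factor of $M_0$.

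I would then establish the affine bundle structure by induction on the number $k$ of slopes. For $k = 2$, $\F(A_0)$ coincides by construction with $\Ext^1_{\EE_q}(P_2, P_1)$ and is a $\C$-vector space. For $k \ge 3$, the projection that forgets the last block column defines a surjection $\F(A_0) \twoheadrightarrow \F(A_0')$ onto the analogous space for the truncated module (length $k-1$), whose fibres are torsors under $\bigoplus_{i<k} \Ext^1_{\EE_q}(P_k, P_i)$. Since $\D$ is a noncommutative PID, $\EE_q$ has global homological dimension $\le 1$, hence $\Ext^2 = 0$ and all extension obstructions vanish; thus the bundle splits layer by layer and yields a natural simply transitive action of $\bigoplus_{i<j} \Ext^1_{\EE_q}(P_j, P_i)$ on $\F(A_0)$, which supplies the affine structure naturally attached to $M_0$.

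It remains to compute $\dim_\C \Ext^1_{\EE_q}(P_j, P_i) = r_i r_j (\mu_j - \mu_i)$ for $i < j$. The integer-slope case is classical. In general, pulling back via the ramification $z = z_r^r$ with $r$ a common denominator of the slopes of $M_0$ (\cf\ \ref{subsection:Notations}) makes all slopes integer with unchanged multiplicities, and Galois descent along the cyclic extension $K_r/K$ (whose group $\mu_r$ commutes with $\sq$) recovers the formula over $K$. Summing over pairs then gives the stated dimension. The main obstacle I anticipate is controlling the nonlinear tail $R(N,U)$ in the inductive step: one must show that the equation $F \cdot U = U'$ for $U, U'$ in the same equivalence class can be solved by an $F$ whose expansion converges layer by layer in the filtration, which amounts to iteratively solving a coboundary problem on each graded piece thanks to the vanishing of obstructions established above.
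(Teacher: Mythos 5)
The theorem you are proving is recalled from \cite{RSZ} rather than reproved in the paper, so the benchmark is the strategy of \emph{loc.\ cit.}\ together with the complements given in \ref{subsubsection:structaff}. Your plan --- write $\F(A_0)=\VV/\G(K)$, linearize the gauge action, identify the graded pieces with the groups $\Ext^1(P_j,P_i)$, and proceed by d\'evissage --- is essentially that of \cite{RSZ}, and the dimension count is the right one. The genuine gap is in the inductive step. You deduce from $\Ext^2=0$ that ``the bundle splits layer by layer''. Vanishing of $\Ext^2$ does give you that the truncation map $\F(A_0)\twoheadrightarrow\F(A_0')$ is surjective with fibres that are full torsors, and (together with $\Hom(P_k,P_i)=0$ for $i<k$) the additivity of dimensions; it does \emph{not} produce a trivialization of the tower. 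This is precisely the point the paper flags in \ref{subsubsection:structaff}: a vector space acting freely on a pointed set with affine quotient does not endow the total set with an affine structure unless one exhibits a \emph{section}. Your situation is in fact worse than that model case, because the fibre over the class of $A'_{U'}$ is a torsor under $\Ext^1(P_k,M'_{U'})$, a vector space which varies with $U'$; so besides a section you need compatible identifications of these fibres with the fixed space $\bigoplus_{i<k}\Ext^1(P_k,P_i)$. Both defects are repairable --- extension by zero $U'\mapsto(U',0)$ gives a canonical section of the truncation, and the paper's own route is to d\'evisser by $q$-Gevrey level $\delta$, so that the acting group ($V^{(\delta)}$ or $W^{(\delta)}$ in the sequence \eqref{eqn:secna}) is fixed, with sections supplied by the Birkhoff--Guenther truncations in the integral-slope case --- but as written your argument asserts the conclusion at the only place where something nontrivial has to be done.

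A secondary under-justified point is the computation of $\dim_\C\Ext^1(P_j,P_i)$ for non-integral slopes by ramification and descent. After ramifying by a common denominator $r$, the space $\Ext^1$ in $\EE_{q_r}$ has dimension $r_ir_j\,r(\mu_j-\mu_i)$, and your argument needs its $\mu_r$-fixed subspace to have dimension exactly $1/r$ of that; this amounts to showing that every character of $\mu_r$ occurs with the same multiplicity in this representation, which is not automatic and is of the same order of difficulty as a direct computation. The uniform route (that of \cite{RSZ,JSFIL}) is to reduce to $\Ext^1(\1,N)$ with $N:=\underline{\Hom}(P_j,P_i)$ pure of slope $\mu_i-\mu_j<0$ and rank $r_ir_j$, and to compute this as an index of $\sq-\Phi$, which yields $r_ir_j(\mu_j-\mu_i)$ directly for arbitrary rational slopes; equivalently it is the dimension of $H^1$ of the vector bundle $\lambda^{(i,j)}_I(A_0)$ on $\Eq$ recalled in \ref{subsubsection:structaff}.
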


\includegraphics[width=5cm]{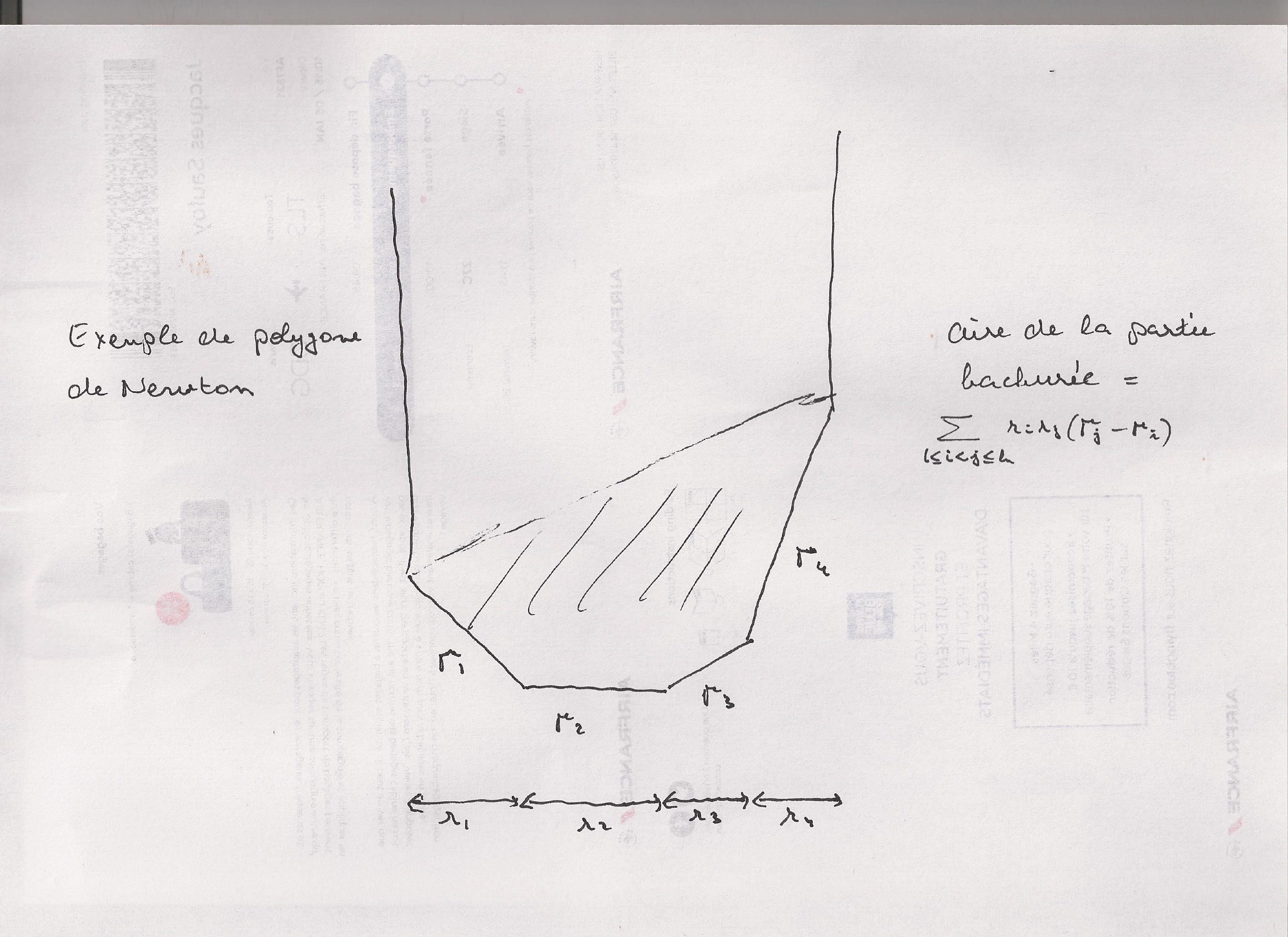}

Dans le cas où les pentes sont entières, on peut munir $\F(M_0)$ de
coordonnées explicites comme suit. Tout d'abord, tout module pur isocline
de pente $\mu \in \Z$ peut être décrit par une matrice de la forme $z^\mu C$
où $C \in \GL_r(\C)$. La matrice $A_0$ associée à $M_0$ peut donc être
choisie de blocs diagonaux $z^{\mu_i} A_i$, $A_i \in \GL_{r_i}(\C)$:
\begin{equation}
\label{eqn:formestandarddiagonaleentiere}
A_{0} := \begin{pmatrix}
z^{\mu_{1}} A_{1}  & \ldots & \ldots & \ldots & \ldots \\
\ldots & \ldots & \ldots  & 0 & \ldots \\
0      & \ldots & \ldots   & \ldots & \ldots \\
\ldots & 0 & \ldots  & \ldots & \ldots \\
0      & \ldots & 0       & \ldots & z^{\mu_{k}} A_{k}    
\end{pmatrix} \in \GLn(K).
\end{equation}
Cette matrice étant fixée, toute matrice $A$ triangulaire supérieure par blocs
et telle que $\gr A = A_0$:
\begin{equation}
\label{eqn:formestandardtriangulaireentière}
A = A_{U} := \begin{pmatrix}
z^{\mu_{1}} A_{1}  & \ldots & \ldots & \ldots & \ldots \\
\ldots & \ldots & \ldots  & U_{i,j} & \ldots \\
0      & \ldots & \ldots   & \ldots & \ldots \\
\ldots & 0 & \ldots  & \ldots & \ldots \\
0      & \ldots & 0       & \ldots & z^{\mu_{k}} A_{k}     
\end{pmatrix} \in \GLn(K),
\end{equation}
est équivalente (par la relation ci-dessus) à une unique matrice de même format
telle que de plus chaque bloc rectangulaire $U_{i,j}$ soit à coefficients dans
l'espace:
\begin{equation}
\label{eqn:polsdeBG}
K_{\mu_i,\mu_j} := \sum\limits_{\mu_i \leq d < \mu_j} \C z^d \subset \C[z,z^{-1}].
\end{equation}
C'est la \emph{forme normale de Birkhoff-Guenther}\footnote{Les matrices de cette
forme ont de plus l'avantage de former une sous-catégorie tannakienne
essentielle de la catégorie des objets de pentes entières.}. Les
coefficients de ces polynômes de Laurent sont au nombre de
$\sum\limits_{1 \leq i < j \leq k} r_i r_j (\mu_j - \mu_i)$ et forment le
système de coordonnées évoqué.

\begin{rem}
La relation d'équivalence analytique sur $\F(M_0)$ se déduit alors de l'action 
par conjugaison du groupe algébrique $\prod \Aut(A_i) \subset \prod \GL_{r_i}(\C)$
sur cet espace affine. L'étude des quotients a été abordée dans la thèse déjà
citée de Anton Eloy.
\end{rem}


\subsubsection{Conséquences pour la détermination du groupe de Galois local}
\label{subsubsection:prérequisGaloislocal}

Comme déjà dit, on considérera désormais les catégories $\EE_q$
et $\EE_{q,p}$ comme identifiées à leurs sous-catégories tannakiennes
essentielles dont les objets et les morphismes sont définis en tout point
de $\Cs$. On peut donc définir, pour tout point-base $z_0 \in \Cs$, un foncteur
fibre $\tilde{\omega}_{z_0}$ sur $\C$, selon la construction indiquée à la fin de 
\ref{subsubsection:propabeltens}; et l'on choisit une fois pour toutes un tel
$z_0$ arbitraire. Soit $\hat{\omega}_{z_0}$ la restriction de $\tilde{\omega}_{z_0}$
à $\EE_{q,p}$ (on n'indique donc plus la dépendance en $z_0$). Des propriétés
du foncteur $\gr$ énoncées au numéro \ref{subsection:filtrationparlespentes},
on déduit que $\omega_{z_0} := \hat{\omega}_{z_0} \circ \gr$ est un foncteur-fibre sur
$\EE_q$: c'est ce dernier que l'on utilisera dans la suite (ici et à la section
\ref{section:groupedeGaloislocal}). Les identifications entre les différents
$\tilde{\omega}_{z_0}$ mentionnées à la fin de \ref{subsubsection:propabeltens}
s'appliquent encore aux $\omega_{z_0}$. Pour simplifier, on omettra donc généralement
d'indiquer la dépendance en $z_0$ et l'on écrira donc $\tilde{\omega}$, $\hat{\omega}$
et $\omega$. \\

Soient $\Gq$ et $\Gqp$ les groupes de Galois\footnote{Sous sa forme la plus forte,
la théorie tannakienne définit des schémas en groupes (ici sur $\Cs$ ou même sur
$\Eq$), mais nous n'envisageons dans ce travail que les groupes proalgébriques
des points à valeurs dans $\C$ de ces schémas; ils contiennent en effet toute
l'information nécessaire.} tannakiens respectifs de $\EE_q$ et $\EE_{q,p}$, réalisés
comme groupes des automorphismes de $\omega$ et de $\hat{\omega}$. Du fait que
$\gr$ est une rétraction de l'inclusion $i$ de $\EE_{q,p}$ dans $\EE_q$, on déduit
par dualité tannakienne une suite exacte scindée
$$
\xymatrix{
  1 \ar@<0ex>[rr] & & \Stq \ar@<0ex>[rr] & & \Gq \ar@<0ex>[rr]_{i^*} & & 
  \Gqp \ar@/_1pc/[ll]_{\gr^*} \ar@<0ex>[rr] & & 1
}
$$
et une décomposition en produit semi-direct:
$$
\Gq = \Stq \ltimes \Gqp,
$$
où le \og groupe de Stokes \fg\ $\Stq$ est le noyau du morphisme
$i^*: \Gq \rightarrow \Gqp$. Il résulte des propriétés de la
filtration par les pentes que $\Stq$ est prounipotent. \\

Plus précisément, pour tout objet $A$ de $\EE_q$, de gradué $A_0$,
la dualité tannakienne associe à $A$, resp. $A_0$, une représentation
$\Gq \rightarrow \GLnc$, resp. $\Gqp \rightarrow \GLnc$, dont
l'image est un sous-groupe algébrique $\Gq(A)$, resp. $\Gqp(A_0)$,
de $\GLnc$. L'image $\Stq(A)$ de $\Stq$ par la première de ces représentations
est le noyau de $\Gq(A) \rightarrow \Gqp(A_0)$, un sous-groupe algébrique
de $\G(\C)$ (donc unipotent) tel que $\Gq(A) = \Stq(A) \ltimes \Gqp(A_0)$;
tandis que $\Gqp(A_0)$ est formé de matrices diagonales par blocs (de
tailles $r_1,\ldots,r_k$). L'action de $\Gqp(A_0)$ sur $\Stq(A)$ est la
conjugaison dans $\GLnc$. Enfin, les groupes proalgébriques $\Gq$, $\Stq$
et $\Gqp$ sont respectivement limites projectives des systèmes projectifs
des $(\Gq(A))$, $(\Stq(A))$ et $(\Gqp(A_0))$.


\subsection{Pentes et ramification}
\label{subsection:prérequisRamification}


\subsubsection{Restrictions sur les pentes}
\label{subsubsection:restrictionspentes}

Soit $r \in \N^*$. Des propriétés du polygone de Newton indiquées en
\ref{subsubsection:polygonedeNewton}, on déduit que la sous-catégorie
pleine $\EE_q^r$ de $\EE_q$ formée des objets dont toutes les pentes
appartiennent à $\frac{1}{r} \Z$ est tannakienne, de même que
$\EE_{q,p}^r := \EE_{q,p} \cap \EE_q^r$. On en déduit (par restriction
des foncteurs fibres à ces sous-catégories) des groupes tannakiens $\Gqr$,
$\Stqr$ et $\Gqpr$ tels que $\Gqr = \Stqr \ltimes \Gqpr$. Les résultats de
\cite{RSZ} (resp. de \cite{RS3}) concernent $\EE_q^1$ (resp. et $\mathbf{G}_q^1$). \\

Les catégories $\EE_q^r$ forment un système inductif indexé par $\N^*$
ordonné par la relation de divisibilité et la limite inductive est $\EE_q$;
de la même manière, la limite inductive des $\EE_{q,p}^r$ est $\EE_{q,p}$.
Cela entraîne que $\Gq$ est la limite projective des $\Gqr$ et que $\Gqp$ est
celle des $\Gqpr$; et donc (par commutation des noyaux aux limites projectives) que
$\Stq$ est celle des $\Stqr$:
$$
\EE_q = \limind\ \EE_q^r, \quad \EE_{q,p} = \limind\ \EE_{q,p}^r, \quad
\Gq = \limproj\ \Gqr, \quad \Gqp = \limproj\ \Gqpr, \quad
\Stq = \limproj\ \Stqr.
$$
Ces relations seront décrites de manière plus détaillée à la section
\ref{section:groupedeGaloislocal}.


\subsubsection{Foncteurs de ramification}
\label{subsubsection:foncteursramif}

Le but de cet article est d'étendre aux catégories $\EE_q^{r}$ une partie des résultats
obtenus dans \cite{RSZ,RS3} pour $\EE_q^{1}$. La méthode consiste à comparer $\EE_q^{r}$
à $\EE_{q_r}^1$ par ramification. \\

Fixons $r$. On définit un foncteur de $\EE_q$ dans $\EE_{q_r}$ en termes matriciels:
$A(z) \leadsto A'(z_r) := A(z_r^r)$ et $F(z) \leadsto F'(z_r) := F(z_r^r)$. On a vu
en \ref{subsubsection:polygonedeNewton} que les pentes sont ainsi multipliées par
$r$. On en déduit donc par restriction des foncteurs $\EE_q^{rs} \leadsto \EE_{q_r}^s$
et en particulier un foncteur de ramification:
$$
\text{Ram}_r: \EE_q^{r} \leadsto \EE_{q_r}^1.
$$

L'apparition de nouvelles \og bases \fg\footnote{Dans la terminologie historique
des \og basic hypergeometric series \fg, la \emph{base} du $q$-calcul est $q$.}
$q_r$ à c\^oté de $q$ nécessite de compliquer un peu les notations et d'appeler
$\Gq$, $\mathbf{G}_{q_r}$ les groupes tannakiens respectifs de $\EE_q$ et $\EE_{q_r}$,
et de même pour les groupes formels et de Stokes. \\

Si l'on note $z_{0,r}$ une racine $r$\ieme\ de $z_0$, on voit de plus que le foncteur
fibre correspondant $\omega'_{z_{0,r}}$ sur $\EE_{q_r}$ vérifie:
$$
\omega'_{z_{0,r}} \circ \text{Ram}_r = \omega_{z_0}.
$$
Par dualité tannakienne, cela donne lieu à des morphismes de groupes
proalgébriques:
$$
\text{Ram}_r^*: \mathbf{G}_{q_r}^1 \rightarrow \Gqr,
$$
et de même pour les groupes formels et de Stokes. Ces morphismes seront décrits
de manière plus détaillée à la section \ref{section:groupedeGaloislocal}.



\section{Classification analytique locale}
\label{section:classification}

Le problème posé ici est celui de la classification par des \og invariants
transcendants \fg\ selon les termes de Birkhoff dans l'article fondateur
\cite{Birkhoff1}. Pour les équations différentielles fuchsiennes, l'objet
classifiant est la représentation de monodromie, dont le rôle pour les équations
aux $q$-différences est tenu, pour l'essentiel, par la matrice de connexion de
Birkhoff \cite{Birkhoff1,JSAIF,ORS}. Pour les équations différentielles irrégulières,
les invariants locaux sont (outre la monodromie locale) les opérateurs de Stokes,
le modèle est celui du théorème de Birkhoff-Malgrange-Sibuya, qui a été transposé
au cas des $q$-différences dans \cite{RSZ} sous l'hypothèse que les pentes sont
entières. Le principal résultat de cette section est le théorème \ref{thm:qBMSPA},
qui étend au cas de pentes arbitraires le résulata antérieur. Signalons cependant
que le \ref{subsubsection:structaff} complète et corrige des affirmations insuffisamment
justifiées de \cite{RSZ,RS3}.


\subsection{Résumé des résultats obtenus pour les pentes entières}
\label{subsection:classifpentesentières}


\subsubsection{Résultats antérieurs}
\label{subsubsection:résultatsantérieurs}

Nous commençons par un résultat pour lequel nous ne sommes pas capables
d'énoncer une généralisation au cas de pentes arbitraires. Une classe
formelle étant fixée par la donnée de la matrice pure $A_0$, on définit
un sous-ensemble fini $\Sigma_{A_0} \subset \Eq$ de \og directions de
sommation interdites \fg\ par des conditions\footnote{Par rapport à
divers travaux antérieurs, on permute ici les rôles de $c$ et de $-c$;
c'est en effet le $c$ choisi ici qui interviendra le plus fréquemment.}  
de \emph{résonance}:
$$
\overline{c} \in \Sigma_{A_0} \underset{def}{\Longleftrightarrow}
q^\Z c^{\mu_i} \Sp(A_i) \cap q^\Z c^{\mu_j} \Sp(A_j) \neq \emptyset.
$$
Le théorème suivant prend la forme platonique d'un énoncé d'existence, mais
des calculs explicites de telles \og sommations algébriques \fg\ seront décrits
en \ref{subsection:calculs explicites}.

\begin{thm}[Sommation algébrique, \cite{JSStokes}]
\label{thm:sommationalgébrique}
Quelle que soit la \og direction de sommation autorisée \fg\
$\overline{c} \in \Eq \setminus \Sigma_{A_0}$, il existe une unique
transformation de jauge $F_{\overline{c}} \in \G(\M(\Cs))$ (autrement dit
méromorphe sur $\Cs$ et de la forme qui définit $\G$, voir l'équation
\eqref{eqn:formestandardautomorphisme}) telle que $F_{\overline{c}}[A_0] = A$,
et dont les blocs rectangulaires $F_{i,j}$, vérifient:
\begin{equation}
\label{eqn:polaritésommationalgébrique}
\forall i,j \;,\; 1 \leq i < j \leq k \;,\;
\div_\Eq(F_{i,j}) \geq - (\mu_j - \mu_i) [\overline{-c}].
\end{equation}
\end{thm}

Expliquons cette dernière condition. La relation $F_{\overline{c}}[A_0] = A$
se réécrit $\sq F_{\overline{c}} = A F_{\overline{c}} A_0^{-1}$, ce qui entraîne
que les pôles des coefficients de $F_{\overline{c}}$ forment des $q$-spirales,
donc des images réciproques par $\pi: \Cs \rightarrow \Eq$ de points de $\Eq$.
L'inégalité ci-dessus dit que tous ces pôles sont sur la $q$-spirale 
$\pi^{-1}(\overline{-c}) = [-c;q]$ et que ceux des $r_i r_j$ coefficients 
de $F_{i,j}$ sont de multiplicité $\leq \mu_j - \mu_i$. La présence un peu désagréable
de signes $-$ dans la définition de $\Sigma_{A_0}$ est due au fait incournable suivant:
la fonction theta qui vérifie $\sq f = (z/a) f$ a ses zéros sur la spirale logarithmique
discrète $[-a;q]$. \\

Mentionnons pour usage ultérieur deux lemmes.

\begin{lem}
\label{lem:groupepolaire}
(i) Les matrices $F \in \G(\M(\Cs))$ dont les blocs rectangulaires $F_{i,j}$,
vérifient les conditions de polarité \eqref{eqn:polaritésommationalgébrique}
forment un groupe, que nous noterons $\G[\overline{c}]$. \\
(ii) Soient $A_U$ et $A_V$ de gradué $A_0$ et notons $F_U$, $F_V$ les
isomorphismes méromorphes obtenus par sommation algébrique dans la direction
autorisée $\overline{c}$. Alors $F_{U,V} := F_V F_U^{-1}$ est l'unique isomorphisme
méromorphe $A_U \rightarrow A_V$ appartenant à $\G[\overline{c}]$.
\end{lem}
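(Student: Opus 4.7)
Le plan consiste à traiter les deux parties de l'énoncé indépendamment, chacune se ramenant à une vérification essentiellement directe une fois faites les observations appropriées. Pour (i), on exploite la structure pro-unipotente de $\G(\M(\Cs))$: ses éléments s'écrivent $I_n + N$ avec $N$ strictement triangulaire supérieure par blocs, donc nilpotente. Pour (ii), la propriété de morphisme de $F_{U,V}$ résultera d'un calcul immédiat sur les équations de jauge, et l'unicité se ramènera à celle garantie par le théorème \ref{thm:sommationalg�brique}.

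Pour (i), le bloc $(i,j)$ (avec $i < j$) d'un produit $FG$ de deux éléments de $\G(\M(\Cs))$ s'écrit
$$
(FG)_{i,j} = F_{i,j} + G_{i,j} + \sum_{i < \ell < j} F_{i,\ell} G_{\ell,j}.
$$
Si $F, G \in \G[\overline{c}]$, chaque terme satisfait la condition de polarité \eqref{eqn:polarit�sommationalg�brique}: pour le produit $F_{i,\ell} G_{\ell,j}$ on utilise l'additivité du diviseur et l'identité $(\mu_\ell - \mu_i) + (\mu_j - \mu_\ell) = \mu_j - \mu_i$; pour la somme, le diviseur d'une somme de fonctions méromorphes est au moins le minimum des diviseurs. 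Pour l'inverse, on développe $F^{-1} = \sum_{m \geq 0} (-N)^m$ (somme finie par nilpotence de $N$) et l'on applique le même argument.

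Pour (ii), partant des équations $\sq F_U = A_U F_U A_0^{-1}$ et $\sq F_V = A_V F_V A_0^{-1}$ qui caractérisent $F_U, F_V$ comme transformations de jauge depuis $A_0$, un calcul direct donne $\sq F_{U,V} = A_V F_{U,V} A_U^{-1}$, soit la relation $(\sq F_{U,V}) A_U = A_V F_{U,V}$ de morphisme $A_U \to A_V$. L'appartenance de $F_{U,V}$ à $\G[\overline{c}]$ est alors une application immédiate de (i) aux facteurs $F_V$ et $F_U^{-1}$.

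Pour l'unicité, qui constitue le point le plus substantiel, soit $G \in \G[\overline{c}]$ un autre isomorphisme méromorphe $A_U \to A_V$. Par (i), le produit $G F_U$ appartient à $\G[\overline{c}]$; et il réalise une transformation de jauge $A_0 \to A_V$ puisque $(G F_U)[A_0] = G[F_U[A_0]] = G[A_U] = A_V$. L'unicité de la sommation algébrique (théorème \ref{thm:sommationalg�brique}) appliquée à $A_V$ dans la direction autorisée $\overline{c}$ impose alors $G F_U = F_V$, d'où $G = F_V F_U^{-1} = F_{U,V}$. La seule véritable obstruction technique réside dans le contrôle précis des ordres des pôles sous composition, déjà traité dans (i); toute la substance analytique a été absorbée en amont par le théorème de sommation algébrique.
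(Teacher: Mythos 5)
Votre preuve est correcte et suit essentiellement la même démarche que celle du texte, qui se borne à indiquer que (i) résulte d'un calcul direct (celui que vous explicitez, par additivité des diviseurs et télescopage des pentes dans les produits de blocs, puis développement nilpotent de l'inverse) et que (ii) découle de l'unicité dans le théorème de sommation algébrique. Votre rédaction ne fait que détailler ces deux points sans s'en écarter.
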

\begin{proof}
Un calcul direct établit immédiatement (i), et (ii) découle alors du théorème.
\end{proof}

Le cas particulier suivant est important dans les questions de \og filtration
$q$-Gevrey \fg.

\begin{lem}
\label{lem:F_{U,V}etniveaudelta}
Supposons, avec les notations du lemme précédent, que $U_{i,j} = V_{i,j}$ aux
niveaux $\mu_j - \mu_i < \delta$ (cette relation sera notée plus loin
$A_U \equivd A_V$, \cf\ équation \eqref{eqn:defequivd} de
\ref{subsubsection:structaff}). Alors: \\
(i) Les blocs de niveaux $\mu_j - \mu_i < \delta$ de $F_U$ et $F_V$ coïncident
(c'est encore la relation $F_U \equivd F_V$). \\
(ii) Les blocs de niveaux $\mu_j - \mu_i < \delta$ de $F_{U,V}$ sont nuls et
ses blocs $F_{i,j}$ de niveau $\mu_j - \mu_i = \delta$ sont déterminés par les
équations:
$$
(\sq F_{i,j}) (z^{\mu_j} A_j) - (z^{\mu_i} A_i) F_{i,j} = V_{i,j} - U_{i,j},
$$
avec la condition $\div_\Eq(F_{i,j}) \geq - (\mu_j - \mu_i) [\overline{-c}]$.
\end{lem}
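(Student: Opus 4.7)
My plan is to exploit the block-triangular structure of the cohomological equations defining $F_U$ and $F_V$ and to proceed by induction on the level $\mu_j-\mu_i$. Writing $F_U[A_0]=A_U$ as $(\sq F_U)\, A_0 = A_U\, F_U$ and reading off the $(i,j)$ block for $i<j$ (using that $F_U\in\G$ has $(F_U)_{i,i}=I_{r_i}$ and that $A_U$ has diagonal blocks $z^{\mu_i}A_i$), one obtains
\begin{equation*}
\sq((F_U)_{i,j})\, z^{\mu_j} A_j \;-\; z^{\mu_i} A_i\, (F_U)_{i,j}
\;=\; U_{i,j} \;+\; \sum_{i<l<j} U_{i,l}\,(F_U)_{l,j},
\end{equation*}
and similarly for $F_V$. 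The crucial observation is that every term on the right-hand side involves either $U_{i,j}$ at level $\mu_j-\mu_i$, or products $U_{i,l}(F_U)_{l,j}$ with strictly smaller indices, where both factors live at levels strictly less than $\mu_j-\mu_i$.

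For assertion (i), I would induct on $\mu_j-\mu_i$. Under the hypothesis $A_U\equivd A_V$, both $U_{i,j}$ and $V_{i,j}$ coincide at any level $<\delta$, and the inductive hypothesis gives equality of $(F_U)_{i,l}$ with $(F_V)_{i,l}$ for all the lower-level blocks appearing in the sum. Hence the two equations for $(F_U)_{i,j}$ and $(F_V)_{i,j}$ have the same inhomogeneous term; uniqueness in Theorem~\ref{thm:sommationalg�brique} (applied to the same allowed direction $\overline{c}$, with the same polar bound) then forces $(F_U)_{i,j}=(F_V)_{i,j}$, yielding $F_U\equivd F_V$.

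For assertion (ii), I would expand $F_{U,V}=F_V F_U^{-1}$ block by block. Since $F_U,F_V\in\G$ are unipotent upper block-triangular, $F_U^{-1}$ is too, and its $(i,j)$ block at any level $<\delta$ is a polynomial expression in the $(F_U)_{l,m}$ at lower levels, hence (by (i)) equal to the corresponding block of $F_V^{-1}$. A direct expansion of $(F_V F_U^{-1})_{i,j}$ then shows that all blocks at level $<\delta$ vanish and that at level $\mu_j-\mu_i=\delta$ the cross terms with $i<l<j$ cancel, leaving
\begin{equation*}
(F_{U,V})_{i,j} \;=\; (F_V)_{i,j} \;-\; (F_U)_{i,j}.
\end{equation*}
Subtracting the two defining equations displayed above, and using once more that $U_{i,l}=V_{i,l}$ and $(F_U)_{l,j}=(F_V)_{l,j}$ at levels $<\delta$, the sums cancel and one is left precisely with
\begin{equation*}
\sq(F_{i,j})\,(z^{\mu_j}A_j) \;-\; (z^{\mu_i}A_i)\,F_{i,j} \;=\; V_{i,j} - U_{i,j}.
\end{equation*}
The polarity bound $\div_\Eq(F_{i,j})\geq -(\mu_j-\mu_i)[\overline{-c}]$ is inherited from the two summands, both of which lie in $\G[\overline{c}]$ by construction, so Lemma~\ref{lem:groupepolaire} applies.

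The only delicate point is the bookkeeping in step (ii): one must keep track of which products of blocks survive at each level and verify that the inversion $F_U^{-1}$ also satisfies the $\equivd$-compatibility. This is routine once one records that the $\equivd$-relation is a congruence on $\G$ with respect to multiplication and inversion, a fact that follows from the same type of block expansion and induction on level used in (i).
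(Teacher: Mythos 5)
Your argument is correct, and it reaches the lemma by a route that is organized differently from the paper's. The paper writes the single gauge equation $F_{U,V}[A_U]=A_V$ for the cocycle itself, observes that the resulting block system is triangular with respect to the level $\mu_j-\mu_i$, checks that $F_{i,j}=0$ solves the equations at all levels $<\delta$, and invokes the uniqueness of $F_{U,V}$ as the element of $\G[\overline{c}]$ carrying $A_U$ to $A_V$ to conclude that these are the actual blocks; assertion (i) is then dispatched in one line by remarking that $F_U\equivd F_V$ is exactly the congruence $F_{U,V}\in\Gd$ modulo the normal subgroup $\Gd$. You instead work with the two separate equations $F_U[A_0]=A_U$ and $F_V[A_0]=A_V$, prove (i) first by induction on the level, and then extract (ii) from a block expansion of the product $F_VF_U^{-1}$. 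Both arguments rest on the same two pillars, namely the triangularity of the block system in the level and the uniqueness of the solution with prescribed polarity in an allowed direction (to pass from the global uniqueness of the summation theorem to blockwise uniqueness you do need, as the paper does implicitly, that the equation for the block $(i,j)$ only involves blocks of strictly smaller level). What the paper's version buys is economy: by working with $F_{U,V}$ directly it never expands a product or an inverse, and the only multiplicative fact it needs is that $\equivd$ is a congruence modulo $\Gd$. Your version carries the extra bookkeeping you acknowledge at the end (stability of $\equivd$ under multiplication and inversion in the unipotent group $\G$, cancellation of the cross terms at level $\delta$), but in exchange it makes explicit the identity $(F_{U,V})_{i,j}=(F_V)_{i,j}-(F_U)_{i,j}$ at the first divergent level, which is precisely the linearization that the paper exploits later in the cocycle computations and in the reduction of $\Delta_\alpha^{(\delta)}(A)$ to the one-layer matrix $A^{(\delta)}$. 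So the two proofs are interchangeable; yours is slightly longer but anticipates a useful formula.
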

\begin{proof}
Soit $\Gd$ le sous-groupe distingué de $\G$ formé des éléments dont les blocs
de niveaux $\mu_j - \mu_i < \delta$ de $F_{U,V}$ sont nuls (ce sous-groupe sera
étudié de plus près en \ref{subsubsection:structaff}). On vérifie facilement
que $F_U \equivd F_V \Leftrightarrow F_{U,V} \in \Gd$ (congruence modulo $\Gd$).
Il suffit donc de prouver (ii). \\
La propriété $F_{U,V}[A_U] = A_V$ se traduit par le système d'équations en les
blocs $F_{i,j}$:
$$
\forall i,j \;,\; 1 \leq i < j \leq k \;,\;
U_{i,j} + \sum_{i < l < j} (\sq F_{i,l}) U_{l,j} + (\sq F_{i,j}) (z^{\mu_j} A_j) =
(z^{\mu_i} A_i) F_{i,j} + \sum_{i < l < j} V_{i,l} F_{l,j} + V_{i,j}.
$$
Pour $\mu_j - \mu_i < \delta$, on a $U_{i,j} = V_{i,j}$ et on voit immédiatement
que les solutions $F_{i,j} = 0$ conviennent: par unicité, ce sont donc les seules.
Les équations au niveau $\mu_j - \mu_i = \delta$ prennent alors bien la forme
indiquée dans le lemme.
\end{proof}

Rappelons (\cf\ \ref{subsection:filtrationparlespentes}) qu'il existe une
unique transformation de jauge formelle $\hat{F} \in \G(\Kc)$ telle que
$\hat{F}[A_0] = A$; elle est en générale divergente et l'on considère
$F_{\overline{c}}$ comme \og somme de $\hat{F}$ dans la direction
$\overline{c}$ \fg\ (au sens de la sommation des séries divergentes).

\begin{cor}
\label{cor:sommationalgébrique}
Soient $\overline{c}, \overline{d} \in \Eq \setminus \Sigma_{A_0}$.
Alors $F_{\overline{c},\overline{d}} := F_{\overline{c}}^{-1} F_{\overline{d}}$
vérifie: $F_{\overline{c},\overline{d}}[A_0] = A_0$; autrement dit, si l'on
note $F_{i,j}$ les blocs rectangulaires de $F_{\overline{c},\overline{d}}$,
on a: $\sq F_{i,j} = z^{-(\mu_j - \mu_i)} A_i F_{i,j} A_j^{-1}$. De plus,
$\div_\Eq(F_{i,j}) \geq - (\mu_j - \mu_i) ([\overline{-c}] + [\overline{-d}])$,
autrement dit, $F_{i,j}$ est méromorphe sur $\Cs$, tous ses pôles sont sur
$[-c,-d;q] := [-c;q] \cup [-d;q]$ et leurs multiplicités sont $\leq \mu_j - \mu_i$.
\end{cor}

De la relation évidente
$F_{\overline{c},\overline{d}} F_{\overline{d},\overline{e}} = F_{\overline{c},\overline{e}}$,
on déduit que les $F_{\overline{c},\overline{d}}$ forment un cocycle pour un
certain faisceau de groupes (non abéliens) $\Lambda_I(A_0)$ sur $\Eq$
défini comme suit: les sections de $\Lambda_I(A_0)$ sur l'ouvert $U$
de $\Eq$ sont les $F \in \G(\O(\pi^{-1}(U)))$ telles que $F[A_0] = A_0$.
Ce cocycle est attaché au recouvrement $\Ua$ de $\Eq$ formé des ouverts
de Zariski $U_{\overline{c}} := \Eq \setminus \{\overline{-c}\}$,
$\overline{c} \in \Eq \setminus \Sigma_{A_0}$. On traduit les relations
$\div_\Eq(F_{i,j}) \geq - (\mu_j - \mu_i) ([\overline{-c}] + [\overline{-d}])$
en disant que c'est un cocycle \emph{privilégié}. L'ensemble des cocycles
privilégiés est noté $Z^1_{pr}(\Ua,\Lambda_I(A_0))$. Le théorème suivant est
un $q$-analogue du théorème de Birkhoff-Malgrange-Sibuya:

\begin{thm} \cite{RSZ}
\label{thm:qBMS}
Le cocycle $(F_{\overline{c},\overline{d}})$ ne dépend que de la classe de $A$
dans $\F(A_0)$. Les applications:  
$$
\F(A_0) \rightarrow Z^1_{pr}(\Ua,\Lambda_I(A_0)) 
\rightarrow H^1(\Eq,\Lambda_I(A_0))
$$
ainsi définies sont bijectives.
\end{thm}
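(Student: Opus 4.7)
La d�monstration se fait en trois temps, suivant la trame classique du th�or�me de Birkhoff-Malgrange-Sibuya adapt�e au cadre $q$-diff�rentiel elliptique. On v�rifie d'abord que l'assignation $A \mapsto (F_{\overline{c},\overline{d}})$ fournit effectivement un cocycle privil�gi� ind�pendant du repr�sentant choisi dans sa classe d'�quivalence; on �tablit ensuite l'injectivit� de l'application induite vers $H^1$; enfin, et c'est le point le plus d�licat, on obtient la surjectivit� par un d�vissage cohomologique sur la courbe elliptique $\Eq$.

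La premi�re �tape est de nature essentiellement formelle: la relation de cocycle $F_{\overline{c},\overline{d}} F_{\overline{d},\overline{e}} = F_{\overline{c},\overline{e}}$ est imm�diate sur la d�finition, et le fait que $F_{\overline{c},\overline{d}}$ stabilise $A_0$ en satisfaisant la condition de polarit� privil�gi�e constitue exactement le contenu du corollaire \ref{cor:sommationalg�brique}. Si $A' = F[A]$ avec $F \in \G(K)$, l'it�ration de l'�quation fonctionnelle $(\sq F) A = A' F$, combin�e � la restriction � la sous-cat�gorie essentielle des objets � coefficients dans $\C[z,z^{-1}]$ (\cf\ fin de \ref{subsub:filgrad}), montre que $F$ se prolonge en une matrice m�romorphe sur $\Cs$ tout entier; un examen bloc par bloc permet alors de v�rifier que $F \cdot F_{\overline{c}}$ appartient � $\G[\overline{c}]$ et satisfait $(F \cdot F_{\overline{c}})[A_0] = A'$, ce qui par l'unicit� du th�or�me \ref{thm:sommationalg�brique} entra�ne $F'_{\overline{c}} = F \cdot F_{\overline{c}}$ et la co�ncidence des cocycles de $A$ et $A'$. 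R�ciproquement, si deux matrices produisent le m�me cocycle, la matrice $G := F'_{\overline{c}} \cdot F_{\overline{c}}^{-1}$ ne d�pend pas de $\overline{c}$, fournit un �l�ment global de $\G(\M(\Cs))$ v�rifiant $G[A] = A'$, et l'intersection des contraintes polaires impos�es par toutes les directions autoris�es confine ses p�les au voisinage de $0$; d'o� $G \in \G(K)$ et l'injectivit�.

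Le passage de $Z^1_{pr}$ � $H^1(\Eq, \Lambda_I(A_0))$ s'obtient en v�rifiant que l'action par cobords des sections globales $\Lambda_I(A_0)(\Eq)$ sur les cocycles privil�gi�s est triviale: par d�vissage le long de la filtration par les sous-groupes distingu�s $\Gd$ (lemme \ref{lem:F_{U,V}etniveaudelta}), chaque quotient ab�lien se ram�ne aux sections globales d'un fibr� sur $\Eq$ soumis � une condition de \og divisor bound \fg\ nul, lesquelles s'annulent par un argument de type Liouville elliptique (absence de sections globales non triviales d'un fibr� de degr� n�gatif). La surjectivit� proc�de dualement: �tant donn� un cocycle privil�gi� $(G_{\overline{c},\overline{d}})$, chaque quotient du d�vissage correspond � un faisceau localement libre explicite sur $\Eq$ (associ� aux �quations $\sq X = z^{-(\mu_j - \mu_i)} A_i X A_j^{-1}$), et les restrictions du cocycle � ces quotients se rel�vent par des arguments de Cousin ab�lien sur les ouverts affines $U_{\overline{c}} = \Eq \setminus \{\overline{-c}\}$, gr�ce � l'annulation du $H^1$ des faisceaux coh�rents sur un ouvert affine; la combinaison des liftings successifs fournit une famille $(\tilde F_{\overline{c}})$ dont les $\tilde F_{\overline{c}}[A_0]$ se recollent en une matrice $A \in \GLn(K)$ r�alisant le cocycle donn�. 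Un d�compte dimensionnel via Riemann-Roch sur $\Eq$ confirme l'�galit� des dimensions $\sum_{1 \leq i < j \leq k} r_i r_j (\mu_j - \mu_i)$ pr�dite par le th�or�me \ref{thm:structaffineF(M_0)}.

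\paragraph{Obstacle principal.} Le point le plus d�licat est la surjectivit�, et plus pr�cis�ment le contr�le du d�vissage non ab�lien: il faut identifier rigoureusement, � chaque niveau $\delta = \mu_j - \mu_i$, le quotient ab�lien comme fibr� vectoriel explicite sur la courbe elliptique $\Eq$, calculer ses groupes de cohomologie sur les ouverts affines $U_{\overline{c}}$, et v�rifier que les obstructions au rel�vement aux �tages sup�rieurs s'annulent effectivement pour que la r�currence aboutisse.
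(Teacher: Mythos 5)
Ce th\'eor\`eme n'est pas d\'emontr\'e dans le pr\'esent article: il y est rappel\'e de \cite{RSZ} sans preuve, seule la structure affine des deux membres \'etant compl\'et\'ee en \ref{subsubsection:structaff}. Votre sch\'ema reconstitue correctement la strat\'egie de \emph{loc.\ cit.}, et co\"incide pour l'essentiel avec la m\'ecanique que l'article met en place ensuite: bonne d\'efinition via l'unicit\'e de la sommation alg\'ebrique (th\'eor\`eme \ref{thm:sommationalg�brique}), puis d\'evissage le long de la filtration $(\Gd)$, chaque couche ab\'elienne \'etant identifi\'ee au fibr\'e $\lambda^{(\delta)}_I(A_0)$ de degr\'e strictement n\'egatif sur $\Eq$, avec $H^0 = 0$ et comptage dimensionnel par Riemann-Roch -- c'est exactement le contenu de la suite exacte \eqref{eqn:secna} et des diagrammes de \ref{subsubsection:structaff}. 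Votre rel\`evement \og \`a la Cousin \fg\ sur les ouverts affines $U_{\overline{c}}$ est une variante l\'egitime du scindage explicite par les formes normales de Birkhoff-Guenther utilis\'e dans \cite{RSZ}.

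Un point de votre argument d'injectivit\'e est toutefois trop rapide. De l'intersection des contraintes polaires vous d\'eduisez que $G := F'_{\overline{c}}\, F_{\overline{c}}^{-1}$ n'a aucun p\^ole sur $\Cs$, puis concluez directement \og d'o\`u $G \in \G(K)$ \fg. Or une matrice holomorphe sur $\Cs$ n'est pas automatiquement m\'eromorphe en $0$, et les $F_{\overline{c}}$ eux-m\^emes ne le sont pas: ce sont les sommes d'une transformation formelle $\hat{F}$ en g\'en\'eral divergente. Il manque donc un argument pour franchir l'origine: par exemple d\'evelopper $G = I_n + N$ en s\'erie de Laurent sur $\Cs$ et exploiter couche par couche l'\'equation fonctionnelle $\sq G = A'\, G\, A^{-1}$ (le bloc de niveau $\mu_j - \mu_i = \delta > 0$ satisfait une r\'ecurrence sur les coefficients qui force l'annulation des termes d'indice tr\`es n\'egatif), ou invoquer l'asymptoticit\'e de $G$ \`a l'unique transformation formelle $\hat{F}'\hat{F}^{-1}$. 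Sans cela, l'injectivit\'e de $\F(A_0) \rightarrow Z^1_{pr}(\Ua,\Lambda_I(A_0))$ n'est pas acquise. Signalons aussi que la formulation \og l'action par cobords des sections globales est triviale \fg\ ne dit pas ce qu'il faut pour l'injectivit\'e de $Z^1_{pr} \rightarrow H^1$: l'\'enonc\'e requis est que deux cocycles privil\'egi\'es \emph{cohomologues} (via des cocha\^ines locales $H_\alpha$) co\"incident, ce qui se ram\`ene bien, dans chaque couche, \`a $H^0\left(\Eq,\lambda^{(\delta)}_I(A_0)\right) = 0$ joint au contr\^ole du diviseur polaire des $H_\alpha$ -- l'ingr\'edient que vous invoquez est donc le bon, mais il faut l'appliquer aux cocha\^ines et non aux sections globales.
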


Il s'agit bien entendu de cohomologie non abélienne \cite{Frenkel} (mais,
les groupes impliqués étant unipotents, la non-abélianité est modérée !) et
les bijections ci-dessus sont en fait des isomorphismes d'ensembles pointés:
la classe de $(A_0, \Id)$ dans $\F(A_0)$ correspond au cocycle trivial dans
$Z^1_{pr}(\Ua,\Lambda_I(A_0))$ et à la classe de cohomologie triviale dans
$H^1(\Eq,\Lambda_I(A_0))$.


\subsubsection{Compléments}
\label{subsubsection:compléments}

Le cas général qui sera abordé en \ref{subsection:classifpentesarbitraires}
se ramènera au cas des pentes entières par ramification. Pour garantir que
les constructions sont indépendantes des choix arbitraires, nous aurons besoin
d'une nouvelle notion et de deux lemmes.

\begin{defn}
Soit $A$ de la forme \eqref{eqn:formestandardtriangulaireentière} 
(donc triangulaire supérieure par blocs) dans $\EE_q$ et soit
$A_0 := \gr A$. On ne fait ici aucune hypothèse sur les pentes. Une
\emph{famille trivialisante adaptée à $A$} est la donnée:
\begin{itemize}
\item d'un recouvrement $\U := (U_\alpha)$ de $\Eq$ par des ouverts de
Zariski;
\item d'isomorphismes méromorphes $F_\alpha: A_0 \rightarrow A$, dans $\G$,
 chaque $F_\alpha$ étant holomorphe sur $U_\alpha$.
\end{itemize}
\end{defn}

On voit alors que la famille des $F_{\alpha,\beta} := F_\alpha^{-1} F_\beta$
est un cocycle de $Z^1(\U,\Lambda_I(A_0))$.

\begin{lem}
Tous les cocycles provenant de \FTAs\ ont même classe dans $H^1(\Eq,\Lambda_I(A_0))$.
\end{lem}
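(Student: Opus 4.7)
Ma strat�gie serait de passer au raffinement commun de deux recouvrements et d'exhiber explicitement le cobord reliant les cocycles associ�s; comme cela vaudrait pour toute paire de \FTAs, tous les cocycles en question auraient alors m�me classe dans $H^1(\Eq,\Lambda_I(A_0))$. Concr�tement, soient $(\U = (U_\alpha)_{\alpha \in I}, (F_\alpha))$ et $(\U' = (U'_\beta)_{\beta \in J}, (F'_\beta))$ deux \FTAs\ � $A$, donnant respectivement les cocycles $(F_{\alpha,\alpha'} := F_\alpha^{-1} F_{\alpha'})$ et $(F'_{\beta,\beta'} := (F'_\beta)^{-1} F'_{\beta'})$. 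Je commencerais par consid�rer le raffinement commun $\U'' := (U_\alpha \cap U'_\beta)_{(\alpha,\beta) \in I \times J}$, sur lequel les deux cocycles se restreignent sans changer de classe cohomologique.

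L'�tape centrale serait alors de poser, pour chaque couple $(\alpha,\beta)$,
$$
H_{(\alpha,\beta)} := F_\alpha^{-1} F'_\beta,
$$
puis de v�rifier que c'est une section de $\Lambda_I(A_0)$ sur $U_\alpha \cap U'_\beta$. Puisque $F_\alpha$ et $F'_\beta$ sont deux isomorphismes m�romorphes $A_0 \rightarrow A$ appartenant � $\G$, leur compos�e $F_\alpha^{-1} F'_\beta$ est un automorphisme m�romorphe de $A_0$ appartenant � $\G$ (ce dernier �tant un groupe). L'holomorphie sur $U_\alpha \cap U'_\beta$ proviendrait du caract�re unipotent de $\G$: l'inversion y est polynomiale en les entr�es, donc $F_\alpha^{-1}$ reste holomorphe sur $U_\alpha$, et la composition est alors holomorphe sur l'intersection.

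Un calcul direct donnerait enfin
$$
H_{(\alpha,\beta)}^{-1} \, F_{\alpha,\alpha'} \, H_{(\alpha',\beta')}
= (F'_\beta)^{-1} F_\alpha \cdot F_\alpha^{-1} F_{\alpha'} \cdot F_{\alpha'}^{-1} F'_{\beta'}
= (F'_\beta)^{-1} F'_{\beta'} = F'_{\beta,\beta'},
$$
d'o� la cohomologie des cocycles raffin�s dans $Z^1(\U'',\Lambda_I(A_0))$, et donc l'�galit� des classes dans $H^1(\Eq,\Lambda_I(A_0))$. Aucune �tape ne me para�t vraiment obstructive; le principal soin � apporter concerne les conventions de cohomologie de \v{C}ech non ab�lienne (convention de gauche/droite pour les cobords), mais l'unipotence de $\G$ rappel�e apr�s le th�or�me \ref{thm:qBMS} rend cette non-ab�lianit� \og mod�r�e \fg\ et �carte toute difficult� s�rieuse.
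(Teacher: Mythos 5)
Votre preuve est correcte et suit essentiellement la m\^eme d\'emarche que celle du texte: passage au raffinement commun, puis cobord explicite $H = F^{-1}G$ (le texte prend $H = G^{-1}F$, ce qui revient au m\^eme) reliant les deux cocycles. Votre justification de l'holomorphie de $H$ sur les intersections, via l'unipotence de $\G$, explicite utilement un point que le texte laisse implicite.
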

\begin{proof}
Soient deux \FTAs\ $((U_\alpha),(F_\alpha))$ et $((V_\beta),(G_\beta))$. Soit
$(W_\gamma)$ le recouvrement des $U_\alpha \cap V_\beta$, qui raffine à la fois
$(U_\alpha)$ et $(V_\beta)$ et soient $F_\gamma$, $G_\gamma$ les restrictions
correspondantes des $F_\alpha$ et des $G_\beta$. Posons
$H_\gamma := G_\gamma^{-1} F_\gamma \in \Gamma(W_\gamma,\Lambda_I(A_0))$. Alors
les $F_{\gamma,\delta} := F_{\gamma}^{-1} F_{\delta}$ et les
$G_{\gamma,\delta} := G_{\gamma}^{-1} G_{\delta}$ vérifient
$F_{\gamma,\delta} = H_\gamma^{-1} G_{\gamma,\delta} H_{\delta}$, donc définissent
la même classe.
\end{proof}

On voit donc que la \og sommation algébrique \fg\ n'est en somme qu'un moyen
explicite de faire apparaître une \FTA. \\

Soient maintenant $A_0$ et $A'_0$ de la forme 
\eqref{eqn:formestandarddiagonaleentiere} (donc diagonales par blocs) et
$\Phi: A_0 \rightarrow A'_0$ un isomorphisme: $A_0$ et $A'_0$ ont donc même
polygone de Newton. Alors la $\Phi$ est diagonale par blocs, de blocs
$\Phi_i \in \GL_{r_i}(\C)$, $i = 1,\ldots,k$. Si $A$ est triangulaire
supérieure par blocs de gradué $\gr A = A_0$, on vérifie immédiatement que
$A' := \Phi[A]$  est triangulaire supérieure par blocs de gradué
$\gr A' = A'_0$. De plus, si $B$ est triangulaire supérieure par blocs
de gradué $\gr B = A_0$, notant $B' := \Phi[B]$, toute équivalence
$F: A \rightarrow B$ dans $\G$ donne lieu à une équivalence
$\Phi F \Phi^{-1}: A' \rightarrow B'$ qui est dans $\mathfrak{G}_{A'_{0}} = \G$.
On définit ainsi une application bijective $\F(A_0) \rightarrow \F(A'_0)$. \\

Notant $A_0 = \Diag(z^{\mu_1} A_1,\ldots,z^{\mu_k} A_k)$, on a
$A'_0 = \Diag(z^{\mu_1} A'_1,\ldots,z^{\mu_k} A'_k)$ où $A'_i := \Phi_i A_i \Phi_i^{-1}$
d'où l'on tire que $\Sigma_{A_0} = \Sigma_{A'_0}$ et aussi que
$\Ua = {\mathfrak U}_{A'_{0}}$. Un cocycle privilégié
$(F_{\overline{c},\overline{d}}) \in Z^1_{pr}(\Ua,\Lambda_I(A_0))$ donne lieu à un cocycle
privilégié $(F'_{\overline{c},\overline{d}}) \in Z^1_{pr}({\mathfrak U}_{A'_{0}},\Lambda_I(A'_0))$
par la formule
$F'_{\overline{c},\overline{d}} := \Phi F'_{\overline{c},\overline{d}} \Phi^{-1}$,
et on obtient ainsi un isomorphisme de $Z^1_{pr}(\Ua,\Lambda_I(A_0))$
sur $Z^1_{pr}({\mathfrak U}_{A'_{0}},\Lambda_I(A'_0))$. De même, les $F_{\overline{c}}$
obtenus par sommation algébrique pour $A$ (théorème \ref{thm:sommationalgébrique})
donnent lieu aux $F'_{\overline{c}}$ obtenus par sommation algébrique pour $A'$
par la formule $F'_{\overline{c}} := \Phi F'_{\overline{c}} \Phi^{-1}$. En résumé:

\begin{lem}
Ces constructions donnent lieu à un diagramme commutatif, dans lequel toutes
les flèches sont des isomorphismes d'ensembles pointés:
$$
\xymatrix{
\F(A_0) \ar@<0ex>[d] \ar@<0ex>[rr] & &
Z^1_{pr}(\Ua,\Lambda_I(A_0)) \ar@<0ex>[d] \ar@<0ex>[rr] & &
H^1(\Eq,\Lambda_I(A_0)) \ar@<0ex>[d] \\
\F(A'_0) \ar@<0ex>[rr] & &
Z^1_{pr}({\mathfrak U}_{A'_{0}},\Lambda_I(A'_0)) \ar@<0ex>[rr] & &
H^1(\Eq,\Lambda_I(A'_0))
}
$$
\end{lem}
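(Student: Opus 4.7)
Le plan est de montrer que les trois fl\`eches verticales sont toutes induites
par la conjugaison par $\Phi$. L'observation essentielle, utilis\'ee partout,
est que $\Phi$ \'etant constante (diagonale par blocs \`a coefficients dans
les $\GL_{r_i}(\C)$), on a $\sq \Phi = \Phi$ et $A'_0 = \Phi A_0 \Phi^{-1}$ ;
la conjugaison par $\Phi$ pr\'eserve donc format triangulaire sup\'erieur par
blocs, gradation et relation d'\'equivalence dans $\G = \mathfrak{G}_{A'_0}$.

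Premi\`erement, je v\'erifierai que $F \mapsto \Phi F \Phi^{-1}$ envoie bien
$\Gamma(U,\Lambda_I(A_0))$ sur $\Gamma(U,\Lambda_I(A'_0))$ : un calcul direct
fournira la relation $(\Phi F \Phi^{-1})[A'_0] = \Phi F[A_0] \Phi^{-1}$, donc
l'\'equation $F[A_0] = A_0$ entra\^{\i}nera $(\Phi F \Phi^{-1})[A'_0] = A'_0$.
La conjugaison agissant par blocs selon
$F_{i,j} \mapsto \Phi_i F_{i,j} \Phi_j^{-1}$, les conditions de polarit\'e sur
les diviseurs sont pr\'eserv\'ees, donc les cocycles privil\'egi\'es sont envoy\'es
sur des cocycles privil\'egi\'es ; la fl\`eche induite sur $H^1$ s'ensuit par
fonctorialit\'e.

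Le point central consistera \`a \'etablir la commutativit\'e du carr\'e de gauche.
Pour $A \in \F(A_0)$ avec $A' := \Phi[A]$, soient $F_{\overline{c}}$ et
$F'_{\overline{c}}$ les transformations de jauge issues du th\'eor\`eme de
sommation alg\'ebrique pour $A$ et $A'$ respectivement. L'argument cl\'e sera
l'invocation de l'unicit\'e : $\Phi F_{\overline{c}} \Phi^{-1}$ appartient \`a
$\G[\overline{c}]$ (par la pr\'eservation des polarit\'es \'etablie ci-dessus) et
v\'erifie
$(\Phi F_{\overline{c}} \Phi^{-1})[A'_0] = \Phi F_{\overline{c}}[A_0] \Phi^{-1} = \Phi A \Phi^{-1} = A'$,
ce qui forcera $F'_{\overline{c}} = \Phi F_{\overline{c}} \Phi^{-1}$. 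On en
d\'eduira imm\'ediatement
$F'_{\overline{c},\overline{d}} = \Phi F_{\overline{c},\overline{d}} \Phi^{-1}$,
d'o\`u la commutativit\'e cherch\'ee.

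La commutativit\'e du carr\'e de droite r\'esultera alors par fonctorialit\'e du
passage au $H^1$, et la pr\'eservation des points base (classe de $(A_0, \Id)$
et cocycles triviaux) sera imm\'ediate puisque $\Phi A_0 \Phi^{-1} = A'_0$ et
$\Phi I_n \Phi^{-1} = I_n$. Aucune des v\'erifications n'est techniquement
d\'elicate, la constance de $\Phi$ emp\^echant l'apparition de tout p\^ole ou
z\'ero suppl\'ementaire ; le seul point un peu subtil est conceptuel et
consiste \`a reconna\^{\i}tre que l'unicit\'e de la sommation alg\'ebrique suffit
\`a garantir la compatibilit\'e des constructions, sans aucun calcul suppl\'ementaire
sur les cocycles.
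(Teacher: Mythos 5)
Votre démonstration est correcte et suit essentiellement la même voie que celle du texte : tout repose sur le fait que $\Phi$ est constante et diagonale par blocs, de sorte que la conjugaison $F \mapsto \Phi F \Phi^{-1}$ transporte le format de $\G$, les conditions de polarité et l'équation $F[A_0]=A_0$, tandis que l'unicité de la sommation algébrique donne $F'_{\overline{c}} = \Phi F_{\overline{c}} \Phi^{-1}$ et donc la commutativité des deux carrés. Rien à redire.
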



\subsubsection{Structure affine sur $\F(A_0)$ et $H^1(\Eq,\Lambda_I(A_0))$}
\label{subsubsection:structaff}

La structure affine de $\F(A_0) = \F(M_0)$ (théorème \ref{thm:structaffineF(M_0)})
a été décrite dans \cite{RSZ}. Dans ce même travail, il est affirmé sans
justification suffisante\footnote{Le c\oe ur de l'argument fourni est le suivant:
si l'espace vectoriel de dimension finie $V$ opère librement sur l'ensemble pointé
$X$ et si le quotient $X'$ est muni d'une structure affine, alors $X$ admet une
structure affine canonique compatible avec ces données (et qui permet donc
d'identifier $X$ à $V \times X'$). C'est faux: il faut encore pour cela disposer
d'une \emph{section} $X' \rightarrow X$.} que les suites exactes de cohomologie
(non abélienne) permettent de munir $H^1(\Eq,\Lambda_I(A_0))$ d'une structure
affine telle que les bijections du théorème \ref{thm:qBMS} soient des isomorphismes
affines (\cf\ \cite[cor. 6.2.2 p. 93]{RSZ}). Cette structure est précisée, encore
sans justification suffisante, dans \cite{RS3} (voir p. 188 après l'explicitation
de la suite exacte en haut de la page). Nous complétons ici ces arguments en vue
d'une généralisation à la section \ref{subsection:classifpentesarbitraires}.

\paragraph{Divers groupes et algèbres de Lie.}

L'algèbre de Lie $\g := \Lie(\G)$ du groupe unipotent $\G$ décrit par l'équation
\eqref{eqn:formestandardautomorphisme} à la fin de la sous-section
\ref{subsubsection:catégorieformelle} est définie par le format:
\begin{equation}
\label{eqn:formestandardendomorphismenilpotent}
\begin{pmatrix}
0_{r_1}  & \ldots & \ldots & \ldots & \ldots \\
\ldots & \ldots & \ldots  & \star & \ldots \\
0      & \ldots & \ldots   & \ldots & \ldots \\
\ldots & 0 & \ldots  & \ldots & \ldots \\
0      & \ldots & 0       & \ldots & 0_{r_k}    
\end{pmatrix}.
\end{equation}
C'est une algèbre de Lie nilpotente. Quelque soit la $\C$-algèbre, les applications
$x \mapsto I_n + x$ et $x \mapsto \exp x$ réalisent des isomorphismes (au sens de la
géométrie algébrique et différentielle) de $\g(R)$ dans $\G(R)$. \\

Nous allons munir $\G$, resp. $\g$, d'une filtration par des sous-groupes distingués,
resp. par des idéaux\footnote{Ces filtrations sont ici décrites de manière purement
algébrique, comme plus bas celles des faisceaux $\Lambda_I(A_0)$ et $\lambda_I(A_0)$.
Leur interprétation $q$-Gevrey est formulée dans \cite[\S 3.4, p. 36]{RSZ} et dans
\cite[p. 188]{RS3}.}. Pour tout $\delta > 0$, on pose:
\begin{equation}
\label{eqn:filtrationsousgroupes}
\Gd := \left\{
\begin{pmatrix}
I_{r_1}  & \ldots & \ldots & \ldots & \ldots \\
\ldots & \ldots & \ldots  & F_{i,j} & \ldots \\
0      & \ldots & \ldots   & \ldots & \ldots \\
\ldots & 0 & \ldots  & \ldots & \ldots \\
0      & \ldots & 0       & \ldots & I_{r_k}    
\end{pmatrix}
\tq 0 < \mu_j - \mu_i < \delta \Rightarrow F_{i,j} = 0 \right\}
\end{equation}
et:
\begin{equation}
\label{eqn:filtrationidéaux}
\gd := \left\{
\begin{pmatrix}
0_{r_1}  & \ldots & \ldots & \ldots & \ldots \\
\ldots & \ldots & \ldots  & F_{i,j} & \ldots \\
0      & \ldots & \ldots   & \ldots & \ldots \\
\ldots & 0 & \ldots  & \ldots & \ldots \\
0      & \ldots & 0       & \ldots & 0_{r_k}    
\end{pmatrix}
\tq 0 < \mu_j - \mu_i < \delta \Rightarrow F_{i,j} = 0 \right\},
\end{equation}
autrement dit, tous les \emph{niveaux $q$-Gevrey} $\mu_j - \mu_i < \delta$,
$1 \leq i < j \leq k$, sont nuls. Ainsi, $\Lie(\Gd) = \gd$ et les applications
$x \mapsto I_n + x$ et $x \mapsto \exp x$ réalisent des isomorphismes de $\gd$
dans $\Gd$. \\

Ces filtrations sont exhaustives (elles démarrent pour $\delta$ petit à l'espace
entier) et séparées (elles stationnent pour $\delta$ grand au sous-groupe trivial).
Elles ont un nombre fini de crans rationnels (les valeurs possibles de $\mu_j - \mu_i$).
On note selon l'usage:
$$
\Gds := \bigcup_{\delta' > \delta} \mathfrak{G}^{\geq \delta'}_{A_{0}}
\quad \text{~et~} \quad
\gds := \bigcup_{\delta' > \delta} \mathfrak{g}^{\geq \delta'}_{A_{0}}.
$$
En particulier, si $A_0$ est à pentes entières,
$\Gds = \mathfrak{G}^{\geq \delta+1}_{A_{0}}$ et
$\gds = \mathfrak{g}^{\geq \delta+1}_{A_{0}}$. \\

Soit $\gde$le sous espace de $\g$ défini par l'annulation de tous les niveaux
$q$-Gevrey $\mu_j - \mu_i \neq \delta$. On a donc $\gd = \gde \oplus \gds$.
De plus, les applications $x \mapsto I_n + x$ et $x \mapsto \exp x$ réalisent
un isomorphisme (le même) de $\gde$ dans $\Gd/\Gds$ et l'on a une suite exacte
d'extension centrale\footnote{Nous notons $0$ le groupe à gauche en vertu de la
notation additive sur $\g$ et $1$ le groupe à droite en vertu de la notation
multiplicative sur $\G$.}:
$$
0 \longrightarrow \gde \longrightarrow \G/\Gds \longrightarrow \Gd/\Gds \longrightarrow 1.
$$

\paragraph{Divers faisceaux et suites exactes.}

Le faisceau sur $\Eq$ de groupes unipotents $\Lambda_I(A_0)$ donne lieu à un faisceau
$\lambda_I(A_0) := \Lie\left(\Lambda_I(A_0)\right)$ d'algèbres de Lie nilpotentes; pour
tout ouvert $U$ de $\Eq$:
$$
\lambda_I(A_0)(U) = \{F \in \g\left(\O(\pi^{-1}(U))\right) \tq (\sq F) A_0 = A_0 F\}.
$$
Les filtrations de $\G$ et $\g$ introduites plus haut donnent lieu à des filtrations
des faisceaux $\Lambda_I(A_0)$ et $\lambda_I(A_0)$. Par exemple:
$$
\Lambda^{\geq \delta}_I(A_0)(U) := 
\Lambda_I(A_0)(U) \cap \Gd\left(\O(\pi^{-1}(U))\right) =
\{F \in \Gd\left(\O(\pi^{-1}(U))\right) \tq (\sq F) A_0 = A_0 F\},
$$
etc; on définit de même $\Lambda^{> \delta}_I(A_0)$, ainsi que
$\lambda^{\geq \delta}_I(A_0)$, $\lambda^{> \delta}_I(A_0)$ et
$\lambda^{(\delta)}_I(A_0)$. On a encore une suite exacte d'extension centrale:
\begin{equation}
\label{eqn:extcentralefaisceaux}
0 \longrightarrow \lambda^{(\delta)}_I(A_0) \longrightarrow 
\Lambda_I(A_0)/\Lambda^{> \delta}_I(A_0) \longrightarrow 
\Lambda_I(A_0)/\Lambda^{\geq \delta}_I(A_0) \longrightarrow 1.
\end{equation}

Pour $1 \leq i < j \leq k$, notons $\mathfrak{g}^{(i,j)}_{A_0} \subset \g$
le sous-espace formé des matrices dont tous les blocs autres que $F_{i,j}$
sont nuls et $\lambda^{(i,j)}_I(A_0) \subset \lambda_I(A_0)$ le sous-faisceau
correspondant. On a donc:
$$
\gd = \bigoplus_{\mu_j - \mu_i = \delta} \mathfrak{g}^{(i,j)}_{A_0}
\quad \text{~et~} \quad
\lambda^{(\delta)}_I(A_0) = \bigoplus_{\mu_j - \mu_i = \delta} \lambda^{(i,j)}_I(A_0).
$$
Identifiant une matrice de $\mathfrak{g}^{(i,j)}_{A_0}(R)$ à son bloc
$F_{i,j} \in \Mat_{r_i,r_j}(R)$, on peut écrire pour tout ouvert $U$ de $\Eq$:
$$
\lambda^{(i,j)}_I(A_0)(U) =
\left\{F \in \Mat_{r_i,r_j}\left(\O(\pi^{-1}(U))\right) \tq
(\sq F) (z^{\mu_j} A_j) = (z^{\mu_i} A_i) F \right\}.
$$
Ce faisceau est un fibré vectoriel sur $\Eq$ de rang $r_i r_j$ et de degré
$\mu_i - \mu_j$ (\cf\ \cite[6.2 et 6.3]{RSZ}). Le faisceau $\lambda^{(\delta)}_I(A_0)$
est donc un fibré vectoriel sur $\Eq$ de rang
$\sum\limits_{\mu_j - \mu_i = \delta} r_i r_j$ et de degré $-\delta$. En conséquence:
$$
V^{(i,j)} := H^1\left(\Eq,\lambda^{(i,j)}_I(A_0)\right)
\quad \text{~et~} \quad
V^{(\delta)} := H^1\left(\Eq,\lambda^{(\delta)}_I(A_0)\right) =
\bigoplus_{\mu_j - \mu_i = \delta} V^{(i,j)}
$$
sont des $\C$-espaces vectoriels de dimensions respectives
$r_i r_j (\mu_j - \mu_i)$ et $\delta \sum\limits_{\mu_j - \mu_i = \delta} r_i r_j$. \\

De l'extension centrale \eqref{eqn:extcentralefaisceaux} on déduit, selon Frenkel
\cite{Frenkel} la suite exacte de cohomologie non abélienne:
\begin{equation}
\label{eqn:secna}
0 \longrightarrow V^{(\delta)} \longrightarrow
H^1\left(\Eq,\Lambda_I(A_0)/\Lambda^{> \delta}_I(A_0)\right) \longrightarrow
H^1\left(\Eq,\Lambda_I(A_0)/\Lambda^{\geq \delta}_I(A_0)\right) 
\longrightarrow 1,
\end{equation}
qui signifie que $V^{(\delta)}$ opère librement sur l'ensemble pointé
$H^1\left(\Eq,\Lambda_I(A_0)/\Lambda^{> \delta}_I(A_0)\right)$
avec quotient $H^1\left(\Eq,\Lambda_I(A_0)/\Lambda^{\geq \delta}_I(A_0)\right)$.

\paragraph{Filtration $q$-Gevrey.}

À partir d'ici et pour toute la suite de \ref{subsubsection:structaff},
on suppose les pentes de $A_0$ entières. Avec la notation introduite en
\ref{subsubsection:prérequisclassificationlocale}, équation
\eqref{eqn:formestandardtriangulaireentière}, on note temporairement:
$$
\F_{A_0} := \{A_U \text{~en forme normale de Birkhoff-Guenther}\},
$$
autrement dit (\ref{subsubsection:prérequisclassificationlocale}, équation
\eqref{eqn:polsdeBG}), l'ensemble des $A_U$ telles que
$U_{i,j} \in \Mat_{r_i,r_j}(K_{\mu_i,\mu_j})$, $1 \leq i < j \leq k$. C'est donc
d'après \ref{subsubsection:prérequisclassificationlocale}
un ensemble de représentants pour la relation d'équivalence qui définit
$\F(A_0)$, \ie\ l'application naturelle $\F_{A_0} \rightarrow \F(A_0)$
est bijective. Plus précisément, l'application de
$\prod\limits_{1 \leq i < j \leq k} \Mat_{r_i,r_j}(K_{\mu_i,\mu_j})$ dans $\F(A_0)$
qui, à $(U_{i,j})_{1 \leq i < j \leq k}$ associe la classe de $A_U$,
est un isomorphisme d'espaces affines. \\

Pour tout $\delta \geq 0$, on définit une relation d'équivalence $\equivd$ sur
$\F_{A_0}$ par:
\begin{equation}
\label{eqn:defequivd}
A_U \equivd A_V \underset{def}{\Longleftrightarrow}
U_{i,j} = V_{i,j} \text{~pour~} 0 < \mu_j - \mu_i \leq \delta.
\end{equation}
Un ensemble de représentants pour le quotient $\frac{\F_{A_0}}{\equivd}$ est
alors donné par le sous-espace affine:
$$
\F_{A_0}^{\leq \delta} :=
\{A_U \in \F_{A_0} \tq U_{i,j} = 0 \text{~pour~} \mu_j - \mu_i > \delta\},
$$
autrement dit la flèche composée
$\F_{A_0}^{\leq \delta} \rightarrow \F_{A_0} \rightarrow \frac{\F_{A_0}}{\equivd}$
est bijective. Pour $0 \leq \delta' < \delta$, on a $\equivd \subset \equivdp$
et la surjection naturelle $\frac{\F_{A_0}}{\equivd} \to \frac{\F_{A_0}}{\equivdp}$
s'identifie à la troncature $\F_{A_0}^{\leq \delta} \to \F_{A_0}^{\leq \delta'}$, qui
remplace par $0$ tous les blocs $U_{i,j}$ tels que $\mu_j - \mu_i > \delta'$. \\

Notant, pour tout $\delta \geq 0$:
$$
W^{(\delta)} := \bigoplus_{\mu_j - \mu_i = \delta} \Mat_{r_i,r_j}(K_{\mu_i,\mu_j})
\quad \text{~et~}
W^{\leq \delta} := \bigoplus_{\delta' \leq \delta} W^{(\delta')},
$$
on a donc un isomorphisme d'espaces affines:
$$
W^{\leq \delta} \isom \F_{A_0}^{\leq \delta}.
$$

L'application de classification $\F_{A_0} \isom H^1\left(\Eq,\Lambda_I(A_0)\right)$
induit alors par passage au quotient une \emph{bijection}:
$$
\F_{A_0}^{\leq \delta} \isom H^1\left(\Eq,\dfrac{\Lambda_I(A_0)}{\Lambda^{> \delta}_I(A_0)}\right)
$$
et l'on a des diagrammes commutatifs à flèches verticales bijectives:
$$
\xymatrix{
\F_{A_0} \ar@<0ex>[d] \ar@<0ex>[r] & 
\F_{A_0}^{\leq \delta} \ar@<0ex>[d] \\
H^1\left(\Eq,\Lambda_I(A_0)\right) \ar@<0ex>[r] & 
H^1\left(\Eq,\dfrac{\Lambda_I(A_0)}{\Lambda^{\geq \delta+1}_I(A_0)}\right)
}
$$
et
$$
\xymatrix{
\F_{A_0}^{\leq \delta} \ar@<0ex>[d] \ar@<0ex>[r] & 
\F_{A_0}^{\leq \delta-1} \ar@<0ex>[d] \\
H^1\left(\Eq,\dfrac{\Lambda_I(A_0)}{\Lambda^{\geq \delta+1}_I(A_0)}\right)
\ar@<0ex>[r] & 
H^1\left(\Eq,\dfrac{\Lambda_I(A_0)}{\Lambda^{\geq \delta}_I(A_0)}\right)
}
$$
On a vu plus haut que la flèche horizontale basse de ce dernier diagramme
est un passage au quotient par l'action libre de $V^{(\delta)}$. Il est bien
évident que la flèche horizontale haute est un passage au quotient par l'action
libre de $W^{(\delta)}$ et que cette dernière est affine. On a décrit dans
\cite[\S 6.2.3 p. 93]{RSZ} et dans \cite[fin de \S 3.1 p. 189]{RS3} un
isomorphisme $W^{(\delta)} \isom V^{(\delta)}$ compatible avec ces données (il est
rappelé ci-dessous dans \ref{subsubsection:CCAC}, \og Calculs de cocycles \fg).
La section (inclusion naturelle)
$\F_{A_0}^{\leq \delta-1} \rightarrow \F_{A_0}^{\leq \delta}$ de la flèche
horizontale haute induit par transport une section
$H^1\left(\Eq,\Lambda_I(A_0)/\Lambda^{\geq \delta}_I(A_0)\right) \rightarrow
H^1\left(\Eq,\Lambda_I(A_0)/\Lambda^{\geq \delta+1}_I(A_0)\right)$
de la flèche horizontale basse compatible avec les données. En itérant, cela
permet de munir de proche en proche chaque ensemble
$H^1\left(\Eq,\Lambda_I(A_0)/\Lambda^{\geq \delta}_I(A_0)\right)$ d'une structure
affine, de telle sorte que:
\begin{enumerate}
\item les suites exactes de cohomologie \eqref{eqn:secna} soient scindées;
\item il s'en déduise un isomorphisme affine
$H^1\left(\Eq,\Lambda_I(A_0)\right) \isom \bigoplus\limits_{\delta > 0} V^{(\delta)}$;
\item la bijection $\F_{A_0} \isom H^1\left(\Eq,\Lambda_I(A_0)\right)$ soit un
  isomorphisme affine.
\end{enumerate}
Cela complète la justification annoncée.


\subsubsection{Calculs de cocycles et action de $\C^*$}
\label{subsubsection:CCAC}

\paragraph{Calculs de cocycles.}

Rappelons pour mémoire, et pour son utilité quand on traite d'exemples, le calcul
de l'application $W^{(\delta)} \isom V^{(\delta)}$ invoquée plus haut. Soient d'abord
$A_U$ et $A_V$ quelconques de gradué $A_0$ (pas nécessairement en forme normale de
Birkhoff-Guenther). L'équation $F[A_U] = A_V$ avec $F \in \G$ (abréviation pour dire
que $F$ est au format décrit dans \eqref{eqn:formestandardautomorphisme}) équivaut
au système:
$$
\forall i,j \;,\; 1 \leq i < j \leq k \;,\;
U_{i,j} + \sum_{i < l < j} (\sq F_{i,l}) U_{l,j} + (\sq F_{i,j}) (z^{\mu_j} A_j) =
(z^{\mu_i} A_i) F_{i,j} + \sum_{i < l < j} V_{i,l} F_{l,j} + V_{i,j}.
$$
Supposons maintenant que $A_U \equivd A_V$, et donc que leurs classes dans
$H^1\left(\Eq,\Lambda_I(A_0)/\Lambda^{\geq \delta}_I(A_0)\right)$ soient les
mêmes. Alors les équations ci-dessus admettent, aux niveaux
$\mu_j - \mu_i < \delta$, les solutions $F_{i,j} = 0$; et, si ces
solutions sont choisies, les équations au niveau $\delta$ deviennent:
$$
\forall i,j \;,\; 1 \leq i < j \leq k \;,\; \mu_j - \mu_i = \delta \;,\;
(\sq F_{i,j}) (z^{\mu_j} A_j) - (z^{\mu_i} A_i) F_{i,j} = V_{i,j} - U_{i,j}.
$$
On se place dans les conditions du théorème \ref{thm:sommationalgébrique}
(sommation algébrique dans une direction autorisée $\overline{c} \in \Eq$),
ce qui garantit l'unicité de la solution, notons la $F_{\overline{c}}$. Pour
$i,j$ fixés tels que $\mu_j - \mu_i = \delta$, les blocs $(i,j)$ des différences
$(F_{\overline{d}} - F_{\overline{c}})$ définissent un cocycle privilégié du faisceau
$\lambda_I^{(i,j)}(A_0)$ et la collection de ces cocycles est elle-même un cocycle
privilégié du faisceau $\lambda_I^{(\delta)}(A_0)$. La classe de ce cocycle est
alors l'unique élément $c_{U,V} \in V^{(\delta)}$ dont l'action amène la classe
de $A_U$ dans $H^1\left(\Eq,\Lambda_I(A_0)/\Lambda^{> \delta}_I(A_0)\right)$ sur
la classe de $A_V$ (c'est la suite exacte \eqref{eqn:secna} qui garantit \emph{a
priori} l'existence et l'unicité de cet élément). \\

Prenant $U = 0$ et $V \in W^{(\delta)}$ (étendu par $0$ aux niveaux $\neq \delta$)
on définit ainsi une application $V \mapsto c_{0,V}$ qui est l'isomorphisme
recherché $W^{(\delta)} \isom V^{(\delta)}$.

\paragraph{Action de $\C^*$.}

Nous en parlons ici pour usage ultérieur au \ref{subsubsection:classification}.
Soit $\lambda \in \C^*$ et notons $f^\lambda(z) := f(\lambda z)$,
$A_0^\lambda(z) := A_0(\lambda z)$, etc (donc $f^q = \sq f$). Du fait que
$f \mapsto f^\lambda$ commute avec $\sq$, on déduit que $A_U \mapsto A_U^\lambda$
induit une application $\F(A_0) \rightarrow \F(A_0^\lambda)$ et un diagramme
commutatif dont toutes les flèches sont des isomorphismes affines:
$$
\xymatrix{
\F_{A_0} \ar@<0ex>[d] \ar@<0ex>[rr] & &
\F(A_0) \ar@<0ex>[d] \ar@<0ex>[rr] & &
H^1(\Eq,\Lambda_I(A_0)) \ar@<0ex>[d] \\
\F_{A_0^\lambda} \ar@<0ex>[rr] & &
\F(A_0^\lambda) \ar@<0ex>[rr] & &
H^1(\Eq,\Lambda_I(A_0^\lambda))
}
$$
Nous appliquerons ce diagramme au cas où $A_0 = A_0^\lambda$.


\subsection{Extension au cas de pentes arbitraires}
\label{subsection:classifpentesarbitraires}


\subsubsection{Ramification}
\label{subsubsection:ramification}

Soit $r \geq 2$ un entier. On reprend les notations de \ref{subsection:Notations}
et de \ref{subsection:prérequisRamification} concernant la ramification; en
particulier, on notera $A'(z_r) := A(z_r^r) = A(z)$, etc. Si $A$ est triangulaire
supérieure par blocs de partie diagonale $A_0$, alors $A'$ est triangulaire
supérieure par blocs de partie diagonale $A'_0(z_r) = A_0(z_r^r) = A_0(z)$. Si
$F: A \rightarrow B$ est une transformation de jauge dans $\G(K)$, alors
$F': A' \rightarrow B'$ est une tranformation de jauge dans $\G(K_r)$. \\

Selon \ref{subsection:Notations}, le groupe $\mu_r$ des racines $r$\iemes\
de l'unité dans $\Cs$ opère sur $K_r$, donc sur $\GLn(K_r)$, et 
en particulier sur le sous-ensemble des matrices triangulaires supérieures 
par blocs de partie diagonale $A'_0$. Il est clair que l'application de
ramification $A(z) \mapsto A'(z_r)$ envoie $\F(A_0)$ dans le sous-ensemble
$\F(A'_0)^{\mu_r}$ des points fixes de $\F(A'_0)$.

\begin{prop}
Cette application est injective. (On verra en \ref{subsubsection:classification},
théorème \ref{thm:qBMSPA}, qu'elle est bijective.)
\end{prop}
\begin{proof}
Soient $A,B$ dans la classe de $A_0$ et soit $G: A' \rightarrow B'$ une tranformation
de jauge dans $\G(K_r)$. La partie $\mu_r$-fixe de $G(z_r)$, obtenue par le projecteur
habituel $\dfrac{1}{r} \sum\limits_{j \in \mu_r} \sigma_j$ est:
$$
F(z) := \dfrac{1}{r} \sum_{j \in \mu_r} G(j z_r).
$$
Or $F(z) \in \G(K)$: c'est une fonction de $z$ car $K_r^{\mu_r} = K$; la matrice
$F$ est évidemment triangulaire supérieure par blocs; et sa partie diagonale est
la partie fixe de la partie diagonale de $G$, qui est $I_n$: c'est donc encore
$I_n$. De plus, en appliquant le même projecteur à la relation
$G(q_r z_r) A'(z_r) = B'(z_r) G(z_r)$, on obtient, puisque $A'$ et $B'$ sont
$\mu_r$-invariants, $F(q z) A(z) = B(z) F(z)$, \ie\ $A$ et $B$ représentent
le même élément de $\F(A_0)$. On a donc défini une injection:
$$
\F(A_0)\rightarrow \F(A'_0)^{\mu_r}.
$$
\end{proof}

On va maintenant définir une injection analogue pour les $H^1$.
L'application $x \mapsto x^r$ de $\Cs$ dans lui-même envoie ${q_r}^\Z$
dans $q^\Z$ et passe donc au quotient en $p: \mathbf{E}_{q_r} \rightarrow \Eq$,
qui est surjective de noyau $\mu_r {q_r}^\Z/{q_r}^\Z \simeq \mu_r$, donc une
isogénie de degré $r$. Soit $\U = (U_\alpha)$ un recouvrement ouvert de $\Eq$.
Les $U'_\alpha := p^{-1}(U_\alpha)$ forment un recouvrement ouvert $\U'$ de
$\mathbf{E}_{q_r}$. Si $(F_{\alpha,\beta})$ est un cocycle de dans
$Z^1(\U,\Lambda_I(A_0))$, alors $(F'_{\alpha,\beta})$ est un cocycle de dans
$Z^1(\U',\Lambda_I(A'_0))$. Si 
$(G_{\alpha,\beta}) = (H_\alpha F_{\alpha,\beta} H_\beta^{-1})$ est un cocycle
équivalent à $(F_{\alpha,\beta})$, alors 
$(G'_{\alpha,\beta}) = (H'_\alpha F'_{\alpha,\beta} {H'_\beta}^{-1})$ est un cocycle
équivalent à $(F'_{\alpha,\beta})$. On a donc défini une application
$H^1(\Eq,\Lambda_I(A_0)) \rightarrow H^1(\mathbf{E}_{q_r},\Lambda_I(A'_0))$. \\

Conservant les mêmes notations, on a une action évidente de $\mu_r$ sur
$Z^1(\U',\Lambda_I(A'_0))$ (toujours par $f(z_r) \mapsto f(j z_r)$), et il est
clair que l'application vue plus haut envoie $Z^1(\U,\Lambda_I(A_0))$ dans
$Z^1(\U',\Lambda_I(A'_0))^{\mu_r}$. De plus, l'opération de $\mu_r$ est 
compatible avec l'équivalence des cocycles, et l'on a pour finir une
application de $H^1(\Eq,\Lambda_I(A_0))$ dans
$H^1(\mathbf{E}_{q_r},\Lambda_I(A'_0))^{\mu_r}$.

\begin{prop}
Cette application est injective. (On verra en \ref{subsubsection:classification},
théorème \ref{thm:qBMSPA}, qu'elle est bijective.)
\end{prop}
\begin{proof}
Une relation de cohomologie
$(G'_{\alpha,\beta}) = (H'_\alpha F'_{\alpha,\beta} {H'_\beta}^{-1})$ 
entre cocycles $(G'_{\alpha,\beta})$ et $(F'_{\alpha,\beta})$ provenant
respectivement par ramification de cocycles $(G_{\alpha,\beta})$ et 
$(F_{\alpha,\beta})$ s'écrit
$$
\forall \alpha,\beta \:;\; 
G'_{\alpha,\beta} H'_\beta = H'_\alpha F'_{\alpha,\beta},
$$
et, en appliquant le même opérateur de projection que ci-dessus, on obtient
$$
\forall \alpha,\beta \:;\; 
G_{\alpha,\beta} H_\beta = H_\alpha F_{\alpha,\beta},
$$
où les $H_\alpha$ s'obtiennent par application aux $H'_\alpha$ du projecteur,
et sont dans $\G$ comme précédemment, d'où une relation de cohomologie 
$(G_{\alpha,\beta}) = (H_\alpha F_{\alpha,\beta} H_\beta^{-1})$. On a donc défini
une injection: 
$$
H^1(\Eq,\Lambda_I(A_0)) \rightarrow H^1(\mathbf{E}_{q_r},\Lambda_I(A'_0))^{\mu_r}.
$$
\end{proof}


\subsubsection{Classification}
\label{subsubsection:classification}

Supposons maintenant que $r$ est un dénominateur commun des pentes de $A_0$,
de sorte que les pentes de $A'_0$ sont entières. Il est immédiat, par construction, 
que l'application $\F(A'_0) \rightarrow H^1(\mathbf{E}_{q_r},\Lambda_I(A'_0))$
définie au théorème \ref{thm:qBMS} est compatible avec l'opération de $\mu_r$
et que l'on a donc une bijection
$\F(A'_0)^{\mu_r} \rightarrow H^1(\mathbf{E}_{q_r},\Lambda_I(A'_0))^{\mu_r}$.

\begin{prop}
L'image de $\F(A_0)$ dans $\F(A'_0)$ est égale à $\F(A'_0)^{\mu_r}$.
\end{prop}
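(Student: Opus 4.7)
\emph{Plan de d�monstration.} L'inclusion $\F(A_0) \subset \F(A'_0)^{\mu_r}$ �tant claire par construction, il s'agit de montrer la surjectivit�: toute classe $\mu_r$-invariante dans $\F(A'_0)$ provient de $\F(A_0)$. Mon approche rel�ve de la \emph{descente galoisienne} pour l'extension cyclique finie $K_r/K$, dont le groupe de Galois s'identifie � $\mu_r$.

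Soit $B' \in \GLn(K_r)$ triangulaire sup�rieure par blocs, de gradu� $A'_0$, repr�sentant une classe fixe par $\mu_r$. Pour chaque $j \in \mu_r$, $\sigma_j(B')$ est $\G(K_r)$-�quivalente � $B'$; il existe donc $H_j \in \G(K_r)$ tel que $H_j[\sigma_j(B')] = B'$. Les pentes de $A'_0$ �tant enti�res, le corollaire \ref{cor:auto=id}(ii), dont la preuve s'�tend sans changement au compl�t� $\hat{K}_r$, assure l'unicit� de $H_j$. Cette unicit� et le calcul direct $(H_j \sigma_j(H_k))[\sigma_{jk}(B')] = H_j[\sigma_j(B')] = B'$ livrent la relation de cocycle
$$
H_{jk} = H_j \cdot \sigma_j(H_k), \qquad j,k \in \mu_r,
$$
pour l'action naturelle de $\mu_r$ sur $\G(K_r)$.

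L'�tape centrale est de montrer que ce $1$-cocycle est un cobord, \ie\ qu'il existe $F \in \G(K_r)$ v�rifiant $H_j = F^{-1} \sigma_j(F)$. Pour cela, je proc�derais par r�currence le long de la filtration finie de $\G$ par les sous-groupes distingu�s $\mathfrak{G}^{\geq \delta}_{A'_0}$ (d�finie en \eqref{eqn:filtrationsousgroupes}, transpos�e � $A'_0$), manifestement stable par $\mu_r$. L'application $x \mapsto I_n + x$ identifie chaque gradu� successif � un espace vectoriel de la forme $\mathfrak{g}^{(\delta)}_{A'_0}(K_r)$, somme directe d'espaces de matrices � coefficients dans $K_r$, sur lequel $\mu_r$ op�re coefficient par coefficient. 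L'extension $K_r/K$ �tant cyclique, le th�or�me de la base normale donne $H^1(\mu_r, K_r) = 0$, d'o� $H^1(\mu_r, \mathfrak{g}^{(\delta)}_{A'_0}(K_r)) = 0$ pour tout $\delta$. On peut donc � chaque cran ajuster $F$ pour trivialiser la composante correspondante du cocycle, et la r�currence aboutit en un nombre fini d'�tapes. C'est, � mon sens, le point technique principal (analogue unipotent de Hilbert 90).

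Une fois $F$ construit, on pose $A' := F[B']$. L'identit� $\sigma_j(F) = F H_j$, combin�e � la commutation $\sigma_j \sq = \sq \sigma_j$ et � la fonctorialit� $\sigma_j(F[X]) = \sigma_j(F)[\sigma_j(X)]$, donne
$$
\sigma_j(A') = \sigma_j(F)[\sigma_j(B')] = (F H_j)[\sigma_j(B')] = F\bigl[H_j[\sigma_j(B')]\bigr] = F[B'] = A',
$$
donc $A' \in \G(K_r)^{\mu_r}$. Comme $K_r^{\mu_r} = K$, on peut �crire $A'(z_r) = A(z_r^r)$ avec $A \in \GLn(K)$ triangulaire sup�rieure par blocs; de plus $\gr A = A_0$ puisque $F \in \G$ pr�serve les blocs diagonaux par transformation de jauge. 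La classe de $A$ dans $\F(A_0)$ a alors pour image dans $\F(A'_0)$ la classe de $A' = F[B']$, qui co�ncide avec celle de $B'$, ce qui ach�ve la preuve.
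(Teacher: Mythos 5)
Votre d\'emonstration est correcte et rel\`eve de la m\^eme strat\'egie que celle du texte, \`a savoir une descente galoisienne pour l'extension cyclique $K_r/K$ fond\'ee sur l'unicit\'e des automorphismes (corollaire \ref{cor:auto=id}) --- le texte signale d'ailleurs lui-m\^eme, dans la remarque qui suit sa preuve, la parent\'e avec le th\'eor\`eme 90 de Hilbert. La diff\'erence porte sur le m\'ecanisme de trivialisation du cocycle. Le texte proc\`ede en une seule \'etape, \`a la Speiser: le compos\'e $G^{j^{r-1}} \cdots G^j G$ des isomorphismes $B \to B^j \to \cdots$ est un automorphisme de $B$, donc l'identit\'e, et la moyenne $H := \frac{1}{r}\left(I_n + G + G^jG + \cdots\right)$ des produits partiels reste dans $\G(K_r)$ (l'unipotence garantit que les blocs diagonaux demeurent $I_{r_i}$, donc l'inversibilit\'e est gratuite) et v\'erifie $H = H^j G$, ce qui fournit le cobord par une formule close. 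Vous obtenez le m\^eme cobord par d\'evissage le long de la filtration $q$-Gevrey $(\mathfrak{G}^{\geq \delta}_{A'_0})$, en ramenant l'annulation de $H^1\left(\mu_r,\G(K_r)\right)$ \`a celle, donn\'ee par le th\'eor\`eme de la base normale, de $H^1(\mu_r,K_r)$ appliqu\'ee \`a chaque gradu\'e vectoriel $\mathfrak{g}^{(\delta)}_{A'_0}(K_r)$. Les deux arguments sont valides: celui du texte est plus court et enti\`erement explicite, le v\^otre est plus conceptuel et s'appliquerait sans modification \`a tout groupe unipotent muni d'une action semi-lin\'eaire d'un groupe fini, au prix d'une r\'ecurrence sur la filtration centrale.
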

\begin{proof}
Notons $j$ un générateur de $\mu_r$ et $B^j$ pour $\sigma_j B$. Une classe 
de $\F(A'_0)$ fixée par $\mu_r$ est représentée par une matrice $B(z_r)$ 
triangulaire supérieure par blocs de gradué $\gr B = A'_0$ et telle que
$B \sim B^j$ (équivalence de jauge). On veut montrer que $B \sim C$, où
$\gr C = A'_0$ et $C = C^j$, \ie\ $C$ est fonction de $z_r^r = z$. \\
Soit donc $G: B \rightarrow B^j$ un isomorphisme $G \in \G(K_r)$. Comme
l'action de $\mu_r$ commute à celle de $\sq$, on voit que $G^j \in \G(K_r)$
est un isomorphisme $G^j: B^j \rightarrow B^{j^2}$, et l'on peut itérer.
En composant les $r$ isomorphismes ainsi obtenus, on trouve que
$G^{j^{r-1}} \cdots G^j G \in \G(K_r)$ est un automorphisme de $B$, donc
l'identité d'après le corollaire \ref{cor:auto=id}:
$$
G^{j^{r-1}} \cdots G^j G = I_n.
$$
On pose alors:
$$
H := \dfrac{1}{r} (I_r + G + G^j G + \cdots + G^{j^{r-1}} \cdots G^j G) \in \G(K_r),
$$
qui vérifie $H = H^j G$. Posant $C := H[B]$, on calcule:
$$
C^j = H^j[B^j] = H[B] = C,
$$
d'où la conclusion voulue.
\end{proof}

\begin{rem}
La parenté de cette démonstration avec celle du \og Théorème 90 de Hilbert \fg\ suggère 
qu'une bonne partie de nos arguments pourrait être simplifiée par des considérations
de cohomologie des groupes (voire de cohomologie galoisienne).
\end{rem}

On a actuellement un diagramme incomplet
$$
\xymatrix{
\F(A_0) \ar@<0ex>[rr] \ar@{.>}[d]_? & &
\F(A'_0)^{\mu_r} \ar@<0ex>[d] \\
H^1(\Eq,\Lambda_I(A_0) \ar@<0ex>[rr] & &
H^1(\mathbf{E}_{q_r},\Lambda_I(A'_0))^{\mu_r}
}
$$
Pour le compléter, partons de la classe de $A$ dans $\F(A_0)$ et posons
$A'(z_r) := A(z)$; soit $(G_\alpha)$ une famille trivialisante adaptée à $A'$,
par exemple celle obtenue par \og sommation algébrique \fg. Les parties
$\mu_r$-fixes $F_\alpha$ des $G_\alpha$, obtenues par le même opérateur de
projection que précédemment, forment une famille trivialisante adaptée à $A$
et définissent donc un cocycle  de $Z^1(\U,\Lambda_I(A_0))$, donc une classe
de $H^1(\Eq,\Lambda_I(A_0))$. Ceci définit sans ambiguïté une flèche
verticale gauche qui rend le diagramme ci-dessus commutatif. Du fait que
les flèches horizontale haute et verticale droite sont bijectives et que
la flèche horizontale basse est injective, il s'ensuit que toutes les flèches
sont bijectives:

\begin{thm}
\label{thm:qBMSPA}
On a un diagramme commutatif à flèches bijectives:
$$
\xymatrix{
\F(A_0) \ar@<0ex>[rr] \ar@<0ex>[d] & &
\F(A'_0)^{\mu_r} \ar@<0ex>[d] \\
H^1(\Eq,\Lambda_I(A_0) \ar@<0ex>[rr] & &
H^1(\mathbf{E}_{q_r},\Lambda_I(A'_0))^{\mu_r}
}
$$
\end{thm}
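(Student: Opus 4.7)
Le plan est d'exploiter que trois des quatre fl�ches du diagramme sont d�j� ma�tris�es. La fl�che horizontale haute $\F(A_0) \to \F(A'_0)^{\mu_r}$ est bijective: l'injectivit� provient de la premi�re proposition de \ref{subsubsection:ramification}, et la surjectivit� de la proposition qui ouvre \ref{subsubsection:classification}. La fl�che verticale droite est bijective par application $\mu_r$-�quivariante du th�or�me \ref{thm:qBMS}, puisque les pentes de $A'_0$ sont enti�res et que toute la construction qui y m�ne commute visiblement avec l'action de $\mu_r$. La fl�che horizontale basse est injective par la seconde proposition de \ref{subsubsection:ramification}. Il reste donc � construire la fl�che verticale gauche rendant le diagramme commutatif, apr�s quoi la bijectivit� de toutes les fl�ches d�coulera d'une chasse �l�mentaire.

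Je construirais la fl�che verticale gauche comme suit. Partant d'une matrice $A$ triangulaire sup�rieure par blocs de gradu� $A_0$, je consid�rerais sa ramifi�e $A'(z_r) := A(z_r^r)$. Je choisirais un recouvrement ouvert $\U = (U_\alpha)$ de $\Eq$ et prendrais $\U' := (U'_\alpha)$ o� $U'_\alpha := p^{-1}(U_\alpha)$, qui est un recouvrement $\mu_r$-invariant de $\mathbf{E}_{q_r}$. J'appliquerais le th�or�me \ref{thm:sommationalg�brique} � $A'$ sur $\U'$, obtenant une \FTA\ $(G_\alpha)$ � $A'$. Enfin je moyennerais par le projecteur usuel, posant $F_\alpha := \frac{1}{r} \sum_{j \in \mu_r} \sigma_j(G_\alpha)$: exactement comme dans les preuves de \ref{subsubsection:ramification}, chaque $F_\alpha$ serait $\mu_r$-invariant donc � coefficients dans $K$, appartiendrait � $\G$, et v�rifierait $F_\alpha[A_0] = A$ sur $\pi^{-1}(U_\alpha)$. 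Les $F_\alpha$ formeraient donc une \FTA\ � $A$, fournissant un cocycle $F_{\alpha,\beta} := F_\alpha^{-1} F_\beta$ dans $Z^1(\U,\Lambda_I(A_0))$, dont la classe dans $H^1(\Eq,\Lambda_I(A_0))$ est ind�pendante des choix par le lemme de \ref{subsubsection:compl�ments}.

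La commutativit� du diagramme pour cette construction serait quasi tautologique: les deux compositions envoient $A$ sur la classe dans $H^1(\mathbf{E}_{q_r},\Lambda_I(A'_0))^{\mu_r}$ du cocycle associ� � une \FTA\ � $A'$, � savoir $(G_\alpha)$ par la voie droite et $(F_\alpha)$ par la voie basse, deux \FTAs\ cohomologues par le m�me lemme. Pour conclure, notant $t, r, b, l$ les fl�ches haut, droite, bas, gauche, la commutativit� se lit $b \circ l = r \circ t$, dont le membre de droite est bijectif comme compos� de bijections; il s'ensuit que $l$ est injective et $b$ est surjective, puis la bijectivit� de $b$ r�sulte de la combinaison avec son injectivit� d�j� connue, et celle de $l$ en d�coule par simplification.

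L'obstacle attendu est conceptuel plut�t que technique: il faut v�rifier l'ind�pendance de la classe de cohomologie construite vis-�-vis des choix (recouvrement $\U$, \FTA\ initiale $(G_\alpha)$), mais ceci est pr�cis�ment le contenu du lemme de \ref{subsubsection:compl�ments}; aucune difficult� vraiment nouvelle ne surgit, tout le travail technique ayant �t� accompli en amont dans les propositions de \ref{subsubsection:ramification} et dans les compl�ments.
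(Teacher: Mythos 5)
Votre preuve suit exactement la m�me strat�gie que celle du texte: bijectivit� de la fl�che haute (injectivit� par moyennisation, surjectivit� par l'argument de type Hilbert 90), bijectivit� $\mu_r$-�quivariante de la fl�che droite via le th�or�me \ref{thm:qBMS}, injectivit� de la fl�che basse, puis construction de la fl�che gauche par moyennisation d'une \FTA\ de $A'$ et chasse au diagramme finale. L'argument est correct et co�ncide pour l'essentiel avec celui du papier.
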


Il résulte de plus des arguments de \ref{subsubsection:compléments}
que la bijection $\F(A_0) \rightarrow H^1(\Eq,\Lambda_I(A_0)$ est 
indépendante du degré de ramification $r$.



\section{Groupe de Galois local}
\label{section:groupedeGaloislocal}

Rappelons que les notations concernant les catégories $\EE_q^r$, $\EE_{q,p}^r$,
etc, et les groupes $\Gqr$, $\Gqpr$, etc, ont été expliquées à la section
\ref{subsection:prérequisRamification}.


\subsection{Résumé des résultats obtenus pour les pentes entières \cite{RS3}}
\label{subsection:groupedeGaloispentesentières}

La catégorie des équations à pentes entières sur $K := \Ka = \O[1/z]$ (selon 
les notations indiquées en \ref{subsection:Notations}), resp. sa sous-catégorie
pleine des équations pures, est notée $\EE_q^1$, resp. $\EE_{q,p}^1$. Pour simplifier,
nous identifierons $\EE_q^1$ à sa sous-catégorie pleine essentielle des systèmes de
la forme \eqref{eqn:formestandardtriangulaireentière} et de même $\EE_{q,p}^1$ à sa
sous-catégorie pleine essentielle des systèmes de la forme
\eqref{eqn:formestandarddiagonaleentiere} (voir \ref{subsub:filgrad}). \\

Le foncteur \og gradué associé \fg\ $\gr: \EE_q^1 \leadsto \EE_{q,p}^1$,
qui est défini dans ce modèle par $\gr A_{U} = A_0$, est exact, $\C$-linéaire,
fidèle, $\otimes$-compatible, et c'est une rétraction de l'inclusion. L'effet
sur un morphisme $F: A \rightarrow B$ est le suivant. Soient $\mu_i,r_i$ les
données associées à $A \in \GLn(K)$ (polygone de Newton) et $\nu_j,s_j$ celles
associées à $B \in \GL_p(K)$. On a donc $F \in \Mat_{p,n}(K)$, que l'on décompose
en blocs $F_{j,i}$ de tailles $s_j \times r_i$. Alors $\gr F \in \Mat_{p,n}(K)$
s'obtient en remplaçant par $0$ tous les blocs $F_{j,i}$ tels que $\nu_j \neq \mu_i$. \\

Nous allons dans cette section décrire les groupes tannakiens respectifs $\mathbf{G}_q^1$
et $\mathbf{G}_{q,p}^1$ de $\EE_q^1$, resp. $\EE_{q,p}^1$, qui sont des groupes
proalgébriques, et la correspondance entre représentations rationnelles de ces
groupes et objets de ces catégories. Avec ces notations, notre but est donc de
décrire les équivalences de catégories:
$$
\EE_q^1 \longleftrightarrow \Rep(\mathbf{G}_q^1) \quad \text{et} \quad
\EE_{q,p}^1 \longleftrightarrow \Rep(\mathbf{G}_{q,p}^1).
$$
L'inclusion $i: \EE_{q,p}^1 \leadsto \EE_q^1$ et sa rétraction $\gr: \EE_q^1 \leadsto \EE_{q,p}^1$
donnent lieu par dualité tannakienne à des morphismes
$i^*: \mathbf{G}_q^1 \rightarrow \mathbf{G}_{q,p}^1$
et $\gr^*: \mathbf{G}_{q,p}^1 \rightarrow \mathbf{G}_q^1$ tels que
$i^* \circ \gr^* = \Id_{\mathbf{G}_{q,p}^1}$, donc à une
décomposition en produit semi-direct:
$$
\mathbf{G}_q^1 = \Stqu \ltimes \mathbf{G}_{q,p}^1,
$$
où le \emph{groupe de Stokes} $\Stqu := \Ker i^*$ est (pro)unipotent. \\

Le \emph{groupe formel $\mathbf{G}_{q,p}^1$} (rappelons qu'il s'agit en réalité
d'un groupe proalgébrique !) est abélien et donc produit de sa composante unipotente
et de sa composante semi-simple:
$$
\mathbf{G}_{q,p}^1 = \mathbf{G}_{q,p,u}^1 \times \mathbf{G}_{q,p,s}^1.
$$
La composante unipotente est $\mathbf{G}_{q,p,u}^1 = \C$ (elle agit sur les $q$-logarithmes)
et la composante semi-simple est $\mathbf{G}_{q,p,s}^1 = \Cs \times \Hom_{gr}(\Eq,\Cs)$, où
le premier facteur (\og tore theta \fg) agit sur les fonctions theta associées aux pentes
et le deuxième surles $q$-caractères associés aux exposants. \\

Nous reprendrons la terminologie classique des groupes diagonalisables:
\begin{itemize}
\item À tout groupe abélien $\Gamma$, on associe le groupe proalgébrique
$\Gamma^\vee := \Hom_{gr}(\Gamma,\Cs)$ des morphismes de groupe de $\Gamma$
dans $\Cs$; on voit que $\Gamma^\vee$ est proalgébrique car c'est la limite
projective des $\Gamma_i^\vee$, où les $\Gamma_i$ sont les sous-groupes de
type fini de $\Gamma$.
\item À tout groupe proalgébrique $G$, on associe le groupe $X(G)$ de ses
caractères, \ie\ de ses représentations rationnelles dans $\Cs$.
\end{itemize}
On a alors des identifications naturelles $X(\Gamma^\vee) = \Gamma$ et, pour
les groupes proalgébriques dits \og multiplicatifs \fg, $X(G)^\vee = G$. \\

Avec ces notations, on peut écrire:
$$
\mathbf{G}_{q,p,s}^1 = \Z^\vee \times \Eqvee,
$$
et donc:
$$
X\left(\mathbf{G}_{q,p,s}^1\right) = X\left(\Z^\vee\right) \times X\left(\Eqvee\right) =
\Z \times \Eq.
$$
Notons d'ailleurs que
$\Eqvee = \Hom_{gr}(\Eq,\Cs) = \Hom_{gr}(\Cs/q^\Z,\Cs)$ s'identifie
naturellement, via le morphisme $\gamma \mapsto \gamma \circ \pi$, 
au sous-groupe de $\Hom_{gr}(\Cs,\Cs)$ formé des $\gamma$
tels que $\gamma(q) = 1$. Dorénavant, nous opérerons sans commentaire
l'identification d'un tel $\gamma \in \Eqvee = \Hom_{gr}(\Eq,\Cs)$ avec
l'élément correspondant $\gamma \circ \pi \in \Hom_{gr}(\Cs,\Cs)$
tel que $q \mapsto 1$.


\subsubsection{Correspondance $\EE_{q,p}^1 \longleftrightarrow \Rep(\mathbf{G}_{q,p}^1)$}
\label{subsubsection:correspondanceEp-Rep(Gp)}

Fixons $A_0$ de la forme \eqref{eqn:formestandarddiagonaleentiere}. Soit
$A_i = A_{i,u} A_{i,s}$ la décomposition de Dunford multiplicative de la partie
constante $A_i$ du $i$\ieme\ bloc. La représentation
$\rho_{A_0}: \mathbf{G}_{q,p}^1 \rightarrow \GLnc$ qui lui est associée par
dualité tannakienne est définie, modulo l'identification de $\mathbf{G}_{q,p}^1$ à
$\C \times \Cs \times \Hom_{gr}(\Eq,\Cs)$, par:
$$
(\lambda,t,\gamma) \mapsto
\begin{pmatrix}
A_{1,u}^\lambda  & \ldots & \ldots & \ldots & \ldots \\
\ldots & \ldots & \ldots  & 0 & \ldots \\
0      & \ldots & \ldots   & \ldots & \ldots \\
\ldots & 0 & \ldots  & \ldots & \ldots \\
0      & \ldots & 0       & \ldots & A_{k,u}^\lambda    
\end{pmatrix}
\begin{pmatrix}
t^{\mu_{1}} I_{r_1} & \ldots & \ldots & \ldots & \ldots \\
\ldots & \ldots & \ldots  & 0 & \ldots \\
0      & \ldots & \ldots   & \ldots & \ldots \\
\ldots & 0 & \ldots  & \ldots & \ldots \\
0      & \ldots & 0       & \ldots & t^{\mu_{k}} I_{r_k}   
\end{pmatrix}
\begin{pmatrix}
\gamma(A_{1,s})  & \ldots & \ldots & \ldots & \ldots \\
\ldots & \ldots & \ldots  & 0 & \ldots \\
0      & \ldots & \ldots   & \ldots & \ldots \\
\ldots & 0 & \ldots  & \ldots & \ldots \\
0      & \ldots & 0       & \ldots & \gamma(A_{k,s})    
\end{pmatrix}.
$$
Noter que ce produit est commutatif. Dans le troisième facteur, il est entendu
que $\gamma$ est appliqué aux classes dans $\Eq$ des valeurs propres des $A_i$
(ou que $\gamma \circ \pi$ est appliqué aux valeurs propres elles-m\^emes),
\cf\ \cite{JSGAL}. \\

Dans l'identification du facteur $\Cs$ de $\mathbf{G}_{q,p,s}^1$ à $\Z^\vee$, le complexe
$t \in \Cs$ correspond au morphisme $h: m \mapsto t^m, \Z \rightarrow \Cs$.
Dans le deuxième facteur ci-dessus, on pourrait donc écrire $h(\mu_i)$
au lieu de $t^{\mu_i}$, et c'est sous cette forme que la description de
$\rho_{A_0}$ sera généralisée dans le cas de pentes non entières: le deuxième
facteur est donc la matrice diagonale par blocs de blocs les $h(\mu_i) \, I_{r_i}$,
$i = 1,\cdots,k$.


\subsubsection{Correspondance $\EE_q^1 \longleftrightarrow \Rep(\mathbf{G}_q^1)$}
\label{subsubsection:correspondanceEE1<->Rep(Gq1)}

Fixons $A = A_U$ de la forme \eqref{eqn:formestandardtriangulaireentière}, de sorte
que $A_0 := \gr A$ est de la forme \eqref{eqn:formestandarddiagonaleentiere}. La donnée
de la représentation
$\rho_A: \mathbf{G}_q^1 = \Stqu \ltimes \mathbf{G}_{q,p}^1 \rightarrow \GLnc$
associée à $A$ est équivalente à la donnée d'un couple $(\rho_A',\rho_A'')$ formé de
représentations $\rho_A': \Stqu \rightarrow \GLnc$ et
$\rho_A'': \mathbf{G}_{q,p}^1 \rightarrow \GLnc$ qui respectent les relations de
conjugaison. \\

La composante $\rho_A'': \mathbf{G}_{q,p}^1 \rightarrow \GLnc$ est la représentation
$\rho_{A_0}$ décrite plus haut. Son image est un sous-groupe algébrique du sous-groupe
$\GL_{r_1,\ldots,r_k}(\C) := \prod \GL_{r_i}(\C)$ de $\GLnc$ formé des matrices diagonales
par blocs de tailles $r_1,\ldots,r_k$. \\

L'image de $\rho_A'$ est un sous-groupe de $\G(\C)$. L'action de conjugaison est
induite par celle de $\GL_{r_1,\ldots,r_k}(\C)$, sous laquelle $\G(\C)$ est stable.
Ainsi, si $\Delta \in \Stq$ et si
$$
\rho_A'(\Delta) = \begin{pmatrix}
I_{r_1} & \ldots & \ldots & \ldots & \ldots \\
\ldots & \ldots & \ldots  & \Phi_{i,j} & \ldots \\
0      & \ldots & \ldots   & \ldots & \ldots \\
\ldots & 0 & \ldots  & \ldots & \ldots \\
0      & \ldots & 0       & \ldots & I_{r_k}   
\end{pmatrix},
$$
on peut décrire le conjugué $g^{-1} \Delta g$ de $\Delta$ par $g \in \mathbf{G}_{q,p}^1$,
identifié à $(\lambda,t,\gamma) \in \C \times \Cs \times \Hom_{gr}(\Eq,\Cs)$,
comme\footnote{Dans \cite{RS3} on décrit plutôt l'action à gauche $g \Delta g^{-1}$,
ce qui naturellement ne change rien. Cependant, il y figure une erreur d'interprétation
du calcul (bas p. 182) qui aboutit (haut p. 183) à une confusion entre l'action associée
aux exposants et celle associée aux pentes. Plusieurs formules erronnées de \cite{RS3},
qui par chance n'entraînent d'ailleurs aucune remise en cause des conclusions, seront
ici rectifiées, d'où par endroits de légères discordances; voir en particulier plus loin
le lemme \ref{lem:conditionrectifiée} de \ref{subsubsection:qdérivéesétrangères}.}:
\begin{equation}
\label{eqn:conjuguédeDelta}
\rho_A'(g^{-1} \Delta g) = \begin{pmatrix}
I_{r_1} & \ldots & \ldots & \ldots & \ldots \\
\ldots & \ldots & \ldots  & \Phi^g_{i,j} & \ldots \\
0      & \ldots & \ldots   & \ldots & \ldots \\
\ldots & 0 & \ldots  & \ldots & \ldots \\
0      & \ldots & 0       & \ldots & I_{r_k}   
\end{pmatrix},
\quad \text{où} \quad
\Phi^g_{i,j} = t^{\mu_j - \mu_i} \left(A_{i,u}^\lambda \gamma(A_{i,s})\right)^{-1}
\Phi_{i,j} \left(A_{j,u}^\lambda \gamma(A_{j,s})\right).
\end{equation}
Selon les termes du dernier alinéa de
\ref{subsubsection:correspondanceEp-Rep(Gp)}, on peut d'ailleurs
remplacer le facteur $t^{\mu_j - \mu_i}$ par $h(\mu_j - \mu_i)$. \\

Pour une description commode, il est utile de remplacer les groupes unipotents
$\Stqu$ et $\mathbf{G}_{q,p,u}^1$ par leurs algèbres de Lie 
$$
\stqu := \Lie(\Stqu) \text{~et~} \C \tau := \Lie(\mathbf{G}_{q,p,u}^1).
$$
L'algèbre de Lie $\stqu$ est pronilpotente. On vérifie que le générateur $\tau$
est envoyé par $\Lie(\rho_{A_0})$ sur
$$
\begin{pmatrix}
\log A_{1,u}  & \ldots & \ldots & \ldots & \ldots \\
\ldots & \ldots & \ldots  & 0 & \ldots \\
0      & \ldots & \ldots   & \ldots & \ldots \\
\ldots & 0 & \ldots  & \ldots & \ldots \\
0      & \ldots & 0       & \ldots & \log A_{k,u}   
\end{pmatrix}
\in \gl_{r_1,\ldots,r_k}(\C) := \bigoplus \gl_{r_i}(\C) =
\Lie\left(\GL_{r_1,\ldots,r_k}(\C)\right).
$$

Il sera également utile de regrouper\footnote{Cela revient à considérer
les $q$-logarithmes (sur lesquels agit $\mathbf{G}_{q,p,u}^1$) comme le \og degré $0$\fg\
de l'échelle des opérateurs de Stokes (sur lesquels agit $\Stqu$).} les parties
unipotentes et d'introduire 
$$
\Sttqu := \Stqu \times \mathbf{G}_{q,p,u}^1, \text{~d'où~}
\sttqu := \Lie(\Sttqu) = \stqu \oplus \C \tau \quad \text{et} \quad
\mathbf{G}_q^1 = \Sttqu \ltimes \mathbf{G}_{q,p,s}^1.
$$
Le produit \emph{direct} $\Stqu \times \mathbf{G}_{q,p,u}^1$ est justifié par le fait que
$\Stqu$ et $\mathbf{G}_{q,p,u}^1$ commutent. Dans l'algèbre de Lie pronilpotente $\sttqu$,
le crochet est caractérisé celui de $\stqu$ et par la relation:
$$
[\stqu,\C \tau] = 0.
$$
On voit alors que $\Lie(\rho_A)$ envoie $\C \tau$
dans $\gl_{r_1,\ldots,r_k}(\C)$ (et même dans la sous-algèbre des matrices
dont les blocs diagonaux sont triangulaires supérieurs stricts) et $\sttqu$
dans $\Lie(\G(\C))$ (matrices triangulaire supérieures par blocs dont les
blocs diagonaux sont nuls). L'action adjointe de $\gl_{r_1,\ldots,r_k}(\C)$
sur ces algèbres est l'action de conjugaison sur $\glnc$, sous laquelle
elles sont en effet stables. \\

Une fois ces définitions posées, on reprend l'idée de Jean-Pierre Ramis
dans sa définition du \emph{groupe de monodromie sauvage}: celui-ci est
vu comme un objet hybride $\sttqu \ltimes \mathbf{G}_{q,p,s}^1$ \og codant \fg\ le
groupe qui nous intéresse:
$$
\mathbf{G}_q^1 = \Sttqu \ltimes \mathbf{G}_{q,p,s}^1 =
\exp(\sttqu) \ltimes \mathbf{G}_{q,p,s}^1.
$$
Ce sont les représentations de $\sttqu \ltimes \mathbf{G}_{q,p,s}^1$ que l'on veut 
décrire, autrement dit, les couples formés d'une représentation de $\sttqu$
dans $\glnc$ et d'une représentation de $\mathbf{G}_{q,p,s}^1$ dans $\GLnc$, soumis
à une contrainte de compatibilité avec l'action adjointe de $\GLnc$ sur $\glnc$. \\

Tout cela ne serait \og que \fg\ de la \emph{théorie de Galois algébrique}, mais
on souhaite de plus, afin d'obtenir une \emph{correspondance de Riemann-Hilbert},
décrire explicitement une sous-algèbre de Lie $L_q^1$ de $\sttqu$ de manière à ce
que les représentations ci-dessus soient en bijection avec celles de
$L_q^1 \ltimes \mathbf{G}_{q,p,s}^1$. La nuance est que $\sttqu$ est proalgébrique
et ses représentations rationnelles; alors que $L_q^1$ est discrète, décrite par
générateurs et relations, et ses représentations sont arbitraires. On trouve en fait
\cite{RS3} que $L_q^1$ est libre graduée, ses représentations admettent donc un codage
combinatoire comme on l'espère pour une correspondance de Riemann-Hilbert. \\

L'action par conjugaison du groupe semi-simple $\mathbf{G}_{q,p,s}^1 = \Eqvee \times \Z^\vee$
sur $\sttqu$ donne lieu à une \emph{décomposition de Fourier}:
$$
\sttqu = \bigoplus_{\delta \in \Z \atop \beta \in \Eq} \sttq^{(\delta,\beta)},
$$
où l'action de $(\gamma,h) \in \Eqvee \times \Z^\vee$ sur le facteur
$\sttq^{(\delta,\beta)}$ est l'homothétie de rapport $\gamma(\beta) h(\delta)$
(rappelons que si $\beta = \overline{c}$, $c \in \Cs$, on identifie
$\gamma(\beta) = \gamma(c)$). En fait, à cause de la filtration par
les pentes et de la définition de $\sttqu$ à partir de $\stqu$, on voit que
$\sttq^{(\delta,\beta)} = 0$ pour $\delta < 0$ ainsi que pour $\delta = 0$
et $\beta \neq \overline{1}$; et que $\sttq^{(0,\overline{1})} = \C \tau$.
Les composantes\footnote{Elles étaient notées $\Delta_\alpha^{(\delta,\overline{c})}$
dans \cite{RS3}.} $\Delta_\alpha^{(\delta,\beta)} \in \sttq^{(\delta,\beta)}$ de
$\Delta_\alpha \in \sttqu$ seront précisées ci-dessous.


\subsubsection{Le groupe de Stokes et son algèbre de Lie}
\label{subsubsection:groupedeStokes}

Suivant \cite{JSStokes,RS3}, on va maintenant construire suffisamment d'éléments
explicites de $\stqu$ qui, avec $\tau$, engendreront $L_q^1$. On les obtiendra à
partir d'éléments de $\Stqu$, considéré comme automorphismes d'un certain foncteur
fibre sur $\EE_q^1$. Comme le groupe tannakien n'est pas commutatif, le choix préalable
d'un tel foncteur fibre (qui s'apparente au choix d'un point-base) est nécessaire.
On choisit donc $z_0 \in \Cs$ arbitraire et l'on identifie $\mathbf{G}_q^1$ au groupe
des automorphismes de $\omega := \hat{\omega} \circ \gr$ qui a été défini au numéro
\ref{subsubsection:prérequisGaloislocal} et dépend de manière inessentielle de $z_0$.
Des précautions liées au choix du point base seront nécessaires à partir du 
\ref{subsection:calculs explicites}, on les discutera à ce moment là. \\

Soit $A$ un objet de $\EE_q^1$, sous la forme \eqref{eqn:formestandardtriangulaireentière}
et soit $A_0 := \gr A$, qui est donc de la forme \eqref{eqn:formestandarddiagonaleentiere}.
Notant $\mathbf{G}_q^1(A)$ l'image de $\rho_A$ on a, avec les notations évidentes:
$$
\mathbf{G}_q^1(A) = \Stqu(A) \ltimes \mathbf{G}_{q,p}^1(A) =
\Sttqu(A) \ltimes \mathbf{G}_{q,p,s}^1(A) =
\sttqu(A) \ltimes \mathbf{G}_{q,p,s}^1(A),
$$
cette dernière égalité ayant le sens indirect expliqué précédemment en
\ref{subsubsection:correspondanceEE1<->Rep(Gq1)}. \\

On va maintenant construire des éléments de $\Stqu(A)$ et de $\stqu(A)$.
Pour cela, commençant par le groupe $\Stqu(A)$, on reprend les notations de
\ref{subsection:classifpentesentières}. Les $F_{\alpha,\alpha'}$,
$\alpha,\alpha' \in \Eq \setminus \Sigma_{A_0}$, 
construits en \ref{subsection:classifpentesentières} (\cf\ le théorème
\ref{thm:sommationalgébrique} et son corollaire \ref{cor:sommationalgébrique})
sont des automorphismes méromorphes de $A_0$.

\begin{prop}
Si $z_0$ n'est pas un pôle de $F_{\alpha,\alpha'}$, alors
$F_{\alpha,\alpha'}(z_0) \in \Stqu(A)$.
\end{prop}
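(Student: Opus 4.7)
Le plan est de construire un \'el\'ement $\Delta \in \Stqu$ dont l'image par $\rho_A$ soit exactement $F_{\alpha,\alpha'}(z_0)$. Par dualit\'e tannakienne, cela revient \`a d\'efinir un $\otimes$-automorphisme naturel de la restriction de $\omega$ \`a la sous-cat\'egorie tannakienne pleine $\langle A \rangle^\otimes$ engendr\'ee par $A$, dont la restriction \`a la sous-cat\'egorie pleine des objets purs soit l'identit\'e (c'est cette derni\`ere condition qui traduit exactement l'appartenance au noyau $\Stqu = \Ker i^*$).

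Ensuite j'invoquerais la fonctorialit\'e et la $\otimes$-compatibilit\'e de la sommation alg\'ebrique (th\'eor\`eme de sommation alg\'ebrique et \cite{JSStokes}): pour tout objet $B$ de $\EE_q^1$ et toute direction autoris\'ee $\alpha \notin \Sigma_{\gr B}$, l'unique transformation de jauge $F^B_\alpha$ trivialisant $B$ v\'erifie $f \circ F^B_\alpha = F^C_\alpha \circ \gr f$ pour tout morphisme $f: B \to C$, ainsi que $F^{B \otimes C}_\alpha = F^B_\alpha \otimes F^C_\alpha$. Pour $B$ pur (\emph{i.e.} $B = \gr B$), l'unicit\'e impose imm\'ediatement $F^B_\alpha = I$, donc $F^B_{\alpha,\alpha'} = I$. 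Posant alors $\Delta_B := F^B_{\alpha,\alpha'}(z_0) \in \GL(\omega(B))$ pour $B \in \langle A \rangle^\otimes$, on obtient une famille de matrices unipotentes du format prescrit; la naturalit\'e et la tensorialit\'e de $(\Delta_B)$ s'h\'eritent directement des m\^emes propri\'et\'es de $F^{(-)}_\alpha$ et $F^{(-)}_{\alpha'}$, et la trivialit\'e sur les objets purs est donn\'ee par le calcul pr\'ec\'edent. On a donc bien un \'el\'ement de $\Stqu$, et sa valeur en $A$ est $F_{\alpha,\alpha'}(z_0)$ par d\'efinition.

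L'obstacle principal est que $z_0$ peut \^etre un p\^ole de $F^B_{\alpha,\alpha'}$ pour certains objets $B$ de $\langle A \rangle^\otimes$, bien qu'il ne le soit pas pour $A$ par hypoth\`ese. On le contourne en utilisant les identifications canoniques entre foncteurs fibres $\omega_{z_0}$ et $\omega_{z_1}$ rappel\'ees \`a la fin de \ref{subsubsection:propabeltens}, valables pour $z_1$ variant dans un disque convenable: comme le lieu polaire de chaque $F^B_{\alpha,\alpha'}$ est discret (contenu dans les spirales $\pi^{-1}(\{\overline{-\alpha},\overline{-\alpha'}\})$), on peut toujours transporter $\Delta_B$ en un point voisin hors de ce lieu pour obtenir un \'el\'ement bien d\'efini de $\GL(\omega(B))$, et les compatibilit\'es garantissent l'ind\'ependance par rapport au choix auxiliaire. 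L'assemblage local fournit alors le $\otimes$-automorphisme naturel cherch\'e, et donc l'\'el\'ement $\Delta \in \Stqu$ de la conclusion.
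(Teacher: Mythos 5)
La proposition est rappel\'ee sans d\'emonstration dans le texte (elle provient de \cite{JSStokes,RS3}), et votre argument est pr\'ecis\'ement celui de ces r\'ef\'erences~: r\'ealiser $F_{\alpha,\alpha'}(z_0)$ comme la valeur en $A$ d'un $\otimes$-automorphisme naturel du foncteur fibre $\omega$, trivial sur les objets purs, donc appartenant \`a $\Stqu = \Ker i^*$. Votre traitement du point-base (d\'eplacement de $z_0$ hors du lieu polaire via les identifications canoniques de foncteurs fibres) est \'egalement le traitement standard~; le seul point laiss\'e implicite, et r\'egl\'e dans \emph{loc.\ cit.}, est que pour un objet $B$ de la sous-cat\'egorie tannakienne engendr\'ee par $A$ la direction $\alpha$ peut devenir r\'esonante ($\alpha \in \Sigma_{\gr B}$ m\^eme si $\alpha \notin \Sigma_{A_0}$), de sorte que $F^B_\alpha$ doit alors \^etre d\'efini par fonctorialit\'e et $\otimes$-compatibilit\'e \`a partir de $F^A_\alpha$ plut\^ot que directement par le th\'eor\`eme d'unicit\'e.
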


On peut démontrer \cite{JSStokes,RS3} que ces éléments, avec leurs conjugués
par le groupe formel, engendrent topologiquement $\Stqu(A)$; mais le recours
à l'algèbre de Lie sera plus fructueux.


\subsubsection{$q$-dérivées étrangères}
\label{subsubsection:qdérivéesétrangères}

En vue de \og remplir \fg\ l'algèbre de Lie $\stqu(A)$, on fixe maintenant
$\alpha_0 \in \Eq \setminus \Sigma_{A_0}$; les résultats des calculs
qui vont suivre seront en fin de compte indépendants de ce choix.

\begin{cor}
Pour tout $\alpha \in \Eq \setminus \Sigma_{A_0}$,
$$
\log F_{\alpha_0,\alpha}(z_0) \in \stqu(A).
$$
\end{cor}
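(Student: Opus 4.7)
La preuve se r�duit essentiellement � combiner la proposition pr�c�dente avec la structure unipotente du groupe de Stokes. Le plan est le suivant.

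Premi�rement, je choisis le point-base $z_0 \in \Cs$ hors de la r�union (d�nombrable mais d'int�rieur vide) des spirales $[-\alpha_0;q] \cup [-\alpha;q]$, $\alpha \in \Eq \setminus \Sigma_{A_0}$, qui portent d'apr�s le corollaire \ref{cor:sommationalg�brique} tous les p�les possibles des coefficients des $F_{\alpha_0,\alpha}$. Cette restriction de g�n�ricit� est sans cons�quence car les identifications entre foncteurs fibres pour diff�rents choix de point-base ont �t� rappel�es � la fin de \ref{subsubsection:propabeltens}; elle permet d'invoquer la proposition pr�c�dente pour conclure que $F_{\alpha_0,\alpha}(z_0) \in \Stqu(A)$ quel que soit $\alpha \in \Eq \setminus \Sigma_{A_0}$.

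Deuxi�mement, j'utilise le fait que $\Stqu(A)$ est un sous-groupe alg�brique unipotent de $\G(\C)$ (signal� explicitement en \ref{subsubsection:correspondanceEE1<->Rep(Gq1)} et dans l'introduction du num�ro) et que, pour un tel groupe, le logarithme matriciel
$$
\log(I_n + N) = \sum_{k \geq 1} \dfrac{(-1)^{k+1}}{k} N^k
$$
se r�duit en vertu de la nilpotence de $N := F_{\alpha_0,\alpha}(z_0) - I_n$ � une somme finie, et r�alise un isomorphisme de vari�t�s alg�briques (l'inverse de l'exponentielle) entre $\Stqu(A)$ et son alg�bre de Lie $\stqu(A)$. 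On en d�duit imm�diatement que $\log F_{\alpha_0,\alpha}(z_0) \in \stqu(A)$.

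Aucun obstacle s�rieux n'est � pr�voir: tout est � ce stade une application directe de r�sultats ant�rieurs. Le seul point d�licat � surveiller est la coh�rence du choix de $z_0$, car celui-ci doit servir de point-base pour \emph{toute} la construction ult�rieure (d�finition des $q$-d�riv�es �trang�res et des g�n�rateurs annonc�s de $L_q^1$); il faudra donc v�rifier, au fil de \ref{subsubsection:qd�riv�es�trang�res} et de la suite, que la d�pendance en $z_0$ s'efface bien dans les �nonc�s finaux, comme annonc� dans le texte.
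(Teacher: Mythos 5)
Votre argument est correct et co\"incide avec celui que le texte laisse implicite (le corollaire n'y est pas d\'emontr\'e): la proposition pr\'ec\'edente donne $F_{\alpha_0,\alpha}(z_0) \in \Stqu(A)$, ce groupe est un sous-groupe alg\'ebrique unipotent de $\G(\C)$, et le logarithme matriciel --- somme finie par nilpotence de $F_{\alpha_0,\alpha}(z_0) - I_n$ --- est l'inverse de l'exponentielle qui identifie $\Stqu(A)$ \`a son alg\`ebre de Lie $\stqu(A)$ (c'est exactement l'isomorphisme $x \mapsto \exp x$ de $\g(R)$ sur $\G(R)$ rappel\'e en \ref{subsubsection:structaff}). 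Rien de plus n'est n\'ecessaire.

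Une seule retouche s'impose dans votre premier point: la r\'eunion des spirales $[-c;q]$ lorsque $\alpha = \overline{c}$ parcourt $\Eq \setminus \Sigma_{A_0}$ n'est pas d\'enombrable --- elle recouvre presque tout $\Cs$ --- et l'on ne peut donc pas choisir $z_0$ \'evitant simultan\'ement tous les p\^oles de tous les $F_{\alpha_0,\alpha}$. Le bon \'enonc\'e de g\'en\'ericit\'e est le suivant: pour $z_0$ fix\'e hors de $[-c_0;q]$, le seul $\alpha$ probl\'ematique est $\alpha = \overline{-z_0}$ (puisque $z_0 \in [-c;q]$ \'equivaut \`a $\overline{c} = \overline{-z_0}$). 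C'est pr\'ecis\'ement cette unique direction exceptionnelle que le texte absorbe en consid\'erant ensuite l'application \emph{m\'eromorphe} $\alpha \mapsto \log F_{\alpha_0,\alpha}(z_0)$ et ses r\'esidus; votre conclusion n'est donc pas affect\'ee, mais la justification du choix de $z_0$ doit \^etre reformul\'ee ainsi.
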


On a donc une application méromorphe
$\alpha \mapsto \log F_{\alpha_0,\alpha}(z_0)$ de $\Eq$ dans
$\stqu(A)$ (la méromorphie se vérifie par calcul direct, voir 
\ref{subsubsection:calculàdeuxpentes}). Soit maintenant
$\alpha \in \Eq$ quelconque et notons $\Delta_\alpha(A)$ le résidu en
$\alpha$ de l'application ci-dessus; nous précisons un peu
plus loin (fin de ce numéro) le mode de calcul de ce résidu. Bien entendu,
$\Delta_\alpha(A)$ s'annule si $\alpha \not\in \Sigma_{A_0}$.

\begin{cor}
$\Delta_\alpha(A) := \Res_{\alpha} \log F_{\alpha_0,\alpha}(z_0) \in \stqu(A)$.
\end{cor}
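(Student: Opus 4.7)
The plan is to reduce the statement to the observation that $\stqu(A)$ is a finite-dimensional closed $\C$-linear subspace of $\glnc$, so that residues of $\glnc$-valued meromorphic functions which a priori take values in $\stqu(A)$ remain in $\stqu(A)$.

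First, I would note that $\Stqu(A)$ is, by construction, the image of the representation $\rho_A$ restricted to $\Stqu$, hence a unipotent algebraic subgroup of $\G(\C)$; its Lie algebra $\stqu(A) = \Lie(\Stqu(A)) \subset \Lie(\G(\C)) \subset \glnc$ is therefore a finite-dimensional nilpotent Lie algebra, and in particular a (Zariski-)closed $\C$-linear subspace of $\glnc$. Moreover, since $F_{\alpha_0,\alpha}(z_0) - I_n$ is nilpotent of index at most $k$, the logarithm
$$
\log F_{\alpha_0,\alpha}(z_0) = \sum_{j=1}^{k-1} \frac{(-1)^{j+1}}{j} \bigl(F_{\alpha_0,\alpha}(z_0) - I_n\bigr)^j
$$
is given by a universal polynomial expression in the entries of $F_{\alpha_0,\alpha}(z_0)$. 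Combined with the meromorphy of $\alpha \mapsto F_{\alpha_0,\alpha}$ as a section of the sheaf $\Lambda_I(A_0)$ on $\Eq$ (Corollary~\ref{cor:sommationalg�brique}), this shows that $f: \alpha \mapsto \log F_{\alpha_0,\alpha}(z_0)$ is a meromorphic function on $\Eq$ with values in $\glnc$, whose set of poles is contained in $\Sigma_{A_0} \cup \{\overline{z_0}\} \cup \cdots$ (i.e.\ a finite subset).

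Second, I would fix $\alpha \in \Eq$ and choose a local coordinate $u$ on $\Eq$ centred at $\alpha$ (for instance, if $c \in \Cs$ lifts $\alpha$, one may take $u := z/c - 1$ in the corresponding chart lifted to $\Cs$). Writing the Laurent expansion
$$
f = \sum_{j \geq -N} c_j \, u^j, \qquad c_j \in \glnc,
$$
one has, on a dense punctured neighbourhood of $\alpha$ in $\Eq \setminus \Sigma_{A_0}$, the inclusion $f(\alpha') \in \stqu(A)$ provided by the previous corollary. Since $\stqu(A)$ is a closed $\C$-linear subspace of the finite-dimensional space $\glnc$, each Laurent coefficient $c_j$ is a limit of $\C$-linear combinations of values of $f$ near $\alpha$, hence lies in $\stqu(A)$. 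In particular $\Delta_\alpha(A) = c_{-1} \in \stqu(A)$.

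There is no real obstacle: the only points requiring care are the finite-dimensionality and closedness of $\stqu(A)$ inside $\glnc$ (immediate from the fact that $\Stqu(A)$ is an algebraic unipotent group) and the verification that $f$ is genuinely meromorphic as a function on the compact Riemann surface $\Eq$, which follows from the meromorphy of the cocycles $F_{\alpha_0,\alpha}$ coupled with the polynomial (not merely analytic) nature of $\log$ on the unipotent group $\G(\C)$. Once these are in place, the statement reduces to the elementary fact that Laurent expansion commutes with the inclusion of a closed linear subspace.
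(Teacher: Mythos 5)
Your proof is correct and is essentially the argument the paper has in mind: this corollary is recalled from earlier work without a written proof, the sentence immediately preceding it already asserting that $\alpha \mapsto \log F_{\alpha_0,\alpha}(z_0)$ is a meromorphic map from $\Eq$ \emph{into} $\stqu(A)$, so that the residue lies in $\stqu(A)$ for exactly the reason you give (the Laurent coefficients of a map into a finite-dimensional, hence closed, linear subspace of $\glnc$ remain in that subspace, and the nilpotency of $F_{\alpha_0,\alpha}(z_0)-I_n$ makes $\log$ polynomial). The only imprecision is your citation for meromorphy in the parameter $\alpha$: the corollary on $F_{\overline{c},\overline{d}}$ that you invoke gives meromorphy in $z$ for fixed directions, whereas meromorphy in $\alpha$ is what the paper explicitly defers to the two-slope computation of $f_{\overline{c}}(z_0)$ as a function of $c$ --- since the paper also postpones that verification, this is a misattribution rather than a gap.
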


Ce résultat peut être renforcé: on vérifie que $A \leadsto \Delta_\alpha(A)$
est fonctoriel et $\otimes$-compatible (au sens des éléments \og Lie-like \fg).

\begin{prop}
Il existe $\Delta_\alpha \in \stqu$ dont les réalisations sont les $\Delta_\alpha(A)$.
\end{prop}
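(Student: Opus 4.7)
Le plan consiste à invoquer la dualité tannakienne pour caractériser les éléments de $\stqu$. Puisque $\Stqu$ est prounipotent et égal à $\Ker(i^*: \mathbf{G}_q^1 \to \mathbf{G}_{q,p}^1)$, son algèbre de Lie $\stqu$ s'identifie canoniquement à la limite projective $\varprojlim_A \stqu(A)$ prise sur les objets $A$ de $\EE_q^1$. Via la dualité tannakienne appliquée au foncteur fibre $\omega$ restreint à $\EE_q^1$, un élément de $\stqu$ est la donnée d'une famille $(D_A)$, $D_A \in \End_\C(\omega(A))$, vérifiant: (i) \emph{naturalité}, \ie\ $\omega(\varphi) \circ D_A = D_B \circ \omega(\varphi)$ pour tout morphisme $\varphi: A \to B$; (ii) \emph{Leibniz} (\og Lie-like \fg), \ie\ $D_{A \otimes B} = D_A \otimes \Id + \Id \otimes D_B$; (iii) \emph{annulation sur les modules purs}, \ie\ $D_{A_0} = 0$ pour $A_0 \in \EE_{q,p}^1$.

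Je vérifierais ces trois conditions pour la famille $(\Delta_\alpha(A))_A$. La condition (iii) est immédiate: si $A = A_0$ est pur, il n'y a pas de blocs rectangulaires à sommer, la sommation algébrique fournit $F_{\alpha_0,\alpha} = I_n$ pour toute direction autorisée $\alpha$, donc tant le logarithme que son résidu s'annulent. La condition (i) découle de la fonctorialité de la sommation algébrique (conséquence de son unicité dans le théorème de sommation algébrique), de la naturalité du logarithme matriciel sur les unipotents, et de la $\C$-linéarité du résidu en $\alpha$; ces trois étapes intermédiaires sont déjà contenues dans le résultat qui précède immédiatement l'énoncé.

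Le point central est la condition (ii), qui repose sur la $\otimes$-compatibilité de la sommation algébrique elle-même. L'ingrédient-clé est l'observation suivante: si $A_0 = \gr A$ et $B_0 = \gr B$, alors $\gr(A \otimes B) = A_0 \otimes B_0$ (par la proposition \ref{prop:foncteurgr}), et les conditions de polarité qui caractérisent la sommation algébrique se combinent additivement sous produit tensoriel (les pentes s'ajoutant dans $A \otimes B$). Par unicité, $F^A_{\alpha_0,\alpha} \otimes F^B_{\alpha_0,\alpha}$ est alors l'unique solution pour $A \otimes B$, d'où $F^{A \otimes B}_{\alpha_0,\alpha} = F^A_{\alpha_0,\alpha} \otimes F^B_{\alpha_0,\alpha}$. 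Puisque $F^A \otimes \Id$ et $\Id \otimes F^B$ commutent et sont unipotents, on obtient l'identité classique $\log(F^A \otimes F^B) = \log F^A \otimes \Id + \Id \otimes \log F^B$; prendre le résidu en $\alpha$, opération $\C$-linéaire, livre alors la relation de Leibniz cherchée.

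L'obstacle principal est donc la vérification rigoureuse de (ii), qui se ramène à l'unicité du théorème de sommation algébrique et à la stabilité par $\otimes$ des conditions de polarité; ce dernier point demande de contrôler simultanément les supports de pôles et les multiplicités, mais l'argument d'unicité court-circuite toute analyse fine directe. Une fois (i)--(iii) établies, la dualité tannakienne livre immédiatement l'élément $\Delta_\alpha \in \stqu$ cherché, dont la réalisation via chaque représentation $\rho_A$ est précisément $\Delta_\alpha(A)$.
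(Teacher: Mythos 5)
Votre démarche est correcte et coïncide en substance avec celle du texte: le papier se contente d'affirmer, juste avant l'énoncé, que $A \leadsto \Delta_\alpha(A)$ est fonctoriel et $\otimes$-compatible \og au sens des éléments Lie-like \fg\ (renvoyant à \cite{JSStokes,RS3}), puis conclut par dualité tannakienne, ce qui est exactement le schéma (i)--(iii) que vous explicitez. Votre vérification de (ii) via l'unicité de la sommation algébrique, l'additivité des conditions de polarité sous $\otimes$ et l'identité $\log(F^A \otimes F^B) = \log F^A \otimes \Id + \Id \otimes \log F^B$ pour unipotents commutants est le bon argument.
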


Notons que dans cette assertion les restrictions sur $z_0$ et les $\alpha$
n'interviennent plus. La dernière opération consiste à décomposer $\Delta_\alpha$
selon l'action de $\mathbf{G}_{q,p,s}^1 = \Eqvee \times \Cs$; nous le faisons en deux temps:
$$
\Delta_\alpha = \sum_{\delta \in \Z} \Delta_\alpha^{(\delta)} 
= \sum_{\delta \geq 1} \Delta_\alpha^{(\delta)},
\quad \text{puis:} \quad
\Delta_\alpha^{(\delta)} = \sum_{\beta \in \Eq} \Delta_\alpha^{(\delta,\beta)},
\quad \text{d'où enfin:} \quad
\Delta_\alpha =
\sum_{\delta \geq 1 \atop \beta \in \Eq} \Delta_\alpha^{(\delta,\beta)};
$$
la restriction à $\delta \geq 1$ est conséquence de la filtration par les pentes.
Dans cette décomposition, avec les conventions habituelles sur $A = A_U$:
$$
\forall \delta \geq 1 \;,\; \Delta_\alpha^{(\delta)} \in \sttq^{(\delta)}
\text{~et~} \Delta_\alpha^{(\delta)}(A) \in \gde(\C)
\quad \text{puis:} \quad
\forall \delta \geq 1 \;,\; \forall \beta \in \Eq \;,\;
\Delta_\alpha^{(\delta,\beta)} \in \sttq^{(\delta,\beta)} \subset \sttq^{(\delta)}.
$$

Pour comprendre ce que sont les composantes $\Delta_\alpha^{(\delta,\beta)}$, il est
commode de supposer que chacun des blocs $z^{\mu_i} A_i$ est lui-même décomposé selon
ses espaces caractéristiques, \ie\ que $A_i = \Diag(A_{i,1},\ldots,A_{i,\ell_i})$,
chaque $A_{i,i'}$ ayant une seule valeur propre $c_{i,i'}$. D'après les formules
\eqref{eqn:conjuguédeDelta}, on voit que le bloc $((i,i'),(j,j'))$ de
$\Delta_\alpha$ est multiplié par $t^{\mu_j - \mu_i} \gamma(c_{j,j'}/c_{i,i'})$. Donc,
si $\beta = \overline{c}$, la composante $\Delta_\alpha^{(\delta,\beta)}$ est formée
des blocs $((i,i'),(j,j'))$ tels que $\mu_j - \mu_i = \delta$ et
$c_{j,j'}/c_{i,i'} \equiv c \pmod{q^\Z}$, \ie:
$$
\pi(c_{j,j'}/c_{i,i'}) = \beta.
$$

Par ailleurs, les calculs explicites de résidus (qui seront rappelés en
\ref{subsubsection:calculàdeuxpentes}) montrent que les dénominateurs qui sont
à l'origine des pôles de la fonction méromorphe
$\overline{c} \mapsto \log F_{\overline{c_0},\overline{c}}(z_0)$
sont de la forme $q^m c_{i,i'} c^{\mu_i} - c_{j,j'} c^{\mu_j}$, $m \in \Z$. Les pôles
$\alpha = \overline{c}$ éventuels vérifient donc nécessairement
$c_{i,i'} c^{\mu_i} \equiv c_{j,j'} c^{\mu_j} \pmod{q^\Z}$, autrement dit:
$$
\pi(c_{i,i'}/c_{j,j'}) = \alpha^\delta.
$$

On en déduit\footnote{C'est essentiellement dans cette condition que l'on a dû
rectifier les formules erronées de \cite{RS3}, en changeant $\beta = \overline{c}$
en $\beta^{-1}$.}:

\begin{lem}
\label{lem:conditionrectifiée}
Si $\alpha^\delta \neq \beta^{-1}$, alors $\Delta_\alpha^{(\delta,\beta)} = 0$.
\end{lem}

Pour chaque couple $(\delta,\beta) \in \N^* \times \Eq$, on a donc produit $\delta^2$
éléments non triviaux $\Delta_\alpha^{(\delta,\beta)}$ de $\sttq^{(\delta,\beta)}$: ceux
tels que $\alpha^\delta = \beta^{-1}$

\begin{thm}
\label{thm:thprincipalpentesentières}
(i) Les $\Delta_\alpha^{(\delta,\beta)}$ telles que $\alpha^\delta = \beta^{-1}$
engendrent avec $\tau$ une sous-algèbre $L_q^1$ de $\sttqu$, graduée par le monoïde
$(\N^* \times \Eq) \cup \{0\}$ et telle que la catégorie des représentations
de $L_q^1 \ltimes \mathbf{G}_{q,p,s}^1$ soit équivalente à $\EE_q^1$. \\
(ii) Pour tout $(\delta,\beta) \in \N^* \times \Eq$, on peut choisir
$\delta$ parmi les $\delta^2$ racines $\delta$\iemes\ de $\beta^{-1}$ dans
$\Eq$, soient $\alpha_i \in \Eq$, $i = 1,\ldots,\delta$, de telle sorte que
$\tau$ et les $\Delta_i^{(\delta,\beta)} := \Delta_{\alpha_i}^{(\delta,\beta)}$,
$\beta \in \Eq$, $i = 1,\ldots,\delta$, forment une base de $L_q^1$.
\end{thm}

C'est essentiellement cette réduction aux représentations d'une algèbre de Lie
graduée libre qui a permis la résolution du problème inverse dans \cite{RS3}.

\begin{rem}
Nous ne disposions pas dans ce précédent travail d'un choix canonique convenable
des $\Delta_i^{(\delta,\beta)}$. Nous obtiendrons cependant ici (voir
\ref{subsubsection:basecanonique}) un tel choix qui se comporte de plus agréablement
sous l'action du groupe formel; mais malheureusement seulement pour des valeurs
génériques de $q$.
\end{rem}

\paragraph{Calcul des résidus.}

Soit $f(c)$ une fonction analytique dans un voisinage épointé de $c_0 \in \C$.
Nous noterons $\res_{c = c_0} f(c)$, ou, s'il n'y a pas de risque de confusion,
$\res_{c_0} f$ le résidu en $c_0$ de $f$, c'est à dire, en termes plus intrinsèques,
de la forme différentielle $df$. \\

Soit maintenant $f$ une fonction méromorphe $q$-invariante sur $\C^*$, que l'on
peut donc identifier à une fonction méromorphe sur $\Eq$.
Notons $\e(x) := e^{2 \ii \pi x}$ et $\Lambda_\tau := \Z + \Z \tau$ (on rappelle
que $q = \e(\tau)$). Du diagramme:
$$
\xymatrix{
\C \ar@<0ex>[rr]^\e \ar@<0ex>[d] & & \C^* \ar@<0ex>[d] \\
\C/\Lambda_\tau \ar@<0ex>[rr]^\sim & & \C^*/q^\Z
}
$$
on déduit que l'uniformisante canonique sur $\Eq$ est
$dx = \dfrac{1}{2 \ii \pi} \, \dfrac{dc}{c} \cdot$ Le facteur $\dfrac{1}{2 \ii \pi}$
ne semble (pour le moment) pas avoir d'importance dans notre affaire et nous
poserons donc, si $\alpha = \overline{c}$ et $\alpha_0 = \overline{c_0}$
$$
\Res_{\alpha = \alpha_0} f(z) = \dfrac{1}{c_0} \, \res_{c=c_0} f(c).
$$
On peut vérifier par calcul direct qu'en effet, $f$ étant $q$-invariante, cette
expression ne dépend que de $\alpha_0 = \overline{c_0}$ (ce qui ne serait pas
le cas avec $\res$). S'il n'y a pas de risque de confusion, on notera plus
simplement $\Res_{\alpha_0} f$.


\subsection{Calculs explicites de $q$-dérivées étrangères}
\label{subsection:calculs explicites}

Le calcul des $\Delta_\alpha(A)$, $\Delta_\alpha^{(\delta)}(A)$ et
$\Delta_\alpha^{(\delta,\beta)}(A)$ par application directe de la définition
comporte plusieurs étapes non linéaires: équations définissant $F_{\overline{c}}$;
produits $F_{\overline{c}}^{-1} F_{\overline{d}}$; prises de logarithme. Pourtant, nous
allons voir que \emph{le calcul du résultat peut être rendu totalement linéaire}.
En fait, il se ramène au cas des matrices à deux pentes extraites de $A$. \\

Comme indiqué au \ref{subsubsection:groupedeStokes}, le choix du point-base rend
indispensable certaines précautions. À partir du \ref{subsubsection:calculàdeuxpentes},
il apparaît des expressions qui n'ont de sens que si des facteurs de la forme
$\thqc(z_0)$ sont non nuls. Ceci ne peut être garanti que si les exposants des
matrices manipulées n'appartiennent pas à $[-z_0;q]$. Comme on travaille par
principe dans une catégorie tannakienne, les exposants doivent pouvoir être
multipliés. On est donc conduit à poser les conventions suivantes:
\begin{enumerate}
\item Pour $z_0$ donné, on choisit un sous-groupe $G$ de $\Eq$ qui ne contient pas
$\overline{-z_0}$.
\item Les formules explicites énoncées sont données pour la sous-catégorie de $\EE_q(G)$
formée des objets dont tous les exposants (\ie\ leurs classes) sont dans $G$.
\item Les constructions galoisiennes peuvent être recollées quand on fait varier $z_0$
et $G$ à l'aide des identifications mentionnées en \ref{subsubsection:propabeltens} et
en \ref{subsubsection:prérequisGaloislocal}.
\item La catégorie $\EE_q$ est la limite inductive de toutes les catégories $\EE_q(G)$.
\end{enumerate}
Nous choisissons, comme nous l'avons fait dans \cite{RS3}, de ne pas mentionner
explicitement ces restrictions\footnote{C'est d'ailleurs l'usage quand il s'agit de
correspondance de Riemann-Hilbert pour les équations différentielles: une fois
choisi un point base pour le groupe fondamental, il n'est pas vrai que toute équation
à laquelle s'applique la théorie soit définie \emph{en ce point}. Le petit exorcisme
qui consiste à changer de point base selon les nécessités est implicite.}, afin de ne
pas alourdir les notations. \\

Cependant, le théorème \ref{thm:conjuguésmeilleurebasecanonique} à la fin de
\ref{subsubsection:RestrictionàE^retramification} et le théorème
\ref{thm:groupefondamentalsauvage} au \ref{subsection:structgroufonsau} sont
des énoncés qui ne comportent pas des expressions dépendant du point-base et
ils sont donc valides sans restriction pour la catégorie $\EE_q^r$.


\subsubsection{Linéarisation}
\label{subsubsection:linéarisation}

Dans ce qui suit, on fixe $A_0$ et on prend $A = A_U$. Pour tout $\delta \in \N^*$,
on notera $A^{\leq \delta}$ la matrice obtenue en remplaçant par $0$ tous les $U_{i,j}$
tels que $\mu_j - \mu_i > \delta$; de même, on notera $A^{(\delta)}$ la matrice
obtenue en remplaçant par $0$ tous les $U_{i,j}$ tels que $\mu_j - \mu_i \neq \delta$
(\og matrice à une couche \fg). Avec ces notations,
$A_U \equivd A_V \Leftrightarrow A_U^{\leq \delta} = A_V^{\leq \delta}$
(la relation $\equivd$ a été définie par l'équation \eqref{eqn:defequivd}, paragraphe
\og Filtration $q$-Gevrey \fg\ de \ref{subsubsection:structaff}). \\

\includegraphics[width=12cm]{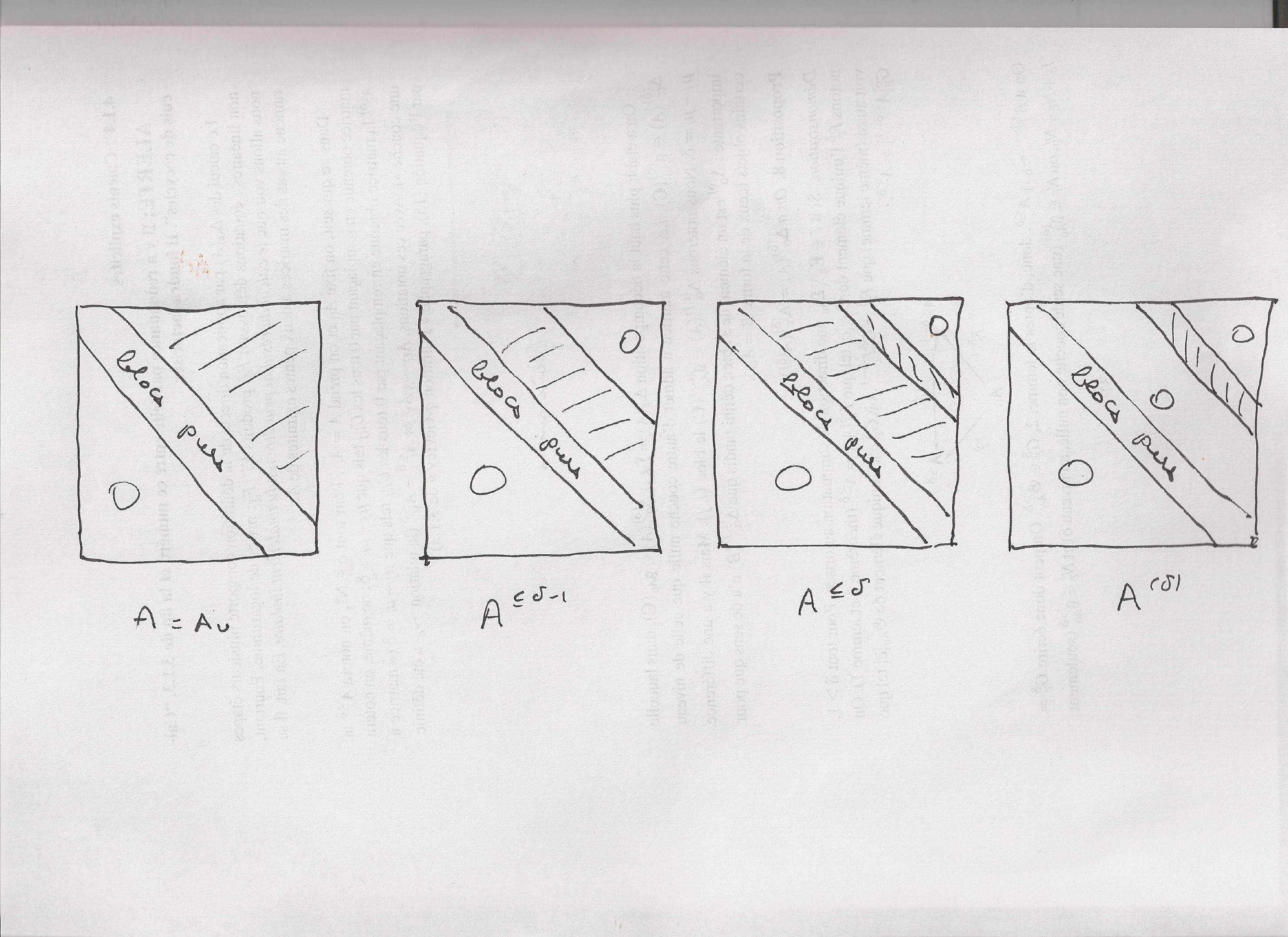} 

\bigskip

On a établi plus haut la décomposition
$\Delta_\alpha(A) = \sum\limits_{\delta \geq 1} \Delta_\alpha^{(\delta)}(A) \in \g(\C)$,
dans laquelle $\Delta_\alpha^{(\delta)}(A) \in \gde(\C)$, \ie\ cette matrice
n'admet d'autre couche nulle que celle de niveau $\mu_j - \mu_i = \delta$.
Nous noterons $\Delta_\alpha^{(i,j)}(A) \in \mathfrak{g}^{(i,j)}_{A_0}(\C)$ le
bloc $(i,j)$. Mais il y a une différence importante: $\Delta_\alpha^{(\delta)}$
est un élément de $\st$ bien défini tandis que $\Delta_\alpha^{(i,j)}(B)$ n'a
de sens que pour certains objets (ceux de la forme $B = A_U$). \\

\includegraphics[width=12cm]{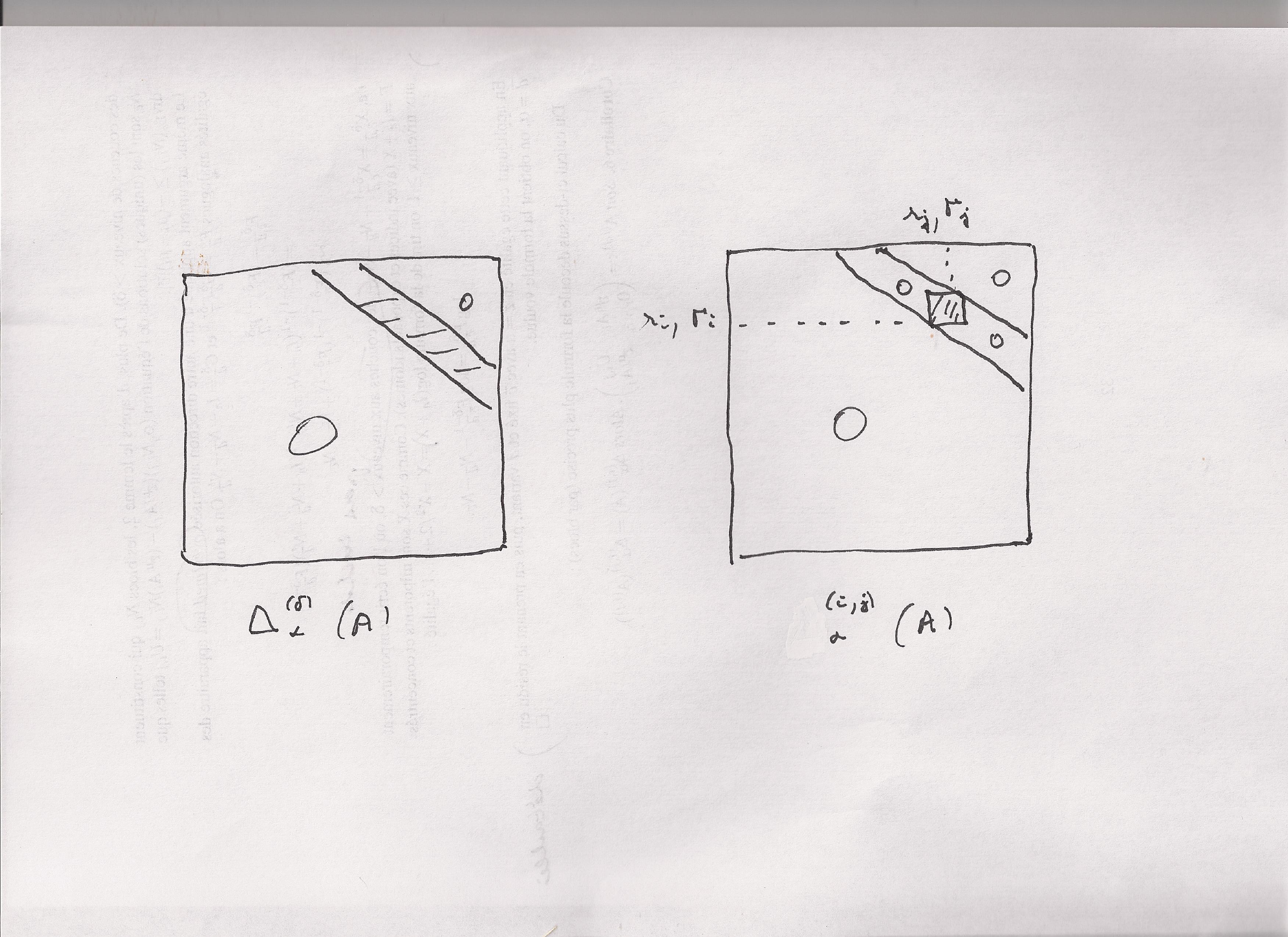}

\begin{prop}
On a $\Delta_\alpha^{(\delta)}(A) = \Delta_\alpha^{(\delta)}(A^{(\delta)})$.
\end{prop}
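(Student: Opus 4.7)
L'id\'ee directrice est de combiner le Lemme~\ref{lem:F_{U,V}etniveaudelta} avec l'analyse par blocs du logarithme dans le groupe prounipotent $\G$, selon une r\'eduction en deux \'etapes successives.

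Dans un premier temps, je prouverai que $\Delta_\alpha^{(\delta)}(A) = \Delta_\alpha^{(\delta)}(A^{\leq\delta})$. Comme $A$ et $A^{\leq\delta}$ co\"\i ncident sur tout bloc $U_{i,j}$ de niveau $\mu_j-\mu_i\leq\delta$, le Lemme~\ref{lem:F_{U,V}etniveaudelta} appliqu\'e avec un seuil $\delta' > \delta$ entra\^\i ne que, pour toute direction autoris\'ee $\overline{c}$, les sommations alg\'ebriques $F_{\overline{c}}(A)$ et $F_{\overline{c}}(A^{\leq\delta})$ ont les m\^emes blocs jusqu'au niveau $\delta$. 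Un calcul matriciel par blocs (produits et inverses dans $\G$) montre alors que les cocycles $F_{\alpha_0,\alpha}(A)$ et $F_{\alpha_0,\alpha}(A^{\leq\delta})$ co\"\i ncident aussi sur tous leurs blocs de niveau $\leq\delta$. Comme la composante de niveau $\delta$ de $\log(I+Y)$ ne met en jeu que des produits de blocs de $Y$ dont la somme des niveaux vaut $\delta$---donc tous $\leq\delta$---il s'ensuit l'\'egalit\'e des composantes de niveau $\delta$ des logarithmes, d'o\`u l'assertion par prise de r\'esidu en $\alpha$.

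Dans un second temps, il restera \`a \'etablir $\Delta_\alpha^{(\delta)}(A^{\leq\delta}) = \Delta_\alpha^{(\delta)}(A^{(\delta)})$, les matrices diff\'erant cette fois aux niveaux strictement inf\'erieurs \`a $\delta$. Ces blocs de bas niveau contribuent \emph{a priori} au niveau $\delta$ du logarithme \`a travers les produits ``par chemin'' $Y_{i,l_1}Y_{l_1,l_2}\cdots Y_{l_{r-1},j}$ de niveau total $\delta$. Le plan est d'exploiter le fait que chaque bloc $Y_{i,j}$ de $F_{\alpha_0,\alpha}$, automorphisme de $A_0$, est section du faisceau $\lambda_I^{(i,j)}(A_0)$ et satisfait l'\emph{\'equation homog\`ene} $(\sq Y_{i,j})\,z^{\mu_j}A_j = z^{\mu_i}A_i\,Y_{i,j}$; les p\^oles de $\alpha\mapsto Y_{i,j}(z_0)$ sont ainsi confin\'es au diviseur de r\'esonance de niveau $\mu_j-\mu_i$ attach\'e aux spectres de $A_i$ et $A_j$. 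Un produit par chemin n'a donc de p\^ole en $\alpha$ que l\`a o\`u \emph{chacun} de ses facteurs en a, ce qui impose plusieurs conditions de r\'esonance individuelles; or la composante $\Delta_\alpha^{(\delta,\beta)}$ (d\'ecomposition de Fourier sous $\Gqps$) n'est non nulle que pour la r\'esonance de niveau $\delta$, soit $\alpha^\delta=\beta^{-1}$. Les lieux de p\^oles des produits par chemin sont g\'en\'eriquement disjoints de celui du bloc direct de niveau $\delta$, et les contributions par chemin s'annulent au r\'esidu.

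L'obstacle principal sera de traiter le cas \emph{co\"\i ncident}, o\`u une r\'esonance de niveau $\delta$ satisfait accidentellement aussi les r\'esonances interm\'ediaires d'une certaine d\'ecomposition par chemin. Je pr\'evois de r\'esoudre ce point soit par un calcul alg\'ebrique explicite sur les d\'enominateurs en fonctions theta (comme dans le \ref{subsubsection:calcul�deuxpentes} \`a venir, qui ram\`ene tout au cas \`a deux pentes), soit par un argument de d\'eformation continue de $A_0$ hors du lieu co\"\i ncident, en exploitant l'invariance de $\Delta_\alpha^{(\delta,\beta)}$ sous toute telle d\'eformation pr\'eservant la donn\'ee de niveau $\delta$.
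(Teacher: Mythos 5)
Votre premi\`ere \'etape (le passage de $A$ \`a $A^{\leq\delta}$ via le lemme \ref{lem:F_{U,V}etniveaudelta} et l'observation que la composante de niveau $\delta$ du logarithme ne fait intervenir que des blocs de niveau $\leq\delta$) est correcte et conforme \`a l'esprit du texte. La lacune se trouve dans la seconde \'etape, qui est pr\'ecis\'ement le point difficile. D'abord, l'affirmation qu'un produit par chemin \og n'a de p\^ole en $\alpha$ que l\`a o\`u chacun de ses facteurs en a \fg\ est invers\'ee~: un produit de fonctions m\'eromorphes a ses p\^oles sur la \emph{r\'eunion} des lieux polaires de ses facteurs, et son r\'esidu en un p\^ole simple d'un seul facteur est le produit de ce r\'esidu par les valeurs des autres facteurs. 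La version correcte de votre id\'ee (en un point g\'en\'erique du lieu de r\'esonance de niveau $\delta$, chaque facteur d'un produit par chemin, \'etant de niveau $<\delta$, est holomorphe, donc le produit a un r\'esidu nul) peut \^etre r\'ecup\'er\'ee, mais elle ne suffit pas~: les couches de bas niveau de $A^{\leq\delta}$ n'interviennent pas seulement par les produits par chemin du logarithme. Elles modifient aussi le \emph{bloc direct} de niveau $\delta$ du cocycle, car l'\'equation d\'efinissant le bloc $(i,j)$ de $F_{\overline{c}}$ est coupl\'ee aux blocs interm\'ediaires par le second membre $U_{i,j} + \sum_{i<l<j} U_{i,l}\,(F_{\overline{c}})_{l,j}$~; le r\'esidu de ce bloc au point de r\'esonance de niveau $\delta$ d\'epend donc des couches inf\'erieures m\^eme hors de toute r\'esonance interm\'ediaire, et votre plan ne traite pas cette contribution. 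Enfin, le \og cas co\"\i ncident \fg\ que vous laissez ouvert n'est pas un cas limite \`a r\'egler apr\`es coup, et l'argument de d\'eformation esquiss\'e est circulaire, puisqu'il suppose l'invariance de $\Delta_\alpha^{(\delta,\beta)}$ que l'on cherche justement \`a \'etablir.

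La preuve du texte contourne enti\`erement cette analyse en construisant $A$ couche par couche \emph{par le bas}~: \`a chaque niveau $\delta$, l'unicit\'e de la sommation alg\'ebrique (lemmes \ref{lem:groupepolaire} et \ref{lem:F_{U,V}etniveaudelta}) identifie l'incr\'ement $G^{\delta}_{\overline{c}} = F^{\delta}_{\overline{c}}\,(F^{\delta-1}_{\overline{c}})^{-1} = I_n + N_{\overline{c}} + N'_{\overline{c}}$, o\`u $N_{\overline{c}}$ est la solution \og \`a une couche \fg\ du niveau $\delta$ et $N'_{\overline{c}} \in \gds$~; l'in\'egalit\'e de filtration $\gd(\C)\,\g^{\geq \delta'}(\C) \subset \g^{\geq \delta+\delta'}(\C)$ montre alors que $\log F^{\delta}_{\overline{c},\overline{d}}$ et $\log F^{\delta-1}_{\overline{c},\overline{d}}$ ne diff\`erent, modulo $\gds(\C)$, que de $N_{\overline{d}} - N_{\overline{c}}$, dont le r\'esidu est exactement $\Delta_\alpha^{(\delta)}(A^{(\delta)})$. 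La compensation entre termes crois\'es et bloc direct est ainsi absorb\'ee alg\'ebriquement par une r\'ecurrence t\'elescopique, sans jamais d\'evelopper le logarithme en produits par chemin ni invoquer de position g\'en\'erique des lieux de r\'esonance. C'est cette r\'ecurrence qu'il vous faudrait reconstituer pour compl\'eter votre seconde \'etape.
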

\begin{proof}
Soit $\overline{c} \in \Eq \setminus \Sigma_{A_0}$ une direction de sommation
autorisée. Pour tout $\delta \geq 1$, notons $F^\delta_{\overline{c}}$ l'unique
élément de $\G[\overline{c}]$ tel que $F^\delta_{\overline{c}}[A_0] = A^{\leq \delta}$
(théorème \ref{thm:sommationalgébrique} et lemme \ref{lem:groupepolaire}).
On voit aussi (même lemme) que
$F^\delta_{\overline{c}} = G^\delta_{\overline{c}} F^{\delta-1}_{\overline{c}}$, où
$G^\delta_{\overline{c}}$ est l'unique élément de $\G[\overline{c}]$ tel que
$G^\delta_{\overline{c}}[A^{\leq \delta-1}] = A^{\leq \delta}$:
$$
\xymatrix{
A^{\leq \delta-1} \ar@<0ex>[rr]^{G^\delta_{\overline{c}}} & & A^{\leq \delta} \\
& A_0 \ar@<0ex>[lu]^{F^{\delta-1}_{\overline{c}}} \ar@<0ex>[ru]_{F^\delta_{\overline{c}}} &
}
$$
On a $A^{\leq \delta-1} \equiv_{\delta-1} A^{\leq \delta}$, d'où, d'après le lemme
\ref{lem:F_{U,V}etniveaudelta}, $G^\delta_{\overline{c}} \in \Gd$. On peut donc
écrire:
$$
G^\delta_{\overline{c}} = I_n + N_{\overline{c}} + N'_{\overline{c}},
$$
où $N_{\overline{c}} \in \gde$ (une seule couche non nulle, au niveau $\delta$)
et $N'_{\overline{c}} \in \gds$ (seulement des couches de niveaux $> \delta$).
De plus, d'après le lemme \ref{lem:F_{U,V}etniveaudelta}, les blocs $N_{i,j}$
qui constituent $N_{\overline{c}}$ sont les (uniques) solutions de l'équation
$$
(\sq N_{i,j}) (z^{\mu_j} A_j) - (z^{\mu_i} A_i) N_{i,j} = U_{i,j}
$$
telles que $\div_\Eq(N_{i,j}) \geq - (\mu_j - \mu_i) [\overline{c}]$. \\
Le même argument appliqué à une autre direction autorisée $\overline{d}$
fait apparaitre des égalités analogues
$F^\delta_{\overline{d}} = G^\delta_{\overline{d}} F^{\delta-1}_{\overline{d}}$ et
$G^\delta_{\overline{d}} = I_n + N_{\overline{d}} + N'_{\overline{d}}$. \\
La fin du calcul utilise les propriétés de la filtration, essentiellement
le fait que:
$$
\gd(\C) \g^{\geq \delta'}(\C) \subset \g^{\geq \delta + \delta'}(\C).
$$
On a donc :
\begin{align*}
  F^\delta_{\overline{c},\overline{d}} &=
  (F^\delta_{\overline{c}})^{-1} F^\delta_{\overline{d}} 
  = (F^{\delta-1}_{\overline{c}})^{-1} (I_n + N_{\overline{c}} + N'_{\overline{c}})^{-1}
  (I_n + N_{\overline{d}} + N'_{\overline{d}}) (F^{\delta-1}_{\overline{d}}) \\
  &= (F^{\delta-1}_{\overline{c}})^{-1} (F^{\delta-1}_{\overline{d}}) +
  N_{\overline{d}} - N_{\overline{c}} + N', \quad N' \in \gds(\C).
\end{align*}
Écrivant temporairement $F^\delta_{\overline{c},\overline{d}} = I_n + X$
et $F^{\delta-1}_{\overline{c},\overline{d}} = I_n + Y$,
on a donc $X,Y \in \g^{\geq 1}(\C)$ et:
$$
X = Y + N_{\overline{d}} - N_{\overline{c}} + N'.
$$
Comme $X$ et $Y$ sont nilpotents et concentrés aux niveaux $\geq 1$, on tire
des formules $\log(I_n + X) = X - X^2/2 + \cdots$ et
$\log(I_n + Y) = Y - Y^2/2 + \cdots$ l'égalité:
$$
\log F^\delta_{\overline{c},\overline{d}} = \log F^{\delta-1}_{\overline{c},\overline{d}} +
N_{\overline{d}} - N_{\overline{c}} + N'', \quad N'' \in \gds(\C).
$$
En appliquant cette égalité en $z = z_0$ avec $\overline{c}$ fixé et
$\overline{d}$ variant, puis en prenant le résidu en $\overline{d} = \alpha$,
on obtient la formule:
\begin{align*}
\Delta_\alpha(A^{\leq \delta})
  &= \Delta_\alpha(A^{\leq \delta-1}) + \Res_{\overline{d} = \alpha} N_{\overline{d}}(a) \\
  &= \Delta_\alpha(A^{\leq \delta-1}) + \Delta_\alpha^{(\delta)}(A^{(\delta)})
\end{align*}
d'après la caractérisation ci-dessus de $N$. Le calcul successif des
$\Delta_\alpha(A^{\leq \delta})$ pour $\delta = 1,\ldots$ consiste donc à ajouter
couche par couche les $\Delta_\alpha^{(\delta)}(A^{(\delta)})$, d'où la conclusion.
\end{proof}

Du calcul ci-dessus découle la formule plus précise (par blocs):

\begin{cor}
Soit $A^{(i,j)} :=
\begin{pmatrix} z^{\mu_i} A_i & U_{i,j} \\
 0_{r_j \times r_i} & z^{\mu_j} A_j \end{pmatrix}$. Alors
$\Delta_\alpha^{(i,j)}(A) = \Delta_\alpha^{(i,j)}(A^{(i,j)})$.
\end{cor}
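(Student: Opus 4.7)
The plan is to combine the preceding proposition with a block-decoupling observation at a single level. By that proposition, $\Delta_\alpha^{(i,j)}(A) = \Delta_\alpha^{(i,j)}(A^{(\delta)})$ where $\delta := \mu_j - \mu_i$, so it suffices to prove
\[
\Delta_\alpha^{(i,j)}(A^{(\delta)}) = \Delta_\alpha^{(i,j)}(A^{(i,j)}).
\]
In other words, once we have localized at a single level, we must show that the $(i,j)$-block of the \emph{dérivée étrangère} depends only on the $(i,j)$-block of the input data.

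The next step is to rerun the linearization of the previous proof, this time applied to $A^{(\delta)}$. Writing $F^{\delta}_{\overline{d}} = G^{\delta}_{\overline{d}} F^{\delta-1}_{\overline{d}}$ with $G^{\delta}_{\overline{d}} = I_n + N_{\overline{d}} + N'_{\overline{d}}$, $N_{\overline{d}} \in \gde(\C)$ and $N'_{\overline{d}} \in \gds(\C)$, Lemma \ref{lem:F_{U,V}etniveaudelta} identifies each block $N_{p,q}$ of $N_{\overline{d}}$ (with $\mu_q - \mu_p = \delta$) as the unique meromorphic solution of
\[
(\sq N_{p,q})(z^{\mu_q} A_q) - (z^{\mu_p} A_p) N_{p,q} = U_{p,q}
\]
subject to the polarity condition $\div_\Eq(N_{p,q}) \geq -\delta\,[\overline{-d}]$. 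The key point is that this system is decoupled across the pairs $(p,q)$: the determination of $N_{i,j}$ depends on the ambient matrix \emph{only} through the triple $(z^{\mu_i}A_i, z^{\mu_j}A_j, U_{i,j})$, with no contribution whatsoever from the other blocks of $A^{(\delta)}$ at level $\delta$.

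Applying the very same linearization to the two-slopes matrix $A^{(i,j)}$ then produces a corrector $\widetilde G^{\delta}_{\overline{d}} = I_{r_i + r_j} + \widetilde N_{\overline{d}}$ whose single off-diagonal block $\widetilde N_{i,j}$ satisfies exactly the same functional equation with the same polarity condition; by uniqueness $\widetilde N_{i,j} = N_{i,j}$. Taking the residue at $\overline{d} = \alpha$ of the evaluation at $z_0$ yields the desired equality $\Delta_\alpha^{(i,j)}(A^{(\delta)}) = \Delta_\alpha^{(i,j)}(A^{(i,j)})$.

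The main obstacle I anticipate is bookkeeping rather than substance: one must check that neither the prefactor $F^{\delta-1}_{\overline{d}}$ nor the higher-order term $N'_{\overline{d}} \in \gds$ contaminates the $(i,j)$-block once we take the logarithm and evaluate at $z_0$. This follows automatically from the filtration estimate $\gd(\C) \g^{\geq \delta'}(\C) \subset \g^{\geq \delta+\delta'}(\C)$ already exploited in the proof of the previous proposition, which ensures that such contributions live strictly above level $\delta$ and therefore vanish under projection onto the direct summand $\mathfrak{g}^{(i,j)}_{A_0}$. No idea beyond the level-by-level reduction is required.
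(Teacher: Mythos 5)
Votre démonstration est correcte et suit essentiellement la même voie que l'article : celui-ci déduit le corollaire « du calcul ci-dessus », c'est-à-dire précisément du fait que, dans la linéarisation, le bloc $N_{i,j}$ est l'unique solution (avec condition de polarité) de l'équation $(\sq N_{i,j})(z^{\mu_j}A_j) - (z^{\mu_i}A_i)N_{i,j} = U_{i,j}$, équation qui ne fait intervenir que les données $(z^{\mu_i}A_i, z^{\mu_j}A_j, U_{i,j})$ et est donc la même pour $A$, $A^{(\delta)}$ et $A^{(i,j)}$. Votre remarque finale sur l'absence de contamination par $F^{\delta-1}_{\overline{d}}$ et $N'_{\overline{d}}$, via l'estimation $\gd(\C)\,\g^{\geq \delta'}(\C) \subset \g^{\geq \delta+\delta'}(\C)$, est exactement l'argument déjà utilisé dans la preuve de la proposition précédente.
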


\paragraph{Un autre outil de dévissage.}

Nous mentionnons ici, pour usage ultérieur, un autre lemme utile.

\begin{lem}
\label{lem:dévissageparblocs}
Soit $A := \begin{pmatrix} z^{\mu_1} A_1 & U \\  0 & z^{\mu_2} A_2 \end{pmatrix}$,
et supposons que $A_1,A_2$ admettent des décompositions diagonales par blocs
$A_i = \Diag(A_{i,1},A_{i,2})$, $i = 1,2$.
Soit $U := \begin{pmatrix} U_{1,1} & U_{1,2} \\ U_{2,1} & U_{2,2} \end{pmatrix}$
la décomposition par blocs correspondante. Alors:
$$
\Delta_\alpha(A) = \begin{pmatrix} 0 & N \\ 0 & 0 \end{pmatrix},
\quad \text{où} \quad
N := \begin{pmatrix} N_{1,1} & N_{1,2} \\ N_{2,1} & N_{2,2} \end{pmatrix},
\quad \text{avec} \quad
\begin{pmatrix} 0 & N_{i,j} \\ 0 & 0 \end{pmatrix} :=
\Delta_\alpha\begin{pmatrix} z^{\mu_1} A_{1,i} & U_{i,j} \\  0 & z^{\mu_2} A_{2,j} \end{pmatrix}.
$$
\end{lem}
\begin{proof}
L'isomorphisme méromorphe $F: A_0 \rightarrow A$ est de la forme
$F = \begin{pmatrix} I_{r_1} & f \\ 0 & I_{r_2} \end{pmatrix}$, où
$f = \begin{pmatrix} f_{1,1} & f_{1,2} \\ f_{2,1} & f_{2,2} \end{pmatrix}$,
chaque $f_{i,j}$ étant calculé comme le bloc surdiagonal de l'isomorphisme
méromorphe $\begin{pmatrix} I_{r_{1,i}} & f \\ 0 & I_{r_{2,j}} \end{pmatrix}$
de $\begin{pmatrix} z^{\mu_1} A_{1,i} & 0 \\  0 & z^{\mu_2} A_{2,j} \end{pmatrix}$
dans $\begin{pmatrix} z^{\mu_1} A_{1,i} & U_{i,j} \\  0 & z^{\mu_2} A_{2,j} \end{pmatrix}$:
en effet, l'équation fonctionnelle $(\sq f) (z^{\mu_2} A_2) - (z^{\mu_1} A_1) f = U$
équivaut au système des quatre équations
$(\sq f_{i,j}) (z^{\mu_2} A_{2,j}) - (z^{\mu_1} A_{1,i}) f_{i,j} = U_{i,j}$. On a donc quatre
calculs indépendants, suivis de l'évaluation en $z_0$ et de la prise de résidu.
\end{proof}

\paragraph{Rappels de théorie spectrale.}


Soient $E$ un $\C$-espace vectoriel de dimension finie et $\phi \in \Lin(E)$.
Notons $E = \bigoplus\limits_{\lambda \in \Sp\ \phi} E_\lambda$ la décomposition en
espaces caractéristiques et $(\Pi_\lambda)_{\lambda \in \Sp\ \phi}$ la famille des
projecteurs associés. Par convention $E_\lambda$ et $\Pi_\lambda$ sont triviaux
si $\lambda \not\in \Sp\ \phi$. \\

Soit $f(z) := (z \Id_E - \phi)^{-1}$ la résolvante: $f$ est méromorphe sur $\C$
à valeurs dans $\Lin(E)$, ses pôles sont les éléments de $\Sp\ \phi$ et:
$$
\res_{z = \lambda} f(z) = \Pi_\lambda.
$$

Soient maintenant $P \in \GL_r(\C)$ et $Q \in \GL_s(\C)$, que l'on suppose diagonaux
par blocs associés à leurs espaces caractéristiques: $P = \Diag(P_1,\ldots,P_k)$,
$P_i \in \GL_{r_i}(\C)$ (donc $r = r_1 + \cdots + r_k$), $\Sp\ P_i = \{\lambda_i\}$,
les $\lambda_i$ deux à deux distincts; et $Q = \Diag(Q_1,\ldots,Q_l)$,
$Q_j \in \GL_{s_j}(\C)$ (donc $s = s_1 + \cdots + s_l$), $\Sp\ Q_j = \{\mu_j\}$,
les $\mu_j$ deux à deux distincts. \\

Notons $\Phi_{P,Q}$ l'endomorphisme de $E := \Mat_{r,s}(\C)$ défini par
$\Phi_{P,Q}(X) = P X Q^{-1}$. On décompose $X$ en blocs $X_{i,j} \in \Mat_{r_i,s_j}(\C)$
et de même $E = \bigoplus\limits_{1 \leq i \leq k \atop 1 \leq j \leq l} E_{i,j}$. Alors
le spectre de $\Phi_{P,Q}$ est l'ensemble des $\lambda_i/\mu_j$ et les espaces
caractéristiques sont les
$$
E_\lambda := \bigoplus_{\lambda_i/\mu_j = \lambda} E_{i,j}.
$$
Si l'on convient d'identifier $\Mat_{r_i,s_j}(\C)$ au sous-espace $E_{i,j}$ de $E$,
alors la matrice $\Pi_\lambda(X)$, qui s'obtient en remplaçant par $0$ tous les
blocs $X_{i,j}$ tels que $\lambda_i/\mu_j \neq \lambda$, peut être identifiée à
$\sum\limits_{\lambda_i/\mu_j = \lambda} X_{i,j}$.


\subsubsection{Calcul à deux pentes}
\label{subsubsection:calculàdeuxpentes}

On prendra:
$$
A = \begin{pmatrix} z^{\mu_1} A_1 & z^{\mu_1} U A_2 \\ 0 & z^{\mu_2} A_2 \end{pmatrix},
$$
avec $A_i \in \GL_{r_i}(\C)$, $i = 1,2$, $\mu_1,\mu_2 \in \Z$, $\mu_1 < \mu_2$ et
$U \in \Mat_{r_1,r_2}(\Ka)$. La forme particulière du terme non diagonal
$z^{\mu_1} U A_2$ est destinée à simplifier les formules. On pose
$\delta := \mu_2 - \mu_1 \in \N^*$ et bien entendu
$A_0 := \begin{pmatrix} z^{\mu_1} A_1 & 0 \\ 0 & z^{\mu_2} A_2 \end{pmatrix}$. \\

Rappelons, de la section \ref{subsection:Notations} de notations, la fonction
theta d'usage courant $\thq$ et les fonctions $\thqc$. On introduit:
$$
V(c,z) := \theta^\delta_{q,c} U(z),
\quad \text{que l'on développe en série de Laurent:} \quad
V(c,z) = \sum_{m \in \Z} V_m(c) z^m.
$$
La formule suivante se déduit directement de l'égalité
$V(qc,z) = (qc/z)^\delta V(c,z)$ qui est elle-même conséquence de l'égalité
$\theta_{q,qc}(z) = (qc/z) \thqc(z)$:
\begin{equation}
\label{eqn:delta-périodicitédeVmbis}
V_{m-\delta}(qc) = (qc)^\delta V_m(c).
\end{equation}

\begin{prop}
\label{prop:casgénériqueàdeuxpentes}
Soit $c \in \C^*$ tel que $\overline{c} \in \Eq \setminus \Sigma_{A_0}$. Alors
l'unique isomorphisme méromorphe $F_{\overline{c}}: A_0 \rightarrow A$ tel que
$F_{\overline{c}} \in \G[\overline{c}]$ (théorème \ref{thm:sommationalgébrique};
voir le lemme \ref{lem:groupepolaire} pour la notation) est de la forme:
$$
F_{\overline{c}} = \begin{pmatrix} I_{r_1} & f_{\overline{c}} \\ 0 & I_{r_2} \end{pmatrix},
\quad \text{~où:} \quad
f_{\overline{c}} := \dfrac{1}{\theta^\delta_{q,c}} \;
\sum_{m \in \Z} (q^m c^\delta \Id - \Phi_{A_1,A_2})^{-1}(V_m).
$$
\end{prop}

La condition \og $\overline{c}$ est une direction de sommation autorisée \fg\
est précisément celle qui garantit l'inversibilité des endomorphismes
$q^m c^\delta \Id - \Phi_{A_1,A_2}$. Le facteur de tête $\dfrac{1}{\theta^\delta_{q,c}}$
est celui qui produit les pôles sur $[-c;q]$. À l'aide de la relation
\eqref{eqn:delta-périodicitédeVmbis}, on peut vérifier (bien que cela ne soit
en principe pas nécessaire en vertu du théorème  \ref{thm:sommationalgébrique})
que l'expression ci-dessus ne change pas quand on remplace $c$ par $qc$.

\begin{proof}
On a un diagramme commutatif de transformations de jauge méromorphes sur $\C^*$:
$$
\xymatrix{
  {\begin{pmatrix} c^{\mu_1} A_1 & 0 \\ 0 & c^{\mu_2} A_2 \end{pmatrix}}
  \ar@<0ex>[rr]^\Theta \ar@<0ex>[dd]_{G_{\overline{c}}} & &
  {\begin{pmatrix} z^{\mu_1} A_1 & 0 \\ 0 & z^{\mu_2} A_2 \end{pmatrix}}
  \ar@<0ex>[dd]^{F_{\overline{c}}} \\
  & & \\
  {\begin{pmatrix} c^{\mu_1} A_1 & c^{\mu_1} V A_2 \\ 0 & c^{\mu_2} A_2 \end{pmatrix}}
  \ar@<0ex>[rr]_\Theta & &
  {\begin{pmatrix} z^{\mu_1} A_1 & z^{\mu_1} U A_2 \\ 0 & z^{\mu_2} A_2 \end{pmatrix}}
}
$$
Les flèches horizontales proviennent de la même matrice
$\Theta :=
\begin{pmatrix} \theta^{\mu_1}_{q,c} \, I_{r_1} & 0 \\
0 & \theta^{\mu_2}_{q,c} \, I_{r_2} \end{pmatrix}$.
Pour que la transformation horizontale basse soit justifiée, il faut et il suffit
que
$z^{\mu_1} U A_2 = \dfrac{\sq(\theta^{\mu_1}_{q,c})}{\theta^{\mu_2}_{q,c}} c^{\mu_1} V A_2$,
ce qui revient à la définition de $V$ donnée plus haut. \\

La commutativité signifie que $G_{\overline{c}} = \Theta^{-1} F_{\overline{c}} \Theta$,
\ie\ 
$G_{\overline{c}} = \begin{pmatrix} I_{r_1} & g_{\overline{c}} \\ 0 & I_{r_2} \end{pmatrix}$,
où $g_{\overline{c}} = \theta^\delta_{q,c} f_{\overline{c}}$. La flèche verticale droite
est celle qui définit $F_{\overline{c}}$; elle sera justifiée si la flèche verticale
gauche l'est. \\

Cette dernière équivaut à la relation:
$$
(\sq G_{\overline{c}})
\begin{pmatrix} c^{\mu_1} A_1 & 0 \\ 0 & c^{\mu_2} A_2 \end{pmatrix} =
\begin{pmatrix} c^{\mu_1} A_1 & c^{\mu_1} V A_2 \\ 0 & c^{\mu_2} A_2 \end{pmatrix}
G_{\overline{c}},
$$
qui elle-même se traduit par:
$$
(\sq g_{\overline{c}}) (c^{\mu_2} A_2) = (c^{\mu_1} A_1) g_{\overline{c}} + c^{\mu_1} V A_2,
$$
soit encore:
$$
c^\delta \sq g_{\overline{c}} - \Phi_{A_1,A_2}(g_{\overline{c}}) = V.
$$
La condition de polarité sur $f_{\overline{c}}$ équivaut à l'absence de pôles
de $g_{\overline{c}}$ sur $\C^*$. On développe donc cette fonction \emph{a priori}
inconnue en série de Laurent: $g_{\overline{c}} = \sum\limits_{m \in \Z} g_m z^m$
et l'on aboutit au système:
$$
\forall m \in \Z \;,\;
c^\delta q^m g_m - \Phi_{A_1,A_2}(g_m) = V_m.
$$
L'assertion voulue s'ensuit.
\end{proof}

\paragraph{Application.}

On prend $A_1$ et $A_2$ diagonales par blocs correspondant à leurs espaces
caractéristiques: $A_1 = \Diag(A_{1,1},\ldots,A_{1,k}) \in \GL_r(\C)$,
$A_{1,i} \in \GL_{r_i}(\C)$, $\Sp\ A_{1,i} = \{\lambda_{1,i}\}$, les $\lambda_{1,i}$
deux à deux distincts pour $i = 1,\ldots,k$; et similairement
$A_2 = \Diag(A_{2,1},\ldots,A_{2,l}) \in \GL_s(\C)$,
$A_{2,j} \in \GL_{s_j}(\C)$, $\Sp\ A_{2,j} = \{\lambda_{2,j}\}$, les $\lambda_{2,j}$
deux à deux distincts pour $j = 1,\ldots,l$. On décompose toute matrice
$X \in \Mat_{r,s}(\C)$ en blocs $X_{i,j} \in \Mat_{r_i,s_j}(\C)$ pour $i = 1,\ldots,k$,
$j = 1,\ldots,l$. Pour tout $d \in \C^*$, selon les notations du dernier paragraphe
de \ref{subsubsection:linéarisation} (\og Rappels de théorie spectrale \fg):
$$
\Pi_d(X) = \sum_{\lambda_{1,i}/\lambda_{2,j} = d} X_{i,j} \quad\text{(qui peut être nul).}
$$
Pour tout $d \in \C^*$, on code comme suit\footnote{Rappelons que d'après les
formules rectifiées de \ref{subsubsection:qdérivéesétrangères} (voir en particulier
le lemme \ref{lem:conditionrectifiée}), c'est $\beta := \pi(\lambda_{2,j}/\lambda_{1,i})$
qui apparaitra dans la $q$-dérivée étrangère $\Delta_\alpha^{(\delta,\beta)}$ que nous
sommes en train de calculer.} les racines $\delta$\iemes\ de
$\overline{d} \in \Eq$: pour tout $m = 0,\ldots,\delta-1$, on note $c_{l,m}$,
$l = 0,\ldots,\delta-1$, les racines $\delta$\iemes\ de $q^{-m} d$ dans $\C^*$;
les racines $\delta$\iemes\ de $\overline{d}$ sont donc les $\overline{c_{l,m}}$. \\

Pour $c$ au voisinage d'un $c_{l,m}$ donné, posant $\phi(c) := f_{\overline{c}}(z_0)$,
on écrit (voir les notations du paragraphe \og Calcul des résidus \fg\ à la fin
du numéro \ref{subsubsection:groupedeStokes}):
\begin{align*}
\phi(c)
&= \dfrac{z_0^l}{\thq(z_0/c)^\delta} \times
\dfrac{1}{c^\delta q^l - d} \times \Pi_d(V_l(c))  + \cdots \\
&= \dfrac{(z_0/q)^l}{\thq(z_0/c_{l,m})^\delta} \times
\dfrac{1}{c^\delta - c_{l,m}^\delta} \times \Pi_d(V_l(c_{l,m}))  + \cdots, \\
&= \dfrac{(z_0/q)^l}{\thq(z_0/c_{l,m})^\delta} \times
\dfrac{1}{\delta c_{l,m}^{\delta-1}} \times \dfrac{1}{c - c_{l,m}} \times
\Pi_d(V_l(c_{l,m}))  + \cdots, \\
&= \dfrac{(z_0/q)^l}{\thq(z_0/c_{l,m})^\delta} \times c_{l,m} \times
\dfrac{1}{\delta d} \times \dfrac{1}{c - c_{l,m}} \times
\Pi_d(V_l(c_{l,m}))  + \cdots,
\end{align*}
d'où l'on tire, en posant $\alpha_{l,m} := \overline{c_{l,m}}$:
$$
\Res_{q,\alpha_{l,m}} \phi =
\dfrac{(z_0/q)^l}{\delta d \thq(z_0/c_{l,m})^\delta} \, \Pi_d(V_l(c_{l,m})).
$$

\begin{cor}
Les \og dérivées étrangères \fg\ non nulles $\Delta_\alpha^{(\delta,\beta)}(A)$
associées à $A$ sont précisément les:
$$
\Delta_{\alpha_{l,m}}^{(\delta,\overline{d^{-1}})}(A) =
\begin{pmatrix} 0_r & \Res_{q,\alpha_{l,m}} \phi \\ 0_{s,r} & 0_s \end{pmatrix}
$$
calculées ci-dessus.
\end{cor}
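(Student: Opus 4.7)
Le plan consiste � lire soigneusement le calcul qui pr�c�de l'�nonc� pour en extraire une preuve rigoureuse. D'abord, puisque $A$ n'a que deux pentes $\mu_1 < \mu_2$, la proposition \ref{prop:casg�n�rique�deuxpentes} garantit que $F_{\overline{c}}$ est unipotente de format
$\begin{pmatrix} I_r & f_{\overline{c}} \\ 0 & I_s \end{pmatrix}$, et donc $F_{\overline{c_0},\overline{c}} := F_{\overline{c_0}}^{-1} F_{\overline{c}}$ est de m�me forme avec bloc sup�rieur $f_{\overline{c}} - f_{\overline{c_0}}$; comme le carr� de la partie nilpotente est nul, on a
$\log F_{\overline{c_0},\overline{c}} = \begin{pmatrix} 0 & f_{\overline{c}} - f_{\overline{c_0}} \\ 0 & 0 \end{pmatrix}$. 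Par d�finition de la $q$-d�riv�e �trang�re et avec $\phi(c) := f_{\overline{c}}(z_0)$, ceci donne imm�diatement
$\Delta_\alpha(A) = \begin{pmatrix} 0 & \Res_{q,\alpha} \phi \\ 0 & 0 \end{pmatrix}$, le terme $f_{\overline{c_0}}(z_0)$ �tant constant en $\alpha$ et donc � r�sidu nul.

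Ensuite, on utilise la formule explicite pour $f_{\overline{c}}$ de la proposition \ref{prop:casg�n�rique�deuxpentes}, et on d�compose $A_1, A_2$ selon leurs espaces caract�ristiques pour faire intervenir les projecteurs $\Pi_d$ de la th�orie spectrale rappel�e en fin de \ref{subsubsection:lin�arisation}. Les p\^oles de $\phi$ ne peuvent provenir que des facteurs $\dfrac{1}{\thq(z_0/c)^\delta}$ (qui produisent des p\^oles sur les $q$-spirales $[-z_0;q]$ �limin�es par la convention sur le point-base et le sous-groupe $G$) ou des facteurs $(q^m c^\delta \Id - \Phi_{A_1,A_2})^{-1}$, dont les p\^oles sont les solutions de $c^\delta q^m = d$ pour $d \in \Sp\ \Phi_{A_1,A_2}$, c'est-�-dire $d = \lambda_{1,i}/\lambda_{2,j}$. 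Les classes $\overline{c}$ de ces p\^oles sont pr�cis�ment les $\alpha_{l,m}$ d�finies dans l'�nonc� (avec $l = m \mod \delta$ parcourant $\{0,\ldots,\delta-1\}$). Le calcul de $\Res_{q,\alpha_{l,m}} \phi$ fait dans le texte juste avant le corollaire (d�veloppement de $c^\delta q^l - d$ au voisinage de $c_{l,m}$ et extraction du r�sidu) donne la formule explicite annonc�e.

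Enfin, il reste � v�rifier l'indexation par $(\delta,\beta)$. Comme $A$ n'a que deux pentes, la d�composition $\Delta_\alpha = \sum_\delta \Delta_\alpha^{(\delta)}$ se r�duit � $\Delta_\alpha = \Delta_\alpha^{(\delta)}$ avec $\delta = \mu_2 - \mu_1$. Pour la d�composition selon $\beta \in \Eq$, on reprend la discussion du \ref{subsubsection:qd�riv�es�trang�res}: la composante $\Delta_\alpha^{(\delta,\beta)}$ est port�e par les blocs $((1,i),(2,j))$ tels que $\beta = \pi(\lambda_{2,j}/\lambda_{1,i}) = \overline{d^{-1}}$. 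Or le projecteur $\Pi_d$ s�lectionne exactement ces blocs, donc $\Pi_d(V_l(c_{l,m}))$ contribue � la composante $\Delta_{\alpha_{l,m}}^{(\delta,\overline{d^{-1}})}(A)$. Pour conclure qu'il n'y a pas d'autres composantes non nulles, on applique le lemme \ref{lem:conditionrectifi�e}: une $(\delta,\beta)$-composante non nulle requiert $\alpha^\delta = \beta^{-1} = \overline{d}$, ce qui force $\alpha$ � �tre une racine $\delta$-i�me de $\overline{d}$ dans $\Eq$, donc l'une des $\delta^2$ classes $\alpha_{l,m}$.

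Le principal obstacle est essentiellement comptable plut�t que conceptuel: s'assurer que les $\delta^2$ racines $\overline{c_{l,m}}$ �num�r�es (pour $l,m = 0,\ldots,\delta-1$) forment bien l'ensemble complet des racines $\delta$-i�mes de $\overline{d}$ dans $\Eq$ sans r�p�tition, et que la normalisation des r�sidus (avec le facteur $\dfrac{1}{c_0}$ introduit dans la d�finition de $\Res_q$ � la fin du \ref{subsubsection:qd�riv�es�trang�res}) est coh�rente avec les expressions obtenues. Aucune de ces v�rifications ne demande d'id�e nouvelle; tous les ingr�dients sont d�j� rassembl�s dans les paragraphes pr�c�dents.
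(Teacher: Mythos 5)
Votre preuve est correcte et suit essentiellement la d\'emarche du texte: le corollaire n'est dans l'article qu'une lecture directe du calcul de r\'esidu qui le pr\'ec\`ede, et vous en explicitez proprement les trois ingr\'edients implicites (lin\'earit\'e du logarithme pour une matrice unipotente \`a deux pentes, d'o\`u $\Delta_\alpha(A)=\Res_\alpha\phi$; localisation des p\^oles via la r\'esolvante $(q^mc^\delta\Id-\Phi_{A_1,A_2})^{-1}$; identification de $\beta=\overline{d^{-1}}$ par les projecteurs $\Pi_d$ et le lemme \ref{lem:conditionrectifi�e}). Rien \`a redire.
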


\paragraph{Un exemple.}

Nous détaillons le cas $r = 2$ car les équations d'ordre $2$ sont celles
qui servent le plus dans les applications (fonctions $q$-spéciales). De
plus, génériquement, tous les calculs peuvent s'y ramener. On prendra ici:
$$
A := \begin{pmatrix} a z^k & u \\ 0 & b z^l \end{pmatrix}, u \in \Rw.
$$
On notera $\delta := l - k$, $d := a/b$ et $\beta := \overline{d^{-1}}$. On est
donc en train de calculer des $q$-dérivées étrangères $\Delta_\alpha^{(\delta,\beta)}$.

\begin{prop}
\label{prop:lecas2x2}
(i) L'ensemble $\Sigma_{A_0}$ est formé des $\delta^2$ racines $\delta$\iemes\ de
$\overline{d}$ dans $\Eq$. Plus précisément, si pour tout $m = 0,\ldots,\delta-1$,
les $c_{l,m}$, $l = 0,\ldots,\delta-1$, sont les $\delta$ racines $\delta$\iemes\ de
$q^{-m} d$ dans $\C^*$, on a:
$$
\Sigma_{A_0} = \{\overline{c_{l,m}} \tq l,m = 0,\ldots,\delta-1\}.
$$
(ii) Soient $f_m(c) := \dfrac{z_0^m v_m(c)}{b \thq(z_0/c)^\delta}$, où l'on a développé
$u z^{-k} \theta_{q,c}^\delta = \sum\limits_{m \in \Z} v_m(c) z^m$, et notons
$\alpha_{l,m} := \overline{c_{l,m}}$ et $\beta := \overline{d^{-1}}$. Alors:
$$
\Delta_{\alpha_{l,m}}^{(\delta,\beta)}(A) =
\begin{pmatrix} 0 & \frac{f_m(c_{l,m})}{\delta d} \\ 0 & 0 \end{pmatrix}.
$$
Tous les autres $\Delta_{\alpha'}^{(\delta',\beta')}(A)$ sont nuls.
\end{prop}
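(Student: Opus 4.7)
Ce r�sultat est essentiellement une sp�cialisation du calcul g�n�rique men� dans le paragraphe \og Application \fg\ de \ref{subsubsection:calcul�deuxpentes} au cas de blocs scalaires $1 \times 1$, et je proc�derais en deux temps.

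Pour (i), je partirais directement de la d�finition de $\Sigma_{A_0}$. Avec $A_1 = (a)$ et $A_2 = (b)$, la condition $q^\Z c^k a \cap q^\Z c^l b \neq \emptyset$ se r�duit � l'existence de $n \in \Z$ tel que $c^\delta = q^{-n} d$. Le changement $c \mapsto q c$ d�cale $n$ de $\delta$, donc les classes dans $\Eq$ sont index�es par $m \in \{0,\ldots,\delta-1\}$, et pour chaque $m$, par les $\delta$ racines $\delta$-i�mes $c_{l,m}$ de $q^{-m} d$ dans $\Cs$. Une v�rification directe montre que si $c_{l,m} = q^s c_{l',m'}$ pour un $s \in \Z$, alors par passage � la puissance $\delta$ on trouve $m' - m = s \delta$, d'o� $s = 0$, $m = m'$, puis $l = l'$; les $\delta^2$ classes $\alpha_{l,m}$ sont donc deux � deux distinctes et exhaustent $\Sigma_{A_0}$.

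Pour (ii), je sp�cialiserais la proposition \ref{prop:casg�n�rique�deuxpentes} en r��crivant $A$ sous la forme standard $\begin{pmatrix} z^k A_1 & z^k U A_2 \\ 0 & z^l A_2 \end{pmatrix}$ avec $U = u z^{-k}/b$. Comme les blocs sont scalaires, l'endomorphisme $\Phi_{A_1,A_2}$ agit comme l'homoth�tie de rapport $d$, et la formule g�n�rale devient
$$
f_{\overline{c}}(z) = \dfrac{1}{b\,\theta_{q,c}^\delta(z)} \sum_{m \in \Z} \dfrac{v_m(c)}{q^m c^\delta - d}\, z^m,
$$
o� les $v_m$ sont exactement ceux de l'�nonc�. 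On �value en $z_0$, puis on prend $\Res_{\overline{c} = \alpha_{l,m}} = \frac{1}{c_{l,m}} \res_{c = c_{l,m}}$. D'apr�s (i), seul le $m$-i�me terme pr�sente un p�le en $c_{l,m}$ (les d�nominateurs $q^{m'} c^\delta - d$, $m' \neq m$, ne s'y annulent pas), et le r�sidu �l�mentaire $\res_{c = c_{l,m}} \frac{1}{q^m c^\delta - d} = \frac{1}{q^m \delta c_{l,m}^{\delta-1}} = \frac{c_{l,m}}{\delta d}$, combin� au pr�facteur, livre pr�cis�ment $\frac{f_m(c_{l,m})}{\delta d}$; la forme matricielle annonc�e en d�coule car $F_{\overline c}$ est triangulaire sup�rieure unipotente.

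Enfin, l'annulation des autres $\Delta_{\alpha'}^{(\delta',\beta')}(A)$ est imm�diate: $A$ n'a qu'un seul �cart de pentes $\delta$, ce qui force $\delta' = \delta$; et la condition spectrale du lemme \ref{lem:conditionrectifi�e} impose $\beta = \pi(b/a) = \overline{d^{-1}}$, ramenant au seul couple trait�. Il n'y a pas de v�ritable obstacle conceptuel, le c\oe ur du raisonnement �tant d�j� fourni par la proposition \ref{prop:casg�n�rique�deuxpentes}; la seule subtilit� r�elle est la d�termination exacte de $\Sigma_{A_0}$ en (i) et la conversion propre entre le r�sidu classique $\res$ sur $\Cs$ et le r�sidu $\Res$ sur $\Eq$ via le facteur $1/c_{l,m}$.
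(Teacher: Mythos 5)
Your proof is correct and follows essentially the same route as the paper's: you specialize the generic two-slope formula to scalar blocks (so that $\Phi_{A_1,A_2}$ becomes the homothety of ratio $d$), evaluate at the base point, and extract the residue at each $c_{l,m}$, where only the $m$-th term of the Laurent series contributes a simple pole with elementary residue $c_{l,m}/(\delta d)$, exactly as in the paper. The only difference is that you spell out part (i), which the paper dismisses as \emph{\'evidente}, and your check that the $\delta^2$ classes $\overline{c_{l,m}}$ are pairwise distinct is a welcome (and correct) addition.
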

\begin{proof}
L'assertion (i) est évidente. Notons, pour $\overline{c} \in \Eq \setminus \Sigma_{A_0}$,
$F_{\overline{c}} = \begin{pmatrix} 1 & f_{\overline{c}} \\ 0 & 1 \end{pmatrix}$.
D'après le calcul général:
$$
f_{\overline{c}}(z) = \dfrac{1}{\theta_{q,c}^\delta}
\sum_{m \in \Z} \dfrac{v_m(c)}{q^m c^\delta - a/b}
\Longrightarrow
\phi(c) = \sum_{m \in \Z} \dfrac{f_m(c)}{q^m c^\delta - d},
$$
où l'on a introduit l'évaluation $\phi(c) := f_{\overline{c}}(z_0)$ au point base.
D'après le calcul général, qu'il est facile de vérifier directement,
$v_{m-\delta}(qc) = (qc)^\delta v_m(c)$, $f_{m-\delta}(qc) = f_m(c)$ et
$\phi(qc) = \phi(c)$ comme il se doit. \\
Les $\delta^2$ pôles de $\phi$ sur $\Eq$ sont les $\overline{c_{l,m}}$; au voisinage
de $c = c_{l,m}$, on a:
$$
\phi(c) = \dfrac{f_m(c)}{q^m (c^\delta - c_{l,m}^\delta)} + \cdots =
\dfrac{f_m(c_{l,m})}{q^m (c - c_{l,m}) \delta c_{l,m}^{\delta-1}} + \cdots =
\dfrac{c_{l,m}}{c - c_{l,m}} \dfrac{f_m(c_{l,m})}{q^m \delta q^{-m} d} + \cdots,
$$
d'où l'on tire enfin
$$
\Res_{q,c_{l,m}} \phi(c) = \dfrac{f_m(c_{l,m})}{\delta d} \cdot
$$
Cela justifie la formule de l'énoncé. La nullité des autres
$\Delta_{\alpha'}^{(\delta',\beta')}(A)$ vient de même.
\end{proof}


\subsubsection{Effet d'une dilatation de la variable $z$}
\label{subsubsection:dilatation}

Nous aurons besoin au \ref{subsubsection:basecanonique} puis à nouveau au
\ref{subsubsection:actiondeGpsurSt'} de la conséquence suivante:

\begin{cor}
\label{cor:Delta(A(lambdaz))}
Outre les notations que ci-dessus, soient $\lambda \in \C^*$ et
$A'(z) := A(\lambda z)$. \\
(i) Notant $c'_{l,m} := \lambda^{-1} c_{l,m}$, on a:
$$
\Sigma_{A'_0} = \{\overline{c'_{l,m}} \tq l,m = 0,\ldots,\delta-1\} =
\overline{\lambda} \Sigma_{A_0}.
$$
(ii) Soient $\alpha'_{l,m} := \overline{c'_{l,m}}$, $d' := \lambda^{-\delta} d$
et $\beta' := \overline{{d'}^{-1}} = \overline{\lambda}^\delta \beta$. Alors:
$$
\Delta_{\alpha'_{l,m}}^{(\delta,\beta')}(A') = 
\left(\dfrac{\thq(z_0/c_{l,m})}{\thq(z_0/c'_{l,m})}\right)^\delta
\lambda^m \Delta_{\alpha_{l,m}}^{(\delta,\beta)}(A).
$$
\end{cor}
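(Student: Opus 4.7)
Le plan est d'exploiter la covariance de la sommation alg�brique sous dilatation de la variable. Si $F_{\overline{c}}: A_0 \rightarrow A$ est la transformation de jauge fournie par le th�or�me \ref{thm:sommationalg�brique}, l'�quation fonctionnelle $(\sq F_{\overline{c}}) A_0 = A F_{\overline{c}}$ et la condition de polarit� sont imm�diatement h�rit�es par $F_{\overline{c}}(\lambda z)$ en tant que transformation de jauge $A'_0 \rightarrow A'$, de polarit� sur $[-c/\lambda;q]$ (puisque les p�les de $F_{\overline{c}}(\lambda z)$ sont obtenus en r�solvant $\lambda z \in [-c;q]$). L'unicit� de la sommation alg�brique donne donc l'identit� cl�:
$$
F'_{\overline{c'}}(z) = F_{\overline{c}}(\lambda z), \quad \text{o�~} c := \lambda c'.
$$
La partie (i) en d�coule directement: l'ensemble $\Sigma_{A'_0}$ est l'image de $\Sigma_{A_0}$ par la bijection $\overline{c} \mapsto \overline{c/\lambda}$ de $\Eq$, d'o� $c'_{l,m} = \lambda^{-1} c_{l,m}$.

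Pour (ii), j'�valuerai l'identit� pr�c�dente en $z = z_0$ puis prendrai le $q$-r�sidu en $c'$ au point $\alpha'_{l,m}$. Le changement de variables $c = \lambda c'$ pr�serve le $q$-r�sidu (on a $dc/c = dc'/c'$) et ram�ne le calcul � $\Res_{q, \alpha_{l,m}} f_{\overline{c}}(\lambda z_0)$. Ce dernier s'obtient en r�p�tant le calcul de r�sidu effectu� dans la preuve de la proposition \ref{prop:lecas2x2}, mais en �valuant en $\lambda z_0$ au lieu de $z_0$: le facteur $z_0^m$ y devient $(\lambda z_0)^m = \lambda^m z_0^m$ et le facteur $\thq(z_0/c_{l,m})^\delta$ devient $\thq(\lambda z_0/c_{l,m})^\delta = \thq(z_0/c'_{l,m})^\delta$, cette derni�re �galit� r�sultant directement de $c_{l,m} = \lambda c'_{l,m}$. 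Le regroupement fait alors appara�tre exactement le facteur $\lambda^m \bigl(\thq(z_0/c_{l,m})/\thq(z_0/c'_{l,m})\bigr)^\delta$ annonc�.

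Aucun obstacle conceptuel n'appara�t; le seul point d�licat est de bien r�partir les puissances de $\lambda$ entre les diff�rents facteurs. L'int�r�t de passer par l'identit� cl� $F'_{\overline{c'}}(z) = F_{\overline{c}}(\lambda z)$ est que la modification $d' = \lambda^{-\delta} d$ n'intervient pas explicitement: elle est implicitement absorb�e par la pr�servation du $q$-r�sidu sous le changement de variables $c = \lambda c'$, ce qui �vite la combinatoire qu'une application directe de la proposition \ref{prop:lecas2x2} � $A'$ imposerait.
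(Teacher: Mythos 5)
Votre preuve est correcte et aboutit \`a la bonne formule, mais elle emprunte une route sensiblement diff\'erente de celle du texte. La d\'emonstration originale applique directement la formule explicite de la proposition \ref{prop:lecas2x2} \`a $A'$ : elle calcule $a'=\lambda^k a$, $b'=\lambda^l b$, $d'=\lambda^{-\delta}d$, d\'eveloppe $u'=\sum\lambda^p u_p z^p$, en d\'eduit la relation entre les coefficients de Laurent $v'_m(c')$ et $v_m(c)$ via le d\'eveloppement de $\thqd$, puis reporte le tout dans l'expression du coefficient surdiagonal. Vous remplacez cette comptabilit\'e de coefficients par l'identit\'e de covariance $F'_{\overline{c/\lambda}}(z)=F_{\overline{c}}(\lambda z)$, obtenue par unicit\'e de la sommation alg\'ebrique, combin\'ee \`a l'invariance du $q$-r\'esidu sous la substitution $c=\lambda c'$ (le facteur $1/\lambda$ du r\'esidu ordinaire est exactement compens\'e par le facteur $1/c_0$ de la d\'efinition de $\Res$ --- cela m\'erite d'\^etre \'ecrit). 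Le seul recours \`a la formule explicite est alors l'identification de l'effet du d\'eplacement du point d'\'evaluation de $z_0$ \`a $\lambda z_0$, qui produit pr\'ecis\'ement $\lambda^m\left(\thq(z_0/c_{l,m})/\thq(z_0/c'_{l,m})\right)^\delta$ ; votre argument rend ainsi transparent le fait que $d'=\lambda^{-\delta}d$ n'a pas \`a intervenir explicitement, ce qui est un gain r\'eel de lisibilit\'e. Deux r\'eserves mineures de r\'edaction : d'une part, l'assertion (i) ne \og d\'ecoule \fg\ pas de l'identit\'e cl\'e mais doit \^etre \'etablie \emph{avant} elle, puisque l'unicit\'e que vous invoquez suppose d\'ej\`a que $\overline{c/\lambda}$ est une direction autoris\'ee pour $A'_0$ ; elle se v\'erifie en une ligne \`a partir de $d'=\lambda^{-\delta}d$, comme dans le texte. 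D'autre part, le passage du logarithme de $F_{\overline{c_0},\overline{c}}$ au seul bloc surdiagonal $f_{\overline{c}}$ devrait \^etre mentionn\'e, mais il est d\'ej\`a acquis dans la preuve de la proposition \ref{prop:lecas2x2}.
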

\begin{proof}
Les pentes de $A'$ sont les mêmes que celles de $A$, d'où même niveau $q$-Gevrey
$\delta' = \delta$. On a avec des notations évidentes $a' = \lambda^k a$,
$b' = \lambda^l b$ et $a'/b' = \lambda^{-\delta} a/b = \lambda^{-\delta} d = d'$;
on peut donc prendre $c'_{l,m} := \lambda^{-1} c_{l,m}$, d'où l'assertion (i). \\
De plus, $u'(z) = u(\lambda z)$ et $A'_0(z) = A_0(\lambda z)$. En particulier,
du développement $u = \sum u_p z^p$ on déduit $u' = \sum u'_p z^p$ avec
$u'_p = \lambda^p u_p$. \\
Notons $\thqd(z) = \sum\limits_{n \in \Z} t_n^{(\delta)} z^n$, d'où:
$$
u z^{-k} \thqc^\delta = \sum_{n,p} u_p z^p z^{-k} t_n^{(\delta)} z^n c^{-n}
\Longrightarrow
v_m(c) = \sum_{n+p-k = m} u_p t_n^{(\delta)} c^{-n} =
\sum_{n+p = m} u_p t_{n+k}^{(\delta)} c^{-(n+k)}.
$$
On en déduit, si $c' = \lambda^{-1} c$:
$$
v'_m(c') = v'_m(\lambda^{-1} c) =
\sum_{n+p = m} u'_p t_{n+k}^{(\delta)} (\lambda^{-1} c)^{-(n+k)} = \lambda^{n+k} v_m(c).
$$
Le coefficient surdiagonal $N'$ de $\Delta_{\alpha'_{l,m}}^{(\delta,\beta')}(A')$ est donc,
d'après la proposition:
$$
N' = \dfrac{z_0^m v'_m(c'_{l,m})}{\delta a' \left(\thq(z_0/c'_{l,m})\right)^\delta} =
\left(\dfrac{\thq(z_0/c_{l,m})}{\thq(z_0/c'_{l,m})}\right)^\delta \lambda^m N,
$$
où $N$ désigne le coefficient surdiagonal de $\Delta_{\alpha_{l,m}}^{(\delta,\beta)}(A)$.
Comme
$\Delta_{\alpha_{l,m}}^{(\delta,\beta)}(A) = \begin{pmatrix} 0 & N \\ 0 & 0 \end{pmatrix}$ et
$\Delta_{\alpha'_{l,m}}^{(\delta,\beta')}(A') = \begin{pmatrix} 0 & N' \\ 0 & 0 \end{pmatrix}$,
la conclusion s'ensuit.
\end{proof}

Remarquons pour usage ultérieur que, si $\phi \in \Hom_{gr}(\C^*,\C^*)$ est tel que
$\phi(q) = \lambda$, alors $\lambda^m = \phi(d/c_{l,m}^\delta)$.


\subsubsection{Une base canonique conditionnelle de $\sttqu$}
\label{subsubsection:basecanonique}

\paragraph{Préliminaires.}

Fixons $\delta \in \N^*$ et $\beta \in \Eq$. D'après \cite[\S 3.3]{RS3}, l'espace
vectoriel $\left(\dfrac{\sttqu}{[\sttqu,\sttqu]}\right)^{(\delta,\beta)}$ est engendré
par les $\delta^2$ classes des $\Delta_\alpha^{(\delta,\beta)}$, $\alpha \in \Eq$,
$\alpha^\delta = \beta$. De plus, il est de dimension $\delta$ car dual de l'espace
$K_{0,\delta} = \C \oplus \C z \oplus \cdots \oplus \C z^{\delta - 1}$. On peut réaliser
l'accouplement comme suit; soient $a,b \in \Cs$ tels que $\pi(b/a) = \beta$ et
$k,l \in \Z$ tels que $l - k = \delta$, aucune autre condition n'étant posée. Alors
la formule:
$$
\langle \Delta,u \rangle := \Delta_\alpha^{(\delta,\beta)}(A_u), \quad
A_u := \begin{pmatrix} a z^k & u \\ 0 & b z^l \end{pmatrix}
$$
réalise cette dualité. On supposera désormais que $k = 0$ et $b = 1$:
$$
A_u := \begin{pmatrix} a & u \\ 0 & z^\delta \end{pmatrix}.
$$

Si pour tous $\delta \in \N^*$ et $\beta \in \Eq$ on choisit $\delta$ éléments
$\Delta_i^{(\delta,\beta)}$, $i = 0,\ldots,\delta-1$ parmi les $\delta^2$ éléments
$\Delta_\alpha^{(\delta,\beta)}$ de telle sorte que leurs classes forment une base
de $\left(\dfrac{\sttqu}{[\sttqu,\sttqu]}\right)^{(\delta,\beta)}$, et si l'on adjoint
à la famille de tous les $\Delta_i^{(\delta,\beta)}$ ($\delta$, $\beta$ et $i$ variant
comme indiqué) le générateur $\tau$ de $\sttq^{(0,\overline{1})}$, on obtient comme
énoncé dans le théorème \ref{thm:thprincipalpentesentières} une base d'une algèbre
de Lie graduée $L_q^1$ dont le rôle est crucial dans la correspondance de
Riemann-Hilbert-Birkhoff (on l'a vu en \ref{subsubsection:qdérivéesétrangères},
on le verra à nouveau en \ref{subsection:groupedeGaloispentesarbitraires}). Pour
réaliser un tel choix, il suffit (par la dualité énoncée plus haut) que les $\delta$
formes linéaires $u \mapsto \langle \Delta_i^{(\delta,\beta)},u \rangle$ sur $K_{0,\delta}$
soient linéairement indépendantes.

\paragraph{Relations entre les $\delta^2$ $q$-dérivées étrangères $\Delta_\alpha^{(\delta,\beta)}$.}

On fixe donc $a \in \C^*$ et $\delta \in \N^*$. Notons $u_j := z^j$ les éléments
de la base canonique de $K_{0,\delta}$ et:
$$
A_j := A_{u_j} = \begin{pmatrix} a & z^j \\ 0 & z^\delta \end{pmatrix},
j = 0,\ldots,\delta-1.
$$
Avec les notations de \ref{subsubsection:calculàdeuxpentes}, on a donc $d = a$.
On choisit pour $c$ une détermination particlulière de la racine $\delta$\ieme\
de $a$ (un tel choix sera précisé juste avant le théorème \ref{thm:basecanonique}).
On pose alors, en cohérence avec \ref{subsubsection:dilatation}:
$$
\beta := \overline{d^{-1}}, \quad
c_{l,m} := \zeta_\delta^{-l} q_\delta^{-m} c \quad \text{~et~}
\alpha_{l,m} := \overline{c_{l,m}}.
$$
Enfin, pour simplifier l'application des formules du \ref{subsubsection:dilatation}
(en particulier du corollaire \ref{cor:Delta(A(lambdaz))}), on pose:
$$
\psi_{l,m}(u) := \left(\thq(z_0/c_{l,m})\right)^\delta \Delta_{\alpha_{l,m}}^{(\delta,\beta)}(A_u).
$$
Nous voulons trouver $\delta$ parmi les formes linéaires $\psi_{l,m}$,
$l,m = 0,\ldots,\delta-1$ sur $K_{0,\delta}$ qui soient indépendantes. Pour cela,
nous appliquerons aux $\psi_{l,m}(u_j)$ le corollaire \ref{cor:Delta(A(lambdaz))}
pour des valeurs bien choisies de $\lambda$. On aura donc:
$$
A'_u := A_u(\lambda z) =
\begin{pmatrix} a & u' \\ 0 & \lambda^\delta z^\delta \end{pmatrix} \quad
\text{et en particulier:~}
A'_j := A_j(\lambda z) =
\begin{pmatrix} a & \lambda^j z^j  \\ 0 & \lambda^\delta z^\delta \end{pmatrix}.
$$

\paragraph{Dilatation de facteur $\lambda = \zeta_\delta$.}

On a dans ce cas $\lambda^\delta = 1$. Avec les notations de
\ref{subsubsection:dilatation}, on voit que $a' = a$, $d' = d$,
$c'_{l,m} = \zeta_\delta^{-1} c_{l,m} = c_{l+1,m}$, si l'on convient de calculer
l'indice $l$ modulo $\delta$ (ce qui est cohérent puisqu'il n'intervient que
via le facteur $\zeta_\delta^{-l}$), de sorte que $\alpha_{\delta,m} = \alpha_{0,m}$;
et donc $\beta' = \beta$ et $\alpha'_{l,m} = \alpha_{l+1,m}$. Le corollaire
\ref{cor:Delta(A(lambdaz))} donne ici:
$$
\Delta_{\alpha'_{l,m}}^{(\delta,\beta')}(A_j') =
\left(\dfrac{\thq(z_0/c_{l,m})}{\thq(z_0/c'_{l,m})}\right)^\delta
\zeta_\delta^m \; \Delta_{\alpha_{l,m}}^{(\delta,\beta)}(A_j).
$$
La linéarité en $u_j' = \zeta_\delta^j u_j$ de $\Delta(A_j')$ entraîne:
$$
\Delta_{\alpha'_{l,m}}^{(\delta,\beta')}(A_j') =
\zeta_\delta^j \; \Delta_{\alpha'_{l,m}}^{(\delta,\beta')}(A_j).
$$
On a donc en combinant:
$$
\Delta_{\alpha_{l+1,m}}^{(\delta,\beta)}(A_j) = 
\left(\dfrac{\thq(z_0/c_{l,m})}{\thq(z_0/c'_{l,m})}\right)^\delta \zeta_\delta^{m-j} \;
\Delta_{\alpha_{l,m}}^{(\delta,\beta)}(A_j),
$$
d'où, en chassant les dénominateurs:
\begin{equation}
\label{eqn:basecanfacteurzeta}
\psi_{l+1,m}(u_j) = \zeta_\delta^{m-j} \psi_{l,m}(u_j).
\end{equation}

\paragraph{Dilatation de facteur $\lambda = q_\delta$.}

On a maintenant $\lambda^\delta = q$, d'où:
$$
A'_j = \begin{pmatrix} a & q_\delta^j z^j \\ 0 & q z^\delta \end{pmatrix},
$$
d'où encore $a' = a$ mais $d' = d/q$, donc $\beta' = \beta$ et:
$$
c'_{l,m} = q_\delta^{-1} c_{l,m} =
\begin{cases} c_{l,m+1} \text{~si~} m < \delta - 1, \\
q^{-1} c_{l,0} \text{~si~} m = \delta - 1, \end{cases}
$$
et, dans tous les cas, $\alpha'_{l,m} = \alpha_{l,m+1}$ (où $m$ est ici compté
modulo $\delta$). \\

D'après le corollaire \ref{cor:Delta(A(lambdaz))}:
$$
\Delta_{\alpha_{l,m+1}}^{(\delta,\beta)}(A_j') =
\left(\dfrac{\thq(z_0/c_{l,m})}{\thq(z_0/c'_{l,m})}\right)^\delta
q_\delta^m \; \Delta_{\alpha_{l,m}}^{(\delta,\beta)}(A_j).
$$
Dans le cas spécial où $m = \delta - 1$, $c'_{l,\delta-1} = q^{-1} c_{l,0}$,
le facteur theta s'écrit:
$$
\left(\dfrac{\thq(z_0/c_{l,\delta-1})}{\thq(z_0/c'_{l,\delta - 1})}\right)^\delta =
\left(\dfrac{\thq(z_0/c_{l,\delta-1})}{\thq(q z_0/c_{l,0})}\right)^\delta =
(c_{l,0}/z_0)^\delta \left(\dfrac{\thq(z_0/c_{l,\delta-1})}{\thq(z_0/c_{l,0})}\right)^\delta =
(a/z_0^\delta) \left(\dfrac{\thq(z_0/c_{l,\delta-1})}{\thq(z_0/c_{l,0})}\right)^\delta.
$$
On peut donc écrire pour abréger l'égalité valable pour $m = 0,\ldots,\delta-1$:
$$
\left(\dfrac{\thq(z_0/c_{l,m})}{\thq(z_0/c'_{l,m})}\right)^\delta =
\left[\dfrac{a}{z_0^\delta}\right]_{m = \delta-1}
\left(\dfrac{\thq(z_0/c_{l,m})}{\thq(z_0/c_{l,m+1})}\right)^\delta,
$$
où le facteur $\dfrac{a}{z_0^\delta}$ entre crochets $\left[-\right]_{m = \delta-1}$
est par convention présent si $m = \delta-1$ et absent sinon et où $m$ est compté
modulo $\delta$. Notre formule précédente devient donc:
$$
\thq(z_0/c_{l,m+1})^\delta \, \Delta_{\alpha_{l,m+1}}^{(\delta,\beta)}(A_j') =
\left[\dfrac{a}{z_0^\delta}\right]_{m = \delta-1} q_\delta^m \psi_{l,m}(u_j).
$$
Pour élucider le facteur $\Delta_{\alpha_{l,m+1}}^{(\delta,\beta)}(A_j')$, on élimine le
facteur $q$ de $z^\delta$ par une transformation de jauge (vérifiable directement):
$$
A''_j := \begin{pmatrix} a & q_\delta^j z^{j+1} \\ 0 & z^\delta \end{pmatrix}
\overset{F}{\longrightarrow}
\begin{pmatrix} a & q_\delta^j z^j \\ 0 & q z^\delta \end{pmatrix} = A'_j,
\quad \text{où~} F :=  \begin{pmatrix} 1 & 0 \\ 0 & z \end{pmatrix},
$$
d'où l'on tire, par fonctorialité et linéarité, si $j < \delta - 1$:
$$
\Delta_{\alpha_{l,m+1}}^{(\delta,\beta)}(A'_j) =
F(z_0) \Delta_{\alpha_{l,m+1}}^{(\delta,\beta)}(A''_j) F(z_0)^{-1} =
q_\delta^j F(z_0) \Delta_{\alpha_{l,m+1}}^{(\delta,\beta)}(A_{j+1}) F(z_0)^{-1} =
\dfrac{q_\delta^j}{z_0} \Delta_{\alpha_{l,m+1}}^{(\delta,\beta)}(A_{j+1}),
$$
le calcul de la conjugaison par $F(z_0)$ étant justifié par la forme triangulaire
supérieure stricte de $\Delta_{\alpha_{l,m+1}}^{(\delta,\beta)}(A_{j+1})$. On en déduit
une première formule:
$$
\psi_{l,m+1}(u_{j+1}) =
\left[\dfrac{a}{z_0^\delta}\right]_{m = \delta-1} z_0 q_\delta^{m-j} \psi_{l,m}(u_j).
$$
Si maintenant $j = \delta-1$, on ramène le $z^\delta$ qui se trouve en position
$(1,2)$ de $A''_j$ à un élément de la base des $u_j$ par une transformation de
jauge (vérifiable directement):
$$
A'''_{\delta-1} := \begin{pmatrix} a & a q_\delta^{\delta-1} \\ 0 & z^\delta \end{pmatrix}
\overset{G}{\longrightarrow}
\begin{pmatrix} a & q_\delta^{\delta-1} z^\delta \\ 0 & z^\delta \end{pmatrix} = A''_{\delta-1},
\quad \text{où:~} G :=  \begin{pmatrix} 1 & q_\delta^{\delta-1} \\ 0 & 1 \end{pmatrix},
$$
d'où l'on tire, par fonctorialité et linéarité:
$$
\Delta_{\alpha_{l,m+1}}^{(\delta,\beta)}(A'_{\delta-1}) = 
(FG)(z_0) \Delta_{\alpha_{l,m+1}}^{(\delta,\beta)}(A'''_{\delta-1}) (FG(z_0))^{-1} =
\dfrac{a q_\delta^{\delta-1}}{z_0} \, \Delta_{\alpha_{l,m+1}}^{(\delta,\beta)}(A_0),
$$
d'où enfin:
$$
\psi_{l,m+1}(u_0) = \left[\dfrac{a}{z_0^\delta}\right]_{m = \delta-1}
\dfrac{z_0}{a} q_\delta^{m - (\delta-1)} \psi_{l,m}(u_{\delta-1}).
$$
On peut réunir les deux cas en une seule formule avec la convention analogue
sur le facteur entre crochets (et où l'on compte $j$ et $m$ modulo $\delta$):
\begin{equation}
\label{eqn:basecanfacteurqdelta}
\psi_{l,m+1}(u_{j+1}) = \left[\dfrac{a}{z_0^\delta}\right]_{m = \delta-1}
\left[\dfrac{1}{a}\right]_{j = \delta-1} z_0 \, q_\delta^{m-j} \, \psi_{l,m}(u_j).
\end{equation}

\paragraph{Puissances de la fonction theta et \og bonnes valeurs \fg\ de $q$.}

Rappelons que la fonction theta $\thq$, $\lmod q \rmod > 1$, a été définie par
la formule $\thq(z) := \sum\limits_{m \in \Z} q^{-m(m+1)/2} z^m$, $z \in \Cs$,  et
que les coefficients $t_n^{(\delta)}$, $\delta \in \N^*$, $n \in \Z$, l'ont été
(au \ref{subsubsection:dilatation}) par le développement en série de Laurent:
$$
\thqd(z) = \sum\limits_{n \in \Z} t_n^{(\delta)} z^n.
$$
Ces séries étant absolument convergentes, on a la formule explicite:
$$
t_n^{(\delta)} = \sum_{m_1 + \cdots + m_\delta = n} q^{- (m_1(m_1+1) + \cdots + m_\delta(m_\delta+1))/2},
$$
qui entraîne que chaque $t_n^{(\delta)}$ est une fonction holomorphe de $q$ sur l'ouvert
connexe $\lmod q \rmod > 1$. Comme $q > 0 \Rightarrow t_n^{(\delta)}(q) > 0$, cette
fonction est non triviale et l'ensemble de ses zéros est discret donc dénombrable.
En conséquence, l'ensemble de tous les zéros de toutes les fonctions $t_n^{(\delta)}$
est dénombrable. \\

Il est évident que $t_n^{(1)}(q) = q^{-n(n+1)/2}$ ne s'annule jamais; c'est aussi vrai
de $t_n^{(2)}(q)$ en vertu du calcul suivant:
\begin{align*}
t_n^{(2)}(q)
&= \sum_{l+m=n} q^{-(l(l+1) + m(m+1))/2} \\
&= \sum_{l+m=n} q^{-n(n+1)/2} q^{lm} \\
&= q^{-n(n+1)/2} \sum_m q^{m(n-m)} \\
&= q^{-n(n+1)/2} \sum_m (q^2)^{-m(m+1)/2} (q^{n+1})^m \\
&= q^{-n(n+1)/2} \theta_{q^2}(q^{n+1}),
\end{align*}
qui ne peut s'annuler pour $\lmod q \rmod > 1$ en vertu de la formule du triple
produit et de l'impossibilité de la relation $q^{n+1} = - q^k$, $k \in \Z$. 

\begin{defn}
\label{defn:bonnevaleur}
On dira que $q$ a une \emph{bonne valeur} si:
$$
\forall \delta \in \N^* \;,\; \forall n \in \Z \;,\; t_n^{(\delta)}(q) \neq 0.
$$
\end{defn}

C'est donc une propriété génériquement satisfaite (\ie\ hors d'un ensemble au plus
dénombrable). Nous verrons que toute valeur de $q$ n'est pas bonne. \\

Le calcul des $t_n{(2)}$ indiqué plus haut permet facilement d'obtenir la formule:
$$
\theta_q^2(z) 
= \theta_{q^2}(q) \theta_{q^2}(z^2) + \theta_{q^2}(1) \theta_{q^2}(q z^2) z^{-1} 
= \theta_{q^2}(q) \theta_{q^2}(z^2) + \theta_{q^2}(1) \theta_{q^2}(q^{-1} z^2) z
$$
Il est difficile de ne pas espérer qu'une telle formule admette des généralisations
aux puissances supérieures. Selon le Professeur Bruce Berndt, les puissances de
fonctions theta apparaissent dans le volume III des \og Ramanujan Notebooks \fg\
\cite{BerndtRamanujanNBIII}; et en effet, il y donne la preuve (\emph{Entry 29}) d'une
telle relation dont l'égalité ci-dessus est conséquence facile; et l'\emph{Entry 30}
semble donner la clé d'un calcul de $\theta_q^3(z)$ et peut-être de $\theta_q^4(z)$,
mais je n'ai pas été capable de m'en assurer. \\

Changgui Zhang (communication personnelle) a développé un mode de calcul qui lui
permet d'établir itérativement de telles identités. Il m'a signalé avoir déduit
d'une telle identité que $t_0^{(3)}(q)$ s'annule au moins pour une valeur réelle
négative de $q$. J'espère qu'il publiera ses résultats et vais en donner une preuve
toute différente, dont je crois qu'elle est également susceptible de généralisation.
On vérifie d'abord aisément le calcul suivant:
\begin{align*}
t_0^{(3)}(q)
&= \sum_{a,b,c \in \Z \atop a+b+c = 0} q^{- (a^2 + a + b^2 + b + c^2 + c)/2} \\
&= \sum_{a,b,c \in \Z \atop a+b+c = 0} q^{ab + bc + ca} \\
&= \sum_{a,b \in \Z} q^{-(a^2 + ab + b^2)} \\
&= f(q^{-1}),
\end{align*}
où l'on a posé, pour $\lmod x \rmod < 1$,
$$
f(x) := \sum_{a,b \in \Z} x^{a^2 + ab + b^2} = \sum_{n \geq 0} r(n) x^n.
$$
Ici $r(n)$ désigne le nombre de couples $(a,b) \in \Z \times \Z$ tels que
$a^2 + ab + b^2 = n$ (le calcul est entièrement justifié par le fait que
cette forme quadratique est définie positive). L'entier:
$$
R(n) := r(0) + \cdots + r(n) = \text{card} \{(a,b) \in \Z \times \Z \tq a^2 + ab + b^2 \leq n\}
$$
admet, lorsque $n \to + \infty$, l'équivalent $C n$, où $C$ est l'aire de l'ellipse
$x^2 + xy + y^2 \leq n$, soit $2 \pi/\sqrt{3}$. On en déduit par théorème abélien
standard que, lorsque $x \to 1^-$:
$$
\dfrac{f(x)}{1-x} = \sum_{n \geq 0} R(n) x^n \sim \sum C n x^n = C \dfrac{x}{(1-x)^2}
\Longrightarrow f(x) \sim \dfrac{C}{1-x} \cdot
$$
D'autre part, l'exposant $a^2 + ab + b^2$ est pair si, et seulement si $a$ et $b$
le sont. La partie paire de la série $f(x)$ est donc la sous-somme sur les tels
couples, c'est-à-dire $f(x^4)$, et l'on a donc $f(-x) = 2 f(x^4) - f(x)$. Ainsi,
toujours pour $x \to 1^-$:
$$
f(x^4) \sim \dfrac{C}{1-x^4} \sim \dfrac{C}{4(1-x)} \Longrightarrow
f(-x) \sim \dfrac{-C}{2(1-x}) \Longrightarrow
\lim_{x \to -1 \atop x > -1} f(x) = - \infty.
$$
Comme $f(0) = 1$, la fonction s'annule au moins une fois sur $\left]-1,0\right[$
et $t_0^{(3)}(q)$ s'annule donc au moins une fois sur $\left]-1,-\infty\right[$.

\paragraph{Une base canonique conditionnelle.}

On opère désormais les choix suivants, en cohérence avec
\ref{subsubsection:calculàdeuxpentes} et \ref{subsubsection:dilatation}:
\begin{enumerate}
\item On prend pour chaque $\beta \in \Eq$ le représentant $d$ de $\beta^{-1}$
tel que $d^{-1}$ appartienne à la la couronne fondamentale:
$$
1 \leq \lmod d^{-1} \rmod < \lmod q \rmod;
$$
\item pour $\delta \in \N^*$ donné, on appelle $c$ l'unique racine $\delta$\ieme\
de $d$ dont un argument appartient à $\left]-2\pi/\delta,0\right]$, puis on pose,
pour $l,m = 0,\ldots,\delta-1$:      
$$
c_{l,m} := \zeta_\delta^{-l} q_\delta^{-m} c \quad \text{~et~}
\alpha_{l,m} := \overline{c_{l,m}};
$$
\item on note $\Delta_l^{(\delta,\beta)} := \Delta_{\alpha_{l,0}}^{(\delta,\beta)}$ pour
$l = 0,\ldots,\delta-1$.
\end{enumerate}

On voit donc que le module de $c_{l,m}$ appartient à
$\left]\lmod q_\delta\rmod^{-m-1},\lmod q_\delta\rmod^{-m}\right]$
et son argument à $\left]-2 (l+1) \pi/\delta,-2 l \pi/\delta\right]$; et aussi
que ces conditions déterminent parfaitement les $c_{l,m}$, une fois $\delta \in \N^*$
et $\beta \in \Eq$ fixés.

\begin{thm}
\label{thm:basecanonique}
On suppose que $q$ a une bonne valeur. Alors en adjoignant le générateur $\tau$
de $\sttq^{(0,\overline{1})}$ à la famille de tous les $\Delta_l^{(\delta,\beta)}$,
$\delta \in \N^*$, $\beta \in \Eq$, $l = 0,\ldots,\delta-1$, on obtient une base
d'une algèbre de Lie graduée $L_q^1$ possédant les propriétés énoncées dans le
théorème \ref{thm:thprincipalpentesentières}.
\end{thm}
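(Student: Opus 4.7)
Le plan consiste � ramener la question � un calcul de d�terminant de Vandermonde, � l'aide des relations \eqref{eqn:basecanfacteurzeta} et \eqref{eqn:basecanfacteurqdelta} �tablies ci-dessus. Fixons $(\delta,\beta) \in \N^* \times \Eq$ : je veux montrer que les $\delta$ �l�ments $\Delta_l^{(\delta,\beta)} = \Delta_{\alpha_{l,0}}^{(\delta,\beta)}$, $l = 0,\ldots,\delta-1$, ont pour classes une base de l'espace $\left(\sttqu/[\sttqu,\sttqu]\right)^{(\delta,\beta)}$, qui est de dimension $\delta$. Par la dualit� rappel�e au d�but du num�ro, $\langle \Delta, u \rangle := \Delta_\alpha^{(\delta,\beta)}(A_u)$, cela �quivaut � ce que les $\delta$ formes lin�aires $u \mapsto \Delta_{\alpha_{l,0}}^{(\delta,\beta)}(A_u)$ sur $K_{0,\delta}$ soient ind�pendantes, ou encore, apr�s multiplication par les scalaires non nuls $\thq(z_0/c_{l,0})^\delta$, que les $\psi_{l,0}$ le soient.

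J'�value alors ces formes sur la base $u_j := z^j$, $j = 0,\ldots,\delta-1$ de $K_{0,\delta}$, et j'examine la matrice $M := (\psi_{l,0}(u_j))_{l,j}$. L'it�ration de la relation \eqref{eqn:basecanfacteurzeta} prise en $m = 0$ donne $\psi_{l,0}(u_j) = \zeta_\delta^{-lj} \, \psi_{0,0}(u_j)$, d'o� la factorisation $M = V D$, avec $V := \left((\zeta_\delta^{-l})^j\right)_{l,j}$ matrice de Vandermonde aux nodes deux � deux distincts $\zeta_\delta^{-l}$, $l = 0,\ldots,\delta-1$, et $D := \Diag\left(\psi_{0,0}(u_0),\ldots,\psi_{0,0}(u_{\delta-1})\right)$. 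Puisque $V$ est inversible, l'ind�pendance souhait�e se ram�ne � la non-annulation des $\delta$ scalaires $\psi_{0,0}(u_j)$.

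L'�tape d�licate et principale obstruction est leur calcul explicite, que je fournis en appliquant la proposition \ref{prop:lecas2x2} � $A_j = \begin{pmatrix} a & z^j \\ 0 & z^\delta \end{pmatrix}$ : le d�veloppement $z^j \thqc^\delta = \sum\limits_{n \in \Z} t_n^{(\delta)} c^{-n} z^{n+j}$ donne $v_m(c) = t^{(\delta)}_{m-j} c^{j-m}$, d'o�, apr�s simplification imm�diate, $\psi_{0,0}(u_j) = \dfrac{t^{(\delta)}_{-j}(q) \, c^{\,j}}{\delta \, d}$. Comme $c$ et $d$ sont non nuls, ce scalaire est non nul si et seulement si $t^{(\delta)}_{-j}(q) \neq 0$ pour $j = 0,\ldots,\delta-1$, ce qui est pr�cis�ment l'hypoth�se de \og bonne valeur \fg\ de la d�finition \ref{defn:bonnevaleur}. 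C'est ici que se concentre toute la difficult� r�siduelle de l'�nonc� : la restriction aux bonnes valeurs de $q$ appara�t \emph{a posteriori} comme la seule obstruction, et elle motive r�troactivement les choix de normalisation (repr�sentant $c$ de la racine $\delta$\ieme\ de $d^{-1}$, num�rotation des $c_{l,m}$).

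Une fois �tablie la propri�t� de base, les conclusions r�siduelles du th�or�me \ref{thm:thprincipalpentesenti�res} (graduation par le mono�de $(\N^* \times \Eq) \cup \{0\}$, libert� de $L_q^1$, �quivalence entre la cat�gorie de ses repr�sentations compatibles avec l'action de $\mathbf{G}_{q,p,s}^1$ et $\EE_q^1$) se transportent imm�diatement, la seule chose qui ait chang� �tant le choix canonique d'une base de chaque composante $(\delta,\beta)$.
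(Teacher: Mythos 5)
Votre preuve est correcte et suit essentiellement la m\^eme d\'emarche que celle du texte : factorisation de la matrice $(\psi_{l,0}(u_j))$ en produit d'une matrice de Vandermonde (via la relation \eqref{eqn:basecanfacteurzeta}) et d'une matrice diagonale, puis non-annulation des coefficients diagonaux ramen\'ee au calcul $v_m(c) = t^{(\delta)}_{m-j} c^{j-m}$ et \`a l'hypoth\`ese de bonne valeur de $q$. Le seul \'ecart est cosm\'etique : vous explicitez la valeur $\psi_{0,0}(u_j) = t^{(\delta)}_{-j}(q)\,c^j/(\delta d)$ l\`a o\`u le texte se contente de constater que tous les $\psi_{l,m}(u_j)$ sont non nuls.
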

\begin{proof}
Combinant les notations de \ref{subsubsection:basecanonique} à celles de la
proposition \ref{prop:lecas2x2} de \ref{subsubsection:calculàdeuxpentes}, on voit
que, si $u = u_j$ (avec ici $k = 0$):
$$
\sum v_m(c) z^m = u \theta_{q,c}^\delta = \sum t_n^{(\delta)} z^{n+j}/c^n,
$$
d'où $v_m(c) = t_{m-j}^{(\delta)}/c^{m-j}$, qui est non nul puisque $q$ a une bonne
valeur, ce qui implique que $f_m(c) \neq 0$. \\
Il en découle qu'aucun $\Delta_{\alpha_{l,m}}^{(\delta,\beta)}(A_j)$ ne s'annule, donc
aucun $\psi_{l,m}(u_j)$ non plus. On déduit alors de la formule de transformation
\eqref{eqn:basecanfacteurzeta} l'égalité matricielle:
$$
\left(\psi_{i,0}(u_j)\right)_{0 \leq i,j \leq \delta-1} =
\left(\zeta_\delta^{-ij}\right)_{0 \leq i,j \leq \delta-1}
\Diag\left(\psi_{0,0}(u_j)\right)_{0 \leq j \leq \delta-1}.
$$
Le premier facteur du membre droit est une matrice de Vandermonde inversible,
le second facteur une matrice diagonale inversible puisqu'aucun $\psi_{l,m}(u_j)$
ne s'annule. Le membre gauche est donc inversible, ce qui permet de conclure.
\end{proof}


\subsection{Pentes arbitraires: vue d'ensemble}
\label{subsection:vue d'ensemble}

Soit $\EE_q$ la catégorie de toutes les équations aux $q$-différences
sur $K$, que nous identifions à la catégorie des matrices en forme
\eqref{eqn:formestandardtriangulaire} (\cf\ \ref{subsub:filgrad})
où chaque $B_i \in \GL_{r_i}(K)$ est pur de pente $\mu_i$ et où chaque 
$U_{i,j} \in \Mat_{r_i,r_j}(\C[z,z^{-1}])$; de plus, selon van der Put et
Reversat \cite{vdPR}, on peut supposer que $B_i \in \GL_{r_i}(\C[z,z^{-1}])$.
La sous-catégorie $\EE_{q,p}$ des purs sera donc identifiée à la catégorie des
matrices $A_0 := \gr A_U$, qui ont la forme \eqref{eqn:formestandarddiagonale}
avec les mêmes conventions sur les $B_i$. \\

Les groupes tannakiens associés sont notés $\Gq$ et $\Gqp$. Le groupe de Stokes
$\Stq$ est le noyau du morphisme $i^*: \Gq \rightarrow \Gqp$ dual de l'inclusion
$i$ de $\EE_{q,p}$ dans $\EE_q$; comme le foncteur $\gr$ de $\EE_q$
dans $\EE_{q,p}$ est une rétraction de l'inclusion (\ie\ $\gr \circ i$ est
le foncteur identité de $\EE_{q,p}$), le morphisme $\gr^*: \Gqp \rightarrow \Gq$
est une section de $i^*$ (\ie\ $i^* \circ \gr^*$ est le morphisme identité
de $\Gqp$) et l'on retrouve la décomposition en produit semi-direct:
$$
\Gq = \Stq \ltimes \Gqp.
$$
On note $\stq := \Lie(\Stq)$. Bien que $\Gqp$ ne soit plus un groupe abélien, 
nous verrons en \ref{subsubsection:Gp=GpsxGpu} que l'on a encore
une décomposition de $\Gqp$ en produit direct
$\Gqpu \times \Gqps$, où $\Gqpu = \C$, qui agit sur les 
$q$-logarithmes; et l'on adjoint encore la \og composante logarithmique \fg\ 
$\Gqpu$ au groupe de Stokes, en posant $\Sttq := \Stq \times \Gqpu$ (le produit
direct est justifié car $\Gqpu$ commute avec $\Stq$) puis $\sttq := \Lie(\Sttq)$,
d'où les relations:
$$
\Gq = \Sttq \ltimes \Gqps = \sttq \ltimes \Gqps, \quad
\sttq = \stq \oplus \C \tau, \quad [\sttq,\C \tau] = 0.
$$
Le sens de l'objet hybride $\sttq \ltimes \Gqps$ a été expliqué au numéro
\ref{subsubsection:correspondanceEE1<->Rep(Gq1)}. Les commutations seront
justifiées au numéro \ref{subsubsection:Gp=GpsxGpu}.


\subsubsection{Le treillis des ramifications}
\label{subsubsection:treillis}

Rappelons (\cf\ \ref{subsubsection:restrictionspentes}) que $\EE_q^r$,
désigne la sous-catégorie pleine de $\EE_q$ formée des objets dont toutes les
pentes appartiennent à $\frac{1}{r} \Z$. C'est une sous-catégorie tannakienne
de groupe $\Gqr$. Si $r,s \in \N^*$ sont tels que $r | s$, l'inclusion
$\EE_q^r \subset \EE_q^s$ donne lieu par dualité à un morphisme
$\mathbf{G}_q^s \rightarrow \Gqr$.

\begin{prop}
Le morphisme $\mathbf{G}_q^s \rightarrow \Gqr$ est fidèlement plat (et donc
surjectif).
\end{prop}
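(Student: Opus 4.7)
La preuve proc�dera par application du crit�re standard de Deligne-Milne \cite[Prop. 2.21]{DM}: �tant donn� un foncteur $\otimes$-exact $F: \mathcal{C}' \to \mathcal{C}$ entre cat�gories tannakiennes neutres sur $\C$, le morphisme de groupes tannakiens dual $F^*: G(\mathcal{C}) \to G(\mathcal{C}')$ est fid�lement plat si, et seulement si, (a) $F$ est pleinement fid�le et (b) tout sous-objet de $F(X')$ dans $\mathcal{C}$ est isomorphe � l'image par $F$ d'un sous-objet de $X'$ dans $\mathcal{C}'$. Il s'agit d'appliquer ce crit�re au foncteur d'inclusion $i_{r,s}: \EE_q^r \hookrightarrow \EE_q^s$, qui est manifestement $\otimes$-exact.

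La condition (a) est triviale: $\EE_q^r$ est, par d�finition, une sous-cat�gorie \emph{pleine} de $\EE_q^s$, donc l'inclusion est pleinement fid�le.

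Pour la condition (b), soient $M \in \EE_q^r$ et $N \subset M$ un sous-module consid�r� dans $\EE_q^s$. Il faut v�rifier que $N$ est en fait d�j� un objet de $\EE_q^r$, \ie\ que toutes ses pentes appartiennent � $\frac{1}{r}\Z$. Or, de la suite exacte $0 \to N \to M \to M/N \to 0$ dans $\EE_q^s$ et de la propri�t� additive (2) de la fonction de Newton rappel�e en \ref{subsubsection:polygonedeNewton}, on tire l'�galit� $r_N + r_{M/N} = r_M$. Les fonctions $r_N$ et $r_{M/N}$ �tant � valeurs dans $\N$, le support $S(N)$ de $r_N$ est contenu dans le support $S(M)$ de $r_M$. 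Mais l'hypoth�se $M \in \EE_q^r$ signifie exactement $S(M) \subset \frac{1}{r}\Z$, d'o� $S(N) \subset \frac{1}{r}\Z$ et donc $N \in \EE_q^r$. En prenant $N$ lui-m�me comme ant�c�dent, on obtient (b).

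L'argument ne pr�sente pas de difficult� s�rieuse: tout repose sur la compatibilit� du polygone de Newton avec les suites exactes, d�j� �tablie ant�rieurement. Le seul point � v�rifier avec soin est le caract�re \og proalg�brique \fg\ des groupes en jeu pour justifier que la platitude fid�le du morphisme �quivaut bien � sa surjectivit� sur les $\C$-points; ceci r�sulte de ce que $\Gqr$ et $\mathbf{G}_q^s$ sont tous deux des limites projectives de groupes alg�briques affines (les $\Gqr(A)$ et $\mathbf{G}_q^s(A)$ de \ref{subsubsection:pr�requisGaloislocal}) compatibles entre elles via les inclusions $\EE_q^r \subset \EE_q^s$.
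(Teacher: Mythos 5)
Votre preuve est correcte et suit essentiellement la m\^eme d\'emarche que celle du texte: invocation du crit\`ere de Deligne--Milne (prop.\ 2.21 de \cite{DM}), pleine fid\'elit\'e imm\'ediate de l'inclusion, puis stabilit\'e des sous-objets via l'inclusion des supports de Newton $S(N) \subset S(M) \subset \frac{1}{r}\Z$. Vous explicitez simplement un peu plus le pourquoi de cette inclusion (additivit\'e $r_M = r_N + r_{M/N}$ et positivit\'e des fonctions de Newton), ce qui est un d\'etail laiss\'e implicite dans le texte.
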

\begin{proof}
On invoque \cite[prop. 2.2.1 (a) p. 139]{DM}:
\begin{itemize}
\item Le foncteur d'inclusion $\EE_q^r \leadsto \EE_q^s$ est pleinement fidèle
car la sous-catégorie $\EE_q^r$ de $\EE_q^s$ est pleine.
\item Tous sous-objet $M'$ dans $\EE_q^s$ d'un objet $M$ de $\EE_q^r$ est lui-même
dans un sous-objet de $\EE_q^r$ car (\cf\ \ref{subsubsection:polygonedeNewton})
$S(M') \subset S(M) \subset \frac{1}{r} \Z$.
\end{itemize}
\end{proof}

Puisque $\EE_q$ est la limite inductive des $\EE_q^r$ et donc son groupe $\Gq$
la limite projective des $\Gqr$, on voit que les morphismes $\Gq \rightarrow \Gqr$
sont eux-mêmes fidèlement plats (et donc surjectifs). \\

Rappelons encore de \ref{subsubsection:foncteursramif} les notations concernant
le foncteur de ramification $\text{Ram}_r: \EE_q^{r} \leadsto \EE_{q_r}^1$. Il s'en
déduit dualement un morphisme $\mathbf{G}_{q_r}^1 \rightarrow \Gqr$ dont on prouvera
au \ref{subsubsection:immersionsfermées} que c'est une immersion fermée.
Plus généralement, soient $r,r',s \in \N^*$ tels que $s = r r'$, donc $r | s$,
d'où un foncteur de ramification $\EE_q^{s} \leadsto \EE_{q_r}^{r'}$ qu'on notera
encore $\text{Ram}_r$; et dualement un morphisme
$\mathbf{G}_{q_r}^{r'} \rightarrow \mathbf{G}_q^s$.
Ce sont ces foncteurs et ces immersions fermées qui nous permettront d'étudier
les $\Gqr$. Ils s'organisent en deux treillis duaux l'un de l'autre:

$$
\xymatrix{
\EE_q \ar@{->>}[rr] & &
\EE_{q_r} \ar@{->>}[rr] & &
\EE_{q_s} \\
\EE_q^s \ar@{->>}[rr] \ar@{->>}[u] & &
\EE_{q_r}^{r'} \ar@{->>}[rr] \ar@<0ex>[u] & &
\EE_{q_s}^1 \ar@{->>}[u] \\
\EE_q^r \ar@{->>}[rr] \ar@{->>}[u] & &
\EE_{q_r}^1 \ar@{->>}[u] & & \\
\EE_q^1 \ar@{->>}[u] & & & &
}
\qquad \qquad \qquad 
\xymatrix{
\Gq \ar@{^{(}->}[d] & &
\mathbf{G}_{q_r} \ar@{_{(}->}[ll] \ar@{^{(}->}[d] & &
\mathbf{G}_{q_s} \ar@{_{(}->}[ll] \ar@{^{(}->}[d] \\
\mathbf{G}_q^s \ar@{^{(}->}[d] & &
\mathbf{G}_{q_r}^{r'} \ar@{_{(}->}[ll] \ar@{^{(}->}[d] & &
\mathbf{G}_{q_s}^1 \ar@{_{(}->}[ll] \\
\Gqr \ar@{^{(}->}[d] & &
\mathbf{G}_{q_r}^1 \ar@{_{(}->}[ll] & & \\
\mathbf{G}_q^1 & & & &
}
$$


\subsubsection{Immersions fermées}
\label{subsubsection:immersionsfermées}

\begin{prop}
Le morphisme $\mathbf{G}_{q_r}^1 \rightarrow \Gqr$ est une immersion fermée.
\end{prop}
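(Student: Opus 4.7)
La preuve reposera sur le crit�re tannakien classique \cite[prop. 2.21]{DM}: le morphisme $\mathbf{G}_{q_r}^1 \rightarrow \Gqr$ dual du foncteur $\text{Ram}_r: \EE_q^r \leadsto \EE_{q_r}^1$ est une immersion ferm�e si, et seulement si, tout objet $N$ de $\EE_{q_r}^1$ est (isomorphe �) un sous-quotient d'un $\text{Ram}_r(M)$ pour un certain $M \in \EE_q^r$. On va en fait prouver mieux: tout tel $N$ sera un facteur direct d'un tel $\text{Ram}_r(M)$, construit par descente galoisienne pour l'extension cyclique $K_r/K$ de groupe $\mu_r$. Cette approche est pleinement dans l'esprit de la preuve galoisienne de \ref{subsubsection:classification} d�j� donn�e dans le texte pour la classification.

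Soit donc $N \in \EE_{q_r}^1$. Rappelons (\cf\ \ref{subsection:Notations}) que $\mu_r$ op�re sur $K_r$ par $z_r \mapsto j z_r$ et que cette action commute � $\sigma_{q_r}$; elle op�re donc naturellement sur $\EE_{q_r}$ par $N \leadsto \sigma_j^* N$, en pr�servant le polygone de Newton (le changement de variable $z_r \mapsto j z_r$ pr�serve les pentes et les multiplicit�s), si bien que $\sigma_j^* N \in \EE_{q_r}^1$ pour tout $j \in \mu_r$. On consid�rera l'objet
$$
\tilde{N} := \bigoplus_{j \in \mu_r} \sigma_j^* N \in \EE_{q_r}^1,
$$
muni de l'action naturelle semi-lin�aire de $\mu_r$ qui permute les facteurs (chaque $k \in \mu_r$ envoyant $\sigma_j^* N$ sur $\sigma_{kj}^* N$ via l'identification canonique), puis le $K$-espace vectoriel des invariants $M := \tilde{N}^{\mu_r}$. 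Comme l'action de $\mu_r$ commute � l'op�rateur $\sigma_{q_r}$-lin�aire de $\tilde{N}$ et que $\sigma_{q_r}|_K = \sigma_q$, la restriction de cet op�rateur � $M$ est $\sigma_q$-lin�aire, ce qui munit $M$ d'une structure d'objet de $\EE_q$.

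La descente galoisienne classique, appliqu�e au $K_r$-espace vectoriel sous-jacent � $\tilde{N}$ muni de son action $K_r$-semi-lin�aire de $\mu_r$, fournit un isomorphisme $K$-lin�aire $M \otimes_K K_r \isom \tilde{N}$; la compatibilit� des op�rateurs (encore une cons�quence du fait que l'action de $\mu_r$ commute avec $\sigma_{q_r}$) garantit qu'il s'agit en fait d'un isomorphisme dans $\EE_{q_r}^1$, c'est-�-dire $\text{Ram}_r(M) \cong \tilde{N}$. Les pentes de $\text{Ram}_r(M) = \tilde{N}$ �tant enti�res et la ramification multipliant les pentes par $r$ (\cf\ \ref{subsubsection:polygonedeNewton}), les pentes de $M$ appartiennent � $\frac{1}{r} \Z$: on a donc bien $M \in \EE_q^r$. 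Le facteur direct $N$ (correspondant � $j = 1$) de $\text{Ram}_r(M)$ est en particulier un sous-quotient, et le crit�re tannakien cit� au d�part donne alors le r�sultat. La seule v�rification d�licate est la compatibilit� de l'isomorphisme de descente avec les op�rateurs $\sigma$-lin�aires; elle se ram�ne � un diagramme de commutation �l�mentaire entre l'action de $\mu_r$ et celle de $\sigma_{q_r}$, et ne pr�sente donc pas de r�elle difficult�.
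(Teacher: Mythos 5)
Votre preuve est correcte et suit essentiellement la m\^eme strat\'egie que celle du texte : le crit\`ere tannakien de \cite{DM} ram\`ene l'\'enonc\'e au fait que tout objet $N$ de $\EE_{q_r}^1$ est sous-quotient d'un ramifi\'e, et l'objet induit $\bigoplus_{j \in \mu_r} \sigma_j^* N$ que vous descendez par descente galoisienne est exactement la matrice $B := \Diag(A'_0,\ldots,A'_{r-1})$ du lemme du texte. La seule diff\'erence est d'ex\'ecution : l\`a o\`u vous invoquez la descente classique (et sa compatibilit\'e aux op\'erateurs, que vous signalez \`a juste titre comme le seul point \`a v\'erifier), le texte la rend explicite en diagonalisant la permutation cyclique $T_r$ puis en appliquant la transformation de jauge $\Diag(1,z',\ldots,{z'}^{r-1})$ pour obtenir une matrice $\tau$-invariante, donc \`a coefficients dans $K$.
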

\begin{proof}
En vertu de \cite[prop. 2.2.1 (a) p. 139]{DM}, cela résulte directement
du lemme ci-dessous.
\end{proof}
  
On notera pour simplifier $\EE := \EE_q^r$, $\EE' := \EE_{q_r}^1$ et
$R: \EE \leadsto \EE'$ le foncteur de ramification $\text{Ram}_r$. De même
on notera $q' := q_r$, $z' := z^{1/r}$ et $K' := K_r = K[z'] = \C(\{z'\})$.
On conviendra que si $A = (a_{i,j})$, alors $A \otimes B$ désigne la
matrice de blocs $a_{i,j} B$.

\begin{lem}
Tout objet de $\EE'$ est isomorphe à un sous-objet d'un $R(X)$, $X$
objet de $\EE$.
\end{lem}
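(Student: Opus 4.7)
Mon plan est d'abord de construire, pour tout $M' \in \EE'$, une surjection $R(X) \twoheadrightarrow M'$ avec $X \in \EE$, puis d'en d�duire l'inclusion requise par dualit�. �tant donn� $M' = (V',\Phi')$ avec $V' = K_r^n$ et $\Phi'$ un automorphisme $\sigma_{q_r}$-lin�aire, je consid�rerais $V'$ comme $K$-espace vectoriel $V'_K$ de dimension $rn$ via l'inclusion $K \subset K_r$. L'observation-cl� est que $\sigma_{q_r}|_K = \sq$ (puisque $\sigma_{q_r}(z) = (q_r z_r)^r = qz$ pour $z = z_r^r$), de sorte que $\Phi'$ est automatiquement $\sq$-lin�aire sur $V'_K$, d'o� un objet $X := (V'_K, \Phi')$ de $\EE_q$.

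D'apr�s la d�finition matricielle de la ramification, $R(X)$ s'identifie au $K_r$-module $V'_K \otimes_K K_r$ muni de l'automorphisme $\sigma_{q_r}$-lin�aire $\Phi' \otimes \sigma_{q_r}$ (bien d�fini gr�ce � l'�galit� $\sq = \sigma_{q_r}|_K$). L'application de multiplication scalaire $\pi: V'_K \otimes_K K_r \to V'$, $v \otimes \lambda \mapsto \lambda v$ utilisant la $K_r$-structure originale de $V'$, est alors $K_r$-lin�aire et v�rifie $\pi \circ (\Phi' \otimes \sigma_{q_r}) = \Phi' \circ \pi$: les deux membres envoient $v \otimes \lambda$ sur $\sigma_{q_r}(\lambda) \Phi'(v)$. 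Elle est manifestement surjective puisque $v = \pi(v \otimes 1)$, d'o� l'�pimorphisme souhait� $\pi: R(X) \twoheadrightarrow M'$ dans $\EE_{q_r}$.

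Pour v�rifier que $X \in \EE = \EE_q^r$, je m'appuierais sur l'isomorphisme galoisien $K_r \otimes_K K_r \simeq \bigoplus_{j \in \mu_r} K_r$ via $\lambda \otimes \mu \mapsto (\sigma_j(\lambda)\mu)_j$, compatible avec $\sigma_{q_r}$ puisque $\sigma_j$ commute � $\sq$ (\cf\ \ref{subsection:Notations}). Il s'en d�duit une d�composition $R(X) \simeq \bigoplus_{j \in \mu_r} \sigma_j^* M'$ dans $\EE_{q_r}$; comme les conjugu�s galoisiens pr�servent les valuations, donc les pentes, $R(X)$ a les m�mes $q_r$-pentes (enti�res) que $M'$. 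La relation rappel�e en \ref{subsubsection:polygonedeNewton} (les $q_r$-pentes de $R(X)$ sont $r$ fois les $q$-pentes de $X$) entra�ne alors $X \in \EE_q^r$.

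Enfin, en appliquant la construction pr�c�dente au dual $(M')^* \in \EE'$ (les pentes enti�res �tant stables par n�gation), on obtient un objet $Y \in \EE$ et une surjection $R(Y) \twoheadrightarrow (M')^*$; en dualisant et en utilisant la compatibilit� du foncteur tensoriel $R$ � la dualit�, il vient l'inclusion cherch�e $M' = ((M')^*)^* \hookrightarrow R(Y)^* = R(Y^*)$ avec $Y^* \in \EE$ (les pentes de $Y^*$ sont oppos�es � celles de $Y$, donc toujours dans $\frac{1}{r}\Z$). L'obstacle principal est la v�rification de l'appartenance de $X$ � $\EE_q^r$, \ie\ le contr�le des pentes sous la restriction des scalaires $K_r \leadsto K$; la d�composition galoisienne de $R(X)$ en fournit la route la plus directe, le reste de l'argument �tant formellement cat�gorique.
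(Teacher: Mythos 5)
Votre preuve est correcte et repose au fond sur la m\^eme construction que celle du papier~: l'objet $X$ que vous obtenez par restriction des scalaires de $K_r$ \`a $K$ est, \`a isomorphisme pr\`es, la matrice $D \in \GL_{nr}(\Ka)$ que le papier fabrique par descente explicite de $B := \Diag(A'_0,\ldots,A'_{r-1})$, et votre d\'ecomposition galoisienne $R(X) \simeq \bigoplus_{j \in \mu_r} \sigma_j^* M'$ n'est autre que ce $B$. La diff\'erence tient \`a l'emballage. Le papier exhibe directement le monomorphisme $A' \hookrightarrow B$ (inclusion du premier facteur), puis montre par un calcul matriciel explicite (relation $\tau B = T_{r,n} B T_{r,n}^{-1}$, diagonalisation de $T_r$, jauge par $\Diag(1,z',\ldots,{z'}^{r-1})$) que $B$ descend en une matrice \`a coefficients dans $K$. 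Vous construisez au contraire d'abord l'\'epimorphisme $R(X) \twoheadrightarrow M'$ via l'application de multiplication $v \otimes \lambda \mapsto \lambda v$, puis concluez par dualit\'e. Ce d\'etour par le dual est d'ailleurs superflu~: votre propre d\'ecomposition $R(X) \simeq \bigoplus_j \sigma_j^* M'$ contient le facteur d'indice $j = 1$, dont l'inclusion fournit directement le monomorphisme $M' \hookrightarrow R(X)$ cherch\'e (c'est la section \'evidente de votre $\pi$). Votre formulation a l'avantage de rendre transparent le contr\^ole des pentes --- la conjugaison galoisienne pr\'eserve le polygone de Newton et la ramification multiplie les pentes par $r$, donc $X \in \EE_q^r$ ---, point que la d\'emonstration du papier laisse implicite~; le calcul matriciel du papier a en revanche l'avantage de fournir une forme enti\`erement explicite de $X$, exploit\'ee dans l'exemple $r=2$ qui suit le lemme.
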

\begin{proof}
On note $j$ un générateur de $\mu_r$ et $\tau: f(z') \mapsto f(jz')$, qui est
donc un générateur du groupe de Galois de l'extension cyclique $K'/K$. On note
$\sigma$ l'automorphisme $\sq$ de $K$ ainsi que son extension $\sigma_{q'}$ à $K'$
(donc $\sigma$ et $\tau$ commutent). \\
Soit $A'\in \GLn(K') = \GLn\left(\C(\{z'\})\right)$ un objet de $\EE'$.
Notant $A'_l := \tau^l A'$, on plonge $A'$ dans la matrice diagonale par blocs
$B := \Diag(A'_0,\ldots,A'_{r-1})$ via le morphisme
$\begin{pmatrix} I_n \\ 0_n \\ \vdots \\ 0_n \end{pmatrix}$ ($r$
blocs). \\
Soient:
$$
T_r := \begin{pmatrix}
0 & 1 & 0 & \ldots & 0 \\
0 & 0 & 1 & \ddots & 0 \\
\vdots & \vdots & \vdots & \ddots & \vdots \\
0 & 0 & 0 & \ldots & 1 \\
1 & 0 & 0 & \ldots & 0 \\
\end{pmatrix} \in \GL_r(\C)
\quad \text{~et~} \quad
T_{r,n} := T_r \otimes I_n = \begin{pmatrix}
0_n & I_n & 0_n & \ldots & 0_n \\
0_n & 0_n & I_n & \ddots & 0_n \\
\vdots & \vdots & \vdots & \ddots & \vdots \\
0_n & 0_n & 0_n & \ldots & I_n \\
I_n & 0_n & 0_n & \ldots & 0_n \\
\end{pmatrix} \in \GL_{nr}(\C).
$$
On vérifie immédiatement que $(\tau B) T_{r,n}$ et $T_{r,n} B$ sont tous
deux égaux à $\Diag(A'_1,\ldots,A'_{r-1},A'_0)$, d'où la relation:
$$
\tau B = T_{r,n} B T_{r,n}^{-1}.
$$
On diagonalise $T_r = P \Delta P^{-1}$, $\Delta := \Diag(1,j,\ldots,j^{r-1})$,
$P \in \GL_r(\C)$ et l'on pose $\overline{P} := P \otimes I_n$ puis
$C := \overline{P}^{-1} B \overline{P}$, qui est donc équivalente à $B$, admet
$A'$ comme sous-objet et vérifie de plus:
$$
\tau C = \overline{\Delta} C \overline{\Delta}^{-1}, \text{~où~}
\overline{\Delta} := \Delta \otimes I_n.
$$
Soient enfin $F := \Diag(1,z',\ldots,{z'}^{r-1}) \in \GL_r(\C(\{z'\})$ et 
$F := \Diag(1,q',\ldots,{q'}^{r-1}) \in \GL_r(\C)$, de sorte que:
$$
\tau F = \Delta F \text{~et~} \sq F = \Delta' F.
$$
Notant $\overline{F} := F \otimes I_n$ puis $D$ la matrice telle que
$C = \overline{F}[D]$, on voit que $D$ est équivalente à $C$ donc à $B$, donc
qu'elle admet $A'$ comme sous-objet et vérifie de plus (calcul direct laissé
au lecteur):
$$
\tau D = D,
$$
Autrement dit, $D \in \GLn(\Ka)$, ce qui achève la démonstration.
\end{proof}

\begin{ex}
Pour $r = 2$, le calcul est transparent. La matrice $A'\in \GLn(\C(\{z'\})$
se plonge par le morphisme $\begin{pmatrix} I_n \\ 0_n \end{pmatrix}$ dans
$\begin{pmatrix} A'(z') & 0_n \\ 0_n & A'(-z') \end{pmatrix}$, qui est équivalente
via l'isomorphisme $\begin{pmatrix} I_n & I_n \\ I_n & -I_n\end{pmatrix}$ à
$\begin{pmatrix} B(z) & z' C(z) \\ z' C(z) & B(z) \end{pmatrix}$, où l'on a posé
$A'(z') = B(z) + z' C(z)$; enfin, cette dernière matrice est à son tour équivalente
via l'isomorphisme $\begin{pmatrix} I_n & 0 \\ 0 & z' I_n\end{pmatrix}$ à la matrice
$\begin{pmatrix} B(z) & C(z) \\ q' z C(z) & q' B(z) \end{pmatrix} \in \GL_{2n}(\Ka)$.    
\end{ex}


\subsubsection{$\Gqp$ et $\Stqp$ suffisent}
\label{subsubsection:suffisancedesstokesnonramifiés}

On reprend les notations ci-dessus: $q'$, $z'$, $K'$, $\sigma$ et $\tau$.
De plus, on note $\EE := \EE_q^r$, $\EE' := \EE_{q'}^1 = \EE_{q_r}^1$ et
$\Gq,\Gqprime$,
leurs groupes tannakiens respectifs, ainsi que $\Gqp$, $\Stq$, $\Gqpprime$, $\Stqp$
les sous-groupes habituels de ces derniers. On a donc un diagramme commutatif
de suites exactes semi-scindées:
$$
\xymatrix{
1 \ar@<0ex>[r] & 
\Stqp \ar@<0ex>[d] \ar@<0ex>[r] & 
\Gqprime \ar@<0ex>[d] \ar@<0ex>[r] & 
\Gqpprime \ar@<0ex>[d] \ar@<0ex>[r] & 
1 \\
1 \ar@<0ex>[r] & 
\Stq \ar@<0ex>[r] & 
\Gq \ar@<0ex>[r] & 
\Gqp  \ar@<0ex>[r] & 
1 
}
$$
D'après \ref{subsubsection:immersionsfermées}, la flèche verticale du milieu
est une immersion fermée, donc les deux autres flèches verticales aussi (car
les inclusions $\Stqp \rightarrow \Gqprime$ et $\Stq \rightarrow \Gq$ le sont,
ainsi que $\gr^*: \Gqpprime \rightarrow \Gqprime$ et $\gr^*: \Gqp \rightarrow \Gq$).
On identifiera $\Stqp$ à un sous-groupe de $\Stq$, $\Gqp$ et $\Gqprime$ à des
sous-groupes de $\Gq$, etc.

\begin{prop}
Les sous-groupes $\Stqp$ et $\Gqp$ \og engendrent topologiquement \fg\ $\Gq$.
\end{prop}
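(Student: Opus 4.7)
La strat\'egie est tannakienne. Compte tenu de la d\'ecomposition en produit semi-direct $\Gq = \Stq \ltimes \Gqp$, et puisque $\Gqp$ est d\'ej\`a inclus parmi les g\'en\'erateurs invoqu\'es, il suffira de montrer que la cl\^oture du sous-groupe $\Gqp$-invariant de $\Stq$ engendr\'e par $\Stqp$ co\"{\i}ncide avec $\Stq$. Comme $\Stq$ est pro-unipotent, on passe imm\'ediatement \`a l'alg\`ebre de Lie : on \'etablira que l'image $R^*(\stqp) \subset \stq$, soumise \`a l'action adjointe de $\Gqp$, engendre topologiquement $\stq = \Lie(\Stq)$.

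L'outil principal sera la d\'ecomposition de Fourier
\[
\stq \;=\; \bigoplus_{(\delta,\beta) \in \frac{1}{r}\N^* \times \Eq} \stq^{(\delta,\beta)}
\]
sous l'action de la partie semi-simple $\Gqps$ du groupe formel. Chaque composante est stable sous $\Gqp$ (l'action de $\Gqps$ y est scalaire par un caract\`ere, tandis que $\Gqpu$ agit de mani\`ere pro-unipotente sans modifier l'indice $(\delta,\beta)$). Les g\'en\'erateurs canoniques de $\stq$, analogues des $q$-d\'eriv\'ees \'etrang\`eres $\Delta_\alpha^{(\delta,\beta)}$ construites au \ref{subsubsection:groupedeStokes} et au \ref{subsection:calculs explicites} pour le cas $r=1$, se r\'epartissent dans ces composantes.

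L'\'etape cruciale sera de constater que, sous $R^*$, les g\'en\'erateurs analogues $\Delta'^{(\delta',\beta')}_{\alpha'}$ de $\stqp$ (avec $\delta' \in \N^*$, $\alpha',\beta' \in \mathbf{E}_{q_r}$) s'envoient sur des \'el\'ements non nuls des composantes $\stq^{(\delta,\beta)}$ avec $\delta = \delta'/r$ (la ramification multipliant les pentes par $r$) et $\beta$ d\'etermin\'e par $\beta'$ via l'isog\'enie surjective $p: \mathbf{E}_{q_r} \to \Eq$. Comme $\delta' \in \N^*$ d\'ecrit toutes les valeurs $\delta \in \frac{1}{r}\N^*$ et comme $\beta'$ couvre tous les $\beta \in \Eq$, $R^*(\stqp)$ rencontrera chaque composante de Fourier de $\stq$ en au moins un vecteur non nul ; l'action de $\Gqp$ combin\'ee aux crochets de Lie permettra alors d'atteindre la totalit\'e de chaque composante, puis, par passage \`a la cl\^oture, $\stq$ tout entier.

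Le principal obstacle sera la justification rigoureuse du comportement des g\'en\'erateurs $\Delta$ sous $R^*$, en particulier l'identification pr\'ecise des indices $(\delta,\alpha,\beta)$ en fonction de $(\delta',\alpha',\beta')$. Cette analyse, qui s'appuie sur la description d\'etaill\'ee du groupe formel $\Gqp$ dans le cas des pentes arbitraires (via les r\'esultats de van der Put--Reversat compl\'et\'es par Bugeaud), sera men\'ee dans la sous-section \ref{subsection:groupedeGaloispentesarbitraires}.
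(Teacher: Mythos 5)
Votre proposition suit une route r\'eellement diff\'erente de celle du texte, mais elle comporte des lacunes substantielles. La r\'eduction initiale (passer \`a $\Stq$ puis \`a $\stq$ par pro-unipotence) est correcte; en revanche, le c\oe ur de l'argument n'est pas men\'e \`a bien et, tel qu'il est planifi\'e, il serait circulaire. Vous renvoyez \og l'\'etape cruciale \fg\ (comportement des $\Delta_{\alpha'}^{(\delta',\beta')}$ sous $R^*$) \`a la sous-section \ref{subsection:groupedeGaloispentesarbitraires}; or cette sous-section \emph{utilise} pr\'ecis\'ement la pr\'esente proposition (c'est elle qui permet d'identifier $\Gq$ \`a l'objet hybride $\stt' \ltimes G_{p,s}$ en ne d\'ecrivant que l'action de $G_{p,s}$ sur $\stt'$). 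De plus, votre d\'ecomposition de Fourier pr\'esuppose que chaque composante $\stq^{(\delta,\beta)}$ est stable sous $\Gqp$: c'est faux dans le cas ramifi\'e, car $\Gqps$ n'est plus diagonalisable (c'est une extension centrale non scind\'ee de $\Eqvee$ par $(\frac{1}{r}\Z)^\vee$) et l'action de $\gamma_2$ \emph{permute} les composantes index\'ees par $\mathbf{E}_{q_r}$ (\cf\ le (iii) du th\'eor\`eme \ref{thm:conjugu�sd�riv�es�trang�res}). Enfin, m\^eme en admettant tout cela, l'argument final --- \og rencontrer chaque composante en un vecteur non nul, puis conclure par l'action de $\Gqp$ et les crochets \fg\ --- n'est pas justifi\'e: les composantes ne sont pas des droites (celle de degr\'e $(\delta,\beta)$ dans l'ab\'elianis\'e est de dimension $\delta$) et l'action de $\Gqps$ y est essentiellement scalaire ou de permutation, donc un seul vecteur non nul par composante ne suffit pas \`a l'engendrer.

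La preuve du texte est d'une toute autre nature et n'a besoin d'aucun de ces calculs ult\'erieurs: elle applique le crit\`ere de Chevalley (une droite $L$ stable sous $\Stqp$ et $\Gqp$ dans une repr\'esentation rationnelle doit \^etre stable sous $\Gq$), traduit tannakiennement la stabilit\'e de $L$ en l'existence de sous-objets de rang $1$ de $A_0$ et de $\text{Ram}_r(A)$, puis recolle ces deux sous-objets par un argument de descente le long de l'extension cyclique $K_r/K$, en utilisant la fid\'elit\'e du foncteur $\gr$ et l'unicit\'e du corollaire \ref{cor:auto=id} pour montrer que le sous-objet ramifi\'e est $\tau$-invariant, donc descend. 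Si vous souhaitez maintenir une approche par alg\`ebres de Lie, il vous faudrait au minimum (a) la rendre logiquement ind\'ependante de \ref{subsection:groupedeGaloispentesarbitraires}, et (b) remplacer l'argument \og un vecteur par composante \fg\ par une comparaison effective de $\stqp$ tout entier avec $\stq$, ce qui revient en substance \`a refaire l'argument de descente du texte.
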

\begin{proof}
C'est une façon abrégée de dire que le sous-groupe de $\Gq$ qu'ils engendrent est
Zariski-dense. En particulier, toute représentation rationnelle de $\Gq$ est
totalement déterminée par ses restrictions à $\Stqp$ et $\Gqp$. \\
On le prouvera à l'aide du critère de Chevalley invoqué sous la forme suivante:
si $\rho$ est une représentation rationnelle de $\Gq$ dans $\GLnc$ et si
$L \subset \C^n$ est une droite stable sous $\Stqp$ et sous $\Gqp$, on vérifiera
que $L$ est stable sous $\Gq$, \ie\ provient d'une sous-représentation de rang $1$.
Pour cela on utilisera la correspondance tannakienne par laquelle les
représentations rationnelles de $\Gq$ sont associées à des objets de $\EE$, etc. \\
Soient donc $\rho$ et $L$ comme ci-dessus et soit $A(z) \in \GLn(\Ka)$, objet
de $\EE$ qui lui correspond. On peut le prendre sous la forme $A_U$. Soient
$\rho' := \rho_{|\Gqprime}$ et $\rho_p := \rho_{|\Gqp}$. Puisque $L$ est stable sous
$\Gqp$ (\ie\ sous $\gr^* \Gqp$), il lui est associé un sous-objet de rang $1$
de $\gr A = A_0$, autrement dit, il existe $u \in \GL_1(K)= \Ka^*$ (objet de
rang $1$) et $X_0 \in \Mat_{n,1}(K)$, $X_0 \neq 0$ (monomorphisme) tels que:
\begin{equation}
\label{eqn:rhop}
A_0 X_0 = u \sigma X_0.
\end{equation}
De même, puisque $L$ est stable sous $\Stqp$ et, bien entendu, sous $\Gqpprime$
(car $\Gqpprime \subset \Gqp$) donc sous $\Gqprime$ (car ce dernier est engendré
par $\Stqp$ et $\Gqpprime$), il lui est associé un sous-objet de rang $1$ de
$A' := \text{Ram}_r(A)$ dans $\EE'$. Autrement dit, posant $A'(z') := A({z'}^r)$,
il existe $v \in \GL_1(K') = \C(\{z'\})^*$ (objet de rang $1$) et
$Y \in \Mat_{n,1}\left(\C(\{z'\})\right)$, $Y \neq 0$ (monomorphisme) tels que
\begin{equation}
\label{eqn:rho'}
A' Y = v \sigma Y.
\end{equation}
Le ramifié de \eqref{eqn:rhop} est $X_0':(\C,u') \rightarrow (\C^n,A_0')$, où
$u'(z') := u({z'}^r)$ et $X_0'(z') := X_0({z'}^r)$. Le gradué de \eqref{eqn:rho'}
est $Y_0:(\C,v) \rightarrow (\C^n,A_0')$ (car un objet de rang $1$ est son
propre gradué). Ces deux sous-objets de rang $1$ de $A_0'$ sont associés à
la même sous-représentation de $\rho'_{|G'_p} = \rho_{|G'_p} = (\rho_p)_{|G'_p}$,
donc définissent des sous-objets isomorphes (en tant que sous-objets) de $A'_0$.
Il existe donc $\phi: (\C,u') \isom (\C,v)$ tel que le diagramme suivant commute:
$$
\xymatrix{
(\C,u') \ar@<0ex>[dd]_\phi \ar@<0ex>[dr]^{X_0'} &  \\
& (\C^n,A'_0)  \\
(\C,v) \ar@<0ex>[ur]_{Y_0} & 
}
$$
autrement dit, $X_0' = Y_0 \phi = \phi Y_0$ (car $\phi$ est scalaire). \\
Posons $Z := \phi Y: (\C,u') \rightarrow (\C^n,A')$, morphisme dans $\EE'$ de
gradué $Z_0 = \phi Y_0 = X_0'$. Avec les notations en cours, $\tau Z_0 = Z_0$
(puisqu'il provient de la ramification de $X_0$. Mais $\tau Z_0 = (\tau Z)_0$.
Donc les morphismes $Z$ et $\tau Z$ ont même gradué. Le foncteur $\gr$ étant
fidèle, $Z = \tau Z$, \ie\ $Z$ provient par ramification d'un morphisme dans
$\EE$, autrement dit, d'un sous-objet $(\C,u) \rightarrow (\C^n,A)$ dont l'oubli
(foncteur fibre espace sous-jacent) est le même, soit $L$. Cela entraîne que
$L$ est stable sous $\Gq$.
\end{proof}


\subsection{Pentes arbitraires: le groupe de Galois formel}
\label{subsection:groupedeGaloisformel}

Le groupe formel (ou pur) pour des pentes arbitraires a été déterminé par
Marius van der Put et Marc Reversat \cite{vdPR} en utilisant par endroits
la théorie de Picard-Vessiot pour les $q$-différences \cite{vdPS1}. Nous
combinerons leurs résultats avec les précisions apportées par Virginie
Bugeaud \cite{VB}, qui leur a donné une forme plus explicite et plus commode
pour l'approche strictement tannakienne (et calculatoire) que nous suivons ici.


\subsubsection{Objets indécomposables et objets irréductibles}
\label{subsubsection:objindecobjirred}

Selon \cite{vdPR,VB}, tout objet \emph{irréductible} (ou simple) de $\EE_{q,p}$ est,
à isomorphisme près, de la forme:
$$
E(r,d,c) := \begin{pmatrix}
  0 & 1 & \ldots & 0 \\
  \vdots & \vdots & \ddots & \vdots \\
  0 & 0 & \ldots & 1 \\
  u & 0 & \ldots & 0
\end{pmatrix} \in \GL_r(K),
\quad \text{où} \quad u := q^{d(r-1)/2} c z^d.
$$
Ici, $r \in \N^*$, $d \in \Z$ et $d \wedge r = 1$. On peut imposer
$1 \leq \lmod c \rmod < \lmod q \rmod$ et il y a alors unicité.
Le facteur constant $q^{d(r-1)/2}$ n'est en soi pas significatif et vise
seulement à simplifier les formules. On vérifie sans peine que $E(r,d,c)$
est pur isocline de pente $d/r$. \\

Tout objet \emph{indécomposable} de $\EE_{q,p}$ est, à isomorphisme près, de la forme
$E(r,d,c) \otimes U_m$, où $U_m \in \GL_m(\C)$ est le bloc de Jordan unipotent
standard de taille $m$:
$$
U_m := \begin{pmatrix}
  1 & 1 & \ldots & 0 \\
  \vdots & \vdots & \ddots & \vdots \\
  0 & 0 & \ldots & 1 \\
  0 & 0 & \ldots & 1
\end{pmatrix} \in \GL_m(\C).
$$
L'objet $E(r,d,c) \otimes U_m$ est encore pur isocline de pente $d/r$. Ainsi,
les objets de $\EE_{q,p}$ sont, de manière essentiellement unique, des sommes
directes $\bigoplus E(r,d,c) \otimes U_m$.

\paragraph{Formulaire.}

On note:
$$
\zeta_r := e^{2 \ii \pi/r}, \text{~et~}
D_r := \Diag(1,\zeta_r,\ldots,\zeta_r^{r-1}), 
Z_r := (\zeta_r^{-ij})_{0 \leq i,j \leq r-1},
T_r := \begin{pmatrix}
  0 & 1 & \ldots & 0 \\
  \vdots & \vdots & \ddots & \vdots \\
  0 & 0 & \ldots & 1 \\
  1 & 0 & \ldots & 0
\end{pmatrix} \in \GL_r(\C).
$$
Cette dernière est donc une matrice de permutation cyclique. \\

On vérifie que $Z_r[T_r] = Z_r T_r Z_r^{-1} = D_r$, soit un isomorphisme:
$$
T_r \overset{Z_r}{\longrightarrow} D_r
$$
d'où également un isomorphisme dans la catégorie $\EE_{q_r}$, au dessus du corps
$K_r := \C(\{z_r\})$, $z_r^r = z$:
$$
a z_r^d T_r \overset{Z_r}{\longrightarrow} a z_r^d D_r
$$
Voici le formulaire associé:
\begin{enumerate}
\item $Z_r^{-1} = \dfrac{1}{r} \overline{Z_r}$ et ${}^t Z_r = Z_r$.
\item $D_r^{-1} = \overline{D_r}$.
\item $T_r^k D_r^l = \zeta_r^{kl} D_r^l T_r^k$, $k,l \in \Z$.
\item $T_r = Z_r^{-1} D_r Z_r = Z_r D_r^{-1} Z_r^{-1}$.
\end{enumerate}

\medskip

Soit maintenant $a$ une racine $r$\ieme\ arbitraire de $c$. Notons:
$$
G_{a,r,d} := \Diag(g_0,g_1,\ldots,g_{r-1}) \in \GL_r(K_r) \text{~où~}
g_j := \dfrac{1}{q^{j(j-1)d/2r} a^j z^{jd/r}},
\quad \text{puis} \quad F_{a,r,d} := Z_r G_{a,r,d}.
$$
On vérifie alors que $G_{a,r,d}[E(r,d,c)] = a z_r^d T_r$ et par conséquent
$F_{a,r,d}[E(r,d,c)] = a z_r^d D_r$; autrement dit, $G_{a,r,d}$ et $F_{a,r,d}$ sont
des isomorphismes dans la catégorie $\EE_{q_r}$:
$$
\xymatrix{
  E(r,d,c) \ar@<0ex>[rr]^{G_{a,r,d}} \ar@/^2pc/[rrrr]^{F_{a,r,d}}
  & & a z_r^d T_r \ar@<0ex>[rr]^{Z_r} & &
  a z_r^d D_r.
}
$$
Voici le formulaire associé:
\begin{enumerate}
\item $G_{a,r,d}(\zeta_r z_r) = D_r^{-d} G_{a,r,d}(z_r) = G_{a,r,d}(z_r) D_r^{-d}$.
\item $F_{a,r,d}(\zeta_r z_r) = T_r^d F_{a,r,d}(z_r) = F_{a,r,d}(z_r) D_r^{-d}$.
\end{enumerate}


\subsubsection{Éléments du groupe formel $\Gqp$}
\label{subsubsection:élémentsdeGp}

Outre le point-base $z_0$, on suppose choisie, pour tout $r$, une racine
$r$\ieme\ $z_{0,r}$ de $z_0$, avec les règles usuelles de compatibilité:
$z_{0,rs}^s = z_{0,r}$. \\

On interprète de la façon suivante un élément $\phi \in \Gqp$ du groupe formel:
à tout objet $A \in \GLn(K)$ de $\EE_{q,p}$, il associe une matrice carrée complexe
$\phi(A) \in \GLnc$, en accord avec les règles suivantes ($\otimes$-compatibilité
et fonctorialité):
\begin{enumerate}
\item $\phi(A \otimes B) = \phi(A) \otimes \phi(B)$.
\item Si $A \in \GLn(K)$, $B \in \GL_p(K)$ et $F \in \Mat_{p,n}(K)$ sont tels
que $(\sq F) A = B F$, alors $(\gr F)(z_0) \phi(A) = \phi(B) (\gr F)(z_0)$.
\end{enumerate}
La deuxième éqquation fait intervenir le gradué $\gr F$ parce que l'on utilise depuis
le début le foncteur fibre $\omega_{z_0} := \hat{\omega}_{z_0} \circ \gr$. Dans les
calculs qui vont suivre, les principaux morphismes utilisés seront $Z$, qui est
constant donc pur (de pente $0$); $G_{a,r,d}$, qui est pur par construction; et
$F_{a,r,d}$ qui est leur composé donc pur. On n'aura donc pas besoin de mentionner
explicitement $\gr$ et l'on posera:
$$
F_0 := F_{a,r,d}(z_{0,r}) \quad \text{et} \quad G_0 := G_{a,r,d}(z_{0,r}).
$$

Selon \cite{vdPR,VB}, un tel $\phi$ est déterminé par la donnée d'un unique triplet:
$$
(\lambda,h,\gamma) \in \C \times \Q^\vee \times \Eqvee,
$$
de telle sorte qu'avec les notations données au
\ref{subsubsection:objindecobjirred},
\begin{equation}
\label{eqn:phi(A)u}
\phi(U_m) = U_m^\lambda
\end{equation}
et
\begin{equation}
\label{eqn:phi(A)s}
\phi\left(E(r,d,c)\right) =
h(d/r) \, \gamma(a) \; G_0^{-1} \, \gamma(T_r) \, G_0(z_{0,r}) \;
\Diag\left(1,\gamma(q_r),\ldots,\gamma(q_r^{r-1})\right)^d.
\end{equation}
Nous résumerons les formules \eqref{eqn:phi(A)u} et \eqref{eqn:phi(A)s} par
la notation:
\begin{equation}
\label{eqn:notationphi<->triplet}
\phi \leftrightarrow (\lambda,h,\gamma),
\end{equation}
où $\phi \in \Gqp$ et $(\lambda,h,\gamma) \in \C \times \Q^\vee \times \Eqvee$.
La bijection de $\Gqp$ sur $\C \times \Q^\vee \times \Eqvee$ ainsi décrite n'est
pas un isomorphisme de groupes, ce point sera précisé aux numéros suivants
(\ref{subsubsection:multiplicationGp} et \ref{subsubsection:Gp=GpsxGpu}). \\

Pour rendre ces formules plus maniables, notant à nouveau $\zeta_r := e^{2 \ii \pi/r}$,
qui engendre $\mu_r$, nous remarquons que $\gamma(\zeta_r) \in \mu_r$ (car $\gamma$
est un morphisme de groupe) et que $\gamma(q_r) \in \mu_r$ (car en outre
$\gamma(q) = 1$). On peut donc écrire $\gamma(\zeta_r) = \zeta_r^k$ et
$\gamma(q_r) = \zeta_r^l$, $k,l \in \Z$. Notant pour simplifier $A := E(r,d,c)$,
on déduit alors de l'égalité ci-dessus et du formulaire du numéro précédent:
\begin{equation}
\label{eqn:phi(A)bis}
F_0 \phi(A) F_0^{-1} = h(d/r) \gamma(a) D_r^k T_r^{-ld}
\quad \text{et} \quad
G_0 \phi(A) G_0^{-1} = h(d/r) \gamma(a) T_r^k D_r^{ld}.
\end{equation}

\begin{rem}
L'effet de la ramification sur la non-commutativité se lit sur ces formules.
En effet, si $F$ et $G$ étaient des morphismes au dessus de $K$ et pas seulement
au dessus de $K_r$, on aurait:
$F_0 \phi(A) F_0^{-1} = \phi\left(F[A]\right) = \phi\left(a z^{d/r} D_r\right) =
h(d/r) \gamma(a) D_r^k$
et de même $G_0 \phi(A) G_0^{-1} = h(d/r) \gamma(a) T_r^k$.
\end{rem}

\paragraph{Deux éléments particuliers.}

On a prouvé dans \cite{JSGAL} que le sous-groupe de $\Eqvee$ formé des morphismes
\emph{continus} $\Eq \rightarrow \Cs$ (c'est donc une dorte de \og forme compacte \fg\
de $\Eqvee$) est Zariski-dense, et qu'il est abélien libre engendré par les éléments
$\gamma_1,\gamma_2 \in \Eqvee$ définis comme suit; si l'on a $\lmod u \rmod = 1$
et $y \in \R$, alors:
$$
\gamma_1(u q^y) = u \quad \text{et~} \quad \gamma_2(u q^y) = e^{2 \ii \pi y}.
$$
Les couples $(k,l) \in \Z \times \Z$ qui leur sont respectivement associés sont
$(k_1,l_1) = (1,0)$ et $(k_2,l_2) = (0,1)$. Notons $\phi_1,\phi_2 \in \Gqp$ les
éléments correspondants pour lesquels $\lambda = 0$ et $h = 1$ (morphisme trivial).
Alors:
\begin{align}
\label{eqn:effetgamma1gamma2}
F_0 \phi_1(A) F_0^{-1} = \gamma_1(a) D_r &\text{~~~~~et~~~~~} 
G_0 \phi_1(A) G_0^{-1} = \gamma_1(a) T_r, \\
F_0 \phi_2(A) F_0^{-1} = \gamma_2(a) T_r^{-d} &\text{~~~~~et~~~~~} 
G_0 \phi_2(A) G_0^{-1} = \gamma_2(a) D_r^{d}.
\end{align}


\subsubsection{Multiplication dans $\Gqp$}
\label{subsubsection:multiplicationGp}

Rappelons que, par définition, si $\phi,\phi' \in G$, leur produit $\phi \phi'$
est caractérisé par les relations $(\phi\phi')(A) := \phi(A) \phi'(A)$. \\

Soient $\phi \in \Gqp$ et $(\lambda,h,\gamma) \in \C \times \Q^\vee \times \Eqvee$
tels que, avec la notation \eqref{eqn:notationphi<->triplet},
$\phi \leftrightarrow (\lambda,h,\gamma)$. Soient $\phi_s,\phi_u \in \Gqp$ tels que
$\phi_s \leftrightarrow (0,h,\gamma)$ et $\phi_u \leftrightarrow (\lambda,1,1)$
(on note selon l'usage $1$ tout morphisme trivial d'un groupe dans $\C^*$).
Soit $A := E(r,d,c) \otimes U_m$. On déduit immédiatement de \eqref{eqn:phi(A)u}
et \eqref{eqn:phi(A)s} que $\phi(A) = \phi_s(A) \phi_u(A) = \phi_u(A) \phi_s(A)$.
On a donc:
$$
\phi = \phi_s \phi_u = \phi_u \phi_s.
$$
L'ensemble des $\phi_s \leftrightarrow (0,h,\gamma)$ sera noté $\Gqps$, celui
des $\phi_u \leftrightarrow (\lambda,1,1)$ sera noté $\Gqpu$. On notera alors
pour abréger:
\begin{equation}
\label{eqn:notationphi<->tripletbis}
\phi_s \leftrightarrow (h,\gamma) \quad \text{et} \quad
\phi_u \leftrightarrow \lambda.
\end{equation}

Soient $\phi,\phi' \in \Gqpu$ et $\lambda, \lambda' \in \C$ tels que, selon
la notation \eqref{eqn:notationphi<->tripletbis}, $\phi \leftrightarrow \lambda$
et $\phi' \leftrightarrow \lambda'$. Avec les mêmes notations, on voit que
$\phi(A) = I_r \otimes U_m^\lambda$, $\phi'(A) = I_r \otimes U_m^{\lambda'}$ et
$(\phi\phi')(A) = I_r \otimes U_m^{\lambda + \lambda'}$, \ie\ $\phi\phi' \in \Gqpu$
et $\phi\phi' \leftrightarrow \lambda + \lambda'$. Ainsi, $\Gqpu$ est un
sous-groupe de $\Gqp$ isomorphe à $\C$. \\

Soient maintenant $\phi,\phi' \in \Gqps$ et
$(h,\gamma), (h',\gamma') \in \Q^\vee \times \Eqvee$
tels que, selon la notation \eqref{eqn:notationphi<->tripletbis},
$\phi \leftrightarrow (h,\gamma)$ et $\phi' \leftrightarrow (h',\gamma')$.
L'effet de ces éléments sur le facteur droit $U_m$ du produit tensoriel
$E(r,d,c) \otimes U_m$ est de le remplacer par $U_m^0 = I_m$, on n'a donc pas
besoin d'en tenir compte dans le calcul qui suit et l'on prend $A := E(r,d,c)$.
Soient $t := h(1/r)$ et $t' := h'(1/r)$. On reprend les notations du numéro
précédent, et on introduit les couples $(k,l), (k',l') \in \Z \times \Z$
respectivement associés à $\gamma, \gamma' \in \Eqvee$. On trouve alors
selon \eqref{eqn:phi(A)bis}:
\begin{align*}
G_0 \phi(A) G_0^{-1} &= t^d \gamma(a) T_r^k D_r^{ld}, \\
G_0 \phi'(A) G_0^{-1} &= {t'}^d \gamma'(a) T_r^{k'} D_r^{l'd}, \\
G_0 (\phi\phi')(A) G_0^{-1}
&= (tt')^d (\gamma \gamma')(a) T_r^k D_r^{ld} T_r^{k'} D_r^{l'd} \\
&= (tt')^d (\gamma \gamma')(a) \zeta_r^{-k' l d} T_r^{k+k'} D_r^{(l+l')d} \\
&= u^d (\gamma \gamma')(a) T_r^{k+k'} D_r^{(l+l')d},
\end{align*}
où l'on a posé $u := t t' \zeta_r^{-k' l}$. On a invoqué au passage la troisième
formule du formulaire à la fin de \ref{subsubsection:objindecobjirred}, qui
implique en particulier que
$T_r^k D_r^{ld} T_r^{k'} D_r^{l'd} = \zeta_r^{-k' l d} T_r^{k+k'} D_r^{(l+l')d}$. Si l'on
trouve $h'' \in \Q^\vee$ tel que $h''(1/r) = u$, on en déduira que
$\phi \phi' \in \Gqps$ et que $\phi \phi' \leftrightarrow (h'',\gamma \gamma')$.
Or:
$$
\gamma'(\gamma(q_r)) = \gamma'\left(\zeta_r^{-l}\right) = \zeta_r^{-k' l}
\Longrightarrow
\zeta_r^{-k' l d} = \gamma'\left(\gamma\left(q^{-d/r}\right)\right).
$$
On est amené à introduire le morphisme $\epsilon(\gamma,\gamma') \in \Q^\vee$
défini par:
$$
\epsilon(\gamma,\gamma'):
\delta \mapsto \gamma'\left(\gamma\left(q^{-\delta}\right)\right).
$$
On a alors $h'' := h h' \epsilon(\gamma,\gamma') \in \Q^\vee$ et comme voulu
$\phi \phi' \leftrightarrow (h h' \epsilon(\gamma,\gamma'),\gamma \gamma')$.
En résumé:

\begin{prop}
\label{prop:multGp}
Via la bijection $\phi \leftrightarrow (\lambda,h,\gamma)$ de $\Gqp$ sur le
produit cartésien $\C \times \Q^\vee \times \Eqvee$, la multiplication de $\Gqp$
est donnée par la formule:
\begin{equation}
\label{eqn:multGp}
(\lambda,h,\gamma) \star (\lambda',h',\gamma') :=
(\lambda + \lambda',h h' \epsilon(\gamma,\gamma'),\gamma \gamma').
\end{equation}
\end{prop}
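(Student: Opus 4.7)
Le plan consiste essentiellement à organiser et à achever les calculs esquissés dans les paragraphes qui précèdent la proposition. Je commencerais par invoquer la décomposition $\phi = \phi_s \phi_u = \phi_u \phi_s$ déjà établie, qui permet de traiter séparément les composantes unipotente et semi-simple. Le cas unipotent est immédiat: pour $A = E(r,d,c) \otimes U_m$, la formule \eqref{eqn:phi(A)u} donne $\phi(A) = I_r \otimes U_m^\lambda$, et la multiplicativité $U_m^\lambda U_m^{\lambda'} = U_m^{\lambda+\lambda'}$ fournit l'additivité de la première coordonnée, donc la formule voulue pour $\Gqpu$.

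Pour la composante semi-simple, je me restreindrais aux objets irréductibles $A = E(r,d,c)$, le facteur tensoriel $U_m$ étant trivialisé par $U_m^0 = I_m$. J'appliquerais alors la deuxième formule de \eqref{eqn:phi(A)bis} aux éléments $\phi \leftrightarrow (h,\gamma)$ et $\phi' \leftrightarrow (h',\gamma')$ associés aux couples $(k,l)$ et $(k',l')$ donnés par $\gamma(\zeta_r) = \zeta_r^k$, $\gamma(q_r) = \zeta_r^l$, et de même pour $\gamma'$. Le produit matriciel $G_0 \phi(A) G_0^{-1} \cdot G_0 \phi'(A) G_0^{-1}$ fait alors apparaître le facteur $T_r^k D_r^{ld} T_r^{k'} D_r^{l'd}$, que la relation de commutation tordue du formulaire (troisième égalité à la fin de \ref{subsubsection:objindecobjirred}) permet de remettre sous la forme canonique $\zeta_r^{-k'ld} T_r^{k+k'} D_r^{(l+l')d}$. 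On obtient ainsi $G_0 (\phi\phi')(A) G_0^{-1} = u^d (\gamma\gamma')(a) T_r^{k+k'} D_r^{(l+l')d}$ avec $u := h(1/r) h'(1/r) \zeta_r^{-k'l}$.

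Le point central — et seule étape non strictement calculatoire — consiste à identifier le facteur correctif $\zeta_r^{-k'l}$ comme valeur en $1/r$ d'un morphisme de $\Q$ dans $\Cs$, afin de retomber sur la forme canonique d'un élément de $\Gqps$. L'observation clé est que $\gamma'(\gamma(q_r))$ produit précisément ce facteur (modulo convention de signe), ce qui conduit naturellement à poser $\epsilon(\gamma,\gamma'): \delta \mapsto \gamma'(\gamma(q^{-\delta}))$; la multiplicativité en $\delta$ découle immédiatement de ce que $\gamma$ et $\gamma'$ sont eux-mêmes des morphismes de groupes, donc $\epsilon(\gamma,\gamma') \in \Q^\vee$. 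On pose alors $h'' := h h' \epsilon(\gamma,\gamma')$ et l'on vérifie que $h''(1/r) = u$, d'où $\phi \phi' \leftrightarrow (h'', \gamma \gamma')$. La combinaison des parties unipotente et semi-simple, via la commutation $\phi_u \phi_s = \phi_s \phi_u$, fournit la formule \eqref{eqn:multGp} sur les objets irréductibles; elle s'étend enfin à toute la catégorie $\EE_{q,p}$ par décomposition en somme directe d'objets indécomposables et par $\otimes$-compatibilité et fonctorialité des éléments de $\Gqp$. La seule vraie subtilité conceptuelle est l'apparition du cocycle $\epsilon$, qui traduit le caractère non abélien du groupe formel en pentes rationnelles (là où il restait abélien dans le cas entier traité en \ref{subsection:groupedeGaloispentesenti�res}).
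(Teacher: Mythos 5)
Votre démonstration est correcte et suit essentiellement la même démarche que celle du texte précédant la proposition: décomposition $\phi = \phi_s \phi_u$, additivité immédiate sur $\Gqpu$, puis calcul du produit dans $\Gqps$ via la conjugaison par $G_0$ et la relation $T_r^k D_r^{ld} T_r^{k'} D_r^{l'd} = \zeta_r^{-k'ld} T_r^{k+k'} D_r^{(l+l')d}$, le facteur correctif étant identifié à $\gamma'\left(\gamma\left(q^{-d/r}\right)\right)$, d'où le cocycle $\epsilon$. Le seul ajout est l'explicitation du passage des objets indécomposables à toute la catégorie $\EE_{q,p}$ par $\otimes$-compatibilité et sommes directes, étape laissée implicite dans le texte mais bienvenue.
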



\subsubsection{Structure de $\Gqp$}
\label{subsubsection:Gp=GpsxGpu}

\paragraph{$\Gqp$ est un produit direct.}

Bien que $\Gqp$ ne soit plus un groupe abélien, on constate que l'on a encore
une décomposition de $\Gqp$ en produit direct $\Gqpu \times \Gqps$, où
$\Gqpu = \C$, qui agit sur les $q$-logarithmes; et l'on adjoint encore la
\og composante logarithmique \fg\ $\Gqpu$ au groupe de Stokes, en posant
$\Sttq := \Stq \times \Gqpu$, puis $\sttq := \Lie(\Sttq)$, d'où les relations:
$$
G = \Sttq \ltimes \Gqps = \sttq \ltimes \Gqps, \quad
\sttq = \stq \oplus \C \tau.
$$
(Voir \ref{subsubsection:correspondanceEE1<->Rep(Gq1)} pour la signification
de $\sttq \ltimes \Gqps$.) Ce qui va changer dans le cas des pentes arbitraires,
c'est la structure de $\Gqps$ et donc son action adjointe sur $\sttq$ (que l'on
abordera en \ref{subsection:groupedeGaloispentesarbitraires}).

\paragraph{$\Gqps$ est une extension centrale.}

Le groupe $\Gqps$ s'insère dans une suite exacte, qui est une extension
centrale non scindée:
$$
1 \rightarrow \Q^\vee \rightarrow \Gqps 
\rightarrow \Eqvee \rightarrow 1.
$$
Concrètement, on peut le décrire en définissant une loi de composition sur
le produit cartésien ensembliste $\Q^\vee \times \Eqvee$:
$$
(h,\gamma) \star (h',\gamma') := (\epsilon(\gamma,\gamma') h h', \gamma \gamma'),
$$
où $\epsilon: \Eqvee \times \Eqvee \rightarrow \Q^\vee$ est l'application 
définie par:
$$
\epsilon(\gamma,\gamma'):
\delta \mapsto \gamma'\left(\gamma\left(q^{-\delta}\right)\right).
$$
On a bien $\epsilon(\gamma,\gamma') \in \Q^\vee$. On vérifie de plus que l'application
$\epsilon: \Eqvee \times \Eqvee \rightarrow \Q^\vee$ est bilinéaire.


\subsubsection{Restriction à $\EE_q^r$ et ramification}
\label{subsubsection:RestrictionàE^retramification}

\paragraph{Le groupe $\Gqpsr$.}

Le lien avec les catégories intermédiaires $\EE_q^r$ (définies, pour tout
$r \in \N^*$, par la condition que les pentes sont dans $\frac{1}{r} \Z$) est 
le suivant. Le groupe $\Gqps$ est la limite projective des $\Gqpsr$. On identifie
alors $\Gqpsr$ à un quotient de $\Gqps$ via le diagramme de suites exactes:
\begin{equation}
\label{eqn:diagrammedesuitesexactes}
\xymatrix{
\EE_q & &
1 \ar@<0ex>[r] & \Q^\vee \ar@<0ex>[r] \ar@<0ex>[d] &
\Gqps \ar@<0ex>[r] \ar@<0ex>[d] & 
\Eqvee \ar@<0ex>[r] \ar@<0ex>[d] & 1 \\
\EE_q^r \ar@<0ex>[u] & &
1 \ar@<0ex>[r] & (\frac{1}{r} \Z)^\vee \ar@<0ex>[r] &
\Gqpsr \ar@<0ex>[r] & 
\Eqvee \ar@<0ex>[r] & 1
}
\end{equation}
La flèche $\Q^\vee \rightarrow (\frac{1}{r} \Z)^\vee$ est duale de l'inclusion
$\frac{1}{r} \Z \rightarrow \Q$ (et donc surjective); la flèche 
$\Eqvee \rightarrow \Eqvee$ est l'égalité; la surjection 
$\Gqps \rightarrow \Gqpsr$ s'en déduit. \\

D'autre part, $(\frac{1}{r} \Z)^\vee$ s'identifie à $\Cs$ par l'isomorphisme
$h \mapsto h(1/r)$, d'où une description alternative de la loi de groupe
sur $\Gqpsr$ donnant lieu à une extension:
$$
1 \rightarrow \Cs \rightarrow \Gqpsr \rightarrow \Eqvee \rightarrow 1.
$$
Soit $\phi \in \Gqpsr$; si $\phi \leftrightarrow (h,\gamma)$,
on écrira $\phi \leftrightarrow (t,\gamma)$ avec $t := h(1/r)$. Alors:
$$
\left(\phi \leftrightarrow (t,\gamma) \text{~et~}
\phi' \leftrightarrow (t',\gamma')\right)
\Longrightarrow \phi \phi' \leftrightarrow (t t' \eta(\gamma,\gamma'),\gamma \gamma')
\text{~où~}
\eta(\gamma,\gamma') := \gamma'\left(\gamma\left(q^{-1/r}\right)\right) \in \Cs.
$$

\paragraph{Description de $\Gqpsr$ par ramification.}

On obtient une description approchée de $\Gqpsr$ à l'aide d'une ramification.
Le foncteur de ramification $\text{Ram}_r: \EE_q^r \leadsto \EE_{q_r}^1$ donne
par dualité tannakienne un diagramme commutatif de suites exactes:
$$
\xymatrix{
\EE_{q_r}^1 & &
1 \ar@<0ex>[r] & \Z^\vee \ar@<0ex>[r] \ar@<0ex>[d] &
\mathbf{G}_{q_r,p,s}^r \ar@<0ex>[r] \ar@<0ex>[d] & 
\mathbf{E}_{q_r}^\vee \ar@<0ex>[r] \ar@<0ex>[d] & 1 \\
\EE_q^r \ar@<0ex>[u] & &
1 \ar@<0ex>[r] & (\frac{1}{r} \Z)^\vee \ar@<0ex>[r] &
\Gqpsr \ar@<0ex>[r] & 
\mathbf{E}_q^\vee \ar@<0ex>[r] & 1
}
$$
La flèche verticale gauche est l'isomorphisme dual de celui de multiplication
par $r$ de $\frac{1}{r} \Z$ dans $\Z$; la flèche verticale centrale est une 
immersion fermée ; la flèche verticale droite est l'immersion fermée duale de
l'isogénie de degré $r$ de $\Eq$ sur $\mathbf{E}_{q_r}$ induite par l'inclusion
$q^\Z \subset {q_r}^\Z$. On déduit en particulier de ce diagramme que l'injection
de $\mathbf{G}_{q_r,p,s}^1$ dans $\Gqpsr$ a pour conoyau $\mu_r$.


\subsection{Le groupe de Galois local dans le cas de pentes arbitraires}
\label{subsection:groupedeGaloispentesarbitraires}


\subsubsection{Le groupe de Galois de $\EE_q^r$ et le groupe fondamental sauvage }
\label{subsubsection:groupefondamentalsauvage}

Notre but est ici de décrire le groupe de Galois $\Gqr$ de $\EE_q^r$ à partir de
son \emph{groupe fondamental sauvage}, un sous-groupe Zariski-dense explicitement
déterminé par générateurs et relations. De plus, ce dernier sera lui-même incarné
par un objet hybride de la forme $L \ltimes \Gamma$, où $L$ est une sous-algèbre
de Lie graduée libre du logarithme de la composante unipotente de $\Gqr$ et où
$\Gamma$ un sous-groupe de la composante semi-simple du facteur formel, \ie\ pur,
de $\Gqr$, tels que $L$ soit stable sous l'action de $\Gamma$. \\

Nous utiliserons pour cela le foncteur de ramification
$\text{Ram}_r: \EE_q^r \leadsto \EE_{q_r}^1$ qui induit une immersion fermée
$\mathbf{G}_{q_r}^1 \hookrightarrow \Gqr$ (\cf\ \ref{subsubsection:immersionsfermées}),
laquelle se restreint en des immersions fermées des composantes de Stokes et des
composantes pures. Pour éviter la multiplication des indices et exposants, nous
adaptons certaines des conventions de
\ref{subsubsection:suffisancedesstokesnonramifiés}: on note sans, resp. avec prime
les objets attachés à $\EE_q^r$, resp. à $\EE_{q_r}^1$. Ainsi $G := \mathbf{G}_{q}^r$
et $G' := \mathbf{G}_{q_r}^1$; de même $\St$, $\St'$ désignent respectivement leurs
sous-groupes de Stokes et $G_p$, $G_p'$ leurs composantes pures. On a vu en
\ref{subsubsection:suffisancedesstokesnonramifiés} le diagramme commutatif:
$$
\xymatrix{
1 \ar@<0ex>[r] & 
\St' \ar@<0ex>[d] \ar@<0ex>[r] & 
G' \ar@<0ex>[d] \ar@<0ex>[r] & 
G_p' \ar@<0ex>[d] \ar@<0ex>[r] & 
1 \\
1 \ar@<0ex>[r] & 
\St \ar@<0ex>[r] & 
G \ar@<0ex>[r] & 
G_p  \ar@<0ex>[r] & 
1 
}
$$
qui permet d'identifier $\St'$ à un sous-groupe de $\St$, donc de $G$. On a surtout
vu que $\St'$ et $G_p$ engendrent ensemble un sous-groupe Zariski-dense du groupe $G$
que nous cherchons à décrire. \\

On sait d'autre part que dans les décompositions en parties unipotentes et semi-simples
$G_p = G_{p,u} \times G_{p,s}$ et $G'_p = G'_{p,u} \times G'_{p,s}$, l'inclusion de $G'$
dans $G$ induit l'identité $G'_{p,u} \rightarrow G_{p,u}$ et que cette composante commute
avec $\St'$. On voit donc que, notant $\st' := \Lie(\St')$ et, comme précédemment,
$\C \tau = \Lie(G_{p,u}) = \Lie(G'_{p,u})$, l'algèbre de Lie $\stt' := \st' \oplus \C \tau$
(via son exponentielle) et $G_{p,s}$ engendrent topologiquement $G$. \\

L'action par conjugaison de $G_{p,s}$ sur $\St$ induit son action adjointe sur $\stt$.
Nous allons montrer que cette dernière laisse stable $\stt'$: on pourra donc identifier
le groupe de Galois $G$ de $\EE_q^r$ à l'objet hybride $\stt' \ltimes G_{p,s}$. Pour cela,
il suffit de se restreindre à la sous-algèbre de Lie graduée $L'$ de $\stt'$ engendrée
par $\tau$ et par les $q$-dérivées étrangères\footnote{On prendra garde à ce que nous avons
introduit les notations $\Delta_\alpha$, etc, pour des objets définis sur la catégorie
$\EE_q^1$ (donc relatifs à la \emph{base} $q$) mais que nous reprenons ici ces notations
pour des objets définis sur la catégorie $\EE_{q_r}^1$ (donc relatifs à la \emph{base}
$q_r$).} $\Delta_\alpha^{(\delta,\overline{c})}$, $\alpha, \overline{c} \in \mathbf{E}_{q_r}$. \\

On invoque \ref{subsection:groupedeGaloisformel}. Puisque $G_p = G_{p,u} \times G_{p,s}$,
l'action par conjugaison du groupe $G_{p,s}$ sur le facteur $\C \tau$ de $\stt'$ est
triviale et il suffit de décrire son action sur les $\Delta \in \stt'$, et même plus
particulièrement sur les $q$-dérivées étrangères $\Delta_\alpha^{(\delta,\beta)}$,
$\delta \in \N^*$, $\alpha, \beta \in \Eqr$. \\

Soit donc $\phi \in G_{p,s}$, $\phi \leftrightarrow (h,\gamma)$. En vertu de la suite
exacte écrite à la ligne inférieure du diagramme \eqref{eqn:diagrammedesuitesexactes}
de \ref{subsubsection:RestrictionàE^retramification}, nous traiterons d'abord le cas 
$\phi \leftrightarrow (h,1)$ au \ref{subsubsection:actiondeh} (action du \og tore
theta \fg\ $(\frac{1}{r} \Z)^\vee$); puis le cas $\phi \leftrightarrow (1,\gamma)$, où
$\gamma \in \mathbf{E}_q^\vee$ au numéro \ref{subsubsection:actiondegamma}.
Le formulaire complet sera résumé au \ref{subsubsection:actiondeGpsurSt'}.


\subsubsection{Action du tore theta $(\frac{1}{r} \Z)^\vee$}
\label{subsubsection:actiondeh}

Soient donc $A$ un objet de $\EE_q^r$ , que l'on peut supposer à deux pentes d'après
\ref{subsubsection:linéarisation}. D'après \ref{subsubsection:objindecobjirred}, et
le lemme \ref{lem:dévissageparblocs} de \ref{subsubsection:linéarisation}, on peut
prendre $A$ de la forme:
$$
A = \begin{pmatrix} E(r_1,d_1,c_1) \otimes U_{m_1} & U \\
0 & E(r_2,d_2,c_2) \otimes U_{m_2} \end{pmatrix}.
$$
Les tailles des blocs diagonaux de $A$ sont $n_1 := r_1 m_1$ et $n_2 := r_2 m_2$.
Les pentes de $A$ calculées dans $\EE_q$ sont $\mu_1 := d_1/r_1$ et $\mu_2 :=d_2/r_2$.
Comme c'est un objet de $\EE_q^r$, on a $r = r_1 s_1 = r_2 s_2$, $s_1,s_2 \in \N^*$ et
les pentes de l'objet $A'(z_r) := A(z_r^r)$ de $\EE_{q_r}^1$ sont $r \mu_1 = d_1 s_1$ et
$r \mu_2 = d_2 s_2$, de différence
$\delta := d_2 s_2 - d_1 s_1 = r(\mu_2 - \mu_1) \in \N^*$. \\

Soit d'autre part $\Delta_\alpha^{(\delta,\beta)} \in L'$, où $\delta$ vient d'être
calculé (d'autres valeurs donneraient un résultat trivial), $\alpha,\beta \in \Eqr$.
On a:
$$
\Delta_\alpha^{(\delta,\beta)}(A) = \begin{pmatrix} 0 & N \\ 0 & 0 \end{pmatrix}
$$
pour une certaine matrice rectangulaire complexe $N \in \Mat_{n_1,n_2}(\C)$. \\

Soit $\phi \leftrightarrow (h,1)$, $h \in (\frac{1}{r} \Z)^\vee$. L'image d'un bloc pur
de pente $\mu \in \frac{1}{r} \Z$ est le scalaire $h(\mu)$. L'image du bloc
$E(r_i,d_i,c_i) \otimes U_{m_i}$ de taille $n_i$, $i = 1,2$, est donc le bloc scalaire
$h(d_i/r_i) I_{n_i}$, d'où $\phi(A) = \phi(A_0) = \Diag(h(d_1/r_1) I_{n_1},h(d_2/r_2) I_{n_2})$
et: \begin{align*}
\phi(A)^{-1} \Delta_\alpha^{(\delta,\beta)}(A) (\phi(A)) &=
\begin{pmatrix} h(d_1/r_1) I_{n_1} & 0 \\ 0 & h(d_2/r_2) I_{n_2} \end{pmatrix}^{-1}
\begin{pmatrix} 0 & N \\ 0 & 0 \end{pmatrix}
\begin{pmatrix} h(d_1/r_1) I_{n_1} & 0 \\ 0 & h(d_2/r_2) I_{n_2} \end{pmatrix} \\
&= \begin{pmatrix} 0 & \dfrac{h(d_2/r_2)}{h(d_1/r_1)} N \\ 0 & 0 \end{pmatrix} 
= h(d_2/r_2 - d_1/r_1) \Delta_\alpha^{(\delta,\beta)}(A) 
= h(\delta/r) \Delta_\alpha^{(\delta,\beta)}(A).
\end{align*}

L'action de $\phi \leftrightarrow (h,1)$ sur la composante de degré $\delta \in \N^*$
de $L'$ est donc scalaire de facteur $h(\delta/r)$. (Rappelons que $\delta$ est à
la fois le degré de l'élément étudié dans l'algèbre de Lie de Stokes et le niveau
$q$-Gevrey calculé dans $\EE_{q_r}$, mais que le niveau $q$-Gevrey calculé dans $\EE_q$
est $\delta/r$.)


\subsubsection{Action de $\mathbf{E}_q^\vee$}
\label{subsubsection:actiondegamma}

On garde les mêmes notations qu'en \ref{subsubsection:actiondeh} pour $A$ et pour
$\Delta_\alpha^{(\delta,\beta)}$.
Soit $\phi \leftrightarrow (1,\gamma)$, $\gamma \in \mathbf{E}_q^\vee$. On pourra
se restreindre au cas où $\gamma$ est l'un des générateurs topologiques $\gamma_1$
et $\gamma_2$ de $\mathbf{E}_q^\vee$. On notera donc $\phi_1 \leftrightarrow (1,\gamma_1)$
et $\phi_2 \leftrightarrow (1,\gamma_2)$. Dans tous les cas, on a $\phi(U_m) = I_m$ et
$\phi(A) = \phi(A_0) = \Diag(\phi(E_1) \otimes I_{m_1},\phi(E_2) \otimes I_{m_2})$, où
l'on a abrégé $E_i := E(r_i,d_i,c_i)$, $i = 1,2$. Ainsi, chacun des blocs de $\phi(A)$ 
est somme directe de blocs $\phi(E_i)$. \\

De la proposition \ref{prop:casgénériqueàdeuxpentes} on déduit que le calcul de
$\Delta_\alpha^{(\delta,\beta)}(A)$ donne le même résultat si l'on remplace
les facteurs unipotents $U_{m_i}$ par leur partie semi-simple triviale $I_{m_i}$.
Mais les blocs diagonaux sont alors remplacés par les $E_i \otimes I_{m_i}$, des
sommes directes de blocs $E_i$. Appliquant le lemme \ref{lem:dévissageparblocs}
de \ref{subsubsection:linéarisation}, on peut à nouveau prendre $A$ de la forme:
$$
A = \begin{pmatrix} E(r_1,d_1,c_1) \otimes U_{m_1} & U \\
0 & E(r_2,d_2,c_2) \otimes U_{m_2} \end{pmatrix}.
$$

On applique maintenant les calculs de \ref{subsubsection:objindecobjirred}; notant
$F := \begin{pmatrix} F_{a_1,r_1,d_1} & 0 \\ 0 & F_{a_2,r_2,d_2} \end{pmatrix}$, où $a_i$
est une racine $r_i$\ieme\ arbitraire de $c_i$, $i = 1,2$, on a un isomorphisme dans
$\EE_{q_r}$:
$$
\xymatrix{
{A = \begin{pmatrix} E(r_1,d_1,c_1) & U \\ 0 & E(r_2,d_2,c_2) \end{pmatrix}}
\ar@<0ex>[rr]^F & &
{B := \begin{pmatrix} a_1 z^{d_1/r_1} D_{r_1} & V \\ 0 & a_2 z^{d_2/r_2} D_{r_2} \end{pmatrix}}
},
$$
que l'on peut préciser ainsi; $z^{d_1/r_1} = z_r^{d_1 s_1}$, $z^{d_2/r_2} = z_r^{d_2 s_2}$ et,
puisque $z_{r_1} = z_r^{s_1}$ et $z_{r_2} = z_r^{s_2}$:
\begin{align*}
F = F(z_r) &=
\begin{pmatrix} F_{a_1,r_1,d_1}(z_r^{s_1}) & 0 \\ 0 & F_{a_2,r_2,d_2}(z_r^{s_2}) \end{pmatrix}, \\
D_{r_1} = \begin{pmatrix} 1 & 0 & \ldots & 0 \\ 0 & \zeta_{r_1} & \ldots & 0 \\
  \vdots & \vdots & \ddots & \vdots \\ 0 & 0 & \ldots & \zeta_{r_1}^{r_1 - 1} \end{pmatrix}
& = \begin{pmatrix} 1 & 0 & \ldots & 0 \\ 0 & \zeta_{r}^{s_1} & \ldots & 0 \\
  \vdots & \vdots & \ddots & \vdots \\ 0 & 0 & \ldots & \zeta_{r}^{s_1(r_1 - 1)} \end{pmatrix} \\
D_{r_2} = \begin{pmatrix} 1 & 0 & \ldots & 0 \\ 0 & \zeta_{r_2} & \ldots & 0 \\
  \vdots & \vdots & \ddots & \vdots \\ 0 & 0 & \ldots & \zeta_{r_2}^{r_2 - 1} \end{pmatrix}
& = \begin{pmatrix} 1 & 0 & \ldots & 0 \\ 0 & \zeta_{r}^{s_2} & \ldots & 0 \\
  \vdots & \vdots & \ddots & \vdots \\ 0 & 0 & \ldots & \zeta_{r}^{s_2(r_2 - 1)} \end{pmatrix}
\end{align*}
On a vu plus haut (au début de \ref{subsubsection:élémentsdeGp}) que les $F_{a,d,r}$
et $G_{a,d,r}$ sont leurs propres gradués. La fonctorialité de $\Delta_\alpha^{(\delta,\beta)}$
\emph{dans la catégorie $\EE_{q_r}$} entraîne alors la formule:
$$
\Delta_\alpha^{(\delta,\beta)}(B) = \Delta_\alpha^{(\delta,\beta)}\left(F[A]\right) =
F_0 \Delta_\alpha^{(\delta,\beta)}(A) F_0^{-1},
\text{~où l'on a posé:~}
F_0 = F(z_{0,r}) = \begin{pmatrix} F_{a_1,r_1,d_1}(z_{0,r}) & 0 \\
  0 & F_{a_2,r_2,d_2}(z_{0,r}) \end{pmatrix}.
$$
Pour préciser le calcul, on écrira:
$$
\Delta_\alpha^{(\delta,\beta)}(B) = \begin{pmatrix} 0 & N \\ 0 & 0 \end{pmatrix},
\quad \text{où} \quad
N = (n_{i,j})_{1 \leq i \leq r_1 \atop 1 \leq j \leq r_2} \in \Mat_{r_1,r_2}(\C).
$$

\paragraph{Action de $\gamma_1 \in \mathbf{E}_q^\vee$}

En vertu des équations \eqref{eqn:effetgamma1gamma2} de la fin du numéro
\ref{subsubsection:élémentsdeGp}:
$$
F_0 \phi_1(A) F_0^{-1} =
\begin{pmatrix} \gamma_1(a_1) D_{r_1} & 0 \\ 0 & \gamma_1(a_2) D_{r_2} \end{pmatrix}.
$$
Ainsi:
\begin{align*}
F_0 \left(\phi_1(A)^{-1} \Delta_\alpha^{(\delta,\beta)}(A) \phi_1(A)\right) F_0^{-1}
&=
(F_0 \phi_1(A) F_0^{-1})^{-1}
(F_0 \Delta_\alpha^{(\delta,\beta)}(A) F_0^{-1})
(F_0 \phi_1(A) F_0^{-1}) \\
&=
(F_0 \phi_1(A) F_0^{-1})^{-1} \Delta_\alpha^{(\delta,\beta)}(B) (F_0 \phi_1(A) F_0^{-1}) \\
&=
\begin{pmatrix} \gamma_1(a_1) D_{r_1} & 0 \\ 0 & \gamma_1(a_2) D_{r_2} \end{pmatrix}^{-1}
\begin{pmatrix} 0 & N \\ 0 & 0 \end{pmatrix}
\begin{pmatrix} \gamma_1(a_1) D_{r_1} & 0 \\ 0 & \gamma_1(a_2) D_{r_2} \end{pmatrix} \\
& =\begin{pmatrix} 0 & \left(\gamma_1(a_1) D_{r_1}\right)^{-1} N
\left(\gamma_1(a_2) D_{r_2}\right)
\\ 0 & 0 \end{pmatrix} \\
& =\begin{pmatrix} 0 & \gamma_1(a_2/a_1) D_{r_1}^{-1} N D_{r_2} \\ 0 & 0 \end{pmatrix}.
\end{align*}

La matrice $N' := \gamma_1(a_2/a_1) D_{r_1}^{-1} N D_{r_2}$ s'écrit
$(n'_{i,j})_{1 \leq i \leq r_1 \atop 1 \leq j \leq r_2}$ , où:
$$
n'_{i,j} = \gamma_1(a_2/a_1) \zeta_r^{j s_2 - i s_1} =
\gamma_1\left(\dfrac{a_2 \zeta_{r_2}^j}{a_1 \zeta_{r_1}^i}\right)
$$
car $\gamma_1(u) = u$ pour toute racine de l'unité $u$. Or, notant $c \in \C^*$
un représentant de $\beta \in \Eqr$, les seuls coefficients non nuls de $N$, bloc
surdiagonal de $\Delta_\alpha^{(\delta,\beta)}(B) = \Delta_\alpha^{(\delta,\overline{c})}(B)$
sont ceux tels que $\dfrac{a_2 \zeta_{r_2}^j}{a_1 \zeta_{r_1}^i} \equiv c \pmod{q_r^\Z}$,
et l'on vient de voir qu'ils sont multipliés par $\gamma_1(c)$. On a donc:
$$
F_0 \left(\phi_1(A)^{-1} \Delta_\alpha^{(\delta,\overline{c})}(A) \phi_1(A)\right) F_0^{-1} =
\gamma_1(c) (F_0 \Delta_\alpha^{(\delta,\overline{c})}(A) F_0^{-1})
\Longrightarrow
\phi_1(A)^{-1} \Delta_\alpha^{(\delta,\overline{c})}(A) \phi_1(A) =
\gamma_1(c) \Delta_\alpha^{(\delta,\overline{c})}(A).
$$
À noter qu'en principe l'élément $c$ ne devrait intervenir qu'à travers sa classe
$\beta = \overline{c}$ modulo $q_r^\Z$, mais comme $\gamma_1(q_r) = \gamma_1(q^{1/r}) = 1$
par définition de $\gamma_1$, le résultat ci-dessus est cohérent. En fait, $\gamma_1(\beta)$
est bien défini et l'on peut même écrire:
$$
\phi_1(A)^{-1} \, \Delta_\alpha^{(\delta,\beta)}(A) \, \phi_1(A) =
\gamma_1(\beta) \, \Delta_\alpha^{(\delta,\beta)}(A).
$$
En conclusion: l'action par conjugaison de $\phi_1$ sur $\Delta_\alpha^{(\delta,\beta)}$
est scalaire, c'est la multiplication par $\gamma_1(\beta)$:
\begin{equation}
\label{eqn:actiongamma1}
\phi_1^{-1} \, \Delta_\alpha^{(\delta,\beta)} \, \phi_1 =
\gamma_1(\beta) \, \Delta_\alpha^{(\delta,\beta)}.
\end{equation}

\begin{rem}
Dans le cas des pentes entières, on avait vu que l'action de tout
$\gamma \in \mathbf{E}_q^\vee$ sur $\Delta_\alpha^{(\delta,\overline{c})}$ était la
multiplication par $\gamma(c)$ et celle de tout $h \in \Z^\vee$ la multiplication
par $h(\delta)$  (décomposition de Fourier pour le groupe semi-simple
$\mathbf{G}_{q,p,s}^1$). Ces deux faits restent vrais ici; le changement principal
viendra de $\gamma_2$ (ci-dessous).
\end{rem}

\paragraph{Action de $\gamma_2 \in \mathbf{E}_q^\vee$}

Toujours en vertu des équations \eqref{eqn:effetgamma1gamma2} de la fin du numéro
\ref{subsubsection:élémentsdeGp}:
$$
F_0 \phi_2(A) F_0^{-1} = 
\begin{pmatrix} \gamma_2(a_1) T_{r_1}^{-d_1} & 0 \\
  0 & \gamma_2(a_2) T_{r_2}^{-d_2} \end{pmatrix}.
$$
Ainsi:
\begin{align*}
F_0 \left(\phi_2(A)^{-1} \Delta_\alpha^{(\delta,\beta)}(A) \phi_2(A)\right) F_0^{-1}
&=
(F_0 \phi_2(A) F_0^{-1})^{-1}
(F_0 \Delta_\alpha^{(\delta,\beta)}(A) F_0^{-1})
(F_0 \phi_2(A) F_0^{-1}) \\
&=
(F_0 \phi_2(A) F_0^{-1})^{-1} \Delta_\alpha^{(\delta,\beta)}(B) (F_0 \phi_2(A) F_0^{-1}) \\
&=
\begin{pmatrix} \gamma_2(a_1) T_{r_1}^{-d_1} & 0 \\
  0 & \gamma_2(a_2) T_{r_2}^{-d_2} \end{pmatrix}^{-1}
\begin{pmatrix} 0 & N \\ 0 & 0 \end{pmatrix}
\begin{pmatrix} \gamma_2(a_1) T_{r_1}^{-d_1} & 0 \\
  0 & \gamma_2(a_2) T_{r_2}^{-d_2} \end{pmatrix} \\
& =\begin{pmatrix} 0 & \left(\gamma_2(a_1) T_{r_1}^{-d_1}\right)^{-1} N
\left(\gamma_2(a_2) T_{r_2}^{-d_2}\right)
\\ 0 & 0 \end{pmatrix} \\
& = \gamma_2(a_2/a_1) \begin{pmatrix} 0 & N'' \\ 0 & 0 \end{pmatrix},
\end{align*}
où l'on a posé $N'' := T_{r_1}^{d_1} N T_{r_2}^{-d_2}$. Pour affiner ce calcul, on note
$c \in \C^*$ un représetant de $\beta \in \Eqr$. Les seuls cas de non nullité de
$\Delta_\alpha^{(\delta,\beta)}(B)$ sont ceux où
$c \equiv \frac{a_2 \zeta_r^j}{a_1 \zeta_r^i} \pmod{q_r^\Z}$.
Comme $\gamma_2(\zeta_r) = 1$ (par définition de $\gamma_2)$, on a donc en fait:
$$
F_0 \left(\phi_2(A)^{-1} \Delta_\alpha^{(\delta,\overline{c})}(A) \phi_2(A)\right) F_0^{-1} =
\gamma_2(c) \begin{pmatrix} 0 & N'' \\ 0 & 0 \end{pmatrix},
$$
ou encore:
$$
F_0 \left(\phi_2(A)^{-1} \Delta_\alpha^{(\delta,\beta)}(A) \phi_2(A)\right) F_0^{-1} =
\gamma_2(\beta) \begin{pmatrix} 0 & N'' \\ 0 & 0 \end{pmatrix},
$$
Pour comprendre cette action, nous aurons l'usage d'un lemme. À cet effet,
nous introduisons (à nouveau) l'automorphisme $\tau: z_r \mapsto \zeta_r z_r$,
générateur du groupe de Galois de l'extension cyclique $K_r/K$, qui commute
à $\sq$. Rappelons que
$B := F[A] = \begin{pmatrix} a_1 z^{d_1/r_1} D_{r_1} & V \\
  0 & a_2 z^{d_2/r_2} D_{r_2} \end{pmatrix}$.

\begin{lem}
Soit $T := \Diag(T_{r_1}^{d_1},T_{r_2}^{d_2})$. Alors $\tau B = T B T^{-1}$.
\end{lem}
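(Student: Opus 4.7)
The plan is to verify the identity $\tau B = T B T^{-1}$ block by block on the two pentes decomposition of $B$. Since $T$ is block diagonal, this splits cleanly: for $i=1,2$ the $(i,i)$-block must satisfy $\tau(a_i z^{d_i/r_i} D_{r_i}) = T_{r_i}^{d_i}(a_i z^{d_i/r_i} D_{r_i}) T_{r_i}^{-d_i}$, and the off-diagonal block must satisfy $\tau V = T_{r_1}^{d_1} V T_{r_2}^{-d_2}$.

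For the diagonal blocks, I would first use $z = z_r^r$ and $z_r^{s_i} = z_{r_i}$ (hence $\zeta_{r_i} = \zeta_r^{s_i}$) to compute $\tau(z^{d_i/r_i}) = \tau(z_r^{d_i s_i}) = \zeta_r^{d_i s_i} z_r^{d_i s_i} = \zeta_{r_i}^{d_i} z^{d_i/r_i}$. On the other side, since $D_{r_i}$ is constant, the commutation rule $T_r^k D_r^l = \zeta_r^{kl} D_r^l T_r^k$ of the formulary of~\ref{subsubsection:objindecobjirred} (applied with $r = r_i$, $k = d_i$, $l = 1$) yields $T_{r_i}^{d_i} D_{r_i} T_{r_i}^{-d_i} = \zeta_{r_i}^{d_i} D_{r_i}$, and the two expressions coincide.

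For the off-diagonal block, I would first extract the exact form of $V$ from the isomorphism $B = F[A]$, where $F = \Diag(F_{a_1,r_1,d_1}, F_{a_2,r_2,d_2})$: reading the off-diagonal entry of $(\sigma_{q_r} F) A F^{-1}$ gives
\begin{equation*}
V = (\sigma_{q_r} F_{a_1,r_1,d_1}) \; U \; F_{a_2,r_2,d_2}^{-1}.
\end{equation*}
Now apply $\tau$, using that $\tau$ commutes with $\sigma_{q_r}$ (both act on $K_r$), and that $\tau U = U$ since $U$ has coefficients in $K$. It then suffices to compute $\tau F_{a_i,r_i,d_i}$. Interpreting the formulary item $F_{a,r,d}(\zeta_r z_r) = T_r^d F_{a,r,d}(z_r)$ in our setting (where the $z_r$ of the formulary becomes $z_{r_i} = z_r^{s_i}$ and $\zeta_{r_i} = \zeta_r^{s_i}$, so that $\tau z_{r_i} = \zeta_{r_i} z_{r_i}$), one obtains $\tau F_{a_i,r_i,d_i} = T_{r_i}^{d_i} F_{a_i,r_i,d_i}$. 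Substituting back gives $\tau V = T_{r_1}^{d_1} (\sigma_{q_r} F_{a_1,r_1,d_1}) U F_{a_2,r_2,d_2}^{-1} T_{r_2}^{-d_2} = T_{r_1}^{d_1} V T_{r_2}^{-d_2}$, as required.

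There is no real obstacle; the only mild subtlety is the bookkeeping of ramification indices, namely correctly translating the formulary identities for $F_{a,r,d}$ (stated in the variable $z_r$ of its own level of ramification $r$) into the common variable $z_r$ used throughout the section for the level $r = r_i s_i$. Once the identifications $z_{r_i} = z_r^{s_i}$ and $\zeta_{r_i} = \zeta_r^{s_i}$ are made explicit, both the diagonal and off-diagonal checks are one-line computations and conclude the lemma.
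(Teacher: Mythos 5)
Your proof is correct and rests on exactly the same ingredients as the paper's: the formulary identity $\tau F_{a,r,d} = T_r^d F_{a,r,d}$, the invariance $\tau A = A$ (hence $\tau U = U$), and the commutation of $\tau$ with $\sigma_q$. The paper simply performs the computation globally on $B = (\sigma_q F) A F^{-1}$ in one line rather than block by block, but this is a presentational difference only.
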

\begin{proof}
On a bien entendu $\tau A = A$; de plus $\tau F = T F$ en vertu des formules
du \ref{subsubsection:objindecobjirred}. On en déduit:
\begin{align*}
\tau B &= \tau((\sq F) A F^{-1}) = (\tau \sq F) (\tau A) (\tau F)^{-1} =
(\sq \tau F) (\tau A) (\tau F)^{-1} \\
&= (\sq (T F)) A (T F)^{-1} =
T ((\sq F) A F^{-1}) T^{-1} = T B T^{-1}.
\end{align*}
\end{proof}

Par fonctorialité, on en déduit:
$$
\Delta_\alpha^{(\delta,\beta)}(\tau B) =
T \Delta_\alpha^{(\delta,\beta)}(B) T^{-1} =
\begin{pmatrix} 0 & T_{r_1}^{d_1} N T_{r_2}^{-d_2} \\ 0 & 0 & \end{pmatrix} =
\begin{pmatrix} 0 & N'' \\ 0 & 0 & \end{pmatrix},
$$
avec la notation introduite plus haut. On a donc:
$$
F_0 \left(\phi_2(A)^{-1} \Delta_\alpha^{(\delta,\beta)}(A) \phi_2(A)\right) F_0^{-1}
= \gamma_2(\beta) \Delta_\alpha^{(\delta,\beta)}(\tau B).
$$

Par ailleurs le calcul de $\Delta_\alpha^{(\delta,\beta)}(\tau B)$ relève du
corollaire \ref {cor:Delta(A(lambdaz))} de \ref{subsubsection:dilatation},
que l'on applique avec $\lambda := \zeta_r$ (et bien entendu $q_r$ à la place
de $q$). En utilisant la remarque qui suit le corollaire et le fait que
$\gamma_2(q_r) = \zeta_r$, on trouve, pour les seules valeurs non nulles,
donc pour toutes (notation: $\alpha = \overline{c}$, $\beta = \overline{d^{-1}}$):
$$
\Delta_{\overline{\zeta_r}^{-1} \alpha}^{(\delta,\overline{\zeta_r}^\delta \beta)}(\tau B) = 
\left(\dfrac{\theta_{q_r}(z_0/c)}{\theta_{q_r}(z_0/(\zeta_r^{-1} c))}\right)^\delta
\gamma_2(d/c^\delta) \; \Delta_{\alpha}^{(\delta,\beta)}(B).
$$
Pour se rapprocher des notations en cours dans ce numéro, on remplace d'abord
dans la formule ci-dessus $c$ par $\zeta_r c$ et $d$ par $\zeta_r^{\delta} d$
(la valeur de $d/c^\delta$ n'est donc pas modifiée):
$$
\Delta_{\alpha}^{(\delta,\beta)}(\tau B) = 
\left(\dfrac{\theta_{q_r}(z_0/\zeta_r c)}{\theta_{q_r}(z_0/c)}\right)^\delta
\gamma_2(d/c^\delta) \; \Delta_{\overline{\zeta_r} \alpha}^{(\delta,\overline{\zeta_r}^{-\delta} \beta)}(B);
$$
puis on remplace $c$ par $a$ et $d$ par $c$ (maintenant $\alpha = \overline{a}$
et $\beta = \overline{c^{-1}}$):
$$
\Delta_{\alpha}^{(\delta,\beta)}(\tau B) = 
\left(\dfrac{\theta_{q_r}(z_0/\zeta_r a)}{\theta_{q_r}(z_0/a)}\right)^\delta
\gamma_2(c/a^\delta) \; \Delta_{\overline{\zeta_r} \alpha}^{(\delta,\overline{\zeta_r}^{-\delta} \beta)}(B).
$$
On a alors:
\begin{align*}
F_0 \left(\phi_2(A)^{-1} \Delta_\alpha^{(\delta,\beta)}(A) \phi_2(A)\right) F_0^{-1}
&= \gamma_2(\beta) \Delta_\alpha^{(\delta,\beta)}(\tau B) \\
&= \gamma_2(c^{-1})
\left(\dfrac{\theta_{q_r}(z_0/\zeta_r a)}{\theta_{q_r}(z_0/a)}\right)^\delta
\gamma_2(c/a^\delta) \;
\Delta_{\overline{\zeta_r} \alpha}^{(\delta,\overline{\zeta_r}^{-\delta} \beta)}(B) \\
&= \gamma_2(a^{-\delta})
\left(\dfrac{\theta_{q_r}(z_0/\zeta_r a)}{\theta_{q_r}(z_0/a)}\right)^\delta
F_0 \, \Delta_{\overline{\zeta_r} \alpha}^{(\delta,\overline{\zeta_r}^{-\delta} \beta)}(A) \, F_0^{-1}.
\end{align*}
En se débarrassant de la conjugaison par $F_0$, il vient la formule recherchée:
$$
\phi_2(A)^{-1} \, \Delta_\alpha^{(\delta,\beta)}(A) \, \phi_2(A) =
\left(\overline{\gamma}_2(\alpha)\right)^\delta \;
\Delta_{\overline{\zeta_r} \alpha}^{(\delta,\overline{\zeta_r}^{-\delta} \beta)}(A),
$$
où l'on a posé la définition suivante:

\begin{defn}
\label{defn:nouveaumistigri}
On définit la fonction $\overline{\gamma}_2: \Eqr \rightarrow \Cs$ par la formule:
$$
\overline{\gamma}_2(\alpha) :=
\gamma_2(c^{-1}) \, \dfrac{\theta_{q_r}(z_0/\zeta_r c)}{\theta_{q_r}(z_0/c)}
$$
où $c \in \Cs$ est un représentant arbitraire de $\alpha$.
\end{defn}

On vérifie en effet immédiatement qu'en vertu de l'équation fonctionnelle de $\theta_{q_r}$
et de l'égalité $\gamma_2(q_r) = \zeta_r$, le membre de droite de cette égalité n'est pas
affecté lorsque l'on remplace $c$ par $q_r c$. En résumé:

\begin{equation}
\label{eqn:actiongamma2}
\phi_2^{-1} \, \Delta_\alpha^{(\delta,\beta)} \, \phi_2 =
\left(\overline{\gamma}_2(\alpha)\right)^\delta \;
\Delta_{\overline{\zeta_r} \alpha}^{(\delta,\overline{\zeta_r}^{-\delta} \beta)}.
\end{equation}


\subsubsection{Formulaire de l'action de $G_p$ sur $\St'$}
\label{subsubsection:actiondeGpsurSt'}

Résumons les résultats du \ref{subsection:groupedeGaloispentesarbitraires}. Si
$\Delta \in \stt$ (donc en particulier si $\Delta \in \stt'$) et si $\phi \in G$,
nous écrirons $\Delta^\phi := \phi \Delta \phi^{-1}$. De plus, pour simplifier les
notations, nous identifierons à l'élément $h \in (\frac{1}{r} \Z)^\vee$ l'élément
$\phi \in G_{p,s}$ tel que $\phi \leftrightarrow (h,1)$; et de même nous identifierons
à l'élément $\gamma \in \mathbf{E}_q^\vee$ l'élément $\phi \in G_{p,s}$ tel que
$\phi \leftrightarrow (1,\gamma)$. Rappelons enfin que, si $\alpha = \overline{c} \in \Eqr$
on a introduit à la fin du \ref{subsubsection:actiondegamma} (définition
\ref{defn:nouveaumistigri}):
$$
\overline{\gamma}_2(\alpha) :=
\gamma_2(c^{-1}) \, \dfrac{\theta_{q_r}(z_0/\zeta_r c)}{\theta_{q_r}(z_0/c)}.
$$

\begin{thm}
\label{thm:conjuguésdérivéesétrangères}
Soient $\delta \in \N^*$ et $\alpha,\beta \in \mathbf{E}_{q_r}$. Alors: \\
(i) $\forall h \in (\frac{1}{r} \Z)^\vee \;,\;
\left(\Delta_\alpha^{(\delta,\beta)}\right)^h =
h(\delta/r) \Delta_\alpha^{(\delta,\beta)}$. \\
(ii) $\left(\Delta_\alpha^{(\delta,\beta)}\right)^{\gamma_1} = 
\gamma_1(\beta) \; \Delta_\alpha^{(\delta,\beta)}$. \\
(iii) $\left(\Delta_\alpha^{(\delta,\beta)}\right)^{\gamma_2} =
\left(\overline{\gamma}_2(\alpha)\right)^\delta \;
\Delta_{\overline{\zeta_r} \alpha}^{(\delta,\overline{\zeta_r}^{-\delta} \beta)}$.
\end{thm}
\begin{proof}
Elle a été donnée tout au long du \ref{subsubsection:actiondegamma}.
\end{proof}

En vue de la définition du \og groupe fondamental sauvage \fg, nous aurons besoin
de préciser ces actions sur la base canonique de l'algèbre de Lie graduée $L_{q_r}^1$,
décrite au théorème \ref{thm:basecanonique} de \ref{subsubsection:basecanonique}.
Dans ce numéro, on se plaçait en fait dans la catégorie $\EE_q$, dont on étudiait
l'action du groupe $\Gqps$ sur $\sttq$; et on y définissait l'algèbre de Lie graduée
libre $L_{q}^1 \subset \sttqu$, tout cela relativement à la \og base \fg\ $q$. Ici nous
devons remplacer $q$ par $q_r$. Nous noterons encore $\Delta_l^{(\delta,\beta)}$ (les indices
$l \in \Z$ étant comptés modulo $\delta$) les éléments de la base canonique de $L_{q_r}^1$;
mais, bien entendu, avec $\beta \in \mathbf{E}_{q_r}$. \\

Il est immédiat à partir du théorème ci-dessus que l'on a, pour $l = 0,\ldots,\delta-1$
et avec les notations du théorème::
\begin{align*}
\left(\Delta_l^{(\delta,\beta)}\right)^h &= h(\delta/r) \; \Delta_l^{(\delta,\beta)}, \\
\left(\Delta_l^{(\delta,\beta)}\right)^{\gamma_1} &= \gamma_1(\beta) \; \Delta_l^{(\delta,\beta)}.
\end{align*}

Le cas de $\gamma_2$ est un peu plus compliqué et nécessite que l'on précise bien
les choix et les notations. \\

Fixons donc $\delta \in \N^*$ et $\beta \in \Eqr$. Soit $e$ l'unique représentant de $\beta$
dans la couronne fondamentale: $1 \leq \lmod e \rmod < \lmod q_r \rmod$; et soit $\phi$ son
argument pris dans $\left[0,2 \pi\right[$. On pose $d := e^{-1}$, de sorte que
$\lmod q_r \rmod^{-1} \leq \lmod d \rmod < 1$ et que $-\phi \in \left]-2 \pi,0\right]$
est un argument de $d$. Soit $c_l$ l'unique racine $\delta$\ieme\ de $d$ d'argument
$\dfrac{-\phi - 2 l \pi}{\delta} \in
\left]-(l+1) \dfrac{2 \pi}{\delta},-l \dfrac{2 \pi}{\delta}\right]$.
Donc $c_l = \zeta_\delta^{-l} c_0$. De plus,
$\lmod q_{r \delta} \rmod^{-1} \leq \lmod c_l \rmod < 1$.
Notons enfin $\alpha_l := \overline{c_l} \in \Eqr$. \\

Ces notations étant posées, par définition (théorème \ref{thm:basecanonique} de
\ref{subsubsection:basecanonique} adapté à la base $q_r$):
$$
\Delta_l^{(\delta,\beta)} = \Delta_{\alpha_l}^{(\delta,\beta)}, \quad l = 0,\ldots,\delta-1.
$$

Pour appliquer le (iii) du théorème, on pose $\beta' := \overline{\zeta_r}^{-\delta} \beta$.
Les notations correspondant à celles explicitées ci-dessus, appliquées à $\beta'$, donnent
$d' = \zeta_r^\delta d$. Les $c'_l$ sont donc (peut-être à l'ordre près) les $\zeta_r c_l$;
de plus, la suite des $c_l$ comme celle des $c'_l$ est géométrique de raison $\zeta_\delta^{-1}$,
ce qui entraîne que ce sont les mêmes suites à décalage près: il existe un entier $l_0$ tel
que $\zeta_r c_l = c'_{l+l_0}$ pour tout $l$. En particulier, $l_0$ est déterminé (à un multiple
près de $\delta$) par la condition $\zeta_r c_0 = c'_{l_0}$. Comme $\zeta_r c_0$ est une racine
$\delta$\ieme\ de $d'$ telle que $\lmod q_{r \delta} \rmod^{-1} \leq \lmod \zeta_r c_0 \rmod < 1$
et dont un argument est $2 \pi/r - \phi/\delta$, il suffit de trouver $l_0$ tel que:
$$
2 \pi/r - \phi/\delta \in \left]-(l_0+1) \dfrac{2 \pi}{\delta},-l_0 \dfrac{2 \pi}{\delta}\right].
$$
Nous noterons à la française $E(x)$ la partie entière d'un réel $x$. La condition ci-dessus
équivaut à $l_0 = E\left(\phi/2\pi - \delta/r\right)$.    

\begin{defn}
\label{defn:subtildécalage}
On définit la fonction $\ell: \N^* \times \Eqr \rightarrow \Z$ par la formule:
$$
\ell(\delta,\beta) := E\left(\phi/2\pi - \delta/r\right),
$$
où $\phi$ est l'argument pris dans $\left[0,2\pi\right[$ d'un représentant $e$
de $\beta$ choisi dans la couronne fondamentale $1 \leq \lmod e \rmod < \lmod q_r \rmod$.
\end{defn}

Dans cette définition, $r$ est implicite, puisque l'on s'est placé une fois pour toutes
dans la catégorie $\EE_{q_r}$.

\begin{cor}[du théorème \ref{thm:conjuguésdérivéesétrangères}]
\label{cor:conjuguésbasecanonique}
L'action adjointe de $\Gqps$ sur $L_{q_r}^1$ est décrite par les formules:
\begin{align*}
\left(\Delta_l^{(\delta,\beta)}\right)^h &= h(\delta/r) \; \Delta_l^{(\delta,\beta)}, \\
\left(\Delta_l^{(\delta,\beta)}\right)^{\gamma_1} &= \gamma_1(\beta) \; \Delta_l^{(\delta,\beta)}, \\
\left(\Delta_l^{(\delta,\beta)}\right)^{\gamma_2} &=
\overline{\gamma}_2(\alpha_l) \; \Delta_{l+\ell(\delta,\beta)}^{(\delta,\zeta_r^{-\delta}\beta)}.
\end{align*}
\end{cor}

Pour une formulation où n'apparaît pas explicitement le point base $z_0$ (ici via
la fonction $\overline{\gamma}_2$), nous introduisons une autre base de $L_{q_r}^1$
en posant:
\begin{equation}
\label{eqn:meilleurebasecanonique}
\forall \delta \in \N^* \:;\: \forall \beta \in \Eqr \;,\;
\forall l \in \{0,\ldots,\delta-1\} \;,\;
\Psi_l^{(\delta,\beta)} := \theta_{q_r}(z_0/c_l) \Delta_l^{(\delta,\beta)},
\end{equation}
où $c_l := c_{l,0}$ comme précédemment. Comme expliqué au début de
\ref{subsection:calculs explicites}, contrairement aux énoncés précédents
l'énoncé ci-dessous est valable sans restriction dans tout $\EE_q^r$.

\begin{thm}
\label{thm:conjuguésmeilleurebasecanonique}
On suppose que $q$ a une bonne valeur.
L'action adjointe de $\Gqps$ sur $L_{q_r}^1$ est décrite par les formules:
\begin{align*}
\left(\Psi_l^{(\delta,\beta)}\right)^h &= h(\delta/r) \; \Psi_l^{(\delta,\beta)}, \\
\left(\Psi_l^{(\delta,\beta)}\right)^{\gamma_1} &= \gamma_1(\beta) \; \Psi_l^{(\delta,\beta)}, \\
\left(\Psi_l^{(\delta,\beta)}\right)^{\gamma_2} &=
\gamma_2(c_l^{-1}) \; \Psi_{l+\ell(\delta,\beta)}^{(\delta,\zeta_r^{-\delta}\beta)}.
\end{align*}
\end{thm}
\begin{proof}
C'est la traduction directe du corollaire \ref{cor:conjuguésbasecanonique}.
\end{proof}


\subsection{Structure du groupe fondamental sauvage}
\label{subsection:structgroufonsau}

Nous formulons ici le théorème qui étend au cas des pentes arbitraires le théorème
3.10 de \cite{RS3}. Outre la relaxation de l'hypothèse d'intégrité des pentes, nous
proposons une légère amélioration en ce que la composante formelle (ou pure) est
elle-même décrite comme un groupe discret. \\

Dans la ligne basse de l'équation \eqref{eqn:diagrammedesuitesexactes} du numéro
\ref{subsubsection:RestrictionàE^retramification}:
$$
\xymatrix{
1 \ar@<0ex>[r] & (\frac{1}{r} \Z)^\vee \ar@<0ex>[r] &
\Gqpsr \ar@<0ex>[r] & 
\Eqvee \ar@<0ex>[r] & 1,
}
$$
on identifie $(\frac{1}{r} \Z)^\vee$ à $\Cs$ par $h \mapsto h(1/r)$, d'où une extension:
$$
1 \rightarrow \Cs \rightarrow \Gqpsr \rightarrow \Eqvee \rightarrow 1.
$$
Cela permet de décrire le groupe $\Gqpsr$ comme l'ensemble $\Cs \times \Eqvee$ muni
de la loi:
$$
(t,\gamma) \ast (t',\gamma') := (t t' \eta(\gamma,\gamma'),\gamma \gamma')
\text{~où~}
\eta(\gamma,\gamma') := \gamma'\left(\gamma\left(q^{-1/r}\right)\right) \in \Cs.
$$
Le sous-groupe de torsion $\mu_\infty := \text{Tor}(\Cs)$ de $\Cs$ formé des racines
de l'unité est infini, donc Zariski-dense dans $\Cs$. On a vu que le sous-groupe
abélien libre $<\gamma_1,\gamma_2>$ de $\Eqvee$ est également Zariski-dense. Comme
$\eta(\gamma,\gamma')$ est une racine de l'unité (on l'a déjà vu, mais cela sera
précisé ci-dessous), on a un sous-groupe Zariski-dense de $\Gqpsr$
(décrit comme ci-dessus par $\Cs \times \Eqvee$) d'ensemble sous-jacent
$\mu_\infty \times <\gamma_1,\gamma_2>$. des égalités:
$$
\gamma_1(\zeta_r) = \zeta_r, \quad \gamma_1(q_r) = 1, \quad 
\gamma_2(\zeta_r) = 1,  \quad \gamma_2(q_r) = \zeta_r,
$$
on tire:
$$
\eta(\gamma_1,\gamma_1) = \eta(\gamma_1,\gamma_2) = \eta(\gamma_2,\gamma_2) = 1,
\quad \eta(\gamma_2,\gamma_1) = \zeta_r^{-1},
$$
d'où, par bilinéarité de $\eta$:
$$
\eta\left(\gamma_1^{k_1} \gamma_2^{k_2},\gamma_1^{l_1} \gamma_2^{l_2}\right) = \zeta_r^{-k_2 l_1}.
$$
Identifiant $\mu_\infty$ à $\Q/\Z$ (par l'application $x \mapsto e^{2 \ii \pi x}$) et
$<\gamma_1,\gamma_2>$ à $\Z^2$, on obtient enfin le groupe d'ensemble sous-jacent
$(\Q/\Z) \times \Z \times \Z$ et de loi:
$$
(x,k_1,k_2) \ast (y,l_1,l_2) := (x + y - \text{cl}(k_2l_1/r), k_1 + l_1,k_2 + l_2).
$$
(On vérifie aisément que c'est bien une loi de groupe.) En accord\footnote{À ceci
près qu'on omet l'exposant ${}^{(0)}$ qui exprime dans \emph{loc. cit.} qu'il s'agit
d'une étude locale en $0$; et que l'on met en exposant ${}^r$ le dénominateur commun
des pentes.} avec \cite[\S 3.6]{RS3}, notons ce groupe $G_{p,s}^r$. On a donc une
suite exacte (maintenant en notation additive):
$$
0 \longrightarrow \Q/\Z \longrightarrow \Gqpsr \longrightarrow \Z^2 \longrightarrow 0.
$$

Soit par ailleurs $L_q^r$ l'algèbre de Lie graduée libre de base la famille de tous les
symboles $\Psi_{l}^{(\delta,\beta)}$, $\delta \in \N^*$, $l \in \{0,\ldots,\delta-1\}$,
$\beta \in \Eqr$ (famille graduée par $\N^* \times \Eqr$); à laquelle on adjoint l'élément
$\Psi^{(0)}$, qui correspond à $\nu$. On fait agir le groupe $\Gqpsr$ sur $L_q^r$ par les
règles suivantes:
\begin{enumerate}
\item L'action de tout $\Gqpsr$ sur $\Psi^{(0)}$ est triviale (l'identité).
\item L'action de $(x,0,0)$ multiplie $\Psi_{l}^{(\delta,\beta)}$ par
$e^{2 \ii \pi \delta x}$.
\item L'action de $(0,1,0)$ multiplie $\Psi_{l}^{(\delta,\beta)}$ par $\gamma_1(\beta)$.
\item L'action de $(0,0,1)$ envoie $\Psi_{l}^{(\delta,\beta)}$ sur
$\gamma_2(\alpha_l^{-1}) \Psi_{l + \ell(\delta,\beta)}^{(\delta,\overline{\zeta_r}^{-\delta} \beta)}$.
\end{enumerate}

\begin{defn}
Le \emph{groupe fondamental sauvage} de $\EE_q^r$ est l'objet hybride défini par
le produit semi-direct:
$$
\pi_{1,q,w}^r := L_q^r \ltimes G_{p,s}^r
$$
déterminé par l'action décrite ci-dessus.
\end{defn}

Le groupe de Galois $\Gqr$ qui est l'objet de cette étude est le groupe proalgébrique
tel que la catégorie tannakienne $\EE_{q}^r$ soit équivalente à la catégorie de ses
représentations rationnelles, $\Repc(\Gqr)$. Les constructions précédentes définissent
un morphisme de $\pi_{1,q,w}^r$ dans $\Gqr$, d'où un foncteur de restriction de
$\Repc(\Gqr)$ dans la catégorie $\Repc(\pi_{1,q,w}^r)$ des représentations du groupe
abstrait $\pi_{1,q,w}^r$. Les résultats de cette section sont résumés par le théorème
suivant, qui est le pendant de \cite[th. 3.10]{RS3}. Comme le théorème
\ref{thm:conjuguésmeilleurebasecanonique}, ce théorème est valable sans restriction
dans tout $\EE_q^r$.

\begin{thm}
\label{thm:groupefondamentalsauvage}
On suppose que $q$ a une bonne valeur. \\
(i) Le foncteur de restriction de $\Repc(\Gqr)$ dans $\Repc(\pi_{1,q,w}^r)$ est un
isomorphisme, \ie\ il est pleinement fidèle et bijectif sur les objets. \\
(ii) Il y a une bijection naturelle entre les classes d'isomorphisme de représentations
de $\pi_{1,q,w}^r$ et les classes d'isomorphismes d'objets de $\EE_{q}^r$. Les groupes
de Galois de tels objets sont les clotures de Zariski des représentations associées
de $\pi_{1,q,w}^r$.
\end{thm}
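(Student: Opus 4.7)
The plan is to adapt the Tannakian strategy of \cite[th. 3.10]{RS3} to the ramified setting, using the structural results of the preceding subsections. It suffices to show that every representation of the discrete group $\pi_{1,q,w}^r = L_q^r \ltimes G_{p,s}^r$ extends uniquely to a rational representation of the proalgebraic group $\Gqr$, the remaining assertions (full faithfulness, correspondence of isomorphism classes, description of Galois groups as Zariski closures) then being formal consequences. First I would invoke the reductions of \ref{subsubsection:immersionsferm�es} and \ref{subsubsection:suffisancedesstokesnonramifi�s}: the closed immersion $\mathbf{G}_{q_r}^1 \hookrightarrow \Gqr$ restricts to closed immersions on the Stokes and pure-semisimple parts, and the subgroups $\Sttqu$ (the pro-unipotent group obtained from ramification) together with $\Gqpsr$ topologically generate $\Gqr$, yielding the semi-direct description $\Gqr = \Sttqu \ltimes \Gqpsr$. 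Consequently, a rational representation of $\Gqr$ is completely determined by its restrictions to $\Sttqu$ and to $\Gqpsr$ together with their compatibility under conjugation.

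Next I would apply Theorem \ref{thm:basecanonique} to the category $\EE_{q_r}^1$ (the ``bonne valeur'' hypothesis on $q$ is inherited in the sense needed): the free graded Lie subalgebra $L_{q_r}^1 \subset \sttqu$ with basis $\{\tau\} \cup \{\Psi_l^{(\delta,\beta)}\}$ is such that arbitrary graded representations of $L_{q_r}^1$ correspond bijectively to rational representations of $\Sttqu$. Renaming $L_{q_r}^1$ as $L_q^r$, this handles the Stokes side. I would then invoke Theorem \ref{thm:conjugu�smeilleurebasecanonique} to identify the conjugation action of $\Gqpsr$ on $L_q^r$ inside $\Gqr$: the explicit formulas of that theorem coincide on the nose with the abstract action defining the semi-direct product $\pi_{1,q,w}^r$, once $G_{p,s}^r \cong (\Q/\Z) \times \Z \times \Z$ is identified with the subgroup $\mu_\infty \cdot \langle \gamma_1, \gamma_2 \rangle$ of $\Gqpsr$ via the chosen section $\mu_\infty \hookrightarrow \Cs \hookrightarrow \Gqpsr$ and the chosen topological generators $\gamma_1,\gamma_2$ of $\Eqvee$. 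For the pure-semisimple side, I would verify Zariski-density of $G_{p,s}^r$ in $\Gqpsr$: using the extension $1 \to \Cs \to \Gqpsr \to \Eqvee \to 1$, this reduces to the Zariski-density of $\mu_\infty$ in $\Cs$ (elementary) and of $\langle \gamma_1, \gamma_2 \rangle$ in $\Eqvee$ (from \cite{JSGAL}).

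With these ingredients, the assembly is straightforward. Given a representation $\rho$ of $\pi_{1,q,w}^r$, the restriction $\rho|_{L_q^r}$ extends uniquely to a rational representation $\tilde\rho_s$ of $\Sttqu$, the restriction $\rho|_{G_{p,s}^r}$ extends uniquely to a rational representation $\tilde\rho_p$ of $\Gqpsr$, and the conjugation compatibility between $\tilde\rho_s$ and $\tilde\rho_p$ is a Zariski-closed condition in $\Gqpsr$ that holds on the dense subgroup $G_{p,s}^r$ by the very definition of $\pi_{1,q,w}^r$, hence on all of $\Gqpsr$. This produces the desired rational representation of $\Gqr$. The hard part, and what all the preceding subsections prepare for, is precisely the identification at Step 3: verifying that the conjugation action of $\Gqpsr$ on $L_{q_r}^1 \subset \sttqu$ inside $\Gqr$ literally matches the abstractly defined action on the right factor of the semi-direct product $\pi_{1,q,w}^r$. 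This match is delivered by Theorem \ref{thm:conjugu�smeilleurebasecanonique}, whose proof itself rests on the explicit dilation computations of \ref{subsubsection:dilatation} and the careful analysis of the action of the characters $\gamma_1, \gamma_2$ in \ref{subsubsection:actiondegamma}; once that theorem is available, the rest of the argument is essentially formal Tannakian manipulation.
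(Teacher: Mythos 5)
Your architecture is the one the paper intends: the theorem is stated without a formal proof, as a summary of the section, and the ingredients you assemble --- the closed immersion $\mathbf{G}_{q_r}^1 \hookrightarrow \Gqr$, the topological generation of $\Gqr$ by $\St'$ and $\Gqp$, the free graded basis of the theorem \ref{thm:basecanonique} transported to the base $q_r$, the explicit conjugation formulas for $h$, $\gamma_1$, $\gamma_2$, and the Zariski density of $\mu_\infty$ in $\Cs$ and of $\langle\gamma_1,\gamma_2\rangle$ in $\Eqvee$ --- are exactly the ones the section prepares. The density argument for fullness and for propagating the compatibility condition from $G_{p,s}^r$ to all of $\Gqpsr$ is correct and standard.

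The gap is in the surjectivity on objects, which you compress into one sentence. Two things are missing. First, you use the equality $\Gqr = \Stt' \ltimes \Gqpsr$ as if it were given, but the paper only proves \emph{topological generation}; to turn a compatible pair of rational representations of the two pieces into a rational representation of $\Gqr$ you must first observe that $\Gqpr$ normalizes the closed subgroup $\St'$, so that their product is a closed, hence (being dense) full, subgroup. Second, the step ``$\rho|_{L_q^r}$ extends uniquely to a rational representation of the pro-unipotent part'' is not available for the bare Lie algebra: the equivalence of \cite[th.\ 3.10]{RS3} concerns representations of $L_{q_r}^1 \ltimes \mathbf{G}_{q_r,p,s}^1$ as a pair, the grading of $L_{q_r}^1$ being carried by the semisimple action. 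The clean route, and the one the section's machinery is built for, is to restrict $\rho$ to $L_{q_r}^1 \ltimes \mathbf{G}_{q_r,p,s}^1$, obtain an object $A'$ of $\EE_{q_r}^1$ by the unramified theorem, and then use the remaining $\mu_r$-worth of action contained in $G_{p,s}^r$ (the cokernel of $\mathbf{G}_{q_r,p,s}^1 \to \Gqpsr$, visible in the $\gamma_2$-formula which permutes the $\Psi_l^{(\delta,\beta)}$ and twists $\beta$ by $\overline{\zeta_r}^{-\delta}$) as a descent datum: it furnishes an isomorphism $A' \simeq \tau A'$, and the Hilbert--90 type argument already used for the classification (together with the corollary \ref{cor:auto=id}) shows that $A'$ lies in the essential image of $\text{Ram}_r$, i.e.\ comes from an object of $\EE_q^r$. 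Without this descent step your construction only yields an object of $\EE_{q_r}^1$, and the passage from $\EE_{q_r}^1$ to $\EE_q^r$ is precisely the new content of the ramified case.
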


Lorsque $q$ a une mauvaise valeur, il n'est pas possible d'extraire de la famille des
$\Delta_\alpha^{(\delta,\beta)}$ une base telle que l'action de $\gamma_2$ soit une simple
combinaison de permutation et d'homothétie. Il n'est alors pas difficile de déduire
du théorème \ref{thm:conjuguésdérivéesétrangères} un analogue du théorème
\ref{thm:groupefondamentalsauvage} pour une base moins sympathique, mais la description
est alors moins parlante.


\bibliography{TALPA}

\end{document}